\newcommand{\R}{\mathbb{R}}
\newcommand{\la}{\langle}
\newcommand{\ra}{\rangle}
\newcommand{\nab}{\nabla}
\DeclareMathOperator{\tr}{tr}
\DeclareMathOperator{\dist}{dist}
\DeclareMathOperator{\Div}{div}
\DeclareMathOperator{\const}{const}
\DeclareMathOperator{\Area}{area}
\DeclareMathOperator{\Span}{span}
\DeclareMathOperator{\Hess}{Hess}
\DeclareMathOperator{\Hom}{Hom}
\DeclareMathOperator{\Rank}{Rank}
\DeclareMathOperator{\End}{End}
\DeclareMathOperator{\inj}{inj}
\DeclareMathOperator{\Int}{Int}
\DeclareMathOperator{\pr}{pr}
\DeclareMathOperator{\Genus}{Genus}
\newtheorem{lemma}{Lemma}[section]
\newtheorem{proposition}[lemma]{Proposition}
\newtheorem{theorem}{Theorem}
\newtheorem{corollary}[lemma]{Corollary}
\newtheorem{definition}[lemma]{Definition}
\newtheorem{remark}[lemma]{Remark}
\newtheorem*{introtheorem1}{Theorem A}
\newtheorem*{introtheorem2}{Theorem B}
\newtheorem*{introtheorem3}{Theorem C}
\newtheorem*{introtheorem4}{Theorem D}
\begin{document}
\title{Estimates for $J$-curves as submanifolds }
\author{Joel W. Fish}
\address{Department of Mathematics, Stanford University, Stanford, CA 94305}
\email{joelfish@math.stanford.edu}
\urladdr{http://www.stanford.edu/~joelfish}
\date{\today}
\thanks{Research partially supported by NSF grants DMS-0602191 and DMS-0802927}
\subjclass[2000]{Primary 32Q65; Secondary 53D99}
\begin{abstract}
Here we develop some basic analytic tools to study compactness properties of $J$-curves (i.e. pseudo-holomorphic curves) when regarded as submanifolds. Incorporating techniques from the theory of minimal surfaces, we derive an inhomogeneous mean curvature equation for such curves, we establish an extrinsic monotonicity principle for non-negative functions $f$ satisfying $\Delta f\geq -c^2 f$, we show that curves locally parameterized as a graph over a coordinate tangent plane have all derivatives a priori bounded in terms of curvature and ambient geometry, and we establish $\epsilon$-regularity for the square length of their second fundamental forms. These results are all provided for $J$-curves either with or without Lagrangian boundary and hold in almost Hermitian manifolds of arbitrary even dimension (i.e. Riemannian manifolds for which the almost complex structure is an isometry).
\end{abstract}
\maketitle
\tableofcontents
\section{Introduction}\label{sec:Introduction}
In his seminal 1985 paper \cite{Gm85}, Gromov introduced the notion of a ``pseudo-holomorphic curve'' and established the fundamental notion of compactness for families of such $J$-curves.  Since then, the vast majority of modifications and generalizations to Gromov's compactness theorem have all followed the same basic recipe, namely to study $J$-curves as a type of special harmonic map.  This essentially reduces the compactness problem to applying Deligne-Mumford compactness to the underlying Riemann surfaces and then applying bubbling analysis.  However, there are a growing number of examples in which this approach badly breaks down -- for instance, $J$-curves in a family of symplectic manifolds which lacks a uniform energy threshold.  Such a case was considered in the author's Ph.D. thesis \cite{Fj07}, in which a compactness result was proved for $J$-curves in the connected sum of two contact manifolds for which the connecting handle collapsed to a point.  The key difficulty there and in several current research directions (particularly $J$-curves in symplectic cobordisms between non-compact and/or degenerate contact manifolds) is the lack of a uniform energy threshold.  Indeed, the lack of this quantity is so fundamental that it necessitates an alternate approach to the compactness problem: namely, to regard $J$-curves as submanifolds and to prove compactness by incorporating elements from minimal surface theory.  Such an approach was taken in \cite{Fj09a}, and the results proved below constitute the core analytic estimates on which those arguments rely, namely extrinsic monotonicity, graphical parameterizations with a priori bounds, and $\epsilon$-regularity of the square-length of the second fundamental form.  In Section \ref{sec:Applications} below, we restate the main result from \cite{Fj09a} and sketch a proof with an emphasis on applications of the estimates proved below.

We now make two important points.  First, the results that follow are established for $J$-curves either with or without a partial Lagrangian boundary condition.  Consequently, the target-local results proved in \cite {Fj09a} (which impose no such boundary condition) appear to be generalizable to the Lagrangian boundary case.

The second point is that (the image of) a $J$-curve satisfies an inhomogeneous mean curvature equation of a form which allows for a variety of minimal surface techniques to be adapted to prove the main results of this article.  While these arguments appear to be familiar (if not well known) to the Riemannian geometry community, they seem to be unknown to the pseudo-holomorphic curve community at large. Furthermore the main theorems proved in \cite{FjX10a} and \cite{Fj09a} critically rely on the estimates below, necessitating a precise formulation of these results.
Consequently, a distinct effort has been made to keep the Riemannian aspects of the analysis as clear as possible and reasonably elementary.  For instance, we attempt to either reference well-known results or else provide proofs of basic results for completeness. Although this approach lengthens the exposition, it has hopefully increased the paper's readability and accessibility for the target audience.

\subsection{Statement of results}\label{sec:StatementOfResults}
Here we collect slightly simplified versions of the main results established in this article; see Section \ref{sec:Preliminaries} for relevant definitions.  For convenience, we will only state here the results for $J$-curves which lack a partial Lagrangian boundary condition.  The first such result concerns extrinsic monotonicity, and is a modified statement of Theorem \ref{thm:MonotonicityType0} from Section \ref{sec:Monotonicity0} below.  The Lagrangian boundary case is stated as Theorem \ref{thm:MonotonicityType1and2} from Section \ref{sec:Monotonicty1and2} below.

\begin{introtheorem1}
Let $(M,J,g)$ be an almost Hermitian manifold.  Then there exists a constant $C$, depending only on the geometry of $(M,J,g)$, with the following significance.  For any compact $J$-curve $u:S\to M$ which lacks constant components and satisfies $\partial S = u^{-1}(\partial M)$, and for any point $p\in u(S\setminus \partial S)\subset M\setminus \partial M$, and positive constants $b$ and $\lambda$ where $b$ is sufficiently small relative to the geometry of $(M,J,g)$ near $p$, and function $f:S\to \R$ for which $f\geq 0$ and $\Delta_S f\geq -\lambda b^2 f$, the following holds for any $0<a<b$:
\begin{equation*}
e^{\frac{\lambda a}{2b}+2 Ca}a^{-2 }\int_{u^{-1}(\mathcal{B}_a(p))}f\leq e^{\frac{\lambda}{2}+2 Cb}b^{-2
}\int_{u^{-1}(\mathcal{B}_b(p))}f.
\end{equation*}
Here $\mathcal{B}_r(p)=\{q\in M: \dist(p,q)<r\}$, and integration is taken with respect to the Hausdorff measure associated to $u^*g$, or equivalently (since $u$ parameterizes a $J$-curve) the two-form $u^*\omega$ with $\omega:= g\circ(J\times \mathds{1})$.
\end{introtheorem1}

Although the above estimate is perhaps foreign in its current form, it is not too difficult to see that for particular choices of $f$ and constants, a number of results can quickly be deduced, such as the extrinsic mean value inequality for sub-harmonic functions and a version of the monotonicity of area lemma which is stronger than the version commonly cited for $J$-curves; see Corollaries \ref{cor:MeanValueInequality0}, \ref{cor:MonotonicityOfArea0}, \ref{cor:MeanValueInequality1}, \ref{cor:MonotonicityOfArea1} below.  It should also be pointed out that the above result holds regardless of the topology of $u^{-1}\big(\mathcal{B}_r(p)\big)$.  Also of possible interest is that a very similar result holds for $J$-curves which have two (totally geodesic) Lagrangian boundary conditions and the Lagrangians intersect transversally.  See Section \ref{sec:Monotonicity} below.

The next result, Theorem B below, should not be regarded as a stand-alone result, but rather as a very convenient computational tool when working in local coordinates in $M$.  Indeed, it is used in Section \ref{sec:CurvatureRegularity} to establish various regularity results for the second fundamental form of a $J$-curve, and it is critically used in \cite{Fj09a} to obtain compactness results for sequences of immersed $J$-curves with a priori bounded curvature but potentially unbounded topology.
It is a modified restatement of Theorem \ref{thm:AprioriDerivataiveBounds} from Section \ref{sec:InteriorCase} below. The case with partial Lagrangian boundary condition can be found in Section \ref{sec:BoundaryCase} and is stated there as Theorem \ref{thm:AprioriDerivataiveBounds2}.

\begin{introtheorem2}
Let $(M,J,g)$ be an almost Hermitian manifold, $\epsilon>0$, and $u:S\to M$  a compact immersed $J$-curve with $\partial S = u^{-1}(\partial M)$ which has second fundamental form $B$ that satisfies $\|B\|_{L^\infty}\leq 1$.  Then there exists constants $r_0>0$ and $C_0,C_1,\ldots$ which depend only on $\epsilon$ and the local geometry of $(M,J,g)$ with the following significance.  For each $\zeta\in S$ for which $\dist\big(u(\zeta),\partial M\big)\geq \epsilon$, there exist maps $\phi:\mathcal{D}_{r_0}\to S$ and geodesic normal coordinates $\Phi:\mathcal{B}_{2r_0}\big(u(\zeta)\big)\to \R^{2n}$ for which the following hold
\begin{enumerate}
\item $\tilde{u}(s,t):=\Phi\circ u\circ \phi (s,t) = \big(s,t,\tilde{u}^3(s,t),\ldots,\tilde{u}^{2n}(s,t)\big)$,
\item $\phi(0)=\zeta$, $\tilde{u}(0,0) = 0$, and $D_\alpha \tilde{u}^i (0,0) = 0$ for $|\alpha|=1$ and $i=3,\ldots,2n$,
\item $\|\tilde{u}\|_{C^k(\mathcal{D}_{r_0})} \leq C_k$ for $k\in \mathbb{N}$.
\end{enumerate}
\end{introtheorem2}

In short, Theorem B guarantees that any $J$-curve with a priori bounded curvature can be locally parameterized as a graph over a disc (of radius uniformly bounded away from zero) in a coordinate tangent plane in such a way that all derivatives of said parameterization are uniformly bounded independent of the $J$-curve.

The final consequence of this article is an $\epsilon$-regularity result for the square length of the second fundamental form.  These are stated below as Theorem \ref{thm:EpsRegularity0} and Theorem \ref{thm:EpsRegularity1},  however rather than restate them here, we instead state Theorem C, which follows as an immediate corollary and has more transparent applications.

\begin{introtheorem3}
Let $(M,J,g)$ be a compact almost Hermitian manifold, and $\epsilon>0$.  Then there exists $\hbar>0$ depending only on $\epsilon$ and the local geometry of $(M,J,g)$ with the following significance.  For each compact immersed $J$-curve $u:S\to M$ with $\partial S:= u^{-1}(\partial M)$ and each $\zeta\in S$ with $\dist\big(u(\zeta),\partial M\big)>\epsilon$, the following holds for every $0<r<\hbar$:
\begin{equation*}
\text{if}\qquad \|B_u(\zeta)\|_g \geq \frac{1}{r}\qquad \text{then}\qquad \int_{u^{-1}(\mathcal{B}_r(u(\zeta)))}\|B_u\|^2 \geq \hbar.
\end{equation*}
Here $B_u$ is the second fundamental form of the immersion $u$, and $\mathcal{B}_r(p):=\{q\in M:\dist_g(p,q)<r\}$.
\end{introtheorem3}

Roughly speaking, Theorem C guarantees that if the curvature $\|B\|$ of a $J$-curve is large at a point, then a small neighborhood of this point captures a threshold amount of total curvature $\int \|B\|^2$.  This is relevant since integrating the Gauss equations for immersed closed $J$-curves shows that the total curvature $\int_S \|B\|^2$ is bounded in terms of the genus of $S$, the area of $S$, and the geometry of $(M,J,g)$.  Indeed, this estimate and such an a priori bound on the total curvature form the foundation of the compactness results given in \cite{Fj09a}; a sketch of this argument is provided in the next section.

\subsection{Applications}\label{sec:Applications}
The above results are technical in nature, so we take a moment to discuss their application in the target-local compactness result proved in \cite{Fj09a}.  Indeed, the main result of that article is the following:

\begin{introtheorem4}
Let $(M,J,g)$ be a compact almost Hermitian\footnote{That is, for $(M,J,g)$ we require that $g$ is a Riemannian metric, and the almost complex structure $J$ is an isometry.} manifold with boundary.  Let $(J_k,g_k)$ be a sequence of almost Hermitian structures which converge to $(J,g)$ in $C^\infty(M)$, and let $(u_k,S_k,j_k,J_k)$ be a sequence of compact $J_k$-curves (possibly disconnected, but having no constant components) satisfying the following:
\begin{enumerate}
\item $u_k:\partial S_k \to \partial M$,
\item $\Area_{u_k^*g_k}(S_k)\leq C_{A}$,
\item $\Genus(S_k)\leq C_{G}$.
\end{enumerate}
Then there exists a subsequence (still denoted with subscripts $k$) of the $\mathbf{u}_k$, an $\epsilon>0$, and an open dense set $\mathcal{I}\subset [0,\epsilon)$ with the following significance.  For each $\delta \in \mathcal{I}$, define $\tilde{S}_k^\delta:=\{\zeta\in S_k: \dist_g\big(u_k(\zeta),\partial M)\geq \delta\}$; then the $J_k$-curves $(u_k,\tilde{S}_k^\delta,j_k,J_k)$ converge in a Gromov sense.
\end{introtheorem4}

There are a variety subtleties which make the proof of this theorem less than obvious.  For instance, one serious problem here is that we have not assumed that the number of connected components of the $\partial S_k$ is uniformly bounded.  Since the $S_k$ need not be diffeomorphic, we see that the standard arguments (i.e. Deligne-Mumford convergence of the underlying Riemann surfaces and then bubbling analysis) will be useless. On the other hand, if one could pass to a subsequence and find $0<\delta<\epsilon$ for arbitrarily small $\delta$ such that the curves $(u_k,\tilde{S}_k^{\epsilon,\delta},j_k,J_k)$ with
\begin{equation*}
\tilde{S}_k^{\epsilon,\delta}:=\{\zeta\in S_k: \epsilon\geq \dist_g\big(u_k(\zeta),\partial M)\geq \delta\}
\end{equation*}
converged in $C^\infty$ to an immersed curve, then one could use this convergence plus standard techniques to prove the above theorem. Indeed, this is precisely the tack taken in \cite{Fj09a}, so let us now see the relevance of the estimates proved below.

We begin by assuming the $u_k:S_k\to M$ are immersed; this is not an assumption in \cite{Fj09a}, but it will make the discussion in this section clearer.  The desired result is then proved in three steps. The first step is to prove Theorem D under the additional assumption that the second fundamental forms $B_{u_k}$ of the immersions $u_k$ are uniformly bounded in $L^\infty$, and that the limit curve is immersed and not nodal.  Since it is again the case that the domains $S_k$ (or trimmed domains $\tilde{S}_k^\delta$) are not diffeomorphic, one may still not employ standard arguments for Gromov compactness.  Instead, one employs Theorem B to cover the $\tilde{S}_k^\delta$ by open sets of the form $\mathcal{U}_{k,i}=\phi_{k,i}(\mathcal{D}_{r_0})$, where  $i\in \{1,\ldots, n_k\}$; a further consequence of Theorem B is that the maps $\tilde{u}_{k,i}:=\Phi_{k,i}\circ u_k\circ \phi_{k,i}$ have a priori bounds on all derivatives and $r_0$ is independent of $i$ and $k$.  Furthermore, by employing the uniform area bound, Theorem A (or other weaker versions of monotonicity), and a standard covering argument, one finds that the $\mathcal{U}_{k,i}$ can be chosen so that the $n_k$ are uniformly bounded.  Since the $\tilde{u}_{k,i}$ are graphical with uniformly bounded derivatives, it follows from Arzel\`{a}-Ascoli that after passing to a subsequence the $\tilde{u}_{k,i}\to \tilde{u}_{\infty,i}$ in $C^\infty$.  Using these limit maps, one passes to a further subsequence, constructs a surface $\tilde{S}$, an immersion $\tilde{u}:\tilde{S}\to M$, and diffeomorphisms $\psi_k:\tilde{S}\to\psi_k(\tilde{S})\subset S_k$ such that $S_\delta\subset \psi_k(\tilde{S})$ and $u_k\circ \psi_k\to \tilde{u}$ in $C^\infty$.  This shows the desired convergence under the additional $L^\infty$ curvature bound.

Of course, sequences of immersed $J$-curves often have unbounded curvature $\|B_{u_k}\|_{L^\infty}$.  Indeed, consider the formation of a critical point: $u_k(z)=(k^{-1} z, z^2)$, or the formation of the standard node: $\{(z_1,z_2)\in \mathbb{C}^2:z_1 z_2 = 1/k\}$ appropriately parameterized. Thus the second step is to prove Theorem D for immersed curves under the additional assumption that the total curvature $\int_{S_k}\|B_{u_k}\|^2$ is uniformly bounded.  Note that if the $S_k$ were each closed, then the total curvature would be bounded in terms of $C_G$, $C_A$, $\|K_{sec}\|_{L^\infty}$, and $\|\nabla J\|_{L^\infty}$.  Indeed this follows by integrating the Gauss equations, employing the Gauss-Bonnet theorem,  and using the fact that $J$-curves have mean curvature $H_\nu$ which is $L^\infty$-bounded in terms of $\|\nabla J\|_{L^\infty}$.  The difficulty with deriving a similar bound for non-closed $S_k$ once again arises from the lack of a priori control of boundary behavior.  Nevertheless, assuming uniformly bounded total curvature $\int_{S_k} \|B_{u_k}\|^2 \leq C_B$, we see that Theorem C guarantees that the curvature of our curves cannot blowup at arbitrarily many points in the interior of $M$.  More precisely, if $\{\zeta_{1,k},\ldots,\zeta_{n,k}\}\subset S_k$ is a sequence of finite sets for which $\lim_{k\to \infty} \min\{\|B_{u_k}(\zeta_{1,k})\|,\ldots,\|B_{u_k}(\zeta_{n,k})\|\} = \infty$, $\inf_{i,k} \{\dist_{g_k}\big(u_k(\zeta_{i,k}),\partial M\big)\} >0$, and $\inf_{k\in \mathbb{N}} \min_{i\neq j} \dist_{g_k}\big(u_k(\zeta_{i,k}),u_k(\zeta_{j,k})\big)>0$, then $n$ is bounded in terms of $C_B$, and the geometry of $(M,J,g)$.  Consequently, by passing to a subsequence one may find $\epsilon,\delta>0$ as small as we like such that the $\|B_{u_k}\|_{L^\infty(\tilde{S}^{\epsilon,\delta})}$ are uniformly bounded.  This reduces the problem to the one solved in our first step, and the desired result is immediate.

The third and final step in proving Theorem D for immersed curves is then to show the following.  For each $\delta>0$ there exists a $C_\delta>0$ such that $\int_{\tilde{S}_k^\delta}\|B_{u_k}\|^2 < C_\delta$.  Here $C_\delta$ will depend on $C_A$, $C_G$, and the geometry of $(M,J,g)$.  The proof is via contradiction, and employs the results of the previous two steps.  The key idea of the proof is the following. Since $J$-curves have mean curvature bounded in terms of $\|\nabla J\|$, it follows from the Gauss equations that the Gaussian curvature of such curves is bounded from above in terms of the sectional curvature of $M$ and $\nabla J$.  One can then write down a differential equation relating the area and curvature of intrinsic disks $\mathcal{D}_r:=\{\zeta\in S: \dist_{u^*g}(\zeta,\zeta_0)\}$, from which in can be concluded that if the total curvature on $\mathcal{D}_r$ is very large, then the area of $\mathcal{D}_{2r}$ is very large as well.  Since area is a priori bounded, so too must the total curvature.  The desired estimate is then obtained via a covering argument.  By the work in step two, the desired result is then immediate.  This completes the sketch proof the Theorem D for immersed curves, and it shows how the proof of that theorem critically relies on the estimates proved below.

\subsection{Acknowledgements}
The following is an extension of ideas developed in my Ph.D. thesis \cite{Fj07} at New York University, and as such I would like to thank my advisor, Helmut Hofer, for his encouragement, support, and for creating a vibrant symplectic and contact research group at the Courant Institute.

\section{Preliminaries}\label{sec:Preliminaries}
The purpose of this section is to establish some definitions, notation, and elementary results which will be used throughout the remainder of this article.  More precisely, in Section \ref{sec:DefAndNot} we define generally immersed $J$-curves of various Types in almost Hermitian manifolds, and in Section \ref{sec:MeanCurvatureEquation} we define the second fundamental forms $A$ and $B$ and the mean curvature vector $H$ of generally immersed curves, and we show that $J$-curves satisfy an inhomogeneous mean curvature equation (see Lemma \ref{lem:InhomogeneousMeanCurvatureEq} below).

\subsection{Definitions and notation}\label{sec:DefAndNot}

Let $M$ be a compact real $2n$-dimensional manifold (possibly with boundary) equipped with a smooth section $J\in \Gamma\big(\End(TM)\big)$ for which $J^2=-\mathds{1}$; we call $(M,J)$ an \emph{almost complex manifold}, and $J$ the \emph{almost complex structure}.  Note that $J$ need not be integrable; that is, it need not be induced from local complex coordinates.  Indeed, this will only be true if the Nijenhuis tensor $N_J$ associated to $J$ vanishes identically, and do not make such an assumption.

If $(M,J)$ is equipped with a smooth Riemannian metric $g$ for which $J$ is an isometry (i.e. $g(x,y)=g(Jx,Jy)$ for all $x,y\in TM$), then we call $(M,J,g)$ an \emph{almost Hermitian manifold}.  Observe that any almost complex manifold can be given an almost Hermitian structure $(J,g)$ by choosing an arbitrary Reimannian metric $\tilde{g}$, and defining $g(x,y):=\frac{1}{2}\big(\tilde{g}(x,y)+ \tilde{g}(Jx,Jy)\big)$.

To an almost Hermitian manifold $(M,J,g)$ one can associate a fundamental two form (c.f. \cite{KsNk96b}) $\omega \in \Gamma\big(\Lambda^2 TM \big)$ given by $\omega(x,y):=g(Jx,y)$.  We call $\omega$ the \emph{almost symplectic form} associated to $(J,g)$, where the ``almost'' refers to the fact that in general $d\omega\neq 0$.  Indeed, $\omega$ is non-degenerate by definition, so if $\omega$ is closed then it is a symplectic form, and in such case $J$ is an $\omega$-compatible almost complex structure.  Again, we do not make this additional assumption.

Suppose $(M,J,g)$ is an almost Hermitian manifold of dimension $2n$ with associated almost symplectic form $\omega$, and suppose $L\subset M$ is a compact embedded submanifold of dimension $n$, which may be disconnected and may have boundary. Letting $TL\subset TM\big|_L$ denote the tangent sub-bundle of $L$, we will say that $L$ is \emph{totally real} provided $JTL \cap TL \equiv 0$. We will say $L$ is \emph{Lagrangian} provided $JTL = TL^\bot$ where $TL^\bot$ denotes the normal bundle of $L\subset M$; equivalently $L$ is Lagrangian if the restriction of $\omega$ to $L$ vanishes identically.  There is an obvious extension of this definition to Lagrangian immersions, however since the following analysis is all local in nature, it will be sufficient here to assume that $L\subset M$ is embedded.

We will often be interested in Lagrangians which are totally geodesic; in other words when each geodesic in $(L,g\big|_L)$ is also a geodesic in $(M,g)$, or equivalently when the second fundamental form of $L$ vanishes identically.  While such sub-manifolds are difficult to find in a given almost Hermitian manifold, we note that the metric only plays an auxiliary role in what follows, so the following result is particularly useful.

\begin{lemma}[Frauenfelder]\label{lem:Frauenfelder}
Let $(M,J)$ be an almost complex manifold, and let $L\subset M$ be an embedded totally real submanifold.  Then there exists a metric $g$ on $M$ for which $(M,J,g)$ is almost Hermitian, $L$ is Lagrangian, and $L$ is totally geodesic.
\end{lemma}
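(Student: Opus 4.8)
The plan is to build the desired metric $g$ in a tubular neighborhood of $L$ using the totally-real condition, and then to extend it arbitrarily to all of $M$ using a partition of unity (the conditions to be arranged are all local near $L$, and away from $L$ any compatible metric will do). First I would fix an auxiliary Riemannian metric $g_0$ on $M$ and use it to identify a neighborhood of $L$ with a neighborhood of the zero section of the normal bundle $\nu L = TL^\perp$ via the exponential map; since $L$ is totally real, $JTL$ is a complement to $TL$ in $TM|_L$, so there is a canonical bundle isomorphism $\nu L \cong JTL$ over $L$. The key point is to choose a metric so that (i) $J$ is an isometry, (ii) $TL^\perp = JTL$ along $L$ (the Lagrangian condition, which by the remarks in the excerpt is equivalent to $\omega|_L \equiv 0$), and (iii) the second fundamental form of $L$ vanishes, i.e. geodesics of $M$ starting tangent to $L$ stay in $L$.

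The concrete construction I would carry out along $L$ is as follows. On $TM|_L = TL \oplus JTL$, define a metric $g$ at points of $L$ by declaring $TL \perp JTL$, choosing $g|_{TL}$ to be any metric on $L$ (for instance $g_0|_{TL}$), and setting $g(Jv,Jw) := g(v,w)$ for $v,w \in TL$ — this forces (i) and (ii) simultaneously along $L$. To get (iii), I would then extend $g$ off $L$ using geodesic normal coordinates adapted to $L$: parameterize a neighborhood of $L$ by $(x, y) \mapsto \exp^{g}_{x}(y)$ where $x \in L$ and $y \in (JTL)_x$, and note that in such Fermi-type coordinates $L$ is automatically totally geodesic for the metric one builds fiberwise. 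More precisely, one defines $g$ first along $L$ as above, then declares the curves $t \mapsto \exp_x(ty)$ to be unit-speed geodesics and fills in the metric so that $L = \{y = 0\}$ is totally geodesic; concretely one can take $g$ on the tube to be the "product-like" extension whose restriction to each normal slice is flat and which agrees with $g_0|_{TL}$ on the $L$-directions, adjusted so that $J$ remains an isometry. Finally, patch this tube metric to $g_0$ (or any almost Hermitian metric built from $g_0$ as in the excerpt's averaging trick) using a partition of unity subordinate to the tube and its complement, and re-average with $J$ at the very end via $g \leadsto \tfrac12(g(\cdot,\cdot) + g(J\cdot,J\cdot))$ to restore (i) globally — this last average does not disturb (ii) or (iii) since both hold in a neighborhood of $L$ where the pieces already make $J$ an isometry, so the average is unchanged there.

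The main obstacle is compatibility of the three conditions near $L$ rather than merely on $L$: the naive $J$-average of a metric that is totally geodesic along $L$ need not keep $L$ totally geodesic, and a metric making $L$ totally geodesic need not make $TL^\perp = JTL$ off the zero section. The cleanest way around this, and the step I expect to require the most care, is to do the construction in the normal bundle picture from the start: transport $J$ into a model on $\nu L$, build a fiberwise-linear almost Hermitian structure there that is totally geodesic along the zero section by construction (using that the Lagrangian and totally-geodesic conditions only constrain the $0$- and $1$-jets of $g$ along $L$), and verify that the required $J$-invariance can be arranged to all orders along $L$ without conflict — which works precisely because $JTL$ is transverse to $TL$, so the compatibility equations for $g$ along $L$ decouple into independent conditions on the $TL$-block, the $JTL$-block, and the (vanishing) off-diagonal block. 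Everything else — the partition-of-unity patching, the final $J$-average, and smoothness — is routine.
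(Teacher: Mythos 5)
The paper itself does not give a proof of this lemma; it simply cites Frauenfelder \cite{Fu08} and Chapter 4.3 of McDuff--Salamon \cite{MdSd04}, so there is no argument in the paper to compare against. Assessing your proposal on its own merits: the overall architecture (build $g$ near $L$, patch globally, restore $J$-compatibility) is sensible, and you correctly identify that the three conditions interact only through the $0$- and $1$-jets of $g$ along $L$. However, the crucial step is not carried out correctly, and the resolution you sketch contains a false claim.

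The problem is the assertion that ``the compatibility equations for $g$ along $L$ decouple into independent conditions on the $TL$-block, the $JTL$-block, and the (vanishing) off-diagonal block.'' They do not decouple at the $1$-jet level, precisely because $J$ \emph{exchanges} $TL$ and $JTL$ along $L$. Differentiate the identity $g(Jv,Jw)=g(v,w)$ in a normal direction $\nu$ and evaluate along $L$ with $v,w$ tangent to $L$: since $Jv,Jw$ are then normal, the left-hand side produces $\nu\big(g\big|_{\mathrm{nn}}\big)$ (plus terms in $\nabla_\nu J$) while the right-hand side produces $\nu\big(g\big|_{\mathrm{tt}}\big)$. Thus first-order $J$-compatibility \emph{couples} the normal derivative of the tangent-tangent block (which must vanish for $L$ to be totally geodesic) to the normal derivative of the normal-normal block. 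This coupling is in fact solvable — one may prescribe $\nu\big(g_{\mathrm{tt}}\big)\big|_L=0$ and read off the required $\nu\big(g_{\mathrm{nn}}\big)\big|_L$ from the identity — but that verification is the heart of the lemma and it is absent. Compounding this, your final ``re-average with $J$'' step is only harmless if the metric being averaged is already $J$-compatible to first order along $L$; if it is $J$-compatible merely at points of $L$ (which is all your construction visibly arranges), the average perturbs $\nu(g_{\mathrm{tt}})\big|_L$ by exactly the normal-normal and $\nabla J$ terms above, and the totally-geodesic property can be lost. So the step you defer (``adjusted so that $J$ remains an isometry'') is not a routine adjustment but the actual content. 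A clean way to close the gap is to solve the coupled $1$-jet system along $L$ explicitly as above and then extend by any $J$-average of an arbitrary extension, checking that the average preserves that $1$-jet; alternatively one can use the anti-holomorphic reflection trick in a tubular neighborhood of $L$, under which $L$ becomes totally geodesic automatically as the fixed-point set of an isometric involution, sidestepping the $1$-jet bookkeeping.
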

\begin{proof}
See \cite{Fu08} or Chapter 4.3 of \cite{MdSd04}.
\end{proof}

Continuing with our definitions, we shall say a smooth map $u:S\to M$ between smooth manifolds (which may have boundary and corners, be disconnected, or be non-compact) is a \emph{generally immersed} provided that for each point $z\in S$ for which $T_z u\neq 0$ we have $\Rank(T_z u)=\dim S$, and the set of critical points, which we henceforth denote as $\mathcal{Z}_u:=\{z\in S: T_z u =0\}$, has no accumulation points.  Furthermore if $M$ is equipped with a Riemannian metric $g$, then we require that the conformal structure $[u^*g]$ on $S$ admits a smooth extension across $\mathcal{Z}_u$. If $S$ has real dimension two, then we call $(u,S)$ a generally immersed \emph{curve}.

\begin{remark}Given a generally immersed curve $u:S\to M$ into an Riemannian manifold $(M,g)$, we note that $u^*g$ is not a Riemannian metric, however it does induce the following metric:
\begin{equation*}
\dist_{u^*g}(\zeta_0,\zeta_1):=\inf\big\{{\textstyle \int_0^1} \la \dot{\gamma}(t),\dot{\gamma}(t)\ra_{u^*g}^{\frac{1}{2}}dt: \gamma\in C^1\big([0,1],S\big)\text{ and } \gamma(i)=\zeta_i\big\}.
\end{equation*}
In the case that $\zeta_0$ and $\zeta_1$ lie in different connected components, our convention will be to define $\dist(\zeta_0,\zeta_1):=\infty$.
Consequently, we may regard $(S,\dist_{u^*g})$ as a metric space, in which case it can be equipped with Hausdorff measures $d\mathcal{H}^k$. Note that if $\mathcal{O}\subset S$ is an open set on which $u$ is an immersion, then  $d\mathcal{H}^{2}(\mathcal{O})=\Area_{u^*g}(\mathcal{O})$.  As such, our convention will be to simply define the area of an arbitrary open set $\mathcal{U}\subset S$ to be $\Area_{u^*g}(\mathcal{U}):=d\mathcal{H}^{2}(\mathcal{U})$.  Furthermore, throughout the remainder of this article, all integration in $S$ will be with respect to these Hausdorff measures unless otherwise specified, and for convenience we shall suppress the measure $d\mathcal{H}$ in subsequent computations.
\end{remark}

Given a smooth immersion $u:S\to M$ into a Riemannian manifold, one can associate several important bundles over $S$:
\begin{align*}
u^*TM &:= \{ (z, X)\in S\times TM : X\in T_{u(z)} M\}\\
\mathcal{T}&:=\{X\in u^*TM: \text{there exists } x\in T_z S \text{ such that } X=u_*x\}\\
\mathcal{N}&:=\{X\in u^*TM: \la X, Y\ra =0 \text{ for all } Y \in \mathcal{T}\}.
\end{align*}
In the case that $u:S\to M$ is only generally immersed, then $\mathcal{T}$ and $\mathcal{N}$ will be considered only as bundles over the regular points $S\setminus \mathcal{Z}_u$.  In this manner we may still regard $\mathcal{T}$ and $\mathcal{N}$ as smooth bundles.

\begin{remark}[An abuse of notation]\label{rem:AbuseOfNotation}
In what follows, we aim analyze $J$-curves not as holomorphically parametrized maps, but rather as sub-manifolds which satisfy an inhomogeneous mean curvature equation.  Unfortunately, it will frequently be the case that $J$-curves need be neither embedded nor immersed, so to proceed with our extrinsic analysis we will employ the following abuse of notation.  If $u:S\to M$ is generally immersed, then on the set of regular points $S\setminus \mathcal{Z}_u$ we identify $TS$ and $\mathcal{T}$ via $Tu$; furthermore if $\mathcal{E}\to M$ is a bundle and $\sigma\in \Gamma (\mathcal{E})$ is a section, we denote the pulled-back section $u^*\sigma$ simply as $\sigma$ and refer to it as the \emph{restriction} to $S$.  For example, if $f:M\to \R$ is a smooth function, we will often write the restriction as $f:S\to \R$ instead of $f\circ u$.
\end{remark}

Given an almost Hermitian manifold $(M,J,g)$, and a generally immersed curve $u:S\to M$, one can consider the non-linear Cauchy-Riemann operator, $\bar{\partial}_J$, defined by $\bar{\partial}_J u:= (JTu)^\bot \in \Gamma\big(\Hom(\mathcal{T},\mathcal{N})\big)$.  We shall call any generally immersed curve $(u,S)$ which satisfies $\bar{\partial}_J u\equiv 0$ a \emph{pseudo-holomorphic curve}, or simply a \emph{$J$-curve}.  We are primarily interested in $J$-curves in three similar set-ups which we enumerate as Type 0 maps, Type 1 maps, and Type 2 maps.  Roughly speaking, the ``type'' measures the number of Lagrangian boundary conditions of the map.  We now make this precise.

\begin{definition}[Type 0 maps]\label{def:Type0Map}
Given a compact manifold $M$, a Type 0 map $u:S\to M$ is a smooth and proper generally immersed map from a compact two-dimensional manifold $S$ (possibly with boundary) which satisfies $\partial S = u^{-1}(\partial M)$.
\end{definition}

\begin{definition}[Type 1 maps]\label{def:Type1Map}
Given a compact manifold $M$ of dimension $2n$ and a compact embedded submanifold $L\subset M$ of dimension $n$ for which $\partial L = L\cap \partial M$, we define a Type 1 map $u:S\to M$ to be a smooth and proper generally immersed map from a compact two-dimensional manifold $S$ with piece-wise smooth boundary for which $\partial S=\partial_0 S\cup \partial_1 S$ where the subsets $\partial_0 S, \partial_1 S\subset \partial S$ satisfy
\begin{enumerate}
\item $\partial_0 S = u^{-1}(\partial M)$;
\item $u(\partial_1 S)\subset L$;
\item $\partial_0 S\cap \partial_1 S$ is finite;
\item $\partial_0 S\cap \partial_1 S$ is the set of non-smooth boundary points of $S$ (i.e. corners).
\end{enumerate}
\end{definition}

\begin{definition}[Type 2 maps]\label{def:Type2Map}
Given a compact \emph{Riemannian} manifold $(M,g)$ of dimension $2n$, two compact embedded submanifolds $L_1,L_2\subset M$ each of dimension $n$ which intersect each other transversely and satisfy $\partial L_i = L_i\cap \partial M$, we define a Type 2 map $u:S\to M$ to be a smooth generally immersed map from a two-dimensional manifold $S$ with piece-wise smooth boundary for which $\partial S=\partial_0 S\cup \partial_1 S\cup \partial_2 S$, and the following hold.
\begin{enumerate}
\item $\Area_{u^*g} S <\infty$.
\item $L_1\cap L_2\subset M\setminus \partial M$.
\item For each compact set $\mathcal{K}\subset M\setminus (L_1\cap L_2)$, the set $u^{-1}(\mathcal{K})$ is compact.
\item $\partial_0 S = u^{-1}(\partial M)$;
\item $u(\partial_i S)\subset L_i$;
\item The set  $\cup_{i<j}\big(\partial_i S\cap \partial_j S\big)$ is finite;
\item The set  $\cup_{i<j}\big(\partial_i S\cap \partial_j S\big)$ is the set of non-smooth boundary points of $S$.
\end{enumerate}
\end{definition}

\begin{remark}
We note that maps $u:S\to M$ of Type 0 and Type 1 are both proper with compact domains, whereas Type 2 maps are only proper when the target is restricted to $M\setminus (L_1\cap L_2)$ and may have non-compact domains.  Indeed, unlike the former cases, there are some non-trivial subtleties regarding the behavior of $J$-curves of Type 2 near the intersection points $L_1\cap L_2$.  A standard manner to analyze this behavior is to regard $M\setminus (L_1\cap L_2)$ as a manifold with finitely many negative cylindrical ends.  Since we wish to allow for such analysis, we allow $J$-curves  of a Type 2 to have non-compact domains, rather than a priori demanding some sort of corner structure near the intersection of the Lagrangians. It should also be noted that finite area condition is standard in all reasonable applications of $J$-curves, and here it is essentially a technical convenience.
\end{remark}

\subsection{A mean curvature equation}\label{sec:MeanCurvatureEquation}
In this section, we derive an inhomogeneous mean curvature equation for $J$-curves.  To that end, we start by recalling some important metric properties of almost complex structures.

\begin{lemma}[Computational]\label{Lem:ComputationalAlmostHermition}
Let $(M,J,g)$ be an almost Hermitian manifold, and let $\nab$ denote the Levi-Civita connection associated to $g$.  Then the following hold.
\begin{align}
0&=(\nabla_XJ)J+J(\nab_X J)\label{eq:nabJAntiCommutesWithJ}\\
0&=\left<(\nab J)X, Y\right> + \left<X,(\nab J)Y\right>\label{eq:nabJAdjoint}\\
0&=\left<(\nab_Z J)X,\alpha X + \beta JX\right>\quad\forall \alpha,\beta\in\mathbb{R}\label{eq:nabJImageComplexOrthogToDomain}
\end{align}
\end{lemma}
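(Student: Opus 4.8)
The three identities are all elementary consequences of the almost Hermitian compatibility conditions $J^2=-\mathds{1}$ and $g(Jx,Jy)=g(x,y)$, together with the fact that $\nabla$ is metric ($\nabla g=0$). The plan is to differentiate each compatibility relation and then extract the claimed formulas.

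For \eqref{eq:nabJAntiCommutesWithJ}, I would start from $J^2=-\mathds{1}$ as an identity of sections of $\End(TM)$, apply $\nabla_X$, and use the Leibniz rule: $0=\nabla_X(J\circ J)=(\nabla_X J)\circ J + J\circ(\nabla_X J)$, which is exactly the assertion. (Here I am using that $\nabla_X(\mathds{1})=0$, since $\mathds{1}$ is parallel.) For \eqref{eq:nabJAdjoint}, I would differentiate the compatibility relation $g(JX,JY)=g(X,Y)$, or more conveniently the equivalent skew-adjointness $g(JX,Y)+g(X,JY)=0$ (which follows from $g(Jx,Jy)=g(x,y)$ by replacing $y$ with $Jy$ and using $J^2=-\mathds{1}$). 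Applying $\nabla_Z$ to $g(JX,Y)+g(X,JY)=0$ and using $\nabla g=0$ to pass the derivative through $g$, the terms involving $\nabla_Z X$ and $\nabla_Z Y$ cancel in pairs (precisely because of the skew-adjointness we started with), leaving $g\big((\nabla_Z J)X,Y\big)+g\big(X,(\nabla_Z J)Y\big)=0$, i.e. the claim. Finally, \eqref{eq:nabJImageComplexOrthogToDomain} follows from \eqref{eq:nabJAntiCommutesWithJ} and \eqref{eq:nabJAdjoint} combined: by linearity in the second slot it suffices to check $\big\langle(\nabla_Z J)X,X\big\rangle=0$ and $\big\langle(\nabla_Z J)X,JX\big\rangle=0$. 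The first is immediate from \eqref{eq:nabJAdjoint} with $Y=X$, which gives $2\langle(\nabla_Z J)X,X\rangle=0$. For the second, \eqref{eq:nabJAdjoint} with $Y=JX$ gives $\langle(\nabla_Z J)X,JX\rangle = -\langle X,(\nabla_Z J)JX\rangle$, and then \eqref{eq:nabJAntiCommutesWithJ} gives $(\nabla_Z J)JX=-J(\nabla_Z J)X$, so $\langle(\nabla_Z J)X,JX\rangle=\langle X,J(\nabla_Z J)X\rangle=-\langle JX,(\nabla_Z J)X\rangle$, whence $\langle(\nabla_Z J)X,JX\rangle=0$.

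None of these steps presents a genuine obstacle; the only point requiring a little care is making sure the Leibniz rule is applied to the right objects (sections of $\End(TM)$ versus vector fields) and that one records the metric-compatibility identity one is differentiating in the precise skew-adjoint form, so that the unwanted $\nabla_Z X$, $\nabla_Z Y$ terms cancel cleanly rather than merely "up to connection terms." I would present the argument in the order \eqref{eq:nabJAntiCommutesWithJ}, then \eqref{eq:nabJAdjoint}, then \eqref{eq:nabJImageComplexOrthogToDomain}, since the last genuinely uses the first two.
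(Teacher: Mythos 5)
Your proof is correct and follows exactly the same route as the paper: differentiate $J^2=-\mathds{1}$ for \eqref{eq:nabJAntiCommutesWithJ}, differentiate the skew-adjointness identity $\langle JX,Y\rangle+\langle X,JY\rangle=0$ using $\nabla g=0$ for \eqref{eq:nabJAdjoint}, and then combine the two to show $\langle(\nabla_Z J)X,X\rangle$ and $\langle(\nabla_Z J)X,JX\rangle$ both vanish, yielding \eqref{eq:nabJImageComplexOrthogToDomain} by linearity. No substantive differences.
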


\begin{proof}
To prove equation (\ref{eq:nabJAntiCommutesWithJ}), we covariantly differentiate $J^2=-\mathds{1}$. To prove equation (\ref{eq:nabJAdjoint}), we fix a vector field $Z$ and covariantly differentiate the equation $0=\left<JX,Y\right>+\left<X,JY\right>$ to find that
\begin{align*}
0&=\left<\nab_Z (JX),Y\right>+\left<JX,\nab_Z Y\right> +\left<\nab_Z X,JY\right> + \left<X,\nab_Z(JY)\right>\\
&=\left<(\nab_Z J) X,Y\right> +\left<J\nab_Z X,Y\right> -\left<X,J\nab_Z Y\right>\\
&\qquad-\left<J\nab_Z X, Y\right> +\left<X,(\nab_Z J)Y\right> + \left<X,J\nab_Z Y\right>\\
&=\left<(\nab_Z J)X,Y\right>+\left<X,(\nab_Z J)Y\right>.
\end{align*}
By (\ref{eq:nabJAdjoint}), it follows that $\la (\nab_Z J)X,X\ra = -\la X,(\nab_Z J) X\ra$; by (\ref{eq:nabJAntiCommutesWithJ}) and (\ref{eq:nabJAdjoint}) it follows that $\la (\nab_Z J)X, JX\ra = - \la J  X, (\nab_Z J)X\ra $; (\ref{eq:nabJImageComplexOrthogToDomain}) then follows immediately.

\end{proof}

Next, recall that given an immersion $u:S\to M$ into a Riemannian manifold $(M,g)$, one may associate second fundamental forms $A$ and $B$ by the following:
\begin{equation*}
A^\nu(x):= -(\nab_x \nu)^\top\qquad \text{and}\qquad B(x,y):= (\nab_x y)^\bot,
\end{equation*}
where $x,y\in \Gamma(\mathcal{T})$, $\nu\in \Gamma(\mathcal{N})$, $\nabla$ is the Levi-Civita connection associated to $g$, and $x\mapsto x^\top$ and $x\mapsto x^\bot$ are the orthogonal projections from $u^*TM$ to $\mathcal{T}$ and $\mathcal{N}$ respectively.  Using fact that $\nab$ is torsion free, one easily verifies that $B(x,y)=B(y,x)$. Furthermore, defining the bundle $\mathcal{S}\to S$ of symmetric linear transformations of $\mathcal{T}$ by
\begin{equation}\label{eq:BundleS}
\mathcal{S}:=\{s\in \Hom (\mathcal{T},\mathcal{T}): \la s x, y \ra = \la x, sy\ra\text{ for all } x,y\in \mathcal{T}\},
\end{equation}
one can use the the Leibniz rule and the fact that $\nab$ also preserves the metric $g$ to show that $A$ and $B$ are sections of $\Gamma\big(\Hom(\mathcal{N},\mathcal{S})\big)$ and $\Gamma\big(\Hom(\mathcal{T}\otimes\mathcal{T},\mathcal{N})\big)$ respectively (in particular they are tensors), and they also are related by the following:
\begin{equation}\label{eq:secondFundamentalFormsRelation}
\la A^w(x),y\ra = \la w,B(x,y)\ra.
\end{equation}
As an immediate consequence, on finds
\begin{equation}\label{eq:NormAisNormB}
\|A\| = \|B\|.
\end{equation}
\begin{definition}[mean curvature $H$]\label{def:meanCurvature}
For immersion $u:S\to M$, let $\{e_i\}_{i=1}^{\dim S}$ be an orthonormal frame for $\mathcal{T}_\zeta$.  Define $H_\nu(\zeta)\in \mathcal{N}_\zeta$ by
\begin{equation*}
H_\nu(\zeta):=\tr_S B_\zeta = \sum_{i=1}^{\dim S} B_\zeta(e_i,e_i).
\end{equation*}
It is straight-forward to show that $H$ does not depend on the choices of $\{e_i\}$, and thus we define the mean curvature vector field $H_\nu\in\Gamma(\mathcal{N})$ by $H_\nu:= \tr_S B$.

\end{definition}

We are now ready to derive a mean curvature equation for $J$-curves.
\begin{lemma}\label{lem:InhomogeneousMeanCurvatureEq}
Let $(M,J,g)$ be an almost Hermitian manifold, and define $Q$, a $(1,2)$-tensor on $M$, by
\begin{equation}\label{eq:DefQ}
Q(X,Y):= J(\nabla_{X}J)Y.
\end{equation}
Then immersed $J$-curves satisfy the following inhomogeneous mean curvature equation:
\begin{equation*}
H_\nu=\tr_S Q:=Q(e_1,e_1) + Q(e_2,e_2),
\end{equation*}
where $\{e_1,e_2\}$ is any locally defined orthonormal frame field of $\mathcal{T}$.
\end{lemma}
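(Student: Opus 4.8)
The plan is to compute the mean curvature vector $H_\nu = B(e_1,e_1) + B(e_2,e_2)$ directly using the fact that a $J$-curve has a $J$-invariant tangent space, so that $\{e_1, e_2\} = \{e_1, Je_1\}$ is an orthonormal frame of $\mathcal{T}$ for any unit tangent vector $e_1$. First I would fix such a frame, set $e_2 = Je_1$, and write $B(x,y) = (\nabla_x y)^\bot$. The key starting observation is that since $\mathcal{T}$ is $J$-invariant and $J$ is an isometry, the normal projection $(\,\cdot\,)^\bot$ commutes with $J$ in the sense that $(JV)^\bot = J(V^\bot)$ for $V \in u^*TM$; equivalently, $J$ preserves both $\mathcal{T}$ and $\mathcal{N}$. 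This lets me trade normal projections for applications of $J$ at convenient moments.

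The heart of the computation is to expand $B(e_2,e_2) = (\nabla_{Je_1}(Je_1))^\bot$ using the Leibniz rule for $\nabla$: $\nabla_{Je_1}(Je_1) = (\nabla_{Je_1}J)e_1 + J\nabla_{Je_1}e_1$. Taking the normal component and using that $J$ commutes with $(\,\cdot\,)^\bot$, the second term becomes $J(\nabla_{Je_1}e_1)^\bot = J\,B(Je_1, e_1) = J\,B(e_2,e_1)$. So $B(e_2,e_2) = ((\nabla_{Je_1}J)e_1)^\bot + J\,B(e_1,e_2)$, using symmetry of $B$. To handle $B(e_1,e_1)$ I would instead exploit holomorphicity more directly: since $e_2 = Je_1$, differentiate in the $e_1$ direction, $\nabla_{e_1}(Je_1) = (\nabla_{e_1}J)e_1 + J\nabla_{e_1}e_1$; but $\nabla_{e_1}(Je_1) = \nabla_{e_1}e_2$, whose normal part is $B(e_1,e_2)$. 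Taking normal components gives $B(e_1,e_2) = ((\nabla_{e_1}J)e_1)^\bot + J\,B(e_1,e_1)$, i.e. $J\,B(e_1,e_1) = B(e_1,e_2) - ((\nabla_{e_1}J)e_1)^\bot$, so $B(e_1,e_1) = -J\,B(e_1,e_2) - J((\nabla_{e_1}J)e_1)^\bot$, where I used $J^2 = -\mathds{1}$. Adding the two expressions, the $\pm J\,B(e_1,e_2)$ terms cancel and I am left with
\begin{equation*}
H_\nu = B(e_1,e_1) + B(e_2,e_2) = \big((\nabla_{Je_1}J)e_1\big)^\bot - J\big((\nabla_{e_1}J)e_1\big)^\bot .
\end{equation*}
It remains to identify the right-hand side with $Q(e_1,e_1) + Q(e_2,e_2) = J(\nabla_{e_1}J)e_1 + J(\nabla_{Je_1}J)(Je_1)$. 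Here I would use Lemma \ref{Lem:ComputationalAlmostHermition}: equation \eqref{eq:nabJAntiCommutesWithJ} gives $(\nabla_{Je_1}J)(Je_1) = -J(\nabla_{Je_1}J)e_1$, so $Q(e_2,e_2) = J(\nabla_{e_1}J)e_1 \cdot(\ldots)$ — more precisely $J(\nabla_{Je_1}J)(Je_1) = -J^2(\nabla_{Je_1}J)e_1 = (\nabla_{Je_1}J)e_1$. Thus $\tr_S Q = J(\nabla_{e_1}J)e_1 + (\nabla_{Je_1}J)e_1$. Finally, equation \eqref{eq:nabJImageComplexOrthogToDomain} says $(\nabla_Z J)e_1 \perp \mathrm{span}\{e_1, Je_1\} = \mathcal{T}$, so $(\nabla_Z J)e_1$ is already normal and the projections $(\,\cdot\,)^\bot$ in the formula for $H_\nu$ are redundant; matching terms then gives $H_\nu = \tr_S Q$.

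The main obstacle I anticipate is bookkeeping rather than conceptual: keeping the normal-versus-tangential decompositions straight and correctly invoking $J$-invariance of $\mathcal{T}$ (which is exactly the content of $\bar\partial_J u \equiv 0$ for a generally immersed curve — away from $\mathcal{Z}_u$, $(JTu)^\bot = 0$ forces $JTS = TS$). I would want to be careful that the frame $\{e_1, Je_1\}$ is genuinely orthonormal, which uses $g(Je_1, Je_1) = g(e_1,e_1) = 1$ and $g(e_1, Je_1) = -g(Je_1, e_1) = 0$ from $J$ being an isometry; and I should note the identity is frame-independent by Definition \ref{def:meanCurvature}, so verifying it for one convenient frame suffices. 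One should also remark that, although $J$-curves are only generally immersed in general, the equation is asserted for \emph{immersed} $J$-curves, so $\mathcal{T}, \mathcal{N}$ and hence $B, H_\nu$ are globally well-defined and no removable-singularity argument is needed here.
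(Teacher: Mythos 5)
Your overall strategy is essentially the paper's: take $e_2=Je_1$, expand the two terms of $H_\nu$ via the Leibniz rule for $\nabla J$, cancel the cross terms, and then identify what remains with $\tr_S Q$ using equations \eqref{eq:nabJAntiCommutesWithJ} and \eqref{eq:nabJImageComplexOrthogToDomain}. The only organizational difference is that you route the cancellation through the symmetry $B(e_1,e_2)=B(e_2,e_1)$, whereas the paper uses the torsion-free property directly (the bracket $[e,Je]$ is tangent, hence killed by $(\cdot)^\bot$); those two are interchangeable.

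There is, however, a sign slip in solving for $B(e_1,e_1)$. From
\begin{equation*}
J\,B(e_1,e_1)=B(e_1,e_2)-\big((\nabla_{e_1}J)e_1\big)^\bot,
\end{equation*}
applying $-J$ to both sides gives
\begin{equation*}
B(e_1,e_1)=-J\,B(e_1,e_2)+J\big((\nabla_{e_1}J)e_1\big)^\bot,
\end{equation*}
not $-J\big((\nabla_{e_1}J)e_1\big)^\bot$. Adding $B(e_2,e_2)=\big((\nabla_{Je_1}J)e_1\big)^\bot+J\,B(e_1,e_2)$ then yields
\begin{equation*}
H_\nu=\big((\nabla_{Je_1}J)e_1\big)^\bot+J\big((\nabla_{e_1}J)e_1\big)^\bot,
\end{equation*}
which after dropping the (redundant) projections does match $\tr_S Q=(\nabla_{Je_1}J)e_1+J(\nabla_{e_1}J)e_1$. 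As you wrote it, the second term carries the wrong sign, so it does \emph{not} equal the $\tr_S Q$ you correctly compute one line later; the final ``matching terms'' step would fail unless the sign is corrected. Everything else — $J$-invariance of $\mathcal{T}$ and $\mathcal{N}$, commutation of $J$ with $(\cdot)^\bot$, use of \eqref{eq:nabJAntiCommutesWithJ} and \eqref{eq:nabJImageComplexOrthogToDomain}, and the frame-independence remark — is correct.
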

\begin{proof}
Without loss of generality, we assume $e_2 = J e_1$, and for clarity we define $e:=e_1$.  Then we compute
\begin{align*}
H_\nu&:= \big(\nabla_{e_1} e_1 + \nabla_{e_2} {e_2}\big)^\bot\; = \;\big(\nabla_{e} e + \nabla_{Je} Je\big)^\bot \;=\; \big(\nabla_e e + (\nabla_{Je} J )e + J\nabla_{Je} e \big) ^\bot\\
&= \big(\nabla_e e + (\nabla_{Je} J )e + J\nabla_{e} (Je) + J[e,Je] \big) ^\bot\\
&= \big(\nabla_e e + (\nabla_{Je} J )e + J(\nabla_{e} J)e - \nabla_e e\big) ^\bot\;\;= \;\;\big( (\nabla_{Je} J )e + J(\nabla_{e} J)e\big) ^\bot\\
&=(\nabla_{Je} J )e + J(\nabla_{e} J)e = \; J(\nabla_{Je} J )Je + J(\nabla_{e} J)e\\
&=\tr_S Q,
\end{align*}
where on the second to last line we have made use of both  (\ref{eq:nabJAntiCommutesWithJ}) and (\ref{eq:nabJImageComplexOrthogToDomain}).
\end{proof}

Recall that if the almost symplectic form $\omega$ is closed, then $\nab_{JX}J = -J\nab_X J$ (c.f. \cite{MdSd04} Appendix C.7) in which case $J$-curves have vanishing mean curvature; i.e. they are minimal surfaces.  Furthermore, it is straight-forward to show that $\|Q\|\leq \sup_M \|\nab J\|$, so that $J$-curves have a priori bounded mean curvature in any compact almost Hermitian manifold.

\section{Monotonicity}\label{sec:Monotonicity}
The purpose of this section is to establish extrinsic monotonicity results for non-negative functions $f:S\to \R$ which satisfy $\Delta_S f\geq - \lambda b^2 f$ on $J$-curves $u:S\to M$.  The approach below follows the basic argument for minimal surfaces in three-dimensional Riemannian manifolds given in \cite{CtMw99}, but with necessary generalizations for $J$-curves in almost Hermitian manifolds of arbitrary (even) codimension. The interior case is established in Section \ref{sec:Monotonicity0}, and Lagrangian boundary case is established in Section \ref{sec:Monotonicty1and2}.  Before deriving these results however, we recall the definition and properties of the Laplace-Beltrami operator $\Delta$ in Section \ref{sec:LaplaceBeltrami}.  The most important result in that section is Lemma \ref{lem:LaplacianEstimate}, in which establishes a useful estimate for $\Delta \beta^2$ where $\beta:\mathcal{O}\subset M \to \R$ is the distance to a point; that is $\beta(q):=\dist(p,q)$ for some fixed $p\in M$.

\subsection{The Laplace-Beltrami operator}\label{sec:LaplaceBeltrami}
Here we recall $\Delta_M$, and provide some elementary estimates which will be useful later on.  Indeed, recall that for an $m$-dimensional Riemannian manifold $M$, one can define the Laplace-Beltrami operator $\Delta_M$ by the following:
\begin{equation*}
\Delta_M f:= \Div_M \nab_M f,
\end{equation*}
where the gradient and divergence are given respectively by
\begin{equation*}
\nab_M f:= \sum_{i=1}^m(\nab_{e_i} f)e_i\qquad\text{and}\qquad\Div_M X := \sum_{i=1}^m\la\nab_{e_i} X,e_i\ra;
\end{equation*}
here $e_1,\ldots,e_m$ is any locally defined orthonormal frame field of $TM$.  We take a moment to point out a subtlety in our notation, namely that $\nab_M$ or $\nab_S$ denote gradients on Riemannian manifolds $M$ or $S$, whereas $\nab$ denotes a covariant derivative with respect to a Levi-Civita connection.

Given an immersion $u:S\to M$ with $\dim S = n$, and a function $f:M\to \mathbb{R}$, one can compute the $S$-gradient of the restriction\footnote{Recall our abuse of notation discussed in Remark \ref{rem:AbuseOfNotation}.} of $f$ to $S$:
\begin{equation*}
\nab_S f = {\textstyle \sum_{i=1}^n}(\nab_{e_i} f)e_i = (\nab_M f)^\top;
\end{equation*}
Here $e_1,\ldots,e_n$ is an orthonormal frame of the tangent plane $\mathcal{T}$. Similarly, for a vector field $X\subset \Gamma(TM)$ along $M$, one can also compute the divergence of $X$ along a submanifold $\Sigma\subset M$ by the following:
\begin{align*}
\Div_\Sigma(X)&:={\textstyle\sum_{i=1}^n} \la \nab_{e_i} X, e_i\ra\\
&={\textstyle \sum_{i=1}^n} \la \nab_{e_i} X^\top, e_i\ra + {\textstyle \sum_{i=1}^n} \la \nab_{e_i} X^\bot, e_i\ra\\
&=\Div_S(X^\top) - \la X^\bot, {\textstyle \sum_{i=1}^n} \nab_{e_i}e_i\ra\\
&=\Div_S(X^\top) - \la X^\bot, H_\nu\ra.
\end{align*}
Given a function $f:M\to \mathbb{R}$, and a smooth immersion $u:S\to M$, it will be convenient to have a formula for $\Delta_S f$, where we abuse notation as above by letting $f$ denote the restriction $f\circ u: S\to\mathbb{R}$.  To that end, we fix a point $\zeta\in S$, and let $e_1, \ldots,e_n$ be a locally defined orthonormal frame field of $\mathcal{T}$ along $S$ for which $\nabla_{e_i} e_j\big|_{\zeta}=(\nabla_{e_i}e_j)^\bot\big|_{\zeta}$ for $i,j\in \{1,\ldots, n\}$. We then extend $e_1,\ldots,e_n$ to a locally defined orthonormal frame field $e_1,\ldots,e_m$ of $TM$ in a neighborhood of $u(\zeta)$.  Letting $\Sigma$ be the image of $S$ by $u$, we then compute as follows.
\begin{align*}
\Delta_S f &= \Div_S \nab_S f\\
&= \Div_S(\nab_M f^\top)\\
&=\Div_{\Sigma} \nab_M f + \la \nab_M f ^\bot, H_\nu\ra\\
&=\la \nab_M f ^\bot, H_\nu \ra + {\textstyle \sum_{j=1}^n \sum_{i=1}^m} \la \nab_{e_j}\big((\nab_{e_i}f)e_i\big), e_j\ra\\
&=\la \nab_M f ^\bot, H_\nu \ra + {\textstyle\sum_{i=1}^n}  \nab_{e_i}(\nab_{e_i}f) +{\textstyle \sum_{j=1}^n \sum_{i=1}^m} (\nab_{e_i}f)\la \nab_{e_j}e_i, e_j\ra.
\end{align*}
Evaluating at $\zeta\in S$ we find that
\begin{align*}
(\Delta_S f)(\zeta)&={\textstyle\sum_{i=1}^n}  \nab_{e_i}(\nab_{e_i}f)\\
&=\tr_S \Hess_M^f+{\textstyle \sum_{i=1}^n} \nabla_{(\nabla_{e_j e_j})^\bot}f\\
&=\tr_S \Hess_M^f + \la \nabla_M f, H_\nu\ra;
\end{align*}
here $\Hess_M^f$ denotes the Hessian of $f:M\to \R$ and is given by
\begin{align*}
\Hess_M^f(X,Y):&= X(Y(f))-(\nab_X Y)f\\
&=\nab_X(\nab_Y f) - \nab_{\nab_X Y}f.
\end{align*}
Consequently, for any immersed $J$-curve $u:S\to M$ with image $\Sigma$ and vector field $X\subset \Gamma(TM)$, we have
\begin{align}
\Div_\Sigma (X) &= \Div_S(X^\top) - \la X^\bot, \tr_S Q\ra\label{eq:JCurveDivergence}\\
\Delta_S f &= \tr_S \Hess_M^f + \la \nabla_M f, \tr_S  Q\ra.\label{eq:JCurveLaplacian}
\end{align}

Next, we estimate $\Delta_S \beta^2$ along a $J$-curve where $\beta:M\to \R$ is the distance from a specified point.  To that end, it will be convenient to define the injectivity radius of $M$ at the point $p$ by $\inj(p)$, and to denote the sectional curvature of a Riemannian manifold $(M,g)$ at the point $p$ as
\begin{equation*}
K_{sec}(X,Y):=\frac{\la R(X,Y)Y,X\ra}{\|X\|^2\|Y\|^2 - \la X, Y\ra^2},
\end{equation*}
where $R$ is the Riemann curvature tensor given by
\begin{equation}\label{eq:RiemannCurvatureTensor}
R(X,Y)Z:=\nab_X \nab_Y Z -\nab_Y \nab_X Z - \nab_{\nab_X Y} Z.
\end{equation}
Given $p\in M$, it will be convenient to define the continuous function $p\mapsto |K_{sec}(p)|$ by the following
\begin{equation}\label{eq:DefOfAbsKsec}
|K_{sec}(p)|:=\sup \{|K_{sec}(X,Y)| : X,Y\in T_p M\text{ and } X\wedge Y\neq 0 \}.
\end{equation}
We now provide the desired estimate.

\begin{lemma}\label{lem:LaplacianEstimate}
Fix a constant $C>0$, and let $(M,J,g)$ be a  compact almost Hermitian manifold, possibly with boundary, for which
\begin{equation*}
\sup_{p\in M} |K_{sec}(p)| \leq {\textstyle\frac{1}{4}}C^2\qquad\text{and}\qquad \sup_{\substack{e\in TM\\\|e\|=1}}\|Q(e,e)+Q(Je,Je) \| \leq {\textstyle\frac{1}{2}}C;
\end{equation*}
Fix $p\in M$ and define the function $\beta:M\to\R$ by $\beta(q)=\dist(p,q)$.  Then for any $J$-curve $u:S\to \mathcal{B}_r(p)$ where
$r\leq {\textstyle \frac{1}{2}}\min\big(\inj(p), C^{-1}\big)$, the following point-wise inequality holds:
\begin{equation}\label{eq:LaplaceEstimate}
\big|\Delta_S \beta^2 -4\big| \leq 4 C\beta;
\end{equation}
here we have abused notation in inequality (\ref{eq:LaplaceEstimate}) by letting $\beta$ denote the restriction, $\beta\circ u$, to the given $J$-curve.
\end{lemma}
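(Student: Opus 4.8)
The plan is to reduce the estimate on $\Delta_S \beta^2$ to two classical inputs via the $J$-curve Laplacian formula~\eqref{eq:JCurveLaplacian}. Since $\beta^2$ is smooth on the geodesic ball $\mathcal{B}_r(p)$ minus the point $p$ (and in fact extends smoothly across $p$ because $\beta^2 = \dist(p,\cdot)^2$), and since $r \le \tfrac12 \inj(p)$ guarantees we stay well inside the injectivity radius, we may apply~\eqref{eq:JCurveLaplacian} with $f = \beta^2$ to get
\begin{equation*}
\Delta_S \beta^2 = \tr_S \Hess_M^{\beta^2} + \la \nabla_M \beta^2, \tr_S Q\ra.
\end{equation*}
The first term I would control using the Hessian comparison theorem for the distance-squared function: under the sectional curvature bound $|K_{sec}| \le \tfrac14 C^2$, standard comparison (e.g. with the model spaces of constant curvature $\pm \tfrac14 C^2$) gives a two-sided bound on $\Hess_M^{\beta^2}$ as a bilinear form, of the form $\Hess_M^{\beta^2}(X,X) = 2\|X\|^2 + O(C^2\beta^2)\|X\|^2$ with explicit constants. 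Tracing over the two-dimensional tangent plane $\mathcal{T}$ then yields $|\tr_S \Hess_M^{\beta^2} - 4| \le (\text{const})\,C^2\beta^2$, which since $C\beta \le 1$ is bounded by a constant times $C\beta$; I would track the constants carefully so that this contributes at most, say, $2C\beta$.

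For the second term, I would use $\nabla_M \beta^2 = 2\beta\,\nabla_M\beta$ with $\|\nabla_M \beta\| = 1$ (unit speed radial field, valid inside the injectivity radius), so $\|\nabla_M \beta^2\| = 2\beta$. Combined with the hypothesis $\|\tr_S Q\| = \|Q(e,e) + Q(Je,Je)\| \le \tfrac12 C$ — here using that for a $J$-curve we may choose the orthonormal frame $\{e_1, e_2\} = \{e, Je\}$ of $\mathcal{T}$, as in Lemma~\ref{lem:InhomogeneousMeanCurvatureEq} — the Cauchy–Schwarz estimate gives $|\la \nabla_M \beta^2, \tr_S Q\ra| \le 2\beta \cdot \tfrac12 C = C\beta$. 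Adding the two contributions produces $|\Delta_S \beta^2 - 4| \le 2C\beta + C\beta$; to land exactly on $4C\beta$ I would simply sharpen (or not bother sharpening, and instead allow) the comparison constant — the precise split between the two terms is a bookkeeping matter, and the intended allocation is presumably $2C\beta$ from each, or $3C\beta + C\beta$, rearranged to fit.

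The main obstacle is making the Hessian comparison estimate quantitatively precise rather than merely asymptotic: I need an honest inequality valid for all $\beta \le r \le \tfrac12 C^{-1}$, not just a Taylor expansion near $p$. The cleanest route is to invoke the standard Hessian comparison theorem (as in do Carmo or Petersen) applied to $\beta$ directly — giving $\Hess_M^\beta \le \tfrac{C}{2}\coth(\tfrac{C}{2}\beta)(g - d\beta\otimes d\beta)$ from below-curvature-bound $-\tfrac14 C^2$ and a matching cotangent-type lower bound from the upper curvature bound $\tfrac14 C^2$ — and then convert to $\Hess_M^{\beta^2} = 2\beta\,\Hess_M^\beta + 2\,d\beta\otimes d\beta$. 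Tracing over $\mathcal{T}$ and using elementary bounds on $x\coth x$ and $x\cot x$ for $0 < x \le \tfrac12$ then yields the claim. I would also need to handle the point $\zeta$ with $u(\zeta) = p$ (where $\beta = 0$) either by the smooth extension of $\beta^2$ across $p$ or by a limiting argument, but there the inequality is trivially $|\Delta_S\beta^2 - 4| = 0 \le 0$ once one checks $\Hess_M^{\beta^2}|_p = 2g$.
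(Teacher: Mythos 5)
Your proposal is correct and follows essentially the same route as the paper. The paper applies \eqref{eq:JCurveLaplacian} to $\beta$ rather than $\beta^2$ and then uses the identity $\Delta_S\beta^2 = 2\beta\Delta_S\beta + 2\|\nab_S\beta\|^2$, but this is algebraically the same as your step $\Hess_M^{\beta^2} = 2\beta\Hess_M^\beta + 2\,d\beta\otimes d\beta$; both reduce to the same key comparison lemma (estimating $\beta\Hess_M^\beta(e,e) - \|\pi_\beta^\perp(e)\|^2$ against the constant-curvature space forms $\pm C^2/4$ via $x\coth x$ and $x\cot x$), and the bookkeeping lands on $3C\beta \le 4C\beta$ in both cases, so no sharpening is actually needed.
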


\begin{proof}
First observe that for any function $f:S\to\R$, it follows from the definition of $\Delta_S$ that $\Delta_S f^2 = 2 f \Delta_S f + 2\|\nab_S f\|^2$. Thus making use of (\ref{eq:JCurveLaplacian}), we find
\begin{align*}
\big|\Delta_S \beta^2 - 4\big|&=\big| 2\|\nab_S \beta\|^2 + 2 \beta\Delta_S \beta -4 \big|\\
&=\big| 2\|\nab_S \beta\|^2 + 2 \beta \tr_S \Hess_M^\beta +2\beta\la \nab_M \beta,\tr_S Q\ra -4 \big|.
\end{align*}
To further massage this equation, we observe that $\|\nab_M \beta\|\equiv 1$, and thus we may define the orthogonal projection
\begin{equation*}
\pi_\beta^\bot: TM\to TM\qquad\text{by}\qquad \pi_\beta^\bot (X) := X - \la X, \nab_M \beta\ra\nab_M\beta,
\end{equation*}
so that
\begin{equation*}
{\textstyle \sum_{i=1}^2} \|\pi_\beta^\bot  (e_i)\|^2 = {\textstyle \sum_{i=1}^2} \big(1-\la e_i,\nab_M \beta\ra^2\big)=2-\|\nab_S \beta\|^2.
\end{equation*}
Consequently,
\begin{align*}
|\Delta_S \beta^2 - 4|&=\big|2\beta \la \nab_M \beta, \tr_S Q\ra + 2\big(\beta\tr_S \Hess_M^\beta -  {\textstyle \sum_{i=1}^2}
\| \pi_\beta^\bot(e_i)\|^2\big) \big|\\
&\leq 2|\beta|\; \|\tr_S Q\| + 2{\textstyle \sum_{i=1}^2}\big|\beta\Hess_M^\beta(e_i,e_i)-\|\pi_\beta^\bot(e_i)\|^2\big|\\
&\leq \beta C + 2{\textstyle \sum_{i=1}^2}\big|\beta\Hess_M^\beta(e_i,e_i)-\|\pi_\beta^\bot(e_i)\|^2\big|.
\end{align*}
The proof of Lemma \ref{lem:LaplacianEstimate} then follows immediately from Lemma \ref{lem:HessianEstimate} below.
\end{proof}

\begin{lemma}\label{lem:HessianEstimate}
Let $(M,g)$ be a Riemannian manifold, $p\in M$, and let $\beta$, $C$, and $r$ be defined as in Lemma \ref{lem:LaplacianEstimate}. Then for each unit vector $e\in T_q M$ with $\beta(q)\leq r$, we have
\begin{equation}\label{eq:HessianEstimate}
\big|\beta \Hess_M^\beta(e,e)-\|\pi_\beta^\bot(e) \|^2 \big| \leq \beta C/2
\end{equation}
\end{lemma}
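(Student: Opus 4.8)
The plan is to compare the Hessian of the distance function $\beta$ to the model case of flat space via the standard second-variation/Jacobi-field estimate, using the curvature bound $|K_{sec}|\le \tfrac14 C^2$. First I would recall that $\Hess_M^\beta$ annihilates the radial direction: since $\|\nabla_M\beta\|\equiv 1$, differentiating gives $\Hess_M^\beta(\nabla_M\beta,\cdot)=0$, so only the component $\pi_\beta^\bot(e)$ of $e$ matters and we may assume $e\perp\nabla_M\beta$, i.e. $\|\pi_\beta^\bot(e)\|=1$, reducing the claim to $|\beta\,\Hess_M^\beta(e,e)-1|\le \beta C/2$ for unit $e$ tangent to the geodesic sphere of radius $\beta(q)$ through $q$. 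In the flat model one has exactly $\Hess^\beta(e,e)=1/\beta$, so the inequality says the Hessian of the distance function differs from its Euclidean value by at most $C/2$ on the ball of radius $r\le \tfrac12\min(\inj(p),C^{-1})$.

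The key step is the Hessian comparison theorem. Writing $K:=\tfrac14 C^2$ so that $-K\le K_{sec}\le K$ and $\sqrt K=C/2$, the standard comparison estimates (obtained by comparing the Riccati equation for the shape operator of geodesic spheres against the constant-curvature models $\pm K$) give, for $0<\beta(q)<\pi/\sqrt K$ and within the injectivity radius,
\begin{equation*}
\sqrt K\cot\!\big(\sqrt K\,\beta\big)\ \le\ \Hess_M^\beta(e,e)\ \le\ \sqrt K\coth\!\big(\sqrt K\,\beta\big)
\end{equation*}
for every unit $e\perp\nabla_M\beta$. Multiplying by $\beta$ and setting $x:=\sqrt K\,\beta=\tfrac12 C\beta\in(0,\tfrac12]$ (using $\beta\le r\le C^{-1}/2$), the claim becomes the elementary two-sided bound $|h(x)-1|\le x$ for $h(x)\in\{x\cot x,\ x\coth x\}$ on $0<x\le\tfrac12$. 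I would verify this by elementary calculus: $x\coth x = 1+\tfrac{x^2}{3}+O(x^4)$ is increasing and at $x=\tfrac12$ is well under $1+\tfrac12$, while $x\cot x = 1-\tfrac{x^2}{3}-\cdots$ is decreasing with $x\cot x\ge 1-x$ on $(0,\tfrac12]$ (indeed $1-x\cot x\le \tfrac{x^2}{3}\cdot\frac{1}{1-x^2/3}\le x$ for such $x$). Translating back through $x=\tfrac12 C\beta$ yields $|\beta\,\Hess_M^\beta(e,e)-1|\le \tfrac12 C\beta$, which is (\ref{eq:HessianEstimate}).

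The main obstacle is purely one of hypotheses and bookkeeping rather than deep content: one must ensure the radius restriction $r\le\tfrac12\min(\inj(p),C^{-1})$ is exactly what is needed so that (i) $\beta$ is smooth on $\mathcal{B}_r(p)\setminus\{p\}$ (no cut locus issues, handled by $r<\inj(p)$), and (ii) the comparison argument stays below the first conjugate radius of the model space with curvature $+K$, namely $\pi/\sqrt K=2\pi/C$, which is comfortably satisfied since $\sqrt K\,\beta\le\tfrac12$. A minor care point is the behavior as $q\to p$: there $\Hess_M^\beta$ blows up like $1/\beta$, but the quantity $\beta\,\Hess_M^\beta(e,e)$ extends continuously with value $\|\pi_\beta^\bot(e)\|^2$, so the estimate holds in the limit as well and no separate argument is needed at $p$. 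If one prefers to avoid quoting the Riccati-comparison form of the Hessian comparison theorem, an equivalent route is to write $\Hess_M^\beta(e,e)$ in terms of a Jacobi field $J(t)$ along the minimizing geodesic from $p$ to $q$ with $J(0)=0$, $J(\beta)=e$, via $\Hess_M^\beta(e,e)=\langle J'(\beta),J(\beta)\rangle$, and then bound $|J|$ and $|J'|$ above and below by solving the scalar ODEs $\ddot\varphi\pm K\varphi=0$; this gives the same constants.
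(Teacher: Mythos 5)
Your proposal is correct and follows essentially the same route as the paper: reduce to $e\perp\nabla_M\beta$ since $\Hess_M^\beta(\nabla_M\beta,\cdot)=0$, invoke the Hessian comparison theorem against the constant-curvature model spaces with $\kappa=\pm C^2/4$ to trap $\Hess_M^\beta(e,e)$ between $\tfrac{C}{2}\cot(\tfrac{C\beta}{2})$ and $\tfrac{C}{2}\coth(\tfrac{C\beta}{2})$, and finish with elementary bounds on $x\cot x$ and $x\coth x$ for small $x$. The only cosmetic difference is that the paper derives the model Hessians explicitly from the Jacobi-field integral formula rather than quoting the Riccati-comparison form, and bounds $x\coth x-1\le\cosh x-1\le x$ in place of your Taylor-series estimate; both close the argument in the same way.
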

\begin{proof}
We begin by noting that $\Hess_M^\beta(\nab_M \beta, \cdot )\equiv 0$, so it is sufficient to prove the above result for the $e\in T_qM$ which are perpendicular to $\nab_M \beta$. Next we recall that the Hessian comparison theorem (c.f. Theorem 1.1 in \cite{SrYst94}) guarantees that if for $i=1,2$, the $(M_i,g_i)$ are $m$-dimensional Riemannian manifolds, and $p_i\in M_i$, and $\beta_i(q_i):=\dist_{g_i}(q_i,p_i)$, and
\begin{equation*}
\inf_{q_1\in M_1}K_{sec}^{g_1}(q_1)\geq \sup_{q_2 \in M_2} K_{sec}^{g_2}(q_2),
\end{equation*}
then for any unit vectors $e_i\in T_{q_i} M_i$ with $\beta_1(q_1)=\beta_2(q_2)< \min\big(\inj(p_1),\inj(p_2)\big)$ and $\la X_i, \nab_{M_i} \beta_i\ra=0$, the following holds:
\begin{equation*}
\Hess_{M_1}^{\beta_1}(e_1,e_1) \leq \Hess_{M_2}^{\beta_2}(e_2,e_2).
\end{equation*}
Consequently
\begin{equation*}
\Hess_{M_1}^{\beta_1}(e_1,e_1)  \leq \Hess_M^\beta(e,e)  \leq \Hess_{M_2}^{\beta_2}(e_2,e_2) ,
\end{equation*}
where $(M_i,g_i)$ are $m$-dimensional Riemannian manifolds with constant sectional curvatures $K_{sec}^{g_1}\equiv C^2/4$ and $K_{sec}^{g_2}\equiv -C^2/4$ (i.e. space forms of curvature $\pm C^2/4$ respectively), and $\beta_i$ and $e_i$ are as in the hypotheses of the Hessian comparison theorem.  Thus to finish the proof of Lemma \ref{lem:HessianEstimate}, it is sufficient to prove
\begin{equation}\label{eq:ConstSecCurveHessianEstimates}
-\beta_1 C/2\leq\beta_1 \Hess_{M_1}^{\beta_1}(e_1,e_1) - 1 \qquad\text{and}\qquad\beta_2 \Hess_{M_2}^{\beta_2}(e_2,e_2) - 1 \leq \beta_2 C/2.
\end{equation}
To that end, recall (c.f. \cite{SrYst94}, Chapter 1) that for any Riemannian manifold $(M,g)$ with $p,q\in M$, $\dist(p,q) < \inj(p)$, $\gamma:[0,s]\to M$ a unit speed geodesic with $\gamma(0)=p$ and $\gamma(s)=q$, $X\in T_q M$ and $\la X, \dot{\gamma}\ra =0 $ we have
\begin{equation}\label{eq:HessianAsIntegral}
\Hess_M^\beta(X,X)=\int_0^s \big( \|\nab_{\dot{\gamma}} \tilde{X}\|^2-\la R(\tilde{X},\dot{\gamma})\dot\gamma,\tilde{X}\ra \big) dt,
\end{equation}
where $R$ is defined as in (\ref{eq:RiemannCurvatureTensor}) and $\tilde{X}$ is a Jacobi field (i.e. a solution to the following differential equation)
\begin{equation}\label{eq:JacobiFields}
\nab_{\dot{\gamma}}\nab_{\dot{\gamma}} \tilde{X} + R(\tilde{X},\dot{\gamma})\dot{\gamma} = 0\qquad\text{and}\qquad \tilde{X}(p) = 0\;\;,\;\;\tilde{X}(q)=X.
\end{equation}
Next, recall (c.f. \cite{KsNk96a}) that for a space form $(M_\kappa, g_\kappa)$ of curvature $\kappa$, we have
$R(X,Y)Z=\kappa\big( \la Y,Z\ra X - \la Z, X\ra Y\big)$,
in which case solutions to the Jacobi equation (\ref{eq:JacobiFields}) have the form $\tilde{X}_{\gamma(t)}= f(t) X_{\gamma(t)}$, where $\nab_{\dot{\gamma}} X_{\gamma(\cdot)}\equiv 0$, $X_{\gamma(s)}=X$, $f''(t)+\kappa f(t)=0$, $f(0)=0$, and $f(s)=1$.  Observe that
\begin{equation*}
f(t) = \frac{\sin(\sqrt{\kappa} t)}{\sin(\sqrt{\kappa} s)}\;\;\text{if}\;\; \kappa>0\qquad\text{and}\qquad f(t)=\frac{\sinh(\sqrt{|\kappa|} t)}{\sinh(\sqrt{|\kappa|} s)}\;\;\text{if}\;\; \kappa<0.
\end{equation*}
Combining this with (\ref{eq:HessianAsIntegral}) for the space forms $(M_i,g_i)$ with curvatures $\pm C^2/4$ as defined above, we have
\begin{equation*}
\Hess_{M_1}^{\beta_1}(e_1,e_1)= {\textstyle \frac{1}{2}}C \cot (C \beta_1/2)\qquad\text{and}\qquad \Hess_{M_2}^{\beta_2}(e_2,e_2)= {\textstyle \frac{1}{2}}C \coth(C \beta_2/2),
\end{equation*}
and thus for $C\beta_i \leq 1$ we find
\begin{equation*}
\beta_2\Hess_{M_2}^{\beta_2}(e_2,e_2) - 1 \leq {\textstyle \frac{1}{2}}C \beta_2 \coth(C\beta_2/2) -1 \leq \cosh(C \beta_2/2) - 1 \leq {\textstyle \frac{1}{2}}C\beta_2,
\end{equation*}
and
\begin{equation*}
\beta_1 \Hess_{M_1}^{\beta_1}(e_1,e_1) - 1 \geq {\textstyle \frac{1}{2}}C \beta_1 \cot(C \beta_1/2) -1 \geq -{\textstyle \frac{1}{2}}C\beta_1.
\end{equation*}
Thus we have proved (\ref{eq:ConstSecCurveHessianEstimates}), which also completes the proof of Lemma \ref{lem:HessianEstimate}.

\end{proof}

\subsection{Interior case}\label{sec:Monotonicity0}
Here we prove the aforementioned monotonicity results for $J$-curves of Type 0.  We begin with a computational lemma, which will be convenient later.
\begin{lemma}\label{lem:IntegrationByParts0}
Let $(M,J,g)$ be a compact almost Hermitian manifold of dimension $2n$, and possibly with boundary.  Let $u:S\to M$ be a $J$-curve of Type 0, fix $\zeta_0\in S\setminus \partial S$, let $f\in C^2(S, \R)$,  let $\beta:S\to \R$ be defined by
\begin{equation}\label{eq:DefOfBeta}
\beta(\zeta):=\dist_g\big(u(\zeta),u(\zeta_0)\big),
\end{equation}
and define the set
$S_r:=\beta^{-1}\big([0,r]\big)$.  Then for every regular value $r\leq {\textstyle \frac{1}{2}}\inj(p)$ of $\beta$, the following holds
\begin{equation*}
\int_{S_r}\!\!f\Delta_S \beta^{2}
=\int_{S_r}\!\! (\beta^2-r^2)\Delta_S f
+2r\int_{\partial S_r}\!\!\!\!  f\| \nabla_S \beta\|.
\end{equation*}
\end{lemma}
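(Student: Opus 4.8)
The statement is an integration-by-parts (Green's) identity on the sublevel set $S_r$, so the plan is to apply the divergence theorem twice, once in each ``direction,'' and combine. First I would record the elementary pointwise identity $\Div_S(h\,\nabla_S g) = h\,\Delta_S g + \la \nabla_S h, \nabla_S g\ra$ valid for any $h,g\in C^2(S,\R)$ on the regular set $S\setminus\mathcal{Z}_u$ (the critical set is discrete and of measure zero, so it does not affect the integrals). Applying this with $g=\beta^2$ and first $h=f$, then $h=\beta^2-r^2$, and subtracting, the symmetric term $\la\nabla_S f,\nabla_S\beta^2\ra$ cancels and one is left with
\begin{equation*}
f\,\Delta_S\beta^2 - (\beta^2-r^2)\,\Delta_S f = \Div_S\big(f\,\nabla_S\beta^2\big) - \Div_S\big((\beta^2-r^2)\,\nabla_S f\big).
\end{equation*}
Integrating over $S_r$ and applying the divergence theorem on the manifold-with-boundary $S_r$ reduces everything to a boundary integral over $\partial S_r$.

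Next I would evaluate the boundary terms. Since $r$ is a regular value of $\beta$, $\partial S_r = \beta^{-1}(r)$ is a smooth one-dimensional submanifold of $S\setminus\mathcal{Z}_u$ (after possibly discarding the null set where $u$ fails to be an immersion, which by general immersedness meets the compact level set in finitely many points), and the outward unit conormal to $S_r$ along $\partial S_r$ is $\eta = \nabla_S\beta / \|\nabla_S\beta\|$. On this level set $\beta^2 - r^2 = 0$, so the second boundary integral $\int_{\partial S_r}(\beta^2-r^2)\la\nabla_S f,\eta\ra$ vanishes identically. For the first, $f\la\nabla_S\beta^2,\eta\ra = f\,\la 2\beta\,\nabla_S\beta, \nabla_S\beta/\|\nabla_S\beta\|\ra = 2\beta\, f\,\|\nabla_S\beta\| = 2r\,f\,\|\nabla_S\beta\|$ on $\partial S_r$. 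Assembling these gives exactly
\begin{equation*}
\int_{S_r} f\,\Delta_S\beta^2 - \int_{S_r}(\beta^2-r^2)\,\Delta_S f = 2r\int_{\partial S_r} f\,\|\nabla_S\beta\|,
\end{equation*}
which is the claimed identity.

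The main obstacle is purely technical rather than conceptual: justifying the divergence theorem on $S_r$ when $S_r$ may contain critical points of $u$ and may a priori have complicated topology or a non-smooth structure inherited from the abuse of notation identifying $TS$ with $\mathcal{T}$. I would handle this by working on the open dense regular set $S\setminus\mathcal{Z}_u$, where $u^*g$ is a genuine Riemannian metric and $\beta = \dist_g(u(\cdot),u(\zeta_0))$ is Lipschitz and smooth away from the $u$-preimage of the cut locus of $u(\zeta_0)$; since $r \le \tfrac12\inj(p)$, no cut-locus issues arise within $S_r$. The set $\mathcal{Z}_u$ being discrete and the integrands being bounded (as $f\in C^2$, $\beta$ Lipschitz, and $\Delta_S\beta^2$ controlled e.g. by Lemma~\ref{lem:LaplacianEstimate}), one may excise small disks around the finitely many critical points in $S_r$, apply the classical divergence theorem on the resulting smooth compact surface-with-boundary, and pass to the limit; the boundary contributions from the excised disks vanish as their radii shrink because the conformal structure extends smoothly across $\mathcal{Z}_u$ and the area of small intrinsic disks around a critical point goes to zero. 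Sard's theorem guarantees regular values $r$ are dense, which is all that is needed for the stated conclusion.
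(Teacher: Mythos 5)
Your proof is correct and takes essentially the same route as the paper: both are two integrations by parts combined with the fact that $\beta\equiv r$ on $\partial S_r$ and that $\nabla_S\beta$ is the outward normal direction there. Your packaging of the two integrations into the single divergence identity $\Div_S(f\,\nabla_S\beta^2)-\Div_S\big((\beta^2-r^2)\,\nabla_S f\big)=f\,\Delta_S\beta^2-(\beta^2-r^2)\,\Delta_S f$ is just a cosmetic reorganization of the paper's ``integrate by parts, pull out $r^2$, integrate by parts again'' (note the small wording slip: your second application of the Leibniz identity should read $h=\beta^2-r^2$, $g=f$, as the displayed equation makes clear), and your handling of the critical set $\mathcal{Z}_u$ by excision is a reasonable way to make precise a point the paper leaves implicit.
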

\begin{proof}
Let $\nu\in \Gamma(\mathcal{T})\big|_{\partial S_r}$ be an outward pointing unit normal vector.  Then
\begin{align*}
\int_{S_r}\!\!f\Delta_S \beta^{2}&=\int_{S_r}\!\! \beta^2\Delta_S f-\int_{\partial S_{r}}\!\!\!\!\beta^{2}\la
\nabla_S f,\nu\ra
+\int_{\partial S_{r}}\!\!\!\! 2\beta f\la \nabla_S \beta,\nu \ra\\
&=\int_{S_r}\!\! \beta^2\Delta_S f-r^2\int_{ S_r}\!\!\!\!
\Delta_S f
+2r\int_{\partial S_r}\!\!\!\!  f\| \nabla_S \beta\|,
\end{align*}
where to obtain equality of the second terms on the right hand side we have made use of the fact that $\beta\big|_{\partial S_r}\equiv r$ so that $\beta^2$ pulls out of the integrand, and we then integrated by parts once more. Equality of the third terms follows since $\nab_S \beta\big|_{\partial S_r} = h \nu$ for some positive function $h:\partial S_r \to \R$.  The desired result is then immediate.
\end{proof}

\begin{theorem}[Monotonicity - Type 0]\label{thm:MonotonicityType0}
Fix a constant $C>0$, and let $(M,J,g)$ be a  compact almost Hermitian manifold, possibly with boundary, for which
\begin{equation*}
\sup_{p\in M} |K_{sec}(p)| \leq {\textstyle\frac{1}{4}}C^2\qquad\text{and}\qquad \sup_{\substack{e\in TM\\\|e\|=1}}\|Q(e,e)+Q(Je,Je) \| \leq {\textstyle\frac{1}{2}}C;
\end{equation*}
here $|K_{sec}(p)|$ is defined as in (\ref{eq:DefOfAbsKsec}), and $Q$ is the tensor defined in Lemma \ref{lem:InhomogeneousMeanCurvatureEq}.  Let $u:S\to M$ be a $J$-curve of Type 0, fix $\zeta_0\in S\setminus \partial S$, define $\beta$ as above so that $\beta(\zeta_0)=0$, fix positive constants
$\lambda$ and $b$ so that $b<\frac{1}{2}\min\big(\inj\big(u(\zeta_0)\big),C^{-1}\big)$, and let $f\in C^2(S,\R)$  satisfy $f\geq 0$ and $\Delta_S f \geq -\lambda b^2 f$. Then for any $a\in [0,b]$ the following holds
\begin{equation*}
e^{\frac{\lambda a}{2b}+2 Ca}a^{-2 }\int_{S_a}f\leq e^{\frac{\lambda}{2}+2 Cb}b^{-2
}\int_{S_b}f,
\end{equation*}
where $S_r := \beta^{-1}\big([0,r]\big)$.
\end{theorem}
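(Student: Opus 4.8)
The plan is to reduce the statement to a single ordinary differential inequality for the function $I(r):=\int_{S_r}f$ and then to integrate it against an exponential weight. Two of the ingredients are already in hand. First, rearranging Lemma~\ref{lem:IntegrationByParts0} gives, for every regular value $r\in(0,b]$ of $\beta$ (note $b<\tfrac12\inj(u(\zeta_0))$, so such $r$ are admissible),
\[
2r\int_{\partial S_r}f\,\|\nabla_S\beta\|=\int_{S_r}f\,\Delta_S\beta^2+\int_{S_r}(r^2-\beta^2)\,\Delta_S f .
\]
Second, since $b<\tfrac12\min\big(\inj(u(\zeta_0)),C^{-1}\big)$, Lemma~\ref{lem:LaplacianEstimate} applies on $S_b$ and yields the pointwise bound $\Delta_S\beta^2\ge 4-4C\beta$ there. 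To these I would add the coarea formula $\frac{d}{dr}I(r)=\int_{\partial S_r}f/\|\nabla_S\beta\|$, valid at regular values, together with the elementary fact $\|\nabla_S\beta\|=\|(\nabla_M\beta)^\top\|\le\|\nabla_M\beta\|=1$, which gives $f\,\|\nabla_S\beta\|\le f/\|\nabla_S\beta\|$ on $\partial S_r$ and hence $2r\int_{\partial S_r}f\,\|\nabla_S\beta\|\le 2r\,I'(r)$.

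Next I would bound the right-hand side of the displayed identity from below. Using $f\ge0$ and $0\le\beta\le r$ on $S_r$ together with Lemma~\ref{lem:LaplacianEstimate}, $\int_{S_r}f\,\Delta_S\beta^2\ge\int_{S_r}f\,(4-4C\beta)\ge(4-4Cr)\,I(r)$. For the second term, $r^2-\beta^2\ge0$ on $S_r$, so multiplying $\Delta_S f\ge-\lambda b^2 f$ by $r^2-\beta^2$ and using $0\le r^2-\beta^2\le r^2$ and $f\ge0$ gives $\int_{S_r}(r^2-\beta^2)\,\Delta_S f\ge-\lambda b^2\int_{S_r}(r^2-\beta^2)f\ge-\lambda b^2 r^2\,I(r)$. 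Combining these with the identity and the coarea bound, for every regular value $r\in(0,b]$,
\[
2r\,I'(r)\ge\big(4-4Cr-\lambda b^2 r^2\big)\,I(r).
\]
If $I(a)=0$ then, $I$ being nondecreasing, $I\equiv0$ on $[0,a]$ and the asserted inequality reads $0\le(\text{nonneg})$; so assume $I(a)>0$, whence $I>0$ on $[a,b]$, and divide to obtain $\frac{I'(r)}{I(r)}\ge\frac2r-2C-\tfrac12\lambda b^2 r$ at every regular value $r\in[a,b]$.

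Finally I would integrate. Set $W(r):=\exp\!\big((2C+\tfrac{\lambda}{2b})r\big)\,r^{-2}\,I(r)$, and observe that $W(a)$ and $W(b)$ are exactly the left- and right-hand sides of the asserted inequality. Writing $\log W(r)=\big(2C+\tfrac{\lambda}{2b}\big)r-2\log r+\log I(r)$ exhibits $\log W$ as the sum of a smooth function and the nondecreasing function $\log I$, so $\log W(b)-\log W(a)\ge\int_a^b(\log W)'(t)\,dt$. At almost every $t\in[a,b]$ — namely the regular values of $\beta$, which form a set of full measure by Sard — the inequality of the previous step gives
\[
(\log W)'(t)=2C+\tfrac{\lambda}{2b}-\tfrac2t+\tfrac{I'(t)}{I(t)}\ge\tfrac{\lambda}{2b}-\tfrac12\lambda b^2 t=\tfrac12\lambda\big(\tfrac1b-b^2 t\big)\ge0 ,
\]
the last step using $b^2 t\le b^3\le\tfrac1b$, i.e.\ $b<1$, which is part of the smallness hypothesis on $b$. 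Hence $\log W(b)\ge\log W(a)$, which is precisely the claim for $0<a\le b$; the case $a=0$ follows by letting $a\to0^+$, the limit of $W(a)$ existing because $W$ has just been shown to be nondecreasing and bounded below by $0$. The only point that requires genuine care is this last bit of bookkeeping — passing from the pointwise inequality, which is available only at regular values of $\beta$, to the monotonicity of $W$ (and the attendant mild regularity of $r\mapsto I(r)$, where Lemma~\ref{lem:LaplacianEstimate} again helps by forbidding $\beta$ from being locally constant in the relevant range); the core estimate is a direct assembly of Lemmas~\ref{lem:IntegrationByParts0} and~\ref{lem:LaplacianEstimate}.
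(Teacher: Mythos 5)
Your argument follows the same calculational spine as the paper's: Lemma~\ref{lem:IntegrationByParts0}, Lemma~\ref{lem:LaplacianEstimate}, and the coarea identity~(\ref{eq:coareaConsequence0}), assembled into the claim that the weight $W(r)=e^{(2C+\lambda/(2b))r}r^{-2}\int_{S_r}f$ is nondecreasing. You repackage this as a differential inequality for $\log W$ rather than computing $(GF)'$ directly as the paper does, and your passage from ``derivative $\ge 0$ at regular values'' to monotonicity --- decomposing $\log W$ as a $C^1$ function plus the nondecreasing $\log I$, and invoking Lebesgue's theorem for monotone functions --- is both correct and notably cleaner than the paper's Lemma~\ref{lem:AnalysisLemma1}.

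However, your final step contains a genuine gap. You arrive at $(\log W)'(t)\ge \tfrac{\lambda}{2}\bigl(\tfrac1b-b^2t\bigr)$ and claim this is nonnegative because $b^2t\le b^3\le 1/b$, which requires $b^4\le 1$, i.e.\ $b\le 1$. You assert that $b<1$ ``is part of the smallness hypothesis on $b$,'' but the only smallness hypothesis is $b<\tfrac12\min\bigl(\inj(u(\zeta_0)),C^{-1}\bigr)$, which does \emph{not} force $b\le 1$: if the constant $C$ happens to be small (say $M$ is flat and K\"ahler, so $C$ can be taken arbitrarily small) and the injectivity radius is large, $b$ can exceed $1$ and your final inequality fails. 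It is worth noting that this gap is tracking an inconsistency internal to the paper: the theorem statement reads $\Delta_S f\ge -\lambda b^2 f$, but the paper's own proof (the line ``$-\tfrac12\int_{S_\tau}(\tau^2-\beta^2)\,\lambda b^{-2}f$'') plainly uses the hypothesis $\Delta_S f\ge -\lambda b^{-2}f$. With $b^{-2}$ in place of $b^2$ throughout, your argument closes without any extraneous assumption: $(\log W)'(t)\ge \tfrac{\lambda}{2b^2}(b-t)\ge 0$ since $t\le b$. So either the theorem statement should carry $b^{-2}$ (matching the proof), or both your argument and the paper's need the extra hypothesis $b\le 1$; as written, your justification of the final sign is unsound.
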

\begin{proof}
We begin by defining the functions
\begin{equation*}
F(\tau)=\int_{S_\tau}f\qquad \text{and}\qquad G(\tau)=e^{\frac{\lambda \tau}{2b}} e^{2C\tau}\tau^{-2}.
\end{equation*}
Our goal will be to show that the map $\tau\mapsto G(\tau)F(\tau)$ is monotone increasing on $(0,b)$, and we accomplish this in two steps.  The first step is to show that $(GF)'\geq 0$ on its set of regular values (which is open and dense), and the second step is to show that this is sufficient to conclude that $GF$ is monotone increasing.  Recall the smooth coarea formula  states that if $f:S\to \R$ is an integrable function, and $\beta:S\to \R$ is as above and has no critical values in the interval $[a,b]$, then
\begin{equation}\label{eq:coareaFormula0}
\int_{S_b\setminus S_a} f = \int_a^b \Big(\int_{\partial S_\tau}f  \|\nab_{S} \beta\|^{-1}\Big) d\tau,
\end{equation}
where the integrals over $S_b\setminus S_a$ and $\partial S_\tau$ are respectively taken with respect to the Hausdorff measures associated to the metric $u^*g$.  Note that proof of (\ref{eq:coareaFormula0}) follows from an elementary change of variables formula.  Also note that as an immediate consequence of (\ref{eq:coareaFormula0}), we have
\begin{equation}\label{eq:coareaConsequence0}
\frac{d}{d\tau}\Big|_{\tau=\tau_0} \int_{S_\tau} f = \int_{\partial S_{\tau_0}}\frac{f}{\|\nab_S \beta\|}
\end{equation}
for every regular value $\tau_0$ of $\beta$.

We are now ready to show that $(GF)'\geq 0$ on the set of regular values $\mathcal{I}_{reg}$ of $\beta$.  Indeed, for $\tau\in \mathcal{I}_{reg}$ we compute as follows:

{\allowdisplaybreaks
\begin{align*}
\big(e^{-\frac{\lambda \tau}{2b}}e^{-2 C\tau}\tau^{3}\big)(GF)'(\tau)
&= e^{-\frac{\lambda\tau}{2b}}e^{-2 C\tau}\tau^{3}\frac{d}{d\tau}\left( e^{\frac{\lambda \tau}{2b}}e^{2
C\tau}\tau^{-2 }\int_{S_\tau}f\right)\notag\\
&=\frac{\lambda \tau}{2b}\int_{S_\tau}f+\frac{1}{2}\int_{S_\tau} \big(4C\tau +\Delta_S \beta^2-4\big)f \\ &\qquad+\tau\int_{\partial S_\tau}f\frac{1}{\|
\nabla_S
\beta\|}
-\frac{1}{2}
\int_{S_\tau}f\Delta_{S }\beta^{2}\notag\\
&=\frac{\lambda \tau}{2b}\int_{S_\tau}f+\frac{1}{2}\int_{S_\tau}\left( 4 C
\tau+\Delta_{S }\beta^{2}-4 \right) f\\
&\qquad\qquad +\frac{1}{2}\int_{S_\tau}\left(
\tau^{2}-\beta^{2}\right)
\Delta_{S }f
+\tau\int_{\partial S_\tau}f\frac{
\| \nabla_M \beta^\bot\| ^{2}}{\| \nabla_S
\beta\|}\notag\\
&\geq\frac{\lambda \tau}{2b}\int_{S_\tau}f+\frac{1}{2}\int_{S_\tau}\left(
\tau^{2}-\beta^{2}\right)
\Delta_{S}f\notag\\
&\geq\frac{\lambda \tau}{2b}\int_{S_\tau}f -\frac{1}{2}\int_{S_\tau}\big(\tau^2-\beta^2\big) \lambda b^{-2} f\\
&= \frac{\lambda\tau}{2 b}\int_{S_\tau}\Big(1-\frac{\tau}{b}+\frac{\beta^2}{b\tau}\Big)  f\\
& \geq 0,
\end{align*}}
where the second equality follows from differentiating and applying equation (\ref{eq:coareaConsequence0}); the third equality follows from Lemma \ref{lem:IntegrationByParts0} together with the fact that $1-\|\nab_S \beta \|^2 = \|\nab_M \beta^\bot \|^2$; finally, the first inequality is obtained by employing Lemma \ref{lem:LaplacianEstimate}. Consequently $(GF)'(\tau_0)\geq 0$ for every point $\tau_0\in \mathcal{I}_{reg}$.  Theorem \ref{thm:MonotonicityType0} now follows immediately from Lemma \ref{lem:AnalysisLemma1} below.
\end{proof}

\begin{lemma}\label{lem:AnalysisLemma1}
Consider two functions $f,g:[a,b]\to \R$ for which $f,g\geq0$, $g$ is continuously differentiable, and $f$ is monotone increasing.  Suppose further that the set of regular\footnote{In other words, $\mathcal{I}_{reg}$ is the set of points at which $f$ is differentiable.} points of $f$, $\mathcal{I}_{reg}\subset [a,b]$, is open, and that at every point $\tau_0\in \mathcal{I}_{reg}$ we have $(gf)'(\tau_0)\geq 0$.  Then $\tau\mapsto g(\tau)f(\tau)$ is monotone increasing.
\end{lemma}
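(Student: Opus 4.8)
The plan is to reduce the global statement to a purely local one and then patch. First I would observe that it suffices to show $g f$ is monotone increasing on each connected component of $\mathcal{I}_{reg}$ together with the fact that $gf$ cannot decrease across the (closed, measure-zero but possibly large) complement $[a,b]\setminus \mathcal{I}_{reg}$. Since $\mathcal{I}_{reg}$ is open, it is a countable disjoint union of open intervals $(\alpha_k,\beta_k)$. On each such interval $f$ is differentiable, $g$ is $C^1$, so $gf$ is differentiable with $(gf)'\ge 0$; hence $gf$ is monotone increasing on $(\alpha_k,\beta_k)$, and by continuity of $g$ and monotonicity (hence one-sided limits) of $f$, also on the closure $[\alpha_k,\beta_k]$.

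The second and main step is to handle the complement. Here the key point is that $f$, being monotone increasing on $[a,b]$, is continuous except at countably many jump points, and at every point of continuity of $f$ the product $gf$ is continuous. I would argue as follows: take any $a\le s<t\le b$ and show $g(s)f(s)\le g(t)f(t)$. Pick points $s<s'<t'<t$ with $s',t'\in\mathcal{I}_{reg}$ and with $s'$ close to $s$ and $t'$ close to $t$ — this is possible because $\mathcal{I}_{reg}$ is dense (its complement, the set of non-differentiability points of a monotone function, has measure zero, hence empty interior). By Step 1, $g(s')f(s')\le g(t')f(t')$. Now let $s'\downarrow s$ and $t'\uparrow t$ along points of $\mathcal{I}_{reg}$: since $g$ is continuous, $g(s')\to g(s)$ and $g(t')\to g(t)$; since $f$ is monotone, $f(s')\to f(s^+)\ge f(s)$ and $f(t')\to f(t^-)\le f(t)$. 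Because $g\ge0$ and $f\ge 0$, combining these gives
\[
g(s)f(s)\le g(s)f(s^+)=\lim g(s')f(s')\le \lim g(t')f(t')=g(t)f(t^-)\le g(t)f(t),
\]
which is exactly what we want. (If $s$ or $t$ happens to lie in $\mathcal{I}_{reg}$ one can just take it to be $s'$ or $t'$ directly; and the endpoints $a,b$ cause no trouble since we only need one-sided approach.)

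I expect the only real subtlety — the ``hard part'' — to be bookkeeping around the boundary between $\mathcal{I}_{reg}$ and its complement: one must be careful that the inequality $(gf)'\ge 0$ is only assumed at \emph{interior} points of $\mathcal{I}_{reg}$, so the passage to closures of the component intervals must be done via monotonicity on the open interval plus continuity, not via the derivative at the endpoint. The sign conditions $f\ge 0$, $g\ge 0$ are used exactly once, in the inequality $g(s)f(s)\le g(s)f(s^+)$ and $g(t)f(t^-)\le g(t)f(t)$, and it is worth flagging that without them the statement is false. Everything else is routine real analysis (countable decomposition of an open set, density of the complement of a null set, one-sided limits of monotone functions), so no lengthy computation is needed.
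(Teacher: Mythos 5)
Your sketch has a genuine gap at the assertion ``By Step 1, $g(s')f(s')\le g(t')f(t')$.'' Step 1 only establishes that $gf$ is monotone increasing on each connected component of $\mathcal{I}_{reg}$; but $s'$ and $t'$ may lie in different components separated by infinitely many others, and nothing in your argument controls what happens across the closed null set $K:=[a,b]\setminus\mathcal{I}_{reg}$. You flag in your opening paragraph that it ``suffices to show\ldots\ $gf$ cannot decrease across $K$,'' but this is precisely what is never proved---it is silently used in Step 2. That the omission is not mere bookkeeping can be seen from the fact that the abstract statement underlying your chain of reasoning is false: a function $h$ that is monotone increasing on each component of an open, dense, full-measure subset of $[a,b]$ and has one-sided limits satisfying $h(\tau^-)\le h(\tau)\le h(\tau^+)$ everywhere need not be monotone on $[a,b]$ (take $h=-c$ with $c$ the Cantor function and $\mathcal{I}_{reg}$ the complement of the Cantor set: $h$ is constant on each component and continuous, yet strictly decreasing). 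So the specific structure $h=gf$ with $g\in C^1$, $g\ge 0$, $f\ge 0$ monotone must be invoked at exactly this step, and it is not. Your remark that $f$ has only countably many jumps is a red herring; the obstruction is the singular-continuous part of $f$ (Cantor-function-like mass concentrated on $K$, with no jumps at all), which your argument never engages.

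Your Step 2 (passing from $\mathcal{I}_{reg}$ to $[a,b]$ via density of $\mathcal{I}_{reg}$, continuity of $g$, one-sided limits of $f$, and $f,g\ge 0$) is correct, as is component-wise Step 1; what is missing is the bridge from ``monotone on each component'' to ``monotone on all of $\mathcal{I}_{reg}$.'' The paper supplies exactly this by decomposing $f$ and $gf$ into absolutely continuous and singular parts, writing $gf=(gf)_{\mathrm{cont}}+(gf_{\mathrm{cont}})_{\mathrm{sing}}+(gf_{\mathrm{sing}})_{\mathrm{sing}}$, handling the first term by the fundamental theorem of calculus (where $(gf)'\ge 0$ a.e.\ is used), showing the second vanishes, and then proving monotonicity of $(gf_{\mathrm{sing}})_{\mathrm{sing}}$ by a discretization and dominated-convergence argument over the countably many components of $\mathcal{I}_{reg}$, which uses $g\ge 0$, monotonicity of $f_{\mathrm{sing}}$, and a uniform bound on $g'$. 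That last argument is the real content of the lemma and is precisely what your outline omits.
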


Before providing the proof of Lemma \ref{lem:AnalysisLemma1}, let us first make a few remarks regarding Theorem \ref{thm:MonotonicityType0} and state a few immediate corollaries.  Indeed, the first observation is rather elementary in that the above result holds not just for embedded or immersed curves, but also for generally immersed curves. The second observation is that the the above result holds \emph{regardless of the topology of the $S_r$.} Indeed, for small $b$ the manifold  $S_b$ may be a disk, and for larger $b$ the manifold might be a cylinder or pair of pants or a much more complicated surface, and yet Theorem \ref{thm:MonotonicityType0} holds.  Still postponing the proof, let us deduce some well known corollaries from this single estimate.

\begin{corollary}\label{cor:AlternateSr}
Theorem \ref{thm:MonotonicityType0} also holds when the $S_r$ are alternatively defined to be the closure of the connected component of $\beta^{-1}\big([0,r)\big)$ which contains $\zeta_0$.
\end{corollary}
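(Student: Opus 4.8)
The plan is to inspect the proof of Theorem \ref{thm:MonotonicityType0} and verify that every step goes through verbatim when $S_r$ is replaced by $S_r' := \cl\big(\text{component of }\beta^{-1}([0,r))\text{ containing }\zeta_0\big)$. The key point is that the argument is entirely local near $\partial S_r$ and uses only three facts about $S_r$: (i) $\beta\big|_{\partial S_r}\equiv r$; (ii) the coarea formula and its consequence \eqref{eq:coareaConsequence0} for $\tfrac{d}{d\tau}\int_{S_\tau}f$; and (iii) the integration-by-parts identity of Lemma \ref{lem:IntegrationByParts0}. I would check each of these for $S_r'$ in turn.

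First I would observe that for a regular value $r$ of $\beta$, the set $\beta^{-1}([0,r))$ is open with smooth boundary $\beta^{-1}(r)$, so its connected component containing $\zeta_0$ is also open, and $S_r'$ is a compact submanifold-with-boundary whose boundary $\partial S_r'$ is a union of components of $\beta^{-1}(r)$; in particular $\beta\big|_{\partial S_r'}\equiv r$, giving (i). For (ii), note that for $\tau$ near a regular value $\tau_0$ the component of $\beta^{-1}([0,\tau))$ containing $\zeta_0$ varies continuously (its boundary components depend smoothly on $\tau$ by the implicit function theorem, and no new components can merge in for $\tau$ in a small interval of regular values), so $S_\tau'$ is an exhaustion of the same type used in the coarea argument and \eqref{eq:coareaConsequence0} holds with $\partial S_{\tau_0}$ replaced by $\partial S_{\tau_0}'$ — indeed the coarea formula \eqref{eq:coareaFormula0} applied on $S_\tau'\setminus S_{\tau_0}'$ is just the coarea formula restricted to a fixed open subset on which $\beta$ has no critical values. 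For (iii), the proof of Lemma \ref{lem:IntegrationByParts0} uses only the divergence theorem on $S_r$ together with $\beta\big|_{\partial S_r}\equiv r$ and $\nabla_S\beta\big|_{\partial S_r} = h\nu$ with $h>0$; both of these hold on $S_r'$ as well (the latter because $S_r'$ lies on the sublevel side of each of its boundary components), so the identity
\begin{equation*}
\int_{S_r'} f\,\Delta_S\beta^2 = \int_{S_r'}(\beta^2-r^2)\Delta_S f + 2r\int_{\partial S_r'} f\,\|\nabla_S\beta\|
\end{equation*}
holds verbatim.

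With (i)–(iii) in hand, the computation showing $(GF)'(\tau)\geq 0$ on regular values goes through unchanged with $S_\tau$ replaced by $S_\tau'$ throughout, since every manipulation in that display is an application of one of these three ingredients plus the pointwise estimate of Lemma \ref{lem:LaplacianEstimate} (which is a statement about $\beta$ on $M$, insensitive to the choice of domain) and the hypotheses $f\geq 0$, $\Delta_S f\geq -\lambda b^2 f$. Finally, monotonicity of $\tau\mapsto G(\tau)F'(\tau)$ where $F'(\tau):=\int_{S_\tau'}f$ follows from Lemma \ref{lem:AnalysisLemma1} exactly as before, once one checks that $F'$ is monotone increasing in $\tau$ — which holds because $S_\tau'\subset S_{\tau'}'$ for $\tau<\tau'$ (a component of a smaller sublevel set containing $\zeta_0$ is contained in the component of the larger one containing $\zeta_0$) and $f\geq 0$. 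The only mild subtlety — and the step I expect to require the most care — is confirming that $\tau\mapsto S_\tau'$ behaves well enough (no sudden jumps in which components are "connected to $\zeta_0$") across the open dense set of regular values so that the coarea/differentiation step \eqref{eq:coareaConsequence0} is valid; this is handled by the observation above that on an interval of regular values the boundary $\partial S_\tau'$ is a smoothly varying family of hypersurfaces and connectivity to $\zeta_0$ is locally constant. Granting this, the conclusion of Theorem \ref{thm:MonotonicityType0} holds with $S_a,S_b$ replaced by $S_a',S_b'$, which is precisely the claim.
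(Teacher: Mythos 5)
Your proposal is correct and follows the same route as the paper: the paper's own proof is simply the terse observation that the argument for Theorem \ref{thm:MonotonicityType0} goes through verbatim with the redefined $S_\tau$, with the same set of regular values, and then invokes Lemma \ref{lem:AnalysisLemma1}. You are filling in exactly the details the paper leaves implicit — checking $\beta\big|_{\partial S_r'}\equiv r$, the coarea step, Lemma \ref{lem:IntegrationByParts0}, and the monotonicity of $\tau\mapsto\int_{S_\tau'}f$ — and your treatment of each (in particular the observation that components cannot merge across a regular value, so $S_\tau'$ varies smoothly there) is sound.
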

\begin{proof}
The proof is identical to that of the theorem: show the newly defined map $\tau\mapsto F(\tau)G(\tau)$ has non-negative derivative on its set of regular values, which are identical to those of the old map $\tau\mapsto F(\tau)G(\tau)$, then apply Lemma \ref{lem:AnalysisLemma1} to finish the proof.
\end{proof}

\begin{corollary}[Mean Value Inequality]\label{cor:MeanValueInequality0} Assume the same hypotheses as in Theorem \ref{thm:MonotonicityType0}. Then for each $\epsilon>0$ there exists $r_0< \frac{1}{2}\inj(p)$ which depends only on the constant $C$ with the property that whenever $0<b<r_0$, the following holds:
\begin{equation}\label{eq:MeanValueInequality}
\sum_{\zeta\in u^{-1}\big(u(\zeta_0)\big)}f(\zeta)\mu(\zeta)\leq\frac{(1+\epsilon)e^{\frac{\lambda }{2}}}{\pi
b^{2 }}\int_{S_b}f,
\end{equation}
where $\mu(\zeta):=\lim_{a\to 0} \frac{1}{\pi a^2} \Area\big(S_a(\zeta)\big)$ and $S_a(\zeta)$ is the connected component of $u^{-1}\big(\mathcal{B}_a(u(\zeta))\big)$ which contains $\zeta$.  In particular, if $\zeta_0 = u^{-1}\big(u(\zeta_0)\big)$ and $\zeta_0$ is not a critical point of $u$, then
\begin{equation*}
f(\zeta_0)\leq \frac{(1+\epsilon)e^{\frac{\lambda }{2}}}{\pi
b^{2 }}\int_{S_b}f.
\end{equation*}
\end{corollary}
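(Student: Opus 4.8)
The plan is to obtain \eqref{eq:MeanValueInequality} by letting $a\to 0^{+}$ in the monotonicity estimate of Theorem \ref{thm:MonotonicityType0} and then absorbing the factor $e^{2Cb}$ into $(1+\epsilon)$ by taking $b$ small; since $e^{\frac{\lambda a}{2b}+2Ca}\to 1$ as $a\to 0^{+}$, the only real work is to identify $\lim_{a\to 0^{+}}a^{-2}\int_{S_a}f$ with $\pi\sum_{\zeta\in u^{-1}(u(\zeta_0))}f(\zeta)\mu(\zeta)$. First I would note that $u^{-1}\big(u(\zeta_0)\big)$ is a finite set: a generally immersed $J$-curve of Type 0 has no constant components (such a component would have all of itself as critical points, contradicting the no-accumulation condition), so unique continuation for $J$-curves makes $u^{-1}(u(\zeta_0))$ discrete and compactness of $S$ makes it finite; list it as $\zeta^{(1)}=\zeta_0,\dots,\zeta^{(N)}$. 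Since $\beta=\dist_g\big(u(\cdot),u(\zeta_0)\big)$ and $S_a=\beta^{-1}([0,a])=u^{-1}\big(\overline{\mathcal{B}_a(u(\zeta_0))}\big)$, a compactness argument produces $a_0>0$ so that for $0<a<a_0$ the set $S_a$ is the disjoint union $\bigsqcup_{k=1}^{N}S_a(\zeta^{(k)})$ of small connected neighborhoods of the $\zeta^{(k)}$ --- otherwise a sequence $a_j\to 0$ would yield points $\xi_j$ with $u(\xi_j)\to u(\zeta_0)$ staying a fixed distance from $u^{-1}(u(\zeta_0))$, whose subsequential limit would be a new preimage of $u(\zeta_0)$.

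Next, for each fixed $k$ I would show $\frac{1}{\pi a^{2}}\int_{S_a(\zeta^{(k)})}f\to f(\zeta^{(k)})\mu(\zeta^{(k)})$. Existence of the limit $\mu(\zeta^{(k)})$ comes from monotonicity of area: applying Theorem \ref{thm:MonotonicityType0} in the form of Corollary \ref{cor:AlternateSr}, with $f\equiv 1$, $\lambda=0$, and base point $\zeta^{(k)}$ (legitimate since $u(\zeta^{(k)})=u(\zeta_0)\notin\partial M$), shows that $a\mapsto e^{2Ca}a^{-2}\Area\big(S_a(\zeta^{(k)})\big)$ is monotone increasing, hence converges as $a\to 0^{+}$ to a finite value, which is $\pi\mu(\zeta^{(k)})$; in particular $\Area\big(S_a(\zeta^{(k)})\big)=O(a^{2})$. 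Continuity of $f$ together with $\diam S_a(\zeta^{(k)})\to 0$ gives $\sup_{S_a(\zeta^{(k)})}\big|f-f(\zeta^{(k)})\big|\to 0$, so that $\int_{S_a(\zeta^{(k)})}f=f(\zeta^{(k)})\Area\big(S_a(\zeta^{(k)})\big)+o(a^{2})$; dividing by $\pi a^{2}$ and letting $a\to 0^{+}$ gives the claim, and summing over $k$ produces the desired identification of the limit.

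Passing to the limit $a\to 0^{+}$ in Theorem \ref{thm:MonotonicityType0} then yields
\[
\pi\sum_{\zeta\in u^{-1}(u(\zeta_0))}f(\zeta)\mu(\zeta)\;\leq\;e^{\frac{\lambda}{2}+2Cb}b^{-2}\int_{S_b}f ,
\]
and taking $r_0:=\min\big(\tfrac12\inj(p),\,\tfrac{1}{2C},\,\tfrac{1}{2C}\ln(1+\epsilon)\big)$ makes Theorem \ref{thm:MonotonicityType0} applicable and forces $e^{2Cb}\leq 1+\epsilon$ for $0<b<r_0$; dividing by $\pi$ gives \eqref{eq:MeanValueInequality}. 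The ``in particular'' statement is the case $N=1$ with $\zeta_0$ a regular point of $u$, where $S_a(\zeta_0)$ is, up to higher-order terms, a geodesic disc of radius $a$ in $(S,u^{*}g)$, so $\mu(\zeta_0)=1$ and the sum reduces to $f(\zeta_0)$. I expect the main obstacle to be precisely the local density analysis of the middle paragraph --- establishing that $\mu(\zeta)$ is well defined and that $\frac{1}{\pi a^{2}}\int_{S_a(\zeta)}f\to f(\zeta)\mu(\zeta)$, which rests on the local structure of a $J$-curve near a possibly critical point; the remaining steps are a routine passage to the limit in the monotonicity formula already at hand.
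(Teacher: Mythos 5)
Your argument is correct and follows the paper's route exactly: shrink $r_0$ so that $e^{2Cb}<1+\epsilon$, then let $a\to 0^{+}$ in the monotonicity estimate. The only difference is that you spell out the local density analysis (finiteness of $u^{-1}(u(\zeta_0))$, decomposition of $S_a$ into components, existence of $\mu(\zeta)$ via Corollary \ref{cor:AlternateSr}, and the identification $\tfrac{1}{\pi a^2}\int_{S_a}f\to\sum_{\zeta}f(\zeta)\mu(\zeta)$), which the paper leaves implicit in the phrase ``let $a\to 0$.''
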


\begin{proof}
Observe that for each $\epsilon>0$ there exists $r_0>0$ depending only on $C$ such that $e^{2Cb}<1+\epsilon$ for all $0<b<r_0$. Define $r_0$ as such, and then let $a\to 0$ in Corollary \ref{cor:AlternateSr}.
\end{proof}

Note in the above, the mass $\mu(\zeta)$ can alternatively be defined as the unique $k\in \mathbb{N}$ such that there exist coordinates centered at $\zeta\in S$ and $u(\zeta)\in M$ so that in these coordinates $u$ has the form
\begin{equation*}
u(z)=(z^k,0,\ldots,0) + O(|z|^{k+1}).
\end{equation*}
The existence of such a $k$ is well known (c.f. \cite{MdSd04}  or \cite{MmWb94}).

\begin{corollary}[Monotonicity of Area]  \label{cor:MonotonicityOfArea0}
Assume the same hypotheses as in Theorem \ref{thm:MonotonicityType0}. Then for each $\epsilon>0$ there exists $r_0< \frac{1}{2}\inj(p)$ which depends only on the constant $C$ with the property that whenever $0<b<r_0$, the following holds:
\begin{equation*}
\frac{ 1}{a^{2}}\Area_{u^*g}(S_a(\zeta_0))\leq \frac{1+\epsilon}{b^{2 }}\Area_{u^*g}\big(S_b(\zeta_0)\big).
\end{equation*}
Or more commonly,
\begin{equation*}
(1+\epsilon)^{-1}\pi b^2 \leq \Area_{u^*g}\big(S_b(\zeta_0)\big).
\end{equation*}
\end{corollary}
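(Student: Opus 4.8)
The plan is to derive this directly from Theorem \ref{thm:MonotonicityType0} (in the form recorded in Corollary \ref{cor:AlternateSr}) applied to the constant function $f\equiv 1$. First I would observe that $\Delta_S 1 = 0 \geq -\lambda b^2\cdot 1$ holds for \emph{every} positive constant $\lambda$, so with $f\equiv 1$ the hypotheses of Theorem \ref{thm:MonotonicityType0} are satisfied for all $\lambda>0$ at once. Since $\int_{S_r} 1 = \Area_{u^*g}(S_r(\zeta_0))$, the theorem gives
\begin{equation*}
e^{\frac{\lambda a}{2b}+2Ca}a^{-2}\Area_{u^*g}(S_a(\zeta_0)) \leq e^{\frac{\lambda}{2}+2Cb}b^{-2}\Area_{u^*g}(S_b(\zeta_0))
\end{equation*}
for all $0<a\leq b$ and all $\lambda>0$; letting $\lambda\to 0^+$ removes the two $\lambda$-dependent exponential factors and leaves $e^{2Ca}a^{-2}\Area_{u^*g}(S_a(\zeta_0)) \leq e^{2Cb}b^{-2}\Area_{u^*g}(S_b(\zeta_0))$.

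Next I would fix $\epsilon>0$ and choose $r_0>0$, no larger than $\frac{1}{2}\min\big(\inj(u(\zeta_0)),C^{-1}\big)$ and small enough (depending only on $\epsilon$ and $C$) that $e^{2C\tau}<1+\epsilon$ for all $0<\tau<r_0$; this is possible since $\tau\mapsto e^{2C\tau}$ is continuous with value $1$ at $\tau=0$. Then for $0<a\leq b<r_0$, bounding $e^{2Ca}\geq 1$ on the left side and $e^{2Cb}<1+\epsilon$ on the right side of the inequality from the previous paragraph yields the first displayed estimate,
\begin{equation*}
\frac{1}{a^2}\Area_{u^*g}(S_a(\zeta_0)) \leq \frac{1+\epsilon}{b^2}\Area_{u^*g}(S_b(\zeta_0)).
\end{equation*}

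For the ``more commonly'' form I would let $a\to 0$ in this estimate. By the local normal form $u(z)=(z^k,0,\ldots,0)+O(|z|^{k+1})$ recorded after Corollary \ref{cor:MeanValueInequality0}, the density $\mu(\zeta_0)=\lim_{a\to 0}\tfrac{1}{\pi a^2}\Area_{u^*g}(S_a(\zeta_0))$ exists and equals a positive integer $k\geq 1$, so $\lim_{a\to 0}a^{-2}\Area_{u^*g}(S_a(\zeta_0)) = \pi k\geq \pi$. Passing to the limit $a\to 0$ in the displayed estimate then gives $\pi \leq (1+\epsilon)b^{-2}\Area_{u^*g}(S_b(\zeta_0))$, that is, $(1+\epsilon)^{-1}\pi b^2 \leq \Area_{u^*g}(S_b(\zeta_0))$.

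There is essentially no analytic obstacle here, since the corollary is a specialization of Theorem \ref{thm:MonotonicityType0}. The only point requiring a little care is the final step, where one must know that the $2$-dimensional Hausdorff density of the (possibly branched, possibly non-immersed) $J$-curve at $\zeta_0$ is at least $\pi$ — equivalently that the limit $\lim_{a\to 0}a^{-2}\Area_{u^*g}(S_a(\zeta_0))$ exists and is a positive integer multiple of $\pi$ — and this is exactly what the cited local normal form provides. As with Theorem \ref{thm:MonotonicityType0}, I would note that the argument is insensitive to the topology of the sets $S_r(\zeta_0)$ and makes no immersion assumption at the point $\zeta_0$.
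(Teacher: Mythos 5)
Your proof is correct and takes essentially the paper's approach: set $f\equiv 1$, invoke the monotonicity theorem (in its connected-component form via Corollary \ref{cor:AlternateSr}), send $\lambda\to 0^+$, and absorb the remaining exponential factor $e^{2Cb}$ into $1+\epsilon$ by shrinking $r_0$. The paper's own proof is just the terse ``Let $f=1$\ldots apply Theorem \ref{thm:MonotonicityType0}, and let $\lambda\to 0$''; your write-up supplies the same argument together with the $a\to 0$ density step (via $\mu(\zeta_0)\geq 1$) that the paper leaves implicit for the second displayed inequality.
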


\begin{proof}
Let $f=1$ and observe that $\Delta_S f = 0 \geq -\lambda b^2 f$ for all $\lambda>0$.  Apply Theorem \ref{thm:MonotonicityType0}, and let $\lambda\to 0$.The result is immediate.
\end{proof}

We now complete the proof of Theorem \ref{thm:MonotonicityType0} by proving Lemma \ref{lem:AnalysisLemma1}.

\begin{proof}[Proof of Lemma \ref{lem:AnalysisLemma1}]
We begin by recalling that any function $h:[a,b]\to\mathbb{R}$ of bounded variation may be written as $h=h_{cont}+h_{sing}$, where $h_{cont}$ is absolutely continuous, and $h_{sing}$ is differentiable almost everywhere and satisfies
\begin{equation}\label{eq:AnalysisLem1}
h_{sing}'(\tau_0)= 0
\end{equation}
for every regular point $\tau_0$. In particular, an explicit decomposition can be written as
\begin{equation}\label{eq:AnalysisLem3}
h_{cont}(\tau)=\int_a^\tau h'(\sigma)d\sigma\qquad\text{and}\qquad h_{sing}=h-h_{cont}.
\end{equation}
We furthermore recall (c.f. Chapter 5 in \cite{Rhl88}) that if $h$ is monotone increasing then $h(t)-h(s)\geq \int_s^t h'(\sigma)d\sigma,$
from which it immediately follows that both $h_{cont}$ and $h_{sing}$ are monotone increasing.

We now recall that since $g$ is continuously differentiable and $f$ is monotone, they are each of bounded variation, and so too is their product.  Thus we may write
\begin{equation}\label{eq:AnalysisLem2}
gf = (gf)_{cont} + (gf_{cont})_{sing} + (gf_{sing})_{sing},
\end{equation}
and observe that it is sufficient to prove that each of the three terms on the right hand side of (\ref{eq:AnalysisLem2}) is monotone increasing.  To that end, we first consider the first term $(gf)_{cont}$.  Recall that $f$ is differentiable on the set of full measure $\mathcal{I}_{reg}$, and $g$ is continuously differentiable by assumption, so $gf$ is also differentiable on $\mathcal{I}_{reg}$.  Furthermore by our definition of $(gf)_{cont}$ it follows that for all $\tau \in \mathcal{I}_{reg}$ we have $(gf)_{cont}'(\tau)=(gf)'(\tau)$ which is non-negative by assumption.  Consequently
\begin{equation*}
(gf)_{cont}(t)- (gf)_{cont}(s)= \int_s^t \big((gf)_{cont}\big)'(\sigma) d\sigma =  \int_s^t (gf)'(\sigma) d\sigma \geq 0,
\end{equation*}
which proves that $(gf)_{cont}$ is monotone increasing.

Next, we consider the second term $(gf_{cont})_{sing}$. We again note that $g$ is continuously differentiable by assumption, and $f_{cont}$ is absolutely continuous by definition, so $gf_{cont}$ is also absolutely continuous, and hence $(gf_{cont})_{sing}\equiv 0$ by (\ref{eq:AnalysisLem3}). Consequently $(gf_{cont})_{sing}$ is trivially monotonic.

Thus to finish the proof of Lemma \ref{lem:AnalysisLemma1}, it is now sufficient to prove that $(gf_{sing})_{sing}$ is monotone increasing.  For clarity, we will write $\tilde{f}:=f_{sing}$, and we fix $s,t$ such that $a\leq s< t\leq b$. Then we have
\begin{align}
(g\tilde{f})_{sing}(t) - (g\tilde{f})_{sing}(s) &= (g\tilde{f})(t) - (g\tilde{f})(s) - \int_s^t (g\tilde{f})'(\sigma)d\sigma\label{eq:AnalysisLem4}\\
&=(g\tilde{f})(t) - (g\tilde{f})(s) - \int_s^t (g'\tilde{f})(\sigma)d\sigma,\notag
\end{align}
where we have made use of the fact that $\tilde{f}' = f_{sing}' =0 $ on $\mathcal{I}_{reg}$ -- a set of full measure.  Furthermore, $\mathcal{I}_{reg}$ is open by assumption, so there exist a countable collection of disjoint non-empty open intervals $\mathcal{I}_k$ for which $\mathcal{I}_{reg}\cap (s,t) = \cup_{k\in\mathbb{N}} \mathcal{I}_k$, and $\tilde{f}\big|_{\mathcal{I}_k}=\const$.  For each $k\in \mathbb{N}$, define
\begin{equation*}
x_k^+:=\sup \mathcal{I}_k\qquad x_k^-:=\inf \mathcal{I}_k\qquad x_k^0:= {\textstyle \frac{1}{2}}(x_k^+ + x_k^-)\in \mathcal{I}_k.
\end{equation*}
Then by construction
\begin{equation*}
\tilde{f}\big|_{(s,t)} = \sum_{k\in\mathbb{N}}\tilde{f}(x_k^0)\chi_{\mathcal{I}_k}\quad\text{almost everywhere},
\end{equation*}
where $\chi_{\mathcal{I}_k}$ is the characteristic function on the interval $\mathcal{I}_k$.  For each $n\in \mathbb{N}$, it will be convenient to define $x_{k;n}^{\pm,0}$ by $\{x_{1;n}^{\pm,0},\ldots,x_{n;n}^{\pm,0}\}:=\{x_1^{\pm,0},\ldots,x_n^{\pm,0}\} $ so that $x_{k-1;n}^{\pm,0} < x_{k;n}^{\pm,0}$ for each $k=2,\ldots, n$; similarly, define $\mathcal{I}_{k;n}$, and for notational convenience also define $x_{n+1;n}^-=t=x_{n+1;n}^0$ and $x_{0;n}^+=s=x_{0;n}^0$.  Next, observe that since $g$ is continuously differentiable, there exists a constant $M$ such that $\sup_{[a,b]}|g'|\leq M$, so by the dominated convergence theorem we have

{\allowdisplaybreaks
\begin{align*}
g(t)\tilde{f}(t)-g(s)\tilde{f}(s)&-\int_s^t
(g'\tilde{f})(\sigma)d\sigma\notag\\
&=g(t)\tilde{f}(t)-g(s)\tilde{f}(s)-\lim_{n\to\infty}\sum_{k=1}^n
\tilde{f}(x_{k;n}^0)\int_s^t g'(\sigma)\chi_{\mathcal{I}_{k;n}}(\sigma)d\sigma\notag\\
&=\lim_{n\to \infty}\sum_{k=1}^{n+1}\big(g(x_{k;n}^-)\tilde{f}(x_{k;n}^0)-g(x_{k-1;n}^+)\tilde{f}(x_{k-1;n}^0) \big)\\
&=\lim_{n\to\infty}\Big(\sum_{k=1}^{n+1} g(x_{k;n}^-)\big(\tilde{f}(x_{k;n}^0) -\tilde{f}(x_{k-1;n}^0) \big)\\
&\qquad\qquad\qquad+\sum_{k=1}^{n+1}\tilde{f}(x_{k-1;n}^0)\big(g(x_{k;n}^-)-g(x_{k-1;n}^+)\big)\Big),\\
&\geq - \tilde{f}(t) M \lim_{n\to\infty}\sum_{k=1}^{n+1} (x_{k;n}^- - x_{k-1;n}^+)\\
&= -\tilde{f}(t)M \lim_{n\to\infty}\mu \big( (s,t)\setminus \cup_{k=1}^{n} \mathcal{I}_{k;n}\big)\\
&= 0
\end{align*}
}
where the first inequality makes use of the fact that $g\geq 0$, and (as previously discussed) the fact that $f$ is monotone increasing implies that $\tilde{f}=f_{sing}$ is also; in the final equality we have let $\mu$ denote the Lebesgue measure.  Combining this estimate with (\ref{eq:AnalysisLem4}) proves that the final term in (\ref{eq:AnalysisLem2}) is monotone increasing, and thus Lemma \ref{lem:AnalysisLemma1} is proved.

\end{proof}

\subsection{Lagrangian boundary case}\label{sec:Monotonicty1and2}
Here we extend the results of Section \ref{sec:Monotonicity0} to the case in which the $J$-curves of interest have Lagrangian boundary condition.
For such results, we will need to take some additional care near the boundaries, so we begin with some observations in that direction.

Here and throughout, $(M,J,g)$ will be a compact almost Hermitian manifold, possibly with boundary.  We will also consider either one or two compact embedded totally geodesic Lagrangians $L_1,L_2\subset M$ which satisfy $\partial L_i = L_i\cap \partial M$.  When considering the two Lagranigans, we will also demand that and $L_1$ has finite transverse intersections with $L_2$.

Consider $(M,J,g,L)$ as above, and fix $p\in L$, and fix $r_0$ so that $0< r_0 < \frac{1}{2}\inj(p)$ and so that $L\cap \mathcal{B}_{r_0}(p)$ is connected.  As above, define the distance function  $\beta:\mathcal{B}_{r_0}(p)\to \R$ by $\beta(q):=\dist(p,q)$. Observe that by construction, the gradient field $\nab_M \beta$ is a smooth vector field on $\mathcal{B}_{r_0}(p)\setminus \{p\}$, with $\|\nab_M \beta\|\equiv 1$, and the integral curves of $\nab_M \beta$ are unit-speed geodesics in $M$ which emanate radially from from $p$.  Furthermore, $L$ is totally geodesic, connected, and contains $p$.  Consequently we have $\nab_M \beta \big|_{L} \subset TL$, or in other words the gradient field $\nab_M \beta$ is parallel to $L$.  As a further consequence, we see that for any vector normal to $L$ given by $X\in TL^\bot$, we have $\la X, \nab_M \beta\ra=0$.  This is of particular interest, since if $u:S\to \mathcal{B}_{r_0}(p)$ is a $J$-curve of Type 1, and we define $\nu \in \Gamma(TS)\big|_{\partial_1 S}$ to be an outward pointing $u^*g$-unit normal vector along the Lagrangian-type boundary $\partial_1 S$, then $J\nu\in T\partial_1 S\subset TL$ so that
\begin{equation}\label{eq:GradBetaPerpToOutNormal}
\la\nu,\nab_M\beta\ra = \la\nu,\nab_S\beta\ra =0.
\end{equation}
To make use of this fact, it will be convenient to define the sets
\begin{equation}\label{eq:DefOfSr}
S_r:=(\beta\circ u)^{-1}\big([0,r]\big)=u^{-1}\big(\mathcal{B}_r(p)\big),
\end{equation}
for $r\in [0,r_0]$. It then follows from (\ref{eq:GradBetaPerpToOutNormal}) that the set of critical values of $\beta\circ u\big|_{\partial_1 S}$ is contained in the set of critical values of $\beta\circ u \big|_{S}$; we conclude from this observation, and the fact that $u$ is a proper map into $M$,  that for each regular value $r$ of $\beta\circ u\big|_{S}$ the set $S_{r}$ has the structure of a smooth manifold with boundary and a finite number of corners.  In fact, we can conclude even more, namely that the boundary portions $\partial_0 S$ and $\partial_1 S$ intersect orthogonally.
We make this precise with Lemma \ref{lem:OrthoBoundary} below.
\begin{lemma}\label{lem:OrthoBoundary}
Letting $u:S\to\mathcal{B}_{r}(p)$ be a $J$-curve of Type 1 as above, with $r$ a regular value of $\beta\circ u$, then then $\partial_0 S_r \bot_{u^*g} \partial_1 S_r$.
\end{lemma}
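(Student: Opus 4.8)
The plan is to derive the lemma as a short corollary of the identity (\ref{eq:GradBetaPerpToOutNormal}) established just above it, together with the elementary fact that on a Riemannian manifold a regular level set is orthogonal to the function's gradient. Throughout, I read $\partial_0 S_r:=(\beta\circ u)^{-1}(r)$ as the distance-sphere portion of $\partial S_r$ and $\partial_1 S_r:=\partial_1 S\cap S_r$ as the Lagrangian portion, so that ``$\partial_0 S_r\bot_{u^*g}\partial_1 S_r$'' means orthogonality of these two arcs at each corner of $S_r$ where they meet. Fix such a corner $\zeta\in\partial_0 S_r\cap\partial_1 S_r$; by definition $(\beta\circ u)(\zeta)=r$, while $u(\zeta)\in L$ and $\zeta\in\partial_1 S$. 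Because $r$ is a regular value of $\beta\circ u:S\to\R$, we have $d(\beta\circ u)_\zeta\neq 0$, hence $T_\zeta u\neq 0$, so $\zeta\notin\mathcal{Z}_u$; thus $u$ is a genuine immersion on a neighborhood of $\zeta$, $u^*g$ is a Riemannian metric there, and it suffices to prove that the two one-dimensional lines $T_\zeta\partial_0 S_r$ and $T_\zeta\partial_1 S_r$ inside $(T_\zeta S,u^*g)$ are orthogonal.

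The key object is the gradient $\nab_S\beta|_\zeta=(\nab_M\beta)^\top|_\zeta$, i.e. the $u^*g$-dual of $d(\beta\circ u)$; it is nonzero at $\zeta$ by the previous paragraph. First, since $\partial_0 S_r$ sits inside the level set $(\beta\circ u)^{-1}(r)$, the gradient is $u^*g$-orthogonal to it, so $T_\zeta\partial_0 S_r=(\nab_S\beta|_\zeta)^\bot$ in $T_\zeta S$. Second, letting $\nu\in T_\zeta S$ be the outward-pointing $u^*g$-unit normal to $\partial_1 S$ at $\zeta$, the identity (\ref{eq:GradBetaPerpToOutNormal}) gives $\la\nu,\nab_S\beta\ra_{u^*g}=0$; since $T_\zeta S=T_\zeta\partial_1 S\oplus\R\nu$ is a $u^*g$-orthogonal decomposition, this says precisely that $\nab_S\beta|_\zeta$ is tangent to $\partial_1 S$, i.e. lies in the line $T_\zeta\partial_1 S=T_\zeta\partial_1 S_r$; as $\nab_S\beta|_\zeta\neq 0$ it spans that line, so $T_\zeta\partial_1 S_r=\R\cdot\nab_S\beta|_\zeta$. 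Combining the two facts, $T_\zeta\partial_0 S_r=(\nab_S\beta|_\zeta)^\bot=(T_\zeta\partial_1 S_r)^\bot$, which is the asserted orthogonality at $\zeta$; since $\zeta$ was an arbitrary corner, the lemma follows.

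I do not anticipate a genuine analytic obstacle: the geometric content is entirely absorbed into (\ref{eq:GradBetaPerpToOutNormal}), which is where the $J$-holomorphicity and the totally geodesic Lagrangian hypothesis are actually used (via $J\nu\in T\partial_1 S\subset TL$ and $\nab_M\beta|_L\subset TL$, the latter because the unit-speed radial geodesics emanating from $p\in L$ remain in $L$). The matters that need care are bookkeeping: (i) extracting from ``$r$ is a regular value of $\beta\circ u|_S$'' that the corner $\zeta$ is a regular point of $u$, so that $u^*g$, the tangent lines $T_\zeta\partial_i S_r$, and the unit normal $\nu$ are all meaningful near $\zeta$; and (ii) pinning down the conventions for $\partial_0 S_r$ and $\partial_1 S_r$ so that the statement of the lemma is unambiguous, as above.
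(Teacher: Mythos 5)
Your proof is correct, but it takes a different route from the one in the paper. The paper argues extrinsically at the corner: it chooses an adapted orthonormal basis $\{e_1,\ldots,e_n,f_1,\ldots,f_n\}$ of $T_{u(\zeta)}M$ with $Je_i=f_i$, $T_{u(\zeta)}L=\Span(e_1,\ldots,e_n)$ and $\nab_M\beta=e_1$, writes the unit tangents $\tau_1\in T_\zeta\partial_1 S_r$, $\tau_0\in T_\zeta\partial_0 S_r$ in this basis, and uses the fact that $u_*\tau_0,u_*\tau_1$ span a $J$-complex line (so $u_*\tau_0=x\,u_*\tau_1+y\,Ju_*\tau_1$); regularity of $r$ gives a nonzero $e_1$-component of $u_*\tau_1$, forcing $x=0$ and hence the stronger conclusion $u_*\tau_0\in T_{u(\zeta)}L^\bot$ while $u_*\tau_1\in T_{u(\zeta)}L$, from which orthogonality is immediate. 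You instead work intrinsically in $(T_\zeta S,u^*g)$ and funnel all of the geometry ($\nab_M\beta|_L\subset TL$ since $L$ is totally geodesic through $p$, plus $J$-invariance of the tangent plane and the Lagrangian condition) through the already-displayed identity (\ref{eq:GradBetaPerpToOutNormal}): that identity plus $\nab_S\beta|_\zeta\neq 0$ shows $\nab_S\beta$ spans $T_\zeta\partial_1 S_r$, while the level-set description of $\partial_0 S_r$ identifies $T_\zeta\partial_0 S_r$ with $(\nab_S\beta)^{\bot}$. Both proofs use regularity in the same essential way (your $\nab_S\beta|_\zeta\neq 0$ is the paper's $a_1^1\neq 0$), and your observation that $\nab_S\beta$ is tangent to $\partial_1 S$ is exactly the transversality fact the paper records informally before the lemma when it asserts the corner structure of $S_r$. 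What the paper's basis computation buys is the explicit extra information $u_*\tau_0\in TL^\bot$; what your argument buys is brevity and reuse of (\ref{eq:GradBetaPerpToOutNormal}) without choosing a frame. The only point to keep straight (and you flag it) is that the tangent line $T_\zeta\partial_0 S_r$ at the corner is well defined; this is supplied by the manifold-with-corners structure of $S_r$ at regular values, which the paper establishes just before the lemma and which your spanning claim for $\nab_S\beta$ in fact re-derives.
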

\begin{proof}
Fix $\zeta\in \partial_1 S_r$ where $\beta\big(u(\zeta)\big)=r$ and $r$ is a regular value of $\beta\circ u$.  Let $\tau_1\in T_{\zeta} (\partial_1 S_r)$ and $\tau_0\in T_\zeta (\partial_0 S_r)$ be unit vectors.  We now abuse notation by letting each $\tau_i$ denote the push-forward $u_* \tau_i$. Our goal is to show that $\la \tau_0,\tau_1\ra_{g}=0$.  To that end we choose an orthonormal basis $\{e_1,\ldots, e_n,f_1,\ldots,f_n\}\in T_{u(\zeta)} M$ so that $J e_i = f_i$ for all $i=1,\ldots, n$, and so that $T_{u(\zeta)} L = \Span (e_1,\ldots,e_n)$, $\nab_M \beta = e_1$, and $\tau_1 = a_1^1 e_1 + a_1^2 e_2$.  Then $\tau_0$ can be written as $\tau_0= (\sum_{j\geq 2} a_0^j e_j)+(\sum_{j\geq 1} b_0^j f_j)$. Since $r$ is a regular value of $\beta$, it follows that $a_1^1\neq 0$, and since $\tau_0$ and $\tau_1$ span a $J$-complex line, it follows that there exist $x,y\in \R$ such that $\tau_0 = x \tau_1 + y J\tau_1$, which can be restated in our given basis as
\begin{equation*}
\big(\sum_{j\geq 2} a_0^j e_j\big)+\big(\sum_{j\geq 1} b_0^j f_j\big) = xa_1^1 e_1 + xa_1^2 e_2 + ya_1^1 f_1 + ya_1^2 f_2.
\end{equation*}
Since $a_1^1\neq 0$ it follows that $x=0$, and thus $\tau_0\in \Span(f_1,\ldots,f_n)=T_{u(\zeta)}L^\bot$ while $\tau_1\in \Span(e_1,\ldots,e_n)=T_{u(\zeta)} L$.  The result is then immediate.
\end{proof}
Next, consider the case in which $L_1$ and $L_2$ are transversely intersecting Lagrangians as above, and suppose $p\in L_1\cap L_2$.  Also fix $r_0$ such that $0<r<\frac{1}{2}\inj(p)$ and so that each of $L_i\cap \mathcal{B}_{r_0}(p)$ is connected.  Then arguing precisely as above, one concludes that for $J$-curves $u:S\to \mathcal{B}_{r_0}(p)$ of Type 2,
the set of critical values of each of $\beta\circ u\big|_{\partial_1 S}$ and $\beta\circ u\big|_{\partial_2 S}$ is contained in the set of critical values of $\beta\circ u \big|_{S}$, so that again
for each regular value $r$ of $\beta\circ u\big|_{S}$ the set $S_{r}$ has the structure of a (not necessarily compact) smooth manifold with boundary and a finite number of corners, and the proof of Lemma \ref{lem:OrthoBoundary} guarantees that for $i\in \{1,2\}$ we again have $\partial_0 S_r \bot_{u^*g} \partial_i S_r$.

Recall that $J$-curves $u:S\to M$ of Type 2 need not be proper near the finite set $L_1\cap L_2$. In order to proceed, we will need the following technical result.

\begin{lemma}\label{lem:SmallTermsVanish}
Let $(M,J,g, L_1,L_2)$ be as above, and let $u:S\to M$ be a $J$-curve of Type 2. Then there exists a decreasing sequence $\epsilon_k\to 0$ of positive numbers for which $\partial S_{\epsilon_k}^\beta$ has the structure of a smooth compact one-dimensional manifold with boundary, and
\begin{equation*}
\epsilon_k\int_{\partial S_{\epsilon_k}^\beta} 1 \to 0.
\end{equation*}
\end{lemma}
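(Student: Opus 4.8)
The plan is to extract the sequence $\epsilon_k$ from an integrability/finite-area statement via a pigeonhole (averaging) argument. First I would observe that $u:S\to M$ of Type 2 has $\Area_{u^*g}(S)<\infty$, so the coarea formula applied to $\beta\circ u$ (which is legitimate once we restrict to any precompact region $u^{-1}(\mathcal{B}_{r_0}(p)\setminus\mathcal{B}_\delta(p))$, where $u$ is proper) gives
\begin{equation*}
\int_0^{r_0}\Big(\int_{\partial S_\tau^\beta}\|\nabla_S\beta\|^{-1}\Big)\,d\tau = \Area_{u^*g}\big(S_{r_0}\big)<\infty,
\end{equation*}
and since $\|\nabla_S\beta\|\le 1$ we get the cruder bound $\int_0^{r_0}\big(\int_{\partial S_\tau^\beta}1\big)\,d\tau<\infty$, where for almost every $\tau\in(0,r_0)$ the value $\tau$ is a regular value of $\beta\circ u\big|_S$ (by Sard) and hence, by the discussion preceding this lemma together with Lemma \ref{lem:OrthoBoundary}, $S_\tau^\beta$ is a smooth compact manifold-with-corners and $\partial S_\tau^\beta$ is a smooth compact one-dimensional manifold with boundary. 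Writing $L(\tau):=\int_{\partial S_\tau^\beta}1$ for the $\mathcal{H}^1$-length of this slice, the content to be proved is that $\tau L(\tau)$ fails to stay bounded away from zero — more precisely that $\liminf_{\tau\to 0}\tau L(\tau)=0$.

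Next I would argue by contradiction: suppose there were $c>0$ and $\rho>0$ with $\tau L(\tau)\ge c$ for all regular $\tau\in(0,\rho)$. Then $L(\tau)\ge c/\tau$ on a full-measure subset of $(0,\rho)$, so
\begin{equation*}
\int_0^\rho L(\tau)\,d\tau \ge \int_0^\rho \frac{c}{\tau}\,d\tau = +\infty,
\end{equation*}
contradicting the finiteness $\int_0^{r_0}L(\tau)\,d\tau\le \Area_{u^*g}(S_{r_0})<\infty$ established above. Hence $\liminf_{\tau\to 0}\tau L(\tau)=0$, and one can therefore pick a decreasing sequence $\epsilon_k\to 0$ of regular values of $\beta\circ u\big|_S$ with $\epsilon_k L(\epsilon_k)\to 0$; by the paragraph above each such $\epsilon_k$ automatically gives $\partial S_{\epsilon_k}^\beta$ the required smooth compact one-dimensional structure, which completes the proof.

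The main obstacle — and the only point requiring genuine care rather than routine measure theory — is justifying the coarea formula and the finiteness of $\int_0^{r_0}L(\tau)\,d\tau$ in the Type 2 setting, since $u$ need not be proper near $L_1\cap L_2$ and $S$ may be noncompact. I would handle this by exhausting $S_{r_0}\setminus\{p\}$ by the precompact sets $u^{-1}\big(\overline{\mathcal{B}_{r_0}(p)}\setminus \mathcal{B}_{1/j}(p)\big)$ (compact by property (3) in Definition \ref{def:Type2Map}, since $\overline{\mathcal{B}_{r_0}(p)}\setminus\mathcal{B}_{1/j}(p)$ is a compact subset of $M\setminus(L_1\cap L_2)$ once $r_0<\dist(p,(L_1\cap L_2)\setminus\{p\})$, which we may assume by shrinking $r_0$), applying the smooth coarea formula on each, and letting $j\to\infty$ using the monotone convergence theorem together with the a priori area bound $\Area_{u^*g}(S)<\infty$. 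The orthogonality of the boundary pieces from Lemma \ref{lem:OrthoBoundary} (extended to Type 2 as remarked in the text) is what ensures that for regular $\tau$ the corners of $S_\tau^\beta$ are genuine transverse corners, so that $\partial S_\tau^\beta$ is a smooth compact $1$-manifold with boundary rather than something more singular.
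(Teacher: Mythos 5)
Your proof is correct and follows essentially the same route as the paper: both combine the coarea formula, the finite-area hypothesis on Type 2 curves, and the fact that the naive lower bound $\epsilon\int_{\partial S_\epsilon^\beta}\|\nabla_S\beta\|^{-1}\geq\delta$ would force $\int_0^{\epsilon_0}\delta/\epsilon\,d\epsilon=\infty$, contradicting finiteness of the area. The only cosmetic difference is that the paper runs the contradiction directly with $\int_{\partial S_\epsilon}\|\nabla_S\beta\|^{-1}$ (noting $\|\nabla_S\beta\|\leq 1$ only to reduce the stated conclusion to this), whereas you first pass to $L(\tau)=\int_{\partial S_\tau}1$; your extra remarks on justifying the coarea formula away from $L_1\cap L_2$ via exhaustion are a welcome addition, not a departure.
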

\begin{proof}
We begin by recalling that $\|\nab_M\beta\|\equiv 1$, $\nab_S \beta = (\nab_M \beta)^\top$, and therefore  $\int_{\partial S_\epsilon} \|\nab_S  \beta\|^{-1} \geq \int_{\partial S_\epsilon} 1 $, whenever $\epsilon$ is a regular value of $\beta\circ u$.  Consequently, to prove Lemma \ref{lem:SmallTermsVanish}, it is sufficient to prove the existence of a
sequence $\epsilon_k\to 0$ of positive numbers for which $\partial S_{\epsilon_k}^\beta$ has the structure of a smooth compact one-dimensional manifold with boundary, and
\begin{equation}\label{eq:SequenceWithDecay}
\epsilon_k\int_{\partial S_{\epsilon_k}^\beta} \|\nab_S \beta\|^{-1} \to 0.
\end{equation}
To that end, we suppose not.  Observe that the regular values of $\beta\circ u$ are open and dense in $[0,r]$, so if the result does not hold, then it must be the case that there exist $\delta,\epsilon_0>0$ such that for almost every $\epsilon\in [0,\epsilon_0]$ we have
\begin{equation}\label{eq:ElementaryCoercive}
\epsilon\int_{\partial S_\epsilon^\beta} \|\nab_S \beta\|^{-1} \geq \delta>0.
\end{equation}
By applying the coarea formula and inequality (\ref{eq:ElementaryCoercive}), which holds for almost every $\epsilon$, we find
\begin{align*}
\Area_{u^*g}(S_{\epsilon_0}^\beta)&=
\int_{S_{\epsilon_0}^\beta}1 \;\; =\;\; \int_0^{\epsilon_0}\Big(\int_{\partial S_\epsilon^\beta}\|\nab_S \beta\|^{-1}\Big) d\epsilon \\
&\geq \int_0^{\epsilon_0} \frac{\delta}{\epsilon}d\epsilon\;\;=\;\;\infty.
\end{align*}
However, $S$ has finite $u^*g$-area, which provides the desired contradiction.  This completes the proof of Lemma \ref{lem:SmallTermsVanish}.
\end{proof}

We now prove the analogue of Lemma \ref{lem:IntegrationByParts0} for the Lagrangian boundary case. Again, it is essentially just integration by parts while carefully managing the boundary terms.

\begin{lemma}\label{lem:IntegrationByParts1and2}
Let $u:S\to M$ be a $J$-curve  either of Type 1 into $(M,J,g,L_1)$  or else of Type 2 into $(M,J,g,L_1,L_2)$ where $L_1$ and $L_2$ transversely intersecting totally geodesic Lagrangians as above. Next suppose $f\in C^2\cap W^{2,\infty}(S,\R)$.  Furthermore, letting $\nu \in \Gamma( TS)\big|_{\partial S_r}$ denote an outward pointing $u^*g$-unit normal vector, and using the decompositions $\partial S_r = \partial_0 S_r \cup\partial_1 S_r$ in the Type 1 case and $\partial S_r = \partial_0 S_r \cup\partial_1 S_r \cup\partial_2 S_r$ in the Type 2 case,
we assume that $\la \nab_S f, \nu\ra\big|_{\partial_i S_r} =0$ for $i=1,2$.  Then for every regular value $r$ of $\beta=\beta\circ u$ we have
\begin{equation*}
\int_{S_r}\!\!f\Delta_S \beta^{2}
=\int_{S_r}\!\! (\beta^2-r^2)\Delta_S f
+2r\int_{\partial_0 S_r}\!\!\!\!  f\| \nabla_S \beta\|.
\end{equation*}
\end{lemma}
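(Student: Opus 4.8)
The plan is to mimic the proof of Lemma \ref{lem:IntegrationByParts0}, applying the divergence theorem (integration by parts) on the manifold-with-corners $S_r$, but now keeping careful track of the three boundary components $\partial_0 S_r$, $\partial_1 S_r$, and (in the Type 2 case) $\partial_2 S_r$. Concretely, I would start from the Green-type identity
\begin{equation*}
\int_{S_r} f\Delta_S \beta^2 = \int_{S_r} \beta^2 \Delta_S f + \int_{\partial S_r}\big(f\la\nab_S\beta^2,\nu\ra - \beta^2\la\nab_S f,\nu\ra\big),
\end{equation*}
valid for $r$ a regular value of $\beta\circ u$, so that $S_r$ is a smooth compact manifold with boundary and finitely many corners (the corners form a measure-zero set for the boundary integral and hence may be ignored). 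Here $\nu\in\Gamma(TS)|_{\partial S_r}$ is the outward unit normal and I use $\nab_S\beta^2 = 2\beta\nab_S\beta$.

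Next I would analyze each boundary piece. On $\partial_0 S_r$ the argument is identical to Lemma \ref{lem:IntegrationByParts0}: since $\beta|_{\partial S_r}\equiv r$, the factor $\beta^2=r^2$ pulls out, and on $\partial_0 S_r$ the outward normal $\nu$ is parallel to $\nab_S\beta$ (because $\beta$ is precisely the defining function for $S_r$ near $\partial_0 S_r$ — there is no Lagrangian constraint interfering), so $\la\nab_S\beta,\nu\ra = \|\nab_S\beta\|$, giving the term $2r\int_{\partial_0 S_r} f\|\nab_S\beta\|$. On each $\partial_i S_r$ with $i\in\{1,2\}$, the hypothesis $\la\nab_S f,\nu\ra|_{\partial_i S_r}=0$ kills the $\beta^2\la\nab_S f,\nu\ra$ contribution, and the crucial geometric input is equation (\ref{eq:GradBetaPerpToOutNormal}) together with Lemma \ref{lem:OrthoBoundary}: since $L_i$ is totally geodesic and contains $p$, the radial gradient $\nab_M\beta$ is tangent to $L_i$, hence $\la\nu,\nab_S\beta\ra=0$ along $\partial_i S_r$, so the $f\la\nab_S\beta^2,\nu\ra = 2\beta f\la\nab_S\beta,\nu\ra$ term also vanishes there. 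Thus only the $\partial_0 S_r$ contribution survives from the boundary. Finally, on the interior bulk term I would integrate by parts once more using $\beta^2 = (\beta^2-r^2)+r^2$: writing $\int_{S_r}\beta^2\Delta_S f = \int_{S_r}(\beta^2-r^2)\Delta_S f + r^2\int_{S_r}\Delta_S f$ and applying the divergence theorem to $\int_{S_r}\Delta_S f = \int_{\partial S_r}\la\nab_S f,\nu\ra$, which by the same case analysis ($\la\nab_S f,\nu\ra=0$ on $\partial_1 S_r\cup\partial_2 S_r$) reduces to $\int_{\partial_0 S_r}\la\nab_S f,\nu\ra$ — but this piece is absorbed/cancelled exactly as in Lemma \ref{lem:IntegrationByParts0} since there $\beta^2=r^2$ and the original Green identity already produced a matching $-r^2\int_{\partial_0 S_r}\la\nab_S f,\nu\ra$. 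Collecting terms yields the stated identity.

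The one genuinely new subtlety, and the step I expect to be the main obstacle, is justifying the integration by parts in the Type 2 case, where $S$ (and a priori $S_r$) may be non-compact near the finite set $L_1\cap L_2$ and $u$ need not be proper there. For a fixed regular value $r$ with $\mathcal{B}_r(p)$ a small ball around a single intersection point $p\in L_1\cap L_2$, $u^{-1}(\mathcal{B}_r(p))$ could still accumulate at $p$'s preimage. The resolution is to exhaust $S_r$ by the sets $S_r\setminus S_{\epsilon_k}^\beta$ for the sequence $\epsilon_k\to 0$ furnished by Lemma \ref{lem:SmallTermsVanish}, apply the divergence theorem on each (now genuinely compact) piece, and show the extra inner boundary contribution over $\partial S_{\epsilon_k}^\beta$ goes to zero. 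That contribution is controlled by $\big|\int_{\partial S_{\epsilon_k}}(2\beta f\la\nab_S\beta,\nu\ra - \beta^2\la\nab_S f,\nu\ra)\big| \le C(\|f\|_{W^{1,\infty}})\,\epsilon_k\int_{\partial S_{\epsilon_k}^\beta}1$ (using $\beta=\epsilon_k$ there, $\|\nab_S\beta\|\le 1$, and the $W^{2,\infty}$ bound on $f$ which bounds $\|\nab_S f\|$), and likewise the $r^2\int_{\partial S_{\epsilon_k}}\la\nab_S f,\nu\ra$ piece is bounded by $C\int_{\partial S_{\epsilon_k}^\beta}1 \le C\epsilon_k^{-1}\cdot\epsilon_k\int_{\partial S_{\epsilon_k}^\beta}1$; wait — this last bound is too weak, so I would instead handle the $\int_{S_r}\Delta_S f$ reorganization differently in the Type 2 case, keeping the $(\beta^2-r^2)$ grouping from the outset (i.e. apply Green's identity to $f$ and $(\beta^2-r^2)$ rather than to $f$ and $\beta^2$), so that every surviving inner-boundary integrand carries a factor of $\beta=\epsilon_k$ or $(\beta^2-r^2)$ which vanishes at $\epsilon_k\to 0$ at the right rate, and then Lemma \ref{lem:SmallTermsVanish} closes the argument. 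In the Type 1 case (compact domain) this issue does not arise and the proof is the direct integration by parts described above.
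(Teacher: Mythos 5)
Your overall strategy — exhaust $S_r$ by the compact pieces $S_{r,\epsilon_k}:=\overline{S_r\setminus S_{\epsilon_k}^\beta}$, integrate by parts there, kill the Lagrangian boundary contributions via the hypothesis $\la\nab_S f,\nu\ra|_{\partial_i S_r}=0$ and equation (\ref{eq:GradBetaPerpToOutNormal}), and send the inner boundary contributions to zero via Lemma \ref{lem:SmallTermsVanish} — is exactly the paper's approach, and you have correctly put your finger on the one delicate step: after pulling $\beta^2=r^2$ out of the $\partial_0 S_r$ integral, one is left needing $\int_{\partial_0 S_r}\la\nab_S f,\nu\ra=\int_{S_r}\Delta_S f$, whose inner-boundary error term $r^2\int_{\partial_0 S_{\epsilon_k}^\beta}\la\nab_S f,\nu\ra$ is \emph{not} controlled by Lemma \ref{lem:SmallTermsVanish}, since that lemma only gives $\epsilon_k\int_{\partial S_{\epsilon_k}^\beta}1\to 0$, i.e.\ $\int_{\partial S_{\epsilon_k}^\beta}1=o(\epsilon_k^{-1})$.

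However, your proposed fix does not close this gap. You claim that replacing $\beta^2$ by $\beta^2-r^2$ from the outset ensures ``every surviving inner-boundary integrand carries a factor of $\beta=\epsilon_k$ or $(\beta^2-r^2)$ which vanishes at $\epsilon_k\to 0$,'' but this is false: on $\partial_0 S_{\epsilon_k}^\beta$ one has $\beta^2-r^2=\epsilon_k^2-r^2\to -r^2\neq 0$. Writing Green's identity for the pair $(f,\beta^2-r^2)$ on $S_{r,\epsilon_k}$ therefore produces the inner-boundary term $-(\epsilon_k^2-r^2)\int_{\partial_0 S_{\epsilon_k}^\beta}\la\nab_S f,\nu\ra$, which in the limit is exactly the same $r^2$-weighted term as before; nothing has been gained by the reorganization. (The paper itself dispatches this step with the phrase ``again arguing via limits and Lemma \ref{lem:SmallTermsVanish}'' and does not spell out why this particular term vanishes, so you were right to worry about it — but the claimed vanishing of $(\beta^2-r^2)$ on the inner boundary is an error, and an honest argument here needs either a strengthening of Lemma \ref{lem:SmallTermsVanish} giving $\int_{\partial S_{\epsilon_k}^\beta}1\to 0$ along some subsequence, or some other use of the $W^{2,\infty}$ hypothesis on $f$ to control $\int_{\partial_0 S_{\epsilon_k}^\beta}\la\nab_S f,\nu\ra$ directly.)

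The remainder of your proof — the Type 1 compact case, the identification of the vanishing boundary pieces on $\partial_1 S_r\cup\partial_2 S_r$, and the treatment of the $\partial_0 S_r$ contribution — is correct and matches the paper.
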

\begin{proof}
We prove the case in which $u:S\to M$ is Type 2; the Type 1 case is simpler since the domain is compact, and the result in that case is easily deduced.  We begin by letting $\epsilon_k\to 0$ be the sequence guaranteed by Lemma \ref{lem:SmallTermsVanish}, we define $S_{r,\epsilon_k}$ to be the closure of $S_{r}\setminus S_{\epsilon_k}$ and integrate by parts twice to find
\begin{align}
\int_{S_r}\!\!f\Delta_S \beta^{2}
&=\lim_{k\to\infty}\int_{S_{r,\epsilon_k}}\!\!f\Delta_S \beta^{2}\notag\\
&=\lim_{k\to\infty}\Big(\int_{S_{r,\epsilon_k}}\!\! \beta^2\Delta_S f-\int_{\partial S_{r,\epsilon_k}}\!\!\!\!\beta^{2}\la
\nabla_S f,\nu\ra
+\int_{\partial S_{r,\epsilon_k}}\!\!\!\! 2\beta f\la \nabla_S \beta,\nu \ra\Big)\notag \\
&=\int_{S_r}\!\! \beta^2\Delta_S f-\int_{\partial_0 S_r}\!\!\!\!\beta^{2}\la
\nabla_S f,\nu\ra
+\int_{\partial_0 S_r}\!\!\!\! 2\beta f\la \nabla_S \beta,\nu \ra\notag\\
&\qquad-\lim_{k\to\infty} \int_{\partial_0 S_{\epsilon_k}}\!\!\!\!\beta^{2}\la
\nabla_S f,\nu\ra
+\lim_{k\to\infty}\int_{\partial_0 S_{\epsilon_k}}\!\!\!\! 2\beta f\la \nabla_S \beta,\nu \ra,\notag\\
&=\int_{S_r}\!\! \beta^2\Delta_S f-\int_{\partial_0 S_r}\!\!\!\!\beta^{2}\la
\nabla_S f,\nu\ra
+\int_{\partial_0 S_r}\!\!\!\! 2\beta f\la \nabla_S \beta,\nu \ra\label{eq:IntByPartsComputation}
\end{align}
where to obtain the first inequality, we recall that $S$ is a finite measure space and $f\in L^\infty$ and $\Delta_S \beta^2 \in L^\infty$ (this latter statement follows from Lemma \ref{lem:LaplacianEstimate}). To obtain the third equality, we made use of the fact that by assumption $\la \nab_S f, \nu\ra\big|_{\partial_i S_r^\beta} =0$ for $i=1,2$, and by the discussion at the beginning of this section, in particular equation (\ref{eq:GradBetaPerpToOutNormal}), we have $\la \nab_S \beta, \nu\ra\big|_{\partial S_{r,\epsilon_k}^\beta}=0$ for $i=1,2$; to obtain the final equality, we have made use of the fact that the functions $\beta \|\nab_S f\|$ and $f$ are in $C^0\cap L^\infty (S,\R)$, $\beta\big|_{\partial_0 S_{\epsilon_k}^\beta}\equiv \epsilon_k$, and Lemma \ref{lem:SmallTermsVanish}.  Finally, pulling the $\beta$ terms outside the boundary integrals, observing that $\nu\big|_{\partial_0 S_r^\beta}=\|\nab_S \beta\|^{-1}\nab_S \beta$, and integrating by parts once more on the second term in (\ref{eq:IntByPartsComputation}) (and again arguing via limits and Lemma \ref{lem:SmallTermsVanish}) yields
\begin{align*}
\int_{S_r}\!\!f\Delta_S \beta^{2}
=\int_{S_r}\!\! \beta^2\Delta_S f-r^2\int_{ S_r}\!\!\!\!
\Delta_S f
+2r\int_{\partial_0 S_r}\!\!\!\!  f\| \nabla_S \beta\|,
\end{align*}
which is precisely the desired result.
\end{proof}

\begin{theorem}[Monotonicity]\label{thm:MonotonicityType1and2}
Fix a constant $C>0$, and let $u:S\to M$ be a $J$-curve  either of Type 1 into $(M,J,g,L_1)$  or else of Type 2 into $(M,J,g,L_1,L_2)$, where $L_1$ and $L_2$ are compact embedded totally geodesic Lagrangians for which $\partial L_i=L_i\cap \partial M$.  If there are two such Lagrangians, we also assume they have finite transverse intersections on the interior of $M$. Suppose further that
\begin{equation*}
\sup_{p\in M} |K_{sec}(p)| \leq {\textstyle\frac{1}{4}}C^2\qquad\text{and}\qquad \sup_{\substack{e\in TM\\\|e\|=1}}\|Q(e,e)+Q(Je,Je) \| \leq {\textstyle\frac{1}{2}}C;
\end{equation*}
here $|K_{sec}(p)|$ is defined as in (\ref{eq:DefOfAbsKsec}) and $Q$ is the tensor defined in Lemma \ref{lem:InhomogeneousMeanCurvatureEq}.
Fix $p\in L_1$ or $p\in L_1\cap L_2$ as appropriate, and fix $b\in \R$ such that $0< b <\frac{1}{2}\min\big(\inj(p),C^{-1}\big)$ and such that $\mathcal{B}_b(p)\cap L_i$ is connected for each $i$.  Lastly, suppose $f\in C^2\cap W^{2,\infty}(S,\R)$ with $f\geq 0$ and $\Delta_S f\geq -\lambda b^2 f$ for some $\lambda>0$. Then for any $a\in [0,b]$ the following holds
\begin{equation*}
e^{\frac{\lambda a}{2b}+2 Ca}a^{-2 }\int_{S_a}f\leq e^{\frac{\lambda}{2}+2 Cb}b^{-2
}\int_{S_b}f,
\end{equation*}
where $S_r := u^{-1}\big(\mathcal{B}_r(p)\big)$.
\end{theorem}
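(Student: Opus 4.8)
The plan is to follow the same structure as the proof of Theorem \ref{thm:MonotonicityType0}, adapting the integration-by-parts step to accommodate the Lagrangian boundary conditions. Concretely, I set
\begin{equation*}
F(\tau)=\int_{S_\tau}f,\qquad G(\tau)=e^{\frac{\lambda\tau}{2b}}e^{2C\tau}\tau^{-2},
\end{equation*}
and aim to show that $\tau\mapsto G(\tau)F(\tau)$ is monotone increasing on $(0,b)$. As before, by Lemma \ref{lem:AnalysisLemma1} it suffices to prove $(GF)'(\tau_0)\geq 0$ at every regular value $\tau_0$ of $\beta=\beta\circ u$, since the set of such values is open and dense and $F$ is monotone increasing. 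The coarea formula and its differentiated consequence (the analogues of (\ref{eq:coareaFormula0}) and (\ref{eq:coareaConsequence0})) continue to hold verbatim, so $\frac{d}{d\tau}\big|_{\tau=\tau_0}\int_{S_\tau}f=\int_{\partial S_{\tau_0}}f\|\nab_S\beta\|^{-1}$.

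The one genuine change is that, in carrying out the computation that bounds $\big(e^{-\frac{\lambda\tau}{2b}}e^{-2C\tau}\tau^3\big)(GF)'(\tau)$ from below by $0$, one needs the integration-by-parts identity to produce boundary terms only over $\partial_0 S_\tau$. This is exactly what Lemma \ref{lem:IntegrationByParts1and2} provides, \emph{but} it requires the hypothesis $\la\nab_S f,\nu\ra\big|_{\partial_i S_\tau}=0$ for $i=1,2$, which is not assumed in the statement of Theorem \ref{thm:MonotonicityType1and2}. I would handle this by reducing to that case: since $f\in C^2\cap W^{2,\infty}$ and $\partial_i S_\tau$ meets $\partial_0 S_\tau$ orthogonally (Lemma \ref{lem:OrthoBoundary} and the remark following it for Type 2), one can modify $f$ near the Lagrangian-type boundary — e.g. by a reflection/cutoff across $\partial_i S$ using the totally geodesic structure — to arrange the Neumann condition $\la\nab_S f,\nu\ra|_{\partial_i S_\tau}=0$ without destroying $f\geq 0$ or worsening the inequality $\Delta_S f\geq-\lambda b^2 f$ beyond a controlled error, or alternatively observe that the doubling of $S$ across $\partial_1 S$ (resp.\ along both Lagrangians for Type 2) turns the problem into an interior-type monotonicity on the doubled curve, to which Theorem \ref{thm:MonotonicityType0}'s argument applies directly. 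With the boundary terms thus controlled, the chain of (in)equalities is identical to the Type 0 case: expand $\Delta_S\beta^2$ via $\Delta_S\beta^2=2\|\nab_S\beta\|^2+2\beta\Delta_S\beta$, use (\ref{eq:JCurveLaplacian}), apply Lemma \ref{lem:IntegrationByParts1and2}, invoke Lemma \ref{lem:LaplacianEstimate} to absorb the $4C\tau+\Delta_S\beta^2-4$ term, use $1-\|\nab_S\beta\|^2=\|\nab_M\beta^\bot\|^2\geq 0$ to discard the $\partial S_\tau$ integral (which is now over $\partial_0 S_\tau$ only and has the right sign), and finally use $\Delta_S f\geq-\lambda b^2 f$ together with $\beta\leq\tau$ on $S_\tau$ to conclude that the whole expression is bounded below by $\frac{\lambda\tau}{2b}\int_{S_\tau}\big(1-\frac{\tau}{b}+\frac{\beta^2}{b\tau}\big)f\geq 0$.

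For the Type 2 case there is an additional subtlety, namely that $S$ need not be proper near $L_1\cap L_2$, so the integrations by parts must be performed on the truncated domains $S_{r,\epsilon_k}=\cl(S_r\setminus S_{\epsilon_k})$ and then passed to the limit. This is precisely the content already packaged in Lemma \ref{lem:SmallTermsVanish} and used in the proof of Lemma \ref{lem:IntegrationByParts1and2}, so I would simply cite that lemma; the $W^{2,\infty}$ hypothesis on $f$ and the $L^\infty$ bound on $\Delta_S\beta^2$ from Lemma \ref{lem:LaplacianEstimate} are exactly what make the dominated-convergence arguments go through, and the coarea step used to differentiate $F$ only involves regular values of $\beta$ where $S_\tau$ is a genuine compact manifold with corners.

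I expect the main obstacle to be precisely the reconciliation of the stated hypotheses with Lemma \ref{lem:IntegrationByParts1and2}: the theorem as stated does not impose the Neumann-type condition $\la\nab_S f,\nu\ra|_{\partial_i S_r}=0$ that the integration-by-parts lemma needs. Making the reduction rigorous — whether by a reflection argument exploiting that each $L_i$ is totally geodesic and meets $\partial_0 S$ orthogonally, or by doubling the $J$-curve across the Lagrangian boundary and appealing to the interior case — is the step that requires real care; everything downstream is a line-by-line transcription of the Type 0 computation with $\partial S_\tau$ replaced by $\partial_0 S_\tau$. A secondary but purely bookkeeping obstacle is ensuring all the limiting arguments near $L_1\cap L_2$ in the Type 2 case are uniform in the truncation parameter, which is already handled by Lemmas \ref{lem:SmallTermsVanish} and \ref{lem:IntegrationByParts1and2}.
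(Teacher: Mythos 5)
Your overall plan coincides with the paper's proof: follow the Type~0 argument verbatim, replacing $\partial S_r$ by $\partial_0 S_r$ and Lemma~\ref{lem:IntegrationByParts0} by Lemma~\ref{lem:IntegrationByParts1and2}, and using Lemma~\ref{lem:OrthoBoundary} to justify the coarea formula. The difference is that you noticed what the paper's one-paragraph proof silently passes over, namely that Lemma~\ref{lem:IntegrationByParts1and2} carries the hypothesis $\la\nab_S f,\nu\ra|_{\partial_i S_r}=0$ for $i=1,2$, which does not appear in the statement of Theorem~\ref{thm:MonotonicityType1and2}. That observation is correct, and it is a genuine mismatch, not a cosmetic one.

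However, neither of your proposed repairs actually closes the gap. Reflecting $f$ across $\partial_1 S$ (equivalently, doubling the curve and extending $f$ by even reflection) produces a function that is merely Lipschitz across $\partial_1 S$ unless $\la\nab_S f,\nu\ra=0$ there; when the normal derivative is nonzero, the distributional Laplacian of the reflected function acquires a singular measure $-2\la\nab_S f,\nu\ra\,d\mathcal{H}^1\!\restriction_{\partial_1 S}$, which has the wrong sign whenever $\la\nab_S f,\nu\ra>0$, so the hypothesis $\Delta_S f\geq-\lambda b^2 f$ of Theorem~\ref{thm:MonotonicityType0} fails for the doubled data. The cutoff idea fares no better: forcing the Neumann condition by multiplying by a cutoff, or by subtracting a corrector supported near $\partial_1 S$, introduces second-derivative terms of order $\ep^{-2}$ on an $\ep$-collar, and these are not absorbed by $-\lambda b^2 f$; the error is not ``controlled'' as the cutoff shrinks.

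In fact, without some condition on $\la\nab_S f,\nu\ra$ along $\partial_1 S$ the conclusion is simply false. Take $M$ a ball in $\mathbb{C}^2$ with the flat Hermitian structure, $L=\R^2=\{y^1=y^2=0\}$, $u(z)=(z,0)$ restricted to the closed upper half disc $S=\{\operatorname{Im}z\geq 0\}$, $p=0\in L$, and $f(s,t)=e^{-t}$ with $z=s+it$. Here $\Delta_S f=f>0\geq-\lambda b^2 f$ for every $\lambda>0$, $f\geq 0$, and since $K_{sec}\equiv 0$ and $Q\equiv 0$ one may take $C>0$ arbitrarily small. But $S_r$ is the open half disc of radius $r$, and a direct expansion gives $r^{-2}\int_{S_r}f=\tfrac{\pi}{2}-\tfrac{2}{3}r+O(r^2)$, which is strictly decreasing in $r$ near $0$; letting $\lambda,C\to 0$ then contradicts the claimed inequality for any fixed $0<a<b$ small. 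Note $\la\nab_S f,\nu\ra=+1$ on $\partial_1 S$.

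What actually makes the argument go is the following: tracing through Green's identity with $\la\nab_S\beta,\nu\ra=0$ on $\partial_1 S_\tau\cup\partial_2 S_\tau$ (equation~(\ref{eq:GradBetaPerpToOutNormal})), one gets
\begin{equation*}
\int_{S_\tau}f\Delta_S\beta^2=\int_{S_\tau}(\beta^2-\tau^2)\Delta_S f+2\tau\int_{\partial_0 S_\tau}f\|\nab_S\beta\|+\int_{\partial_1 S_\tau\cup\partial_2 S_\tau}(\tau^2-\beta^2)\la\nab_S f,\nu\ra,
\end{equation*}
and since $\tau^2-\beta^2\geq 0$ on $\partial_i S_\tau$, the monotonicity chain of inequalities closes as soon as $\la\nab_S f,\nu\ra\leq 0$ on $\partial_1 S\cup\partial_2 S$; the Neumann condition $\la\nab_S f,\nu\ra\equiv 0$ is the special case assumed by Lemma~\ref{lem:IntegrationByParts1and2}. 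So the correct reading of the theorem is that it requires this one-sided (or Neumann) boundary hypothesis on $f$, and the paper's proof implicitly assumes it by invoking Lemma~\ref{lem:IntegrationByParts1and2}. Your instinct to treat the mismatch as something to be reduced away was the step that fails; once you instead add the boundary condition on $f$ to the hypotheses, the rest of your outline --- coarea, Lemma~\ref{lem:LaplacianEstimate}, the $\|\nab_M\beta^\bot\|^2$ sign, the truncation via Lemma~\ref{lem:SmallTermsVanish} for Type~2 --- matches the paper line for line.
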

\begin{proof}The proof of Theorem \ref{thm:MonotonicityType1and2} is identical to that of Theorem \ref{thm:MonotonicityType0} after the following two modifications are made: first, all references to $\partial S_r$ must be replaced with references to $\partial_0 S_r$, and second, references to Lemma \ref{lem:IntegrationByParts0} must be replaced with references to Lemma \ref{lem:IntegrationByParts1and2}.  Also note that by Lemma \ref{lem:OrthoBoundary} and the discussion proceeding it, the coarea formula (\ref{eq:coareaFormula0}) and its consequence (\ref{eq:coareaConsequence0}) again hold for these curves.
\end{proof}

As in Section \ref{sec:Monotonicity0} we deduce some immediate corollaries.

\begin{corollary}\label{cor:AlternateSr1}
Theorem \ref{thm:MonotonicityType1and2} also holds in the Type 1 case when $\zeta_0\in \partial_1 S$ is fixed and the $S_r$ are alternatively defined to be the connected component of $u^{-1}\big(\mathcal{B}_{r}(p)\big)$ which contains $\zeta_0$.
\end{corollary}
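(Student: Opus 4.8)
The plan is to follow the proof of Corollary~\ref{cor:AlternateSr}, which is itself the proof of the monotonicity theorem with the family $S_\tau$ redefined. Since $\zeta_0\in\partial_1 S$ and $S_\tau$ is to contain $\zeta_0$ for all small $\tau$, necessarily $p=u(\zeta_0)\in L_1$. Write $\beta:=\beta\circ u$ with $\beta(q)=\dist_g(q,p)$, let $S_\tau$ now denote the connected component of $u^{-1}(\mathcal{B}_\tau(p))$ containing $\zeta_0$, and set $F(\tau):=\int_{S_\tau}f$ and $G(\tau):=e^{\frac{\lambda\tau}{2b}}e^{2C\tau}\tau^{-2}$. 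First I would observe that passing to the connected component of $\zeta_0$ changes neither the set $\mathcal{I}_{reg}$ of regular values of $\beta$ on $S$ (connectedness of level sets is irrelevant to whether $\tau$ is a critical value) nor the fact that the family is nested, so $F$ is still monotone increasing; and for $\tau\in\mathcal{I}_{reg}$ the set $S_\tau$ is, by properness of the Type~1 map $u$ together with the discussion preceding Lemma~\ref{lem:OrthoBoundary}, a compact smooth surface with finitely many corners whose boundary splits as $\partial_0 S_\tau\cup\partial_1 S_\tau$, with the two pieces meeting orthogonally and $\la\nabla_S\beta,\nu\ra|_{\partial_1 S_\tau}=0$ as in~(\ref{eq:GradBetaPerpToOutNormal}).

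Next I would check that the two analytic inputs used in the proof of Theorem~\ref{thm:MonotonicityType1and2} localize to the component verbatim. The coarea formula~(\ref{eq:coareaFormula0}) and its consequence~(\ref{eq:coareaConsequence0}) apply with $\partial S_\tau$ replaced by $\partial_0 S_\tau$, since restricting the coarea identity to a connected open subset of $S$ is immediate. The integration-by-parts identity of Lemma~\ref{lem:IntegrationByParts1and2} is obtained by the same two applications of the divergence theorem on $S_\tau$: the $\partial_1$-boundary terms drop out because $\la\nabla_S f,\nu\ra|_{\partial_1 S_\tau}=0$ (hypothesis) and $\la\nabla_S\beta,\nu\ra|_{\partial_1 S_\tau}=0$, while on $\partial_0 S_\tau$ one has $\nu=\|\nabla_S\beta\|^{-1}\nabla_S\beta$; note that the limiting argument via Lemma~\ref{lem:SmallTermsVanish} is not needed here, since the Type~1 domain is compact and $p$ is a genuine interior center. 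Granting these, the identical chain of manipulations and estimates from the proof of Theorem~\ref{thm:MonotonicityType0} — differentiate $GF$, apply~(\ref{eq:coareaConsequence0}), substitute the integration-by-parts identity, and invoke Lemma~\ref{lem:LaplacianEstimate} together with $\Delta_S f\geq-\lambda b^2 f$ — yields $(GF)'(\tau_0)\geq 0$ for every $\tau_0\in\mathcal{I}_{reg}$. Since $\mathcal{I}_{reg}$ is open and dense, $G\in C^1$, and $F$ is monotone increasing, Lemma~\ref{lem:AnalysisLemma1} gives that $GF$ is monotone increasing on $(0,b)$, and comparing its values at $a$ and $b$ produces the asserted inequality.

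The step I expect to require the most care is the boundary geometry of the component $S_\tau$ when its center $p=u(\zeta_0)$ lies on $L_1$: one must confirm that, for every regular value $\tau$ of $\beta$, restricting to the connected component of $\zeta_0$ introduces no boundary or corner behavior beyond the decomposition $\partial_0 S_\tau\cup\partial_1 S_\tau$ already present in Theorem~\ref{thm:MonotonicityType1and2}, and that the orthogonality relation $\la\nabla_S\beta,\nu\ra|_{\partial_1 S_\tau}=0$ survives. This is exactly what the hypothesis that $\mathcal{B}_b(p)\cap L_1$ is connected, combined with Lemma~\ref{lem:OrthoBoundary} and the observation preceding it (that critical values of $\beta|_{\partial_1 S}$ are critical values of $\beta|_S$), is designed to guarantee; once it is in place, everything downstream is a word-for-word repetition of the proof of Theorem~\ref{thm:MonotonicityType1and2}.
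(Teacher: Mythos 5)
Your proof is correct and takes essentially the approach the paper implicitly intends: the paper gives Corollary~\ref{cor:AlternateSr1} no explicit proof, but the template is Corollary~\ref{cor:AlternateSr}, and you replicate it, correctly importing the Type~1 modifications (orthogonality from Lemma~\ref{lem:OrthoBoundary} and equation~(\ref{eq:GradBetaPerpToOutNormal}), integration by parts from Lemma~\ref{lem:IntegrationByParts1and2}, and the observation that Lemma~\ref{lem:SmallTermsVanish} is unnecessary for a compact Type~1 domain) before invoking Lemma~\ref{lem:AnalysisLemma1}. Your identification of the delicate point — that passing to the connected component containing $\zeta_0\in\partial_1 S$ preserves the $\partial_0/\partial_1$ boundary decomposition, the orthogonality, and the nesting needed for monotonicity of $F$ — is exactly the right thing to check, and the checks you outline are sound.
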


\begin{corollary}[Mean Value Inequality]\label{cor:MeanValueInequality1} Assume the same hypotheses as in Theorem \ref{thm:MonotonicityType1and2}, with $u:S\to M$ a $J$-curve of Type 1.  Then for each $\epsilon>0$ there exists a positive $r_0$ depending only on $C$ such that, the following also holds:
\begin{equation}\label{eq:MeanValueInequality1}
\sum_{\zeta\in u^{-1}\big(u(\zeta_0)\big)}f(\zeta)\mu(\zeta)\leq\frac{(1+\epsilon)e^{\frac{\lambda }{2}}}{\pi
b^{2 }}\int_{S_b}f,
\end{equation}
where $\mu(\zeta):=\lim_{a\to 0} \frac{1}{\pi a^2} \Area\big(S_a(\zeta)\big)$ and $S_a(\zeta)$ is the connected component of $u^{-1}\big(\mathcal{B}_a(u(\zeta))\big)$ which contains $\zeta$.  In particular, if $\zeta_0 = u^{-1}\big(u(\zeta_0)\big)\in \partial_1 S$ and $\zeta_0$ is not a critical point of $u$, then
\begin{equation*}
f(\zeta_0)\leq \frac{2(1+\epsilon)e^{\frac{\lambda }{2}}}{\pi
b^{2 }}\int_{S_b}f.
\end{equation*}
\end{corollary}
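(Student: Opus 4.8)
The plan is to follow the proof of Corollary \ref{cor:MeanValueInequality0} closely, with Theorem \ref{thm:MonotonicityType0} replaced by Theorem \ref{thm:MonotonicityType1and2}. Put $p:=u(\zeta_0)$, which lies in $L_1$ since $\zeta_0\in\partial_1 S$, so the monotonicity estimate of Theorem \ref{thm:MonotonicityType1and2} applies with $S_r:=u^{-1}(\mathcal{B}_r(p))$. First I would choose $r_0>0$ depending only on $C$ so that $e^{2Cb}<1+\epsilon$ whenever $0<b<r_0$; for such $b$ the right-hand side of that estimate is then at most $(1+\epsilon)e^{\lambda/2}b^{-2}\int_{S_b}f$. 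It therefore remains to evaluate $\lim_{a\to 0}e^{\lambda a/2b+2Ca}a^{-2}\int_{S_a}f$; since the exponential prefactor tends to $1$, everything reduces to identifying $\lim_{a\to 0}a^{-2}\int_{S_a}f$.

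For this limit I would argue as follows. Since $u$ is generally immersed with discrete critical set, $S$ is compact, and a $J$-curve assumes each value on a discrete set, the fiber $u^{-1}(p)$ is finite, say $\{\zeta_1,\dots,\zeta_m\}$; moreover, for all sufficiently small $a$ the set $S_a$ is the disjoint union $\bigsqcup_{i=1}^m S_a(\zeta_i)$, where $S_a(\zeta_i)$ is the connected component of $S_a$ containing $\zeta_i$, and each $S_a(\zeta_i)$ collapses to $\zeta_i$ as $a\to 0$. Hence $a^{-2}\int_{S_a}f=\sum_{i=1}^m a^{-2}\int_{S_a(\zeta_i)}f$, and for each $i$ one estimates
\begin{equation*}
\Big| a^{-2}\!\!\int_{S_a(\zeta_i)}\!\! f \;-\; f(\zeta_i)\, a^{-2}\Area\big(S_a(\zeta_i)\big)\Big| \;\leq\; a^{-2}\Area\big(S_a(\zeta_i)\big)\cdot\sup_{S_a(\zeta_i)}\big|f-f(\zeta_i)\big|,
\end{equation*}
which tends to $0$ by continuity of $f$ at $\zeta_i$ together with $a^{-2}\Area(S_a(\zeta_i))\to\pi\mu(\zeta_i)$ (the definition of $\mu$, which is well defined via the local normal form $u(z)=(z^k,0,\dots,0)+O(|z|^{k+1})$). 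Summing the finitely many terms gives $\lim_{a\to 0}a^{-2}\int_{S_a}f=\pi\sum_{i=1}^m f(\zeta_i)\mu(\zeta_i)$; passing to the limit in the monotonicity estimate and dividing by $\pi$ then yields (\ref{eq:MeanValueInequality1}).

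For the final assertion, assume in addition that $\zeta_0=u^{-1}(p)\in\partial_1 S$ is a regular point of $u$. Then near $\zeta_0$ the domain $S$ is a smooth manifold with boundary and $u$ restricts to an embedding. By the discussion around (\ref{eq:GradBetaPerpToOutNormal}), the outward $u^*g$-unit normal $\nu$ to $\partial_1 S$ at $\zeta_0$ satisfies $Ju_*\nu\in T_{u(\zeta_0)}(\partial_1 S)\subset T_{u(\zeta_0)}L_1$, hence $u_*\nu\in T_{u(\zeta_0)}L_1^\bot$, so the $2$-plane $\Span(u_*\nu,Ju_*\nu)$ onto which $u_*$ maps $T_{\zeta_0}S$ meets $L_1$ transversally in the line $\R\,Ju_*\nu$. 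Since $u(\partial_1 S)\subset L_1$, it follows that $u(S)$ is, to leading order near $p$, one of the two geodesic half-discs of this plane, whence $\Area(S_a(\zeta_0))=\tfrac12\pi a^2+o(a^2)$ and $\mu(\zeta_0)=\tfrac12$. Substituting $\mu(\zeta_0)=\tfrac12$ into the inequality just proved gives $\tfrac12 f(\zeta_0)\leq\frac{(1+\epsilon)e^{\lambda/2}}{\pi b^2}\int_{S_b}f$, which is the claimed estimate.

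The exponential bookkeeping is entirely routine, and the monotonicity estimate is quoted wholesale. The one place requiring genuine care is the limit computation: one must justify that small metric balls pull back, as $a\to 0$, to a disjoint union of components each collapsing to a single point of the fiber (which is where finiteness of the fiber and well-definedness of $\mu$ via the local normal form enter), and, for the boundary point, that the relevant area density equals $\tfrac12$ rather than some larger number — which is precisely where the totally geodesic Lagrangian boundary condition is used, through the relation $Ju_*\nu\in T(\partial_1 S)$.
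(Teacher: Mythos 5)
Your proof is correct and follows essentially the same route the paper intends (the paper does not spell out the proof of this corollary, but by analogy with the proof of Corollary~\ref{cor:MeanValueInequality0} it is: pick $r_0$ so $e^{2Cb}<1+\epsilon$, take $a\to 0$ in the monotonicity inequality, and identify the resulting limit). Your treatment of the $a\to 0$ limit is in fact slightly more careful than the paper's one-line argument: the paper invokes Corollary~\ref{cor:AlternateSr} (connected-component version), which literally yields only the single term $f(\zeta_0)\mu(\zeta_0)$ on the left, whereas you work directly with the full preimage $S_a = u^{-1}(\mathcal{B}_a(p))$ of Theorem~\ref{thm:MonotonicityType1and2}, decompose it for small $a$ into the disjoint components $S_a(\zeta_i)$ over the finite fiber, and sum the densities -- which is what actually produces the stated $\sum_{\zeta\in u^{-1}(u(\zeta_0))} f(\zeta)\mu(\zeta)$. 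You also supply the one step that is genuinely new relative to the interior case, namely $\mu(\zeta_0)=\tfrac12$ at a regular point of $\partial_1 S$, and you correctly locate the input: $\nu$ outward and $u^*g$-normal to $\partial_1 S$ forces $u_*\nu\perp_g L_1$ and $Ju_*\nu\in T L_1$, so the tangent plane of the image at $p$ meets $L_1$ in a line and the image is locally a geodesic half-disc, giving area $\tfrac12\pi a^2 + o(a^2)$. One notational slip: you wrote $Ju_*\nu\in T_{u(\zeta_0)}(\partial_1 S)$; since $\partial_1 S\subset S$ lives in the domain, this should read $Ju_*\nu\in u_*T_{\zeta_0}(\partial_1 S)\subset T_{u(\zeta_0)}L_1$ -- the intent is clear and the substance is right.
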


\begin{corollary}[Monotonicity of Area]  \label{cor:MonotonicityOfArea1}
Fix the same assumptions as in Theorem \ref{thm:MonotonicityType1and2} with  a $J$-curve $u:S\to \R$ of Type 1, fix  $\zeta_0\in \partial_1 S$ a regular point of $u$, and define $S_r(\zeta_0)$ as in Corollary \ref{cor:MeanValueInequality1}.  Then for each $\epsilon>0$ there exists an $r_0>0$ depending on $C$ such that whenever $0<b<r_0$ the following holds:
\begin{equation*}
\frac{ 1}{a^{2}}\Area_{u^*g}(S_a(\zeta_0))\leq \frac{(1+\epsilon)}{b^{2 }}\Area_{u^*g}(S_b(\zeta_0)).
\end{equation*}
Or in more common form:
\begin{equation*}
(1+\epsilon)^{-1}\frac{\pi b^2}{2}\leq \Area_{u^*g}(S_b(\zeta_0)).
\end{equation*}
\end{corollary}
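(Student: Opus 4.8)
The plan is to mimic the proof of Corollary~\ref{cor:MonotonicityOfArea0}, substituting the Lagrangian--boundary monotonicity result for its interior counterpart. First I would take $f\equiv 1$; then $f\in C^2\cap W^{2,\infty}(S,\R)$, $f\geq 0$, $\nabla_S f\equiv 0$ (so the Neumann--type condition along $\partial_1 S_r$ needed for Lemma~\ref{lem:IntegrationByParts1and2} holds trivially), and $\Delta_S f=0\geq -\lambda b^2 f$ for every $\lambda>0$. Applying Corollary~\ref{cor:AlternateSr1} (the connected--component version of Theorem~\ref{thm:MonotonicityType1and2}) with this $f$, and then letting $\lambda\to 0$, yields
\begin{equation*}
e^{2Ca}a^{-2}\Area_{u^*g}\big(S_a(\zeta_0)\big)\leq e^{2Cb}b^{-2}\Area_{u^*g}\big(S_b(\zeta_0)\big)
\end{equation*}
for all $0<a<b$. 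As in the proof of Corollary~\ref{cor:MeanValueInequality0}, given $\epsilon>0$ one chooses $r_0>0$ (depending only on $C$, and small enough that the hypotheses of Theorem~\ref{thm:MonotonicityType1and2} are met) so that $e^{2Cb}<1+\epsilon$ whenever $0<b<r_0$; since $e^{2Ca}\geq 1$, this immediately gives the first displayed inequality of the corollary.

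For the second (``more common'') form I would let $a\to 0$ in the inequality just obtained. The only point requiring an argument is the claim that $\lim_{a\to 0}a^{-2}\Area_{u^*g}\big(S_a(\zeta_0)\big)=\pi/2$ when $\zeta_0\in\partial_1 S$ is a regular point of $u$; this is precisely where the factor $\tfrac12$ (versus $\pi$ in the interior case) enters. Since $u$ is an immersion near $\zeta_0$ and maps $\partial_1 S$ into the totally geodesic Lagrangian $L$, working in geodesic normal coordinates centered at $u(\zeta_0)$ the image of a small neighborhood of $\zeta_0$ is $C^1$--close to the half-plane bounded by (a piece of) $T_{u(\zeta_0)}L$, so $S_a(\zeta_0)=u^{-1}\big(\mathcal{B}_a(u(\zeta_0))\big)$ is, up to an error $o(a^2)$, a Euclidean half-disk of radius $a$, whose area is $\tfrac12\pi a^2$. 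Equivalently, the density $\mu(\zeta_0)$ appearing in Corollary~\ref{cor:MeanValueInequality1} equals $\tfrac12$ at a regular boundary point. Passing to the limit $a\to 0$ then gives $\tfrac{\pi}{2}\leq (1+\epsilon)b^{-2}\Area_{u^*g}\big(S_b(\zeta_0)\big)$, which is the asserted inequality.

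The substantive analytic content --- establishing monotonicity with the correct boundary terms and exploiting the orthogonal intersection structure at $\partial_0 S_r\cap\partial_1 S_r$ --- has already been carried out in Theorem~\ref{thm:MonotonicityType1and2} and Lemma~\ref{lem:IntegrationByParts1and2}, so the only genuine obstacle here is the local area-density computation at the Lagrangian boundary point. That in turn reduces to the standard fact that a $J$-curve with totally geodesic Lagrangian boundary has a well-defined tangent half-plane (with boundary in $T_{u(\zeta_0)}L$) at a regular boundary point, after which the half-disk comparison and the resulting limit are routine.
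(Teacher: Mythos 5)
Your proof is correct and follows essentially the same route the paper indicates (the corollaries after Theorem~\ref{thm:MonotonicityType1and2} are left as ``immediate'' analogues of the interior case): take $f\equiv 1$ in Corollary~\ref{cor:AlternateSr1}, let $\lambda\to 0$, absorb the exponential factors for $b<r_0$, and pass to $a\to 0$ using the half-disk density $\mu(\zeta_0)=\tfrac12$ at a regular point of $\partial_1 S$. You also correctly isolate the one genuinely new ingredient versus Corollary~\ref{cor:MonotonicityOfArea0}, namely the boundary-density computation that replaces $\pi$ by $\pi/2$, which is consistent with the explicit factor of $2$ appearing in Corollary~\ref{cor:MeanValueInequality1}.
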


\section{Graphs with a priori bounds}\label{sec:GraphsWithAPrioriBounds}
The purpose of this section is to prove Theorems \ref{thm:AprioriDerivataiveBounds} and \ref{thm:AprioriDerivataiveBounds2}, which essentially establish that $J$-curves (with or without Lagrangian boundary) which have $L^\infty$-bounded second fundamental forms, can locally be graphed over a coordinate tangent plane with a priori bounds on all derivatives.  Also of importance here is that the domain of these graphical parameterizations are disks or half-disks with radii bounded away from $0$ depending only on the curvature bound and properties of the geometry of the ambient manifold $(M,J,g)$ or $(M,J,g,L)$ as appropriate. Such parameterizations with a priori bounds are necessary for the analysis in Section
\ref{sec:CurvatureRegularity}, as well as in the analysis in \cite{Fj09a}.  In Section \ref{sec:InteriorCase} we consider the case without Lagrangian boundary, and in Section \ref{sec:BoundaryCase} we consider the case with Lagrangian boundary.

\subsection{Interior case}\label{sec:InteriorCase}
Here and throughout this Section \ref{sec:GraphsWithAPrioriBounds}, $\mathcal{K}$ will denote a compact set contained in a manifold $M$, and we will say $\mathcal{K}\subset\subset M$ provided $\mathcal{K}\subset \Int(M)$. Also, given a Riemannian metric $g$ on $M$, we will let $\nab$ denote the associated Levi-Civita connection, and for any immersion $u:S\to M$, we let $B$ denote the associated second
fundamental form as defined in Section \ref{sec:MeanCurvatureEquation}.  It will also be convenient to use the notation
\begin{equation*}
\mathcal{D}_r:=\{(s,t)\in \mathbb{R}^2 : s^2+t^2<r^2\}
\end{equation*}
\begin{equation*}
\mathcal{B}_\epsilon(p):=\{q\in M:\dist_g(p,q)<\epsilon\}.
\end{equation*}

\begin{definition}[$\|T\|_{C_g^k(\mathcal{K})}$ and $\|T\|_{C_g^{k,\alpha}(\mathcal{K})}$]\label{def:C1BoundOnG}
Let $(M,g)$ be a Riemannian manifold  and let $T$ be a tensor on $M$. For each point $p\in \mathcal{K}$ we define the ball $\mathcal{B}(p):=\mathcal{B}_{\frac{3}{4}\inj(p)}(p)$, and equip it with geodesic normal coordinates $(x^1,\ldots,x^m)$ centered at $p$. For multi-indices $I_1$ and $I_2$, we let $T_{I_1}^{I_2}$ denote the components of $T$ in the coordinates $(x^1,\ldots,x^m)$, and then define
\begin{equation*}
[T]_{C_g^k(p)}:=\sup_{q\in\mathcal{B}(p)} \Big(\sum_{|\alpha|=k}\sum_{I_1,I_2}|D_\alpha T_{I_1}^{I_2}(q)|^2\Big)^{1/2}\quad\text{and}\quad \|T\|_{C_g^k(p)}:= \sum_{j=0}^k [T]_{C_g^j(p)}.
\end{equation*}
Similarly for each $\alpha\in (0,1]$ we define
\begin{equation*}
\|T\|_{C_g^{k,\alpha}(p)}:= \|T\|_{C_g^k(p)} + \sup_{\substack{q_1,q_2\in\mathcal{B}(p) \\ q_1\neq q_2}}\Big(\sum_{|\alpha|=k}\sum_{I_1,I_2}\frac{|D_\alpha T_{I_1}^{I_2}(q_1)-D_\alpha T_{I_1}^{I_2}(q_2)|^2}{|q_1-q_2|^{2\alpha}}\Big)^{1/2},
\end{equation*}
where $|q_1-q_2|$ is the distance between $q_1$ and $ q_2$ computed with respect to the flat metric $dx^i\otimes dx^i$. More generally, if $\mathcal{K}\subset M\setminus \partial M$ is a set, then by
repeating the above construction for each $p\in \mathcal{K}$, we also define
\begin{equation*}
[T]_{C_g^k(\mathcal{K})}:= \sup_{p\in \mathcal{K}} [T]_{C_g^k(p)},\qquad \|T\|_{C_g^k(\mathcal{K})} := \sup_{p\in \mathcal{K}} \|T\|_{C_g^k(p)},\;\;\text{and}
\end{equation*}
\begin{equation*}
\|T\|_{C_g^{k,\alpha}(\mathcal{K})}:=\sup_{p\in \mathcal{K}} \|T\|_{C_g^{k,\alpha}(p)}.
\end{equation*}
\end{definition}

\begin{definition}[admissible]\label{def:admissible}
The triple $(M,g,\mathcal{K})$, where $(M,g)$ is a compact Riemannian manifold of dimension $m$ (possibly with boundary) and $\mathcal{K}\subset M$ is a compact subset, is said to be \emph{admissible} provided the following hold.
\begin{equation*}
10m^2[g]_{C_g^2(\mathcal{K})} \leq 1\qquad\text{and}\qquad\inf_{q\in \mathcal{K}}\inj(q)\geq 2.
\end{equation*}
\end{definition}

\begin{remark}
The notion of the admissibility of a triple $(M,g,\mathcal{K})$ is purely a computational convenience.  Indeed, if $\mathcal{K}\subset M$ is compact and contained in the interior of $M$, and $g$ is any twice differentiable Riemannian metric on $M$, then there exists a $C_0>0$ with the property that for all $C\geq C_0$ the triple $(M,Cg,\mathcal{K})$ is admissible. The existence of such a $C_0$ can be deduced from Lemma \ref{lem:RescalingProperties} below which describes the behavior of some geometric quantities under such a conformal rescaling of $g$.
\end{remark}

\begin{lemma}\label{lem:RescalingProperties}
Let $(M,g)$ be a Riemannian manifold, let $\mathcal{K}\subset M$ be a compact set contained in the interior of $M$, and define another metric on $M$ by $g^c:= c^2 g$ where $c>0$.  Also suppose $u:S\to M$ is an immersion. Then the following hold.
\begin{enumerate}[({R}1)]
\item $\inj^{g^c}(p)=c\inj^g(p) $;\label{en.R1}
\item $[g^c]_{C_{g^c}^k(\mathcal{K})}=c^{-k} [g]_{C_g^k(\mathcal{K})} $;\label{en.R2}
\item $[J]_{C_{g^c}^k(\mathcal{K})}=c^{-k} [J]_{C_g^k(\mathcal{K})} $;\label{en.R2p5}
\item $K_{sec}^{g^c}(X,Y)=c^{-2}K_{sec}^g(X,Y)$;\label{en.R3}
\item $\|B_u(\zeta)^{g^c}\|_{g^c} = c^{-1}\|B_u(\zeta)^g\|_{g}$;\label{en.R4}
\end{enumerate}
where $B_u^g(\zeta)$ and $B_u^{g^c}(\zeta)$ denote the second fundamental forms of the immersion $u:S\to M$ evaluated at $\zeta\in S$ and computed with respect to the metrics $g$ and $g^c$ respectively.
\end{lemma}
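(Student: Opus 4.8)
The plan is to verify each of the five rescaling identities directly from the relevant definitions, exploiting the fact that multiplying the metric by a constant $c^2$ is about the simplest possible conformal change. For (R\ref{en.R1}), I would recall that geodesics of $g$ and $g^c = c^2 g$ coincide as unparametrized curves, and that if $\gamma$ is a unit-speed $g$-geodesic then $t\mapsto \gamma(t/c)$ is a unit-speed $g^c$-geodesic; hence a $g^c$-geodesic ball of radius $cr$ is the same point set as a $g$-geodesic ball of radius $r$, and the exponential map fails to be injective at exactly the rescaled radius. This gives $\inj^{g^c}(p) = c\,\inj^g(p)$. It also shows that the geodesic normal coordinate chart of $g^c$ centered at $p$ is obtained from the one for $g$ by the linear rescaling $x \mapsto c^{-1}x$ of the coordinate values (equivalently, the coordinate vector fields get rescaled), which is the key observation for (R\ref{en.R2}) and (R\ref{en.R2p5}).

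For (R\ref{en.R2}) and (R\ref{en.R2p5}): in geodesic normal coordinates for $g$, write the components $g_{ij}$ and $J^i_j$; passing to $g^c$-normal coordinates $\bar x = c^{-1}x$, the chain rule gives $D^k_{\bar x} = c^k D^k_x$ for the $k$-th derivatives, while the components themselves transform as $g^c_{ij} = c^2 g_{ij}$ and $J^i_j$ is unchanged (it is a $(1,1)$-tensor, invariant under rescaling the metric). Then one must also account for the metric-dependent norm on the space of $k$-th derivatives implicit in Definition \ref{def:C1BoundOnG}: the sum $\sum_{|\alpha|=k}\sum_{I_1,I_2}|D_\alpha T^{I_2}_{I_1}|^2$ is taken coordinate-wise, so the one subtlety is to track how $[g]_{C^k_g}$ itself is normalized. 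Carefully combining the $c^k$ from the derivatives with the $c^{\pm 2}$ (or $c^0$ for $J$) from the components and with the normalization convention yields $[g^c]_{C^k_{g^c}(\mathcal{K})} = c^{-k}[g]_{C^k_g(\mathcal{K})}$ and likewise for $J$; the supremum over $\mathcal{B}(p)$ versus $\mathcal{B}^{g^c}(p)$ causes no trouble since these are the same point set by (R\ref{en.R1}).

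For (R\ref{en.R3}): the Levi-Civita connection is invariant under constant rescaling, $\nabla^{g^c} = \nabla^g$ (the Christoffel symbols of $c^2 g$ equal those of $g$), hence the Riemann tensor $R(X,Y)Z$ from (\ref{eq:RiemannCurvatureTensor}) is unchanged; but in the sectional curvature formula $K_{sec}(X,Y) = \frac{\langle R(X,Y)Y,X\rangle}{\|X\|^2\|Y\|^2 - \langle X,Y\rangle^2}$ the numerator picks up one factor of $c^2$ and the denominator picks up $c^4$, giving $K^{g^c}_{sec} = c^{-2}K^g_{sec}$. For (R\ref{en.R4}): again $\nabla^{g^c}=\nabla^g$, so the orthogonal decomposition $u^*TM = \mathcal{T}\oplus\mathcal{N}$ is the same for both metrics (orthogonality is conformally invariant), and hence $B^{g^c}(x,y) = (\nabla_x y)^\bot = B^g(x,y)$ as a vector field; but the norm $\|B\| = \|A\|$ from (\ref{eq:NormAisNormB}) involves an orthonormal frame of $\mathcal{T}$, and a $g^c$-orthonormal frame is $c^{-1}$ times a $g$-orthonormal frame, so tracing through the definition of the tensor norm gives one net factor of $c^{-1}$, i.e. $\|B^{g^c}_u(\zeta)\|_{g^c} = c^{-1}\|B^g_u(\zeta)\|_g$. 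The main obstacle is bookkeeping rather than conceptual: one must be scrupulous about the normalization conventions built into Definition \ref{def:C1BoundOnG} (the coordinate-wise sums of squared derivatives) so that the powers of $c$ coming from the coordinate rescaling, from the tensor type, and from the frame rescaling are all counted exactly once and with the correct sign.
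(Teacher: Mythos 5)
Your proposal takes essentially the same route as the paper's: observe that $\nabla^{g^c}=\nabla^g$ and that geodesics of $g$ and $g^c$ coincide, deduce how geodesic normal coordinates rescale, and then verify (R\ref{en.R1})--(R\ref{en.R4}) by direct bookkeeping. There is one sign error worth correcting in your treatment of (R\ref{en.R2}) and (R\ref{en.R2p5}): you write $\bar x = c^{-1}x$ for the $g^c$-normal coordinates and conclude $D^k_{\bar x}=c^k D^k_x$, but since a $g^c$-orthonormal frame is $\{c^{-1}e_i\}$, the same tangent vector at $p$ has $g^c$-coordinates $c$ times \emph{larger}, i.e.\ $\bar x = cx$ (as the paper records via $\Phi^c=c\Phi$), which gives $D^k_{\bar x}=c^{-k}D^k_x$. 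With your sign, "$c^k$ from the derivatives times $c^{\pm2}$ from the components'' cannot arithmetically produce the claimed $c^{-k}$; with the correct sign, the $c^2$ coming from $g^c=c^2g$ cancels the $c^{-2}$ Jacobian factor for the $(0,2)$-tensor (and the $c^0$ factor for the $(1,1)$-tensor $J$), leaving exactly $c^{-k}$ from the $k$ derivatives, as in the paper's computation $g^c_{ij}(x)=g_{ij}(x/c)$. The arguments for (R\ref{en.R1}), (R\ref{en.R3}), and (R\ref{en.R4}) are correct and match the paper.
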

\begin{proof}
We begin by observing that if $\nab$ and $\nab^c$ are the Levi-Civita connections associated to $g$ and $g^c$ respectively, then $\nab=\nab^c$.  This can be proved by observing that the Christoffel symbols of $\nab$ are invariant under rescaling the metric.  Consequently, it follows that paths which are constant speed $g$-geodesics are also constant speed $g^c$-geodesics, and thus
\begin{equation*}
\exp_p^g (X) = \exp_p^{g^c}(X).
\end{equation*}
It is then straightforward to deduce (R\ref{en.R1}).

Next observe that if $e_1,\ldots, e_m$ is a $g$-orthonormal frame for $T_p M$, then the vectors $c^{-1} e_1,\ldots, c^{-1} e_m$ form a $g^c$-orthonormal frame for $T_p M$.  Consequently, if $\mathcal{O}\subset M$ is some small neighborhood of $p$, and $\Phi:\mathcal{O}\to \R^{2n}$ are $g$-geodesic normal coordinates, and $\Phi^c:\mathcal{O}\to \R^{2n}$ are $g^c$-geodesic normal coordinates, then $\Phi^c = c \Phi$.  However, for $x\in \R^{2n}$ and $Y,Z\in T_x\R^{2n}$, we can then compute
\begin{align*}
\big(\Phi_*^c g^c\big)_x(Y,Z) &= g_{(\Phi^c)^{-1}(x)}^c\big( \Phi^{c*} Y, \Phi^{c*}  Z\big) =c^2 g_{\Phi^{-1}(x/c)}\big(c^{-1} \Phi^* Y,c^{-1} \Phi^* Z \big)\\
&= g_{\Phi^{-1}(x/c)}\big( \Phi^* Y,\Phi^* Z\big) =\big(\Phi_* g)_{(x/c)} (Y,Z).
\end{align*}
Thus letting $g_{ij}^c$ and $g_{ij}$ denote the components of $g^c$ and $g$ respectively in $g^c$ and $g$ geodesic normal coordinates, we find that $g_{ij}^c(x) = g_{ij}(x/c)$, and then (R\ref{en.R2}) is easily deduced.  Arguing similarly, one finds that $(\Phi_*^c J)_{ij}(x)=(\Phi_* J)_{ij}(x/c)$, and (R\ref{en.R2p5}) follows immediately.

Next, recall that for smooth sections $Y,Z\in \Gamma(TM)$ which are point-wise linearly independent, the sectional curvature of the plane spanned by $Y$ and $Z$ is given by
\begin{equation*}
K_{sec}^g(Y,Z) = \frac{\la\nab_Y \nab_Z Z -\nab_Z \nab_Y Z - \nab_{[Y,Z]}Z, Y\ra_g}{\|Y\|_g^2\|Z\|_g^2 - \la Y,Z\ra_g^2}.
\end{equation*}
As observed above, $\nab=\nab^c$, so (R\ref{en.R3}) is then quickly deduced.  Similarly, since $B_u^g(Y,Z) = (\nab_Y Z)^\bot$ for sections $Y,Z\in \Gamma(\mathcal{T})$ of the tangent bundle of the immersion $u:S\to M$, we see that (R\ref{en.R4}) also follows immediately.
\end{proof}

It will be convenient for later computations to have the following basic estimates at our disposal. The proof of  Lemma \ref{lem:GammaEst} below is elementary.
\begin{lemma}\label{lem:GammaEst}
Let $(M,g,\mathcal{K})$ be an admissible triple, $p\in \mathcal{K}$ a point, $(x^1,\ldots,x^m)$ geodesic normal coordinates centered at $p$, $g_{ij}$ the components of $g$ in these coordinates, $g^{ij}$ which satisfy $g_{i\ell} g^{\ell j} = \delta_i^j$, and $\Gamma_{ij}^k$ the Christoffel symbols of the Levi-Civita connection associated to $g$ in these coordinates.  Then for all points $q\in M$ for which $|q|:=\dist(p,q)\leq 1$, the following inequalities hold.
\begin{align*}
&{\textstyle\sum_{ij=1}^m} |g_{ij}(q)-\delta_{ij}|^2\leq |q|^4,
&&{\textstyle\sum_{ijk=1}^m} |\Gamma_{ij}^k(q)|^2 \leq |q|^2\\
&{\textstyle\sum_{ij=1}^m} |g^{ij}(q)-\delta^{ij}|^2\leq |q|^4
&&\sup_{p\in \mathcal{K}} |K_{sec}(p)| \leq 1.
\end{align*}
Furthermore, for $\bar{g}:=dx^i\otimes dx^i$, the following inequalities hold.
\begin{align*}
\big| \la Y,Z\ra_g - \la Y,Z\ra_{\bar{g}} \big|  &\leq |q|^2 \|Y\|_{\bar{g}}\|Z\|_{\bar{g}}\\
\big(1-|q|^2\big)\|Y\|_{\bar{g}}^2\;\;\leq\;\;  &\|Y\|_g^2 \;\;\leq\;\; \big(1+|q|^2\big)\|Y\|_{\bar{g}}^2.
\end{align*}
Lastly, for $\mathcal{V}\subset T_q M$, a vector space of dimension $k$, we let $\pi_\mathcal{V}^g$ and $\pi_\mathcal{V}^{\bar{g}}$ respectively denote the $g$ and $\bar{g}$ orthogonal projections onto $\mathcal{V}$. Then for $q$ as above, and
$Y,Z\in T_qM$, the following inequalities also hold.
\begin{equation*}
\big\| \pi_\mathcal{V}^g (Y) - \pi_\mathcal{V}^{\bar{g}}(Y)\big\|_{\bar{g}}\leq |q|^2\|Y\|_{\bar{g}}.
\end{equation*}
\end{lemma}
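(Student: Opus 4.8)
The plan is to work throughout in the geodesic normal coordinates $(x^1,\dots,x^m)$ centered at $p$, using the two standard facts about such coordinates: $g_{ij}(0)=\delta_{ij}$, and the first derivatives of the metric vanish at the origin, $\partial_k g_{ij}(0)=0$ (equivalently $\Gamma_{ij}^k(0)=0$). Since $|q|=\dist(p,q)\le 1$ while $\inj(p)\ge 2$, the coordinate segment $t\mapsto tq$, $t\in[0,1]$, is a radial geodesic all of whose points lie at distance at most $|q|\le 1<\tfrac34\inj(p)$ from $p$, hence inside the ball $\mathcal{B}(p)$ of Definition \ref{def:C1BoundOnG}; therefore the admissibility hypothesis $10m^2[g]_{C_g^2(\mathcal{K})}\le 1$ bounds every second coordinate derivative of $g$ all along this segment. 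Each estimate below is then obtained by Taylor expansion along this segment followed by Cauchy--Schwarz, the factor $10m^2$ being present precisely to absorb the combinatorial constants that appear.

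For the metric estimate, Taylor's formula with integral remainder gives
\begin{equation*}
g_{ij}(q)-\delta_{ij}=\sum_{k,l}x^kx^l\int_0^1(1-s)\,\partial_k\partial_l g_{ij}(sq)\,ds,
\end{equation*}
and applying Cauchy--Schwarz first in $(k,l)$ (using $\sum_k|x^k|^2=|q|^2$) and then summing in $(i,j)$ produces $\sum_{ij}|g_{ij}(q)-\delta_{ij}|^2\le c(m)\,|q|^4\,[g]_{C_g^2(p)}^2\le|q|^4$. The Christoffel bound is the same argument one order lower: a first-order Taylor expansion of $\partial_l g_{ij}$, which vanishes at the origin, gives $\sum_{lij}|\partial_l g_{ij}(q)|^2\le c'(m)\,|q|^2[g]_{C_g^2(p)}^2$, and substituting into $\Gamma_{ij}^k=\tfrac12 g^{kl}\big(\partial_i g_{jl}+\partial_j g_{il}-\partial_l g_{ij}\big)$ together with the uniform bound on $g^{kl}$ gives $\sum_{ijk}|\Gamma_{ij}^k(q)|^2\le|q|^2$. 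The estimate on $g^{ij}$ follows from the metric estimate by writing $g^{-1}-\mathds 1=-g^{-1}(g-\mathds 1)$ and noting that $\|g-\mathds 1\|$ is small enough (again by admissibility) that $\|g^{-1}\|\le 2$, so $\sum_{ij}|g^{ij}(q)-\delta^{ij}|^2\le 4\sum_{ij}|g_{ij}(q)-\delta_{ij}|^2\le|q|^4$; this also supplies the uniform bound on $g^{kl}$ used for the Christoffel symbols.

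The sectional curvature bound is a pointwise statement at $p$, which makes it the simplest: in normal coordinates the Christoffel symbols vanish at the origin, so $R^l_{ijk}(0)$ is a fixed linear combination of the numbers $\partial_a\partial_b g_{cd}(0)$ and hence $\sum_{ijkl}|R_{ijkl}(0)|^2\le c(m)[g]_{C_g^2(p)}^2\le 1$; since $g_{ij}(0)=\delta_{ij}$, for any $\bar g$-orthonormal pair $X,Y\in T_pM$ one gets $|K_{sec}(X,Y)|=\big|\sum_{ijkl}R_{ijkl}(0)X^iY^jY^kX^l\big|\le\big(\sum_{ijkl}|R_{ijkl}(0)|^2\big)^{1/2}\le 1$. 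The comparison of inner products is then immediate from the metric estimate, since $\langle Y,Z\rangle_g-\langle Y,Z\rangle_{\bar g}=\sum_{ij}\big(g_{ij}(q)-\delta_{ij}\big)Y^iZ^j$, so Cauchy--Schwarz bounds it by $\big(\sum_{ij}|g_{ij}(q)-\delta_{ij}|^2\big)^{1/2}\|Y\|_{\bar g}\|Z\|_{\bar g}\le|q|^2\|Y\|_{\bar g}\|Z\|_{\bar g}$; taking $Z=Y$ gives the two-sided comparison of $\|\cdot\|_g^2$ with $\|\cdot\|_{\bar g}^2$.

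For the projection estimate I would use that $\pi_{\mathcal V}^g-\pi_{\mathcal V}^{\bar g}$ vanishes on $\mathcal V$ and has image in $\mathcal V$, hence equals $(\pi_{\mathcal V}^g-\pi_{\mathcal V}^{\bar g})(\mathds 1-\pi_{\mathcal V}^{\bar g})$, so it suffices to bound $\|\pi_{\mathcal V}^g(Y')\|_{\bar g}$ for $Y'\perp_{\bar g}\mathcal V$ with $\|Y'\|_{\bar g}\le\|Y\|_{\bar g}$. Writing $w:=\pi_{\mathcal V}^g(Y')\in\mathcal V$, the characterization $Y'-w\perp_g\mathcal V$ gives $\|w\|_g^2=\langle Y',w\rangle_g=\langle Y',w\rangle_g-\langle Y',w\rangle_{\bar g}$ — the $\bar g$-term vanishing because $Y'\perp_{\bar g}\mathcal V\ni w$ — which by the inner-product and norm comparisons is at most a small multiple of $\|Y'\|_{\bar g}\|w\|_{\bar g}$, whence $\|w\|_{\bar g}\le|q|^2\|Y'\|_{\bar g}$. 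The only point that needs any care is that this last step uses the inner-product comparison with its \emph{actual} constant $c(m)|q|^2$ rather than the nominal $|q|^2$ — a bare $|q|^2$ would not survive the division by a factor like $1-|q|^2$ — but the admissibility constant $10m^2$ was chosen exactly so that the genuine constants in all the estimates above are a definite fraction of what is claimed, leaving ample slack. With that understood the whole argument is, as advertised, elementary bookkeeping.
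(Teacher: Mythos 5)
The paper offers no argument for this lemma --- it simply declares the proof ``elementary'' and moves on --- so there is nothing to compare against. Your proof is correct and is exactly the argument one would expect: second-order (resp.\ first-order) Taylor expansion of $g_{ij}$ (resp.\ of $\partial_l g_{ij}$) along the radial geodesic from $p$ to $q$, using $g_{ij}(0)=\delta_{ij}$, $\partial_k g_{ij}(0)=0$, and Cauchy--Schwarz, with the prefactor $10m^2$ in the admissibility hypothesis absorbing the combinatorial constants; $\sum_{ijkl}|R_{ijkl}(0)|^2$ is controlled by $[g]_{C_g^2(p)}^2$ since the Christoffel symbols vanish at the origin; and the projection estimate reduces cleanly to the inner-product comparison via the factorization $\pi_{\mathcal V}^g-\pi_{\mathcal V}^{\bar g}=(\pi_{\mathcal V}^g-\pi_{\mathcal V}^{\bar g})(\mathds{1}-\pi_{\mathcal V}^{\bar g})$. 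Your closing caveat --- that the final division by $(1-|q|^2)$ only works because the genuine constants are a definite fraction of the nominal ones --- is correctly identified as the one place the slack from admissibility is used to its fullest, and the same observation is what makes the chain $\sum_{ij}|g^{ij}-\delta^{ij}|^2\le 4\sum_{ij}|g_{ij}-\delta_{ij}|^2\le|q|^4$ legitimate despite the extra factor of $4$; it does suffice.
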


We are now prepared to prove a a key result about graphical parameterizations.

\begin{lemma}[Uniform Local Graphs]\label{lem:uniformLocalGraphs}
There exists a constant $r_0>0$ with the following significance.  Let $(M,g,\mathcal{K})$ be an admissible triple with $\dim M = m$, and suppose $u:S\to M$ is a Type 0 immersion for which
\begin{equation*}
\sup_{\zeta\in S} \|B_u(\zeta)\|_g \leq 1.
\end{equation*}
Then for each $\zeta\in u^{-1}(\mathcal{K})$, there exists a map $\phi:\mathcal{D}_{r_0}\to S$ and geodesic normal coordinates $\Phi:\mathcal{B}_{2r_0}\big(u(\zeta)\big)\to\mathbb{R}^m$ with the following properties.
\begin{enumerate}[({P}1)]
\item $\phi(0)=\zeta$\label{en.P1}
\item $\tilde{u}(s,t):=\Phi\circ u \circ \phi(s,t)=\big(s,t,\tilde{u}^3(s,t),\ldots,\tilde{u}^m(s,t)\big)$.\label{en.P2}
\item $\tilde{u}^i(0,0)=0$ for $i=1,\ldots, m$\label{en.P3}
\item $D_\alpha \tilde{u}^i(0,0)=0$ whenever $|\alpha|=1$ and $i=3,\ldots,m$\label{en.P4}
\item $\sum_{|\alpha|=1}\sum_{i=3}^m\|D_\alpha \tilde{u}^i\|_{C^0(\mathcal{D}_{r_0})}^2 \leq 10^{-20}$\label{en.P5}
\item For Euclidian coordinates $\rho=(s,t)$, on $\mathcal{D}_{r_0}$, we have\label{en.P6}
\begin{equation*}
{\textstyle \frac{1}{2}}|\rho|\leq \dist_{(u\circ \phi)^*g}(0,\rho) \leq 2 |\rho|
\end{equation*}
\item For $|\alpha|=2$ and $i=1,\ldots,m$, $\|D_\alpha \tilde{u}^i\|_{C^0(\mathcal{D}_{r_0})} \leq 10 $ \label{en.P7}
\item With $\rho$ as above, we have\label{en.P8}
\begin{equation*}
    {\textstyle\frac{1}{2}}|\rho|\leq \dist_{g}\big(u(\phi(\rho)),u(\phi(0))\big)\leq 2|\rho|.
\end{equation*}
\end{enumerate}
\noindent Furthermore, letting subscripts $s$ and $t$ denote partial derivatives, and denoting $\gamma_{11}=\la u_s,u_s\ra_g$, $\gamma_{12}=\gamma_{21}=\la u_s,u_t\ra_g$, $\gamma_{22}=\la u_t,u_t\ra_g$, and $\gamma_{ik}\gamma^{k j}=\delta_i^j$, the following inequalities also hold.
\begin{enumerate}[({P}1)]
\setcounter{enumi}{8}
\item $\frac{1}{4}|\xi|^2\leq \inf_{\rho\in\mathcal{D}_{r_0}} \gamma^{ij}\xi_i\xi_j$\label{en.P9}
\item $\|\gamma^{ij}\|_{C^{0,\alpha}(\mathcal{D}_{r_0})}\leq 2\cdot 10^3$. \label{en.P10}
\end{enumerate}
\end{lemma}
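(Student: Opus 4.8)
The plan is to construct $\Phi$ and $\phi$ from the implicit function theorem and then to promote the resulting local graph to one over a disc of universal radius by a continuity argument, using the hypothesis $\|B_u\|_g\le 1$ together with admissibility to control how quickly the tangent plane of $u$ tilts away from a fixed coordinate $2$-plane. First I would fix $\zeta\in u^{-1}(\mathcal{K})$ and $p:=u(\zeta)$. Admissibility gives $\inj(p)\ge 2$, so a geodesic normal chart exists on $\mathcal{B}_{3/2}(p)$; choosing the orthonormal basis of $T_pM$ defining the chart so that its first two vectors span the tangent plane $\mathcal{T}_\zeta\subset T_pM$, one gets a geodesic normal chart $\Phi:\mathcal{B}_{2r_0}(p)\to\R^m$ (for any $r_0<3/4$, to be fixed, with $\mathcal{B}_{2r_0}(p)$ interior) with $\Phi_*\mathcal{T}_\zeta=\R^2\times\{0\}$. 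Writing $\pi:\R^m\to\R^2$ for the projection to the first two coordinates, $d(\pi\circ\Phi\circ u)_\zeta$ is an isomorphism onto $\R^2$, so the implicit function theorem produces, for some $r>0$, a smooth $\phi:\mathcal{D}_r\to S$ with $\phi(0)=\zeta$ and $\pi\circ\Phi\circ u\circ\phi=\mathrm{id}$; thus $\tilde{u}:=\Phi\circ u\circ\phi$ is graphical ($\tilde{u}^1=s$, $\tilde{u}^2=t$), and (P1)--(P4) hold. What remains is to see $r$ can be taken to equal a universal $r_0$ and to establish the estimates.

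The central estimate is: on any $\mathcal{D}_r$ ($r\le r_0$) carrying such a graphical parametrization with $\sum_{|\alpha|=1,\,i\ge 3}\|D_\alpha\tilde{u}^i\|_{C^0(\mathcal{D}_r)}^2\le 10^{-20}$, one has $\|D_\alpha\tilde{u}^i\|_{C^0(\mathcal{D}_r)}\le 10$ for $|\alpha|=2$ (the case $i=1,2$ being trivial, as $\tilde{u}^1,\tilde{u}^2$ are linear). In the chart, $(\nab_{u_s}u_s)^k=\partial_s^2\tilde{u}^k+\Gamma^k_{ij}\,\partial_s\tilde{u}^i\,\partial_s\tilde{u}^j$, and likewise for $\nab_{u_s}u_t,\nab_{u_t}u_t$; by the graph form the Hessians $\partial_s^2\tilde{u},\partial_s\partial_t\tilde{u},\partial_t^2\tilde{u}$ have vanishing first two components. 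Decomposing $\nab_{u_s}u_s=(\nab_{u_s}u_s)^\top+B(u_s,u_s)$, the normal term satisfies $\|B(u_s,u_s)\|_g\le\|B\|_g\|u_s\|_g^2$, which is just over $1$ since $\|u_s\|_{\bar{g}}^2=1+\sum_{i\ge3}|\partial_s\tilde{u}^i|^2\le 1+10^{-20}$ and $g$ is $O(r_0^2)$-close to $\bar{g}=dx^i\otimes dx^i$ by Lemma \ref{lem:GammaEst}, and the Christoffel term has $\bar{g}$-norm $O(r_0)$, again by Lemma \ref{lem:GammaEst}. Reading off the (vanishing) first two components of the left-hand side bounds the first two components of $(\nab_{u_s}u_s)^\top$ by a constant just over $1$, and since $\partial_s\tilde{u}=(1,0,\ast,\dots)$, $\partial_t\tilde{u}=(0,1,\ast,\dots)$, expanding $(\nab_{u_s}u_s)^\top$ in the frame $\{\partial_s\tilde{u},\partial_t\tilde{u}\}$ then bounds its full $\bar{g}$-norm by a small absolute constant. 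Summing the three pieces bounds $\|\partial_s^2\tilde{u}\|_{\bar{g}}$, hence every $|D_\alpha\tilde{u}^i|$, by an absolute constant which, once the small parameters are accounted for, is below $10$; this is (P7).

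For the uniform radius I run a continuity argument. Call $r\in(0,r_0]$ \emph{good} if $\phi$ extends to a graphical parametrization over $\overline{\mathcal{D}}_r$ with $\sum_{|\alpha|=1,\,i\ge3}\|D_\alpha\tilde{u}^i\|_{C^0(\overline{\mathcal{D}}_r)}^2\le\tfrac14\cdot 10^{-20}$; the set of good $r$ is nonempty for $r_0$ small (by the implicit function theorem, the left side tending to $0$ as $r\to0$), and closed because the $C^2$ bound above gives $C^1$-equicontinuity, so a graph over $\overline{\mathcal{D}}_r$ persists in the limit. If $r$ is good then on $\overline{\mathcal{D}}_r$ the previous paragraph applies, and since $D_\alpha\tilde{u}^i(0)=0$ for $|\alpha|=1$, $i\ge3$ while $\partial_s\tilde{u}(0)=(1,0,0,\dots)$, integrating the Hessian bound along radial segments gives $\sum_{i\ge3}|\partial_s\tilde{u}^i(\rho)|^2=|\partial_s\tilde{u}(\rho)|_{\bar{g}}^2-1\le Cr$ for a universal $C$, and likewise in $t$, so the first-derivative quantity is $\le 2Cr\le 2Cr_0$ on $\overline{\mathcal{D}}_r$. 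As long as these first derivatives stay this small, $\pi\circ\Phi\circ u$ remains a local diffeomorphism near $\phi(\overline{\mathcal{D}}_r)$, while properness of the Type~$0$ immersion $u$ into $M$ prevents the graph from escaping the coordinate cylinder, so $\phi$ extends slightly past $\mathcal{D}_r$; thus good $r$ is also open. Choosing $r_0$ a small enough universal constant with $2Cr_0<\tfrac18\cdot10^{-20}$ (and $r_0<3/4$) makes every $r\in(0,r_0]$ good, and in particular (P5) holds on $\mathcal{D}_{r_0}$. This continuity step is the main obstacle; its analytic heart is the second-derivative estimate above, the only place $\|B_u\|_g\le1$ is used.

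Finally, (P6), (P8), (P9), (P10) are routine consequences of near-Euclidean behaviour on $\mathcal{D}_{r_0}$. There $\partial_s\tilde{u}=(1,0,\ast,\dots)$, $\partial_t\tilde{u}=(0,1,\ast,\dots)$ with starred entries of size $\le 10^{-10}$ and $g$ within $O(r_0^2)$ of $\bar{g}$, so $\gamma_{ij}=\la\partial_i\tilde{u},\partial_j\tilde{u}\ra_g$ is within a tiny amount of $\delta_{ij}$; this gives the eigenvalue bound (P9), and since $\gamma^{ij}$ is a smooth function of $\gamma_{ij}$ away from degeneracy while $\gamma_{ij}$ is Lipschitz by (P7), also the Hölder bound (P10). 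The comparison (P6) follows by estimating $(u\circ\phi)^*g$-lengths of paths against Euclidean ones using $\tfrac14|\xi|^2\le\gamma^{ij}\xi_i\xi_j$ and $\gamma_{ij}\le 2\delta_{ij}$, and (P8) follows because in geodesic normal coordinates $\dist_g\big(u(\phi(\rho)),p\big)=|\tilde{u}(\rho)|_{\bar{g}}$, which lies between $|\rho|$ and $(1+10^{-20})^{1/2}|\rho|$.
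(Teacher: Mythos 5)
Your argument is correct and reaches the same conclusions, but it follows a genuinely different path from the paper's.  The paper constructs $\phi$ by composing the $u^*g$-exponential map on $S$ with an inverse of its coordinate projection; to make this work it must first show that $\exp_\zeta^{u^*g}$ is an immersion on a $\delta$-ball, which it does by combining the Gauss equations, the curvature bound $\|B_u\|\le 1$, and the admissibility bound on $|K_{sec}|$ to exclude conjugate points, and then it controls the tilting of the tangent plane by differentiating $dx^i(\dot\alpha)$ along $u^*g$-geodesics $\alpha$.  You instead take the graph directly from the implicit function theorem and extend it by a continuity (open/closed/nonempty) argument in the radius $r$, with the self-improving estimate ``(P5)-type smallness $\Rightarrow$ second derivatives $\le 10$ $\Rightarrow$ (P5) with room to spare'' doing the work.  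The analytic heart in both proofs is the same: a vector with vanishing first two components is nearly $g$-normal to the graph (your expansion of $(\nabla_{u_s}u_s)^\top$ in the frame $\{u_s,u_t\}$ is a variant of the paper's Lemma~\ref{lem:ProjectionLemma}), so $\|B_u\|\le 1$ bounds $\|D^2\tilde u\|_{\bar g}$.  What your route buys is that you never need the Jacobi/exponential-map machinery or Corollary~\ref{cor:MonotonicityOfArea0}; what it costs is that the bookkeeping in the openness step is more delicate and must be done carefully.  A few places to tighten if you write this up in full: when you integrate the Hessian bound to get ``$\sum_{i\ge3}|\partial_s\tilde u^i(\rho)|^2\le Cr$'', the bound on the Hessian must be taken in the $\bar g$-vector norm (not component by component, which would introduce a spurious factor of $m$) so that $C$ is genuinely universal, and in fact the correct order is $Cr^2$, not $Cr$; in the closedness step, you should note explicitly that the $\phi_r$ for different good $r$ agree on their common domain by implicit-function-theorem uniqueness, so the limiting $\phi$ is well-defined; and for the openness step, the statement that ``$\phi$ extends slightly past $\mathcal D_r$'' should invoke that $u^{-1}\big(\overline{\mathcal B_{2r_0}(p)}\big)$ is a compact subset of $S\setminus\partial S$ (since $u$ is proper, $\partial S=u^{-1}(\partial M)$, and $2r_0<2\le\dist(p,\partial M)$), so the compact set $\phi(\overline{\mathcal D_r})$ sits well inside $S$ and the local IFT extensions patch.
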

\begin{proof}
Begin by defining a constant $\delta:= 10^{-10}$. Assume that $\Phi:\mathcal{B}_{2\delta}\big(u(\zeta)\big)\to\mathbb{R}^m$ has been chosen so that $\Phi\big(u(\zeta)\big)=0$ and the image of $T_\zeta (\Phi\circ u)$ is $\mathbb{R}^2\times \{(0,\ldots,0)\}$.
We next claim (and shall prove) that
\begin{equation}\label{eq:exponentialMap}
\exp_\zeta :\{X\in T_\zeta S: \|X\|_{u^*g}\leq \delta\}\to S
\end{equation}
is well defined and is an immersion.  Indeed, the map is well defined since
\begin{equation*}
\dist_{u^*g}(\zeta,\partial S)\geq \dist_g\big(u(\zeta),\partial M)\geq\dist_g(\mathcal{K},\partial M)\geq 2 \geq \delta,
\end{equation*}
where the second-to-last inequality follows from the admissibility of $(M,g,\mathcal{K})$. If the map were not an immersion, then the Riemannian manifold $(S,u^*g)$ would have conjugate points $\zeta,\zeta'$ for which $\dist_{u^*g}(\zeta,\zeta')\leq \delta$.  By considering the Jacobi equation along the shortest connecting geodesic, one can show that there must be some point $\zeta''\in S$ at which the Gaussian curvature satisfies $K_{u^*g}(\zeta'')\geq \delta^{-2}$.  However recall the Gauss equations for immersions:
\begin{equation*}
K_{sec}(e_1,e_2)=K_{u^*g}(\zeta'') - \la B(e_1,e_1),B(e_2,e_2)\ra_g + \| B(e_1,e_2)\|_g^2
\end{equation*}
where $e_1,e_2\in \mathcal{T}_{\zeta''}$ is an orthonormal basis; consequently
\begin{equation}\label{eq:CurvatureContradiction}
10^{20}=\delta^{-2}\leq K_{u^*g}(\zeta'') \leq 2,
\end{equation}
where we have made use of the fact that the length of the second fundamental form is bounded by assumption, and that $\sup_{\mathcal{K}}|K_{sec}|\leq 1$.  Indeed, this latter statement follows from the fact that $(M,g,\mathcal{K})$ is an admissible triple, together with Lemma \ref{lem:GammaEst}. The contradiction (\ref{eq:CurvatureContradiction}) then shows that indeed the exponential map in (\ref{eq:exponentialMap}) is an immersion.

Next observe that for any $X\in T_\zeta S$ with $\|X\|_{u^*g}=1$, the path given by $\alpha(t):=u\circ \exp_\zeta^{u^*g }(tX)$ is the image by $u$ of a $u^*g$-unit speed geodesic in $S$ emanating from $\zeta$, and it is defined for $t\in[0,\delta]$.  Furthermore, for geodesic normal  coordinates $\Phi=(x^1,\ldots,x^{m})$, we can locally define the flat metric $\bar{g}:=dx^i\otimes dx^i$; we also locally define the $(2,1)$ tensor $\Gamma$ by
\begin{equation*}
\Gamma(X,Y):= \nab_X Y - \bar{\nab}_X Y,
\end{equation*}
where $\nab$ and $\bar{\nab}$ are  the  Levi-Civita connections  associated to $g$ and $\bar{g}$ respectively.  Note that conveniently the components of $\Gamma$ are precisely the Christoffel symbols of $\nab$ in the specified geodesic normal coordinates. We then estimate as follows:
\begin{align*}
\frac{d}{dt}dx^i\big(\dot{\alpha}(t)\big) &= dx^i\big(\nabla_{\dot{\alpha}}\dot{\alpha}\big) - dx^i\big(\Gamma(\dot{\alpha},\dot{\alpha})\big)\\
&=dx^i\big((\nabla_{\dot{\alpha}}\dot{\alpha})^\bot\big) - dx^i\big(\Gamma(\dot{\alpha},\dot{\alpha})\big)\\
&=dx^i\big(B(\dot{\alpha},\dot{\alpha})\big) - dx^i\big(\Gamma(\dot{\alpha},\dot{\alpha})\big);
\end{align*}
here $X\mapsto X^\bot$ is the $g$-orthogonal projection to the normal bundle of $\mathcal{N}\to S$. Making use of the fact that $\|B\|_g\leq 1$, we integrate up to find that
\begin{align}
\big|dx^i\big(\dot{\alpha}(t)\big) - dx^i\big(\dot{\alpha}(0)\big) \big| &\leq t\big(\|B(\dot{\alpha},\dot{\alpha})\|_g\|dx^i\|_g + \|\Gamma\|_{\bar{g}}\|dx^i\|_{\bar{g}}\big)\label{eq:smallDeviation}\\
&\leq t(\|dx^i\|_g + \|\Gamma\|_{\bar{g}})\notag\\
&\leq 3t,\notag
\end{align}
where we have made use of the following facts: $\|dx^i\|_g\leq 2$ and $\|\Gamma\|_{\bar{g}}\leq 1$; these estimates are easily deduced from Lemma \ref{lem:GammaEst}. Similarly for a continuous unit vector field $\eta$ along $\alpha$ which is locally tangent to the image of $u$ but orthogonal to $\dot{\alpha}$, one finds
\begin{equation}\label{eq:goodParam2}
\big|dx^i\big(\eta(t)\big) - dx^i\big(\eta(0)\big) \big| \leq   3t.
\end{equation}
Let $\pr:\mathbb{R}^{m}\to\mathbb{R}^2$ be the coordinate projection $\pr(x^1,\ldots,x^{m})=(x^1,x^2)$. It follows from inequalities (\ref{eq:smallDeviation}) and (\ref{eq:goodParam2}) that there exists a $r_0\in (0,\delta)$  such that the map
\begin{equation*}
\varphi:=\pr\circ \Phi\circ u\circ \exp_{\zeta}^{u^*g} : \{X\in T_\zeta S: \|X\|_{u^*g} < 2r_0\}\to \mathbb{R}^2
\end{equation*}
is a diffeomorphism with its image, its image contains $\mathcal{D}_{r_0}$, and the map
\begin{equation*}
\phi:=\exp_\zeta^{u^*g}\circ \varphi^{-1}:\mathcal{D}_{r_0}\to S
\end{equation*}
is well defined and satisfies properties (P\ref{en.P1}) - (P\ref{en.P6}). We postpone the proof of (P\ref{en.P7}) for the moment, and instead prove (P\ref{en.P8}). To that end, we note that by integrating (P\ref{en.P5}) it follows that for $\rho=(s,t)$
\begin{equation}\label{eq:goodParam3}
\sum_{i=3}^m \big(\tilde{u}^i(\rho)\big)^2\leq \sum_{i=3}^m \big(|s|\| \tilde{u}_s^i\|_{C^0(\mathcal{D}_{r_0})} + |t|\|\tilde{u}_t^i \|_{C^0(\mathcal{D}_{r_0})}  \big) ^2 \leq 2\cdot10^{-20}|\rho|^2\leq {\textstyle \frac{1}{4}}|\rho|^2.
\end{equation}
Making use of the fact that $\dist_{\Phi_*g}\big(\tilde{u}(\rho),0\big)^2=\sum_{i=1}^m \big(\tilde{u}^i(\rho)\big)^2$, we see that the right-most inequality of property (P\ref{en.P8}) follows from inequality (\ref{eq:goodParam3}).  However the left-most inequality of property (P\ref{en.P8}) also holds, since
\begin{equation*}
|\rho|^2 \leq |\rho|^2 + \sum_{i=3}^m \big(\tilde{u}^i(\rho)\big)^2 = \dist_{\Phi_*g}\big(\tilde{u}(\rho),0\big)^2.
\end{equation*}

To prove property (P\ref{en.P7}) it will be convenient to work in $\R^m$ rather than $\mathcal{B}_{2\delta}(p)$, so we will abuse notation by allowing $g$ and $\bar{g}$ to respectively denote $\Phi_*g$ and $\Phi_*\bar{g}$.  Roughly speaking, property (P\ref{en.P7}) will follow from the fact that the length of the second fundamental form $B_u$ is a priori bounded; the proof would be obvious if second derivatives of $\tilde{u}$ (when regarded as vector fields along $\tilde{u}(\mathcal{D}_{r_0})$) were always $g$-orthogonal to the surface $\tilde{u}(\mathcal{D}_{r_0})$, however this is not the case.  Thus we need the following computational lemma to bound the second derivatives by their normal components.
\begin{lemma}\label{lem:ProjectionLemma}
Let $\tilde{u}:\mathcal{D}_{r_0}\to \R^m$ be as above.  Suppose $q\in \tilde{u}(\mathcal{D}_{r_0})$, and $X\in T_q \R^m$ with $dx^1(X)=0=dx^2(X)$, and let $\pi_{\mathcal{N}}^g: T_q \R^m \to \mathcal{N}_q$ denote the $g$-orthogonal projection to the fiber $\mathcal{N}_q$ of the $g$-normal bundle $\mathcal{N}\to \tilde{u}(\mathcal{D}_{r_0})$.  Then
\begin{equation}
\|X\|_{g}\leq 2\|\pi_{\mathcal{N}}^g(X)\|_{g}.
\end{equation}
\end{lemma}
\begin{proof}
Here and throughout, we will regard $\tilde{u}_s=\tilde{u}_s(\rho)$ and $\tilde{u}_t=\tilde{u}_t(\rho)$ as vectors in $T_{\tilde{u}(\rho)}\R^m$.  We then observe that the graphical parametrization $\tilde{u}$ and the definition of $\bar{g}$ yield
\begin{equation*}
\| \sin(\theta) \tilde{u}_s+\cos(\theta) \tilde{u}_t\|_{\bar{g}}^2 =1 + {\textstyle \sum_{i=3}^m} \big(\sin(\theta) \tilde{u}_s^i + \cos(\theta)\tilde{u}_t^i \big)^2;
\end{equation*}
combining this with (P\ref{en.P5}) then yields
\begin{equation*}
\|\sin(\theta) \tilde{u}_s + \cos(\theta) \tilde{u}_t\|_{\bar{g}}\geq 1 - \sqrt{2} \cdot 10^{-10}.
\end{equation*}
Thus for any $X\in T_q \R^m$ with $dx^1(X)=0=dx^2(X)$, we find
\begin{align}
\|\pi_{\mathcal{T}}^{\bar{g}} (X)\|_{\bar{g}}&=\sup_{\theta\in [0,2\pi]} \frac{\big| \la X, \sin(\theta) \tilde{u}_s +\cos(\theta) \tilde{u}_t\ra_{\bar{g}}\big|}{\| \sin(\theta) \tilde{u}_s +\cos(\theta) \tilde{u}_t \|_{\bar{g}}}\label{eq:ProjEst}\\
&\leq (1-\sqrt{2}\cdot 10^{-10})^{-1}\big( \big| \la X,  \tilde{u}_s\ra_{\bar{g}}\big| + \big| \la X,  \tilde{u}_t\ra_{\bar{g}}\big|\big)\notag\\
&\leq 10^{-9} \|X\|_{\bar{g}}.\notag
\end{align}
Next, recall that for each $q\in \tilde{u}(\mathcal{D}_{r_0})$ we have $\dist_{g}(0,q)\leq \delta$, so by
so by Lemma \ref{lem:GammaEst}, it follows that for each $Y\in T_q \R^m$, the following estimates hold:
\begin{align*}
(1-10^{-10})\|Y\|_{\bar{g}}\leq \|Y\|_g& \leq (1+10^{-10})\|Y\|_{\bar{g}}\\
\|\pi_{\mathcal{T}}^{g}(X)-\pi_{\mathcal{T}}^{\bar{g}}(X)\|_{\bar{g}}&\leq 10^{-9} \|X\|_{\bar{g}},
\end{align*}
where $\mathcal{T}$ is the plane tangent to $\tilde{u}(\mathcal{D}_{r_0})$ at $q$. Combining these estimates with inequality (\ref{eq:ProjEst}), we find that for each $X\in T_q\R^m$ with $dx^1(X)=0= dx^2(X)$, the following holds:
\begin{align*}
\|\pi_{\mathcal{T}}^{g}(X)\|_{g}&\leq (1+10^{-10})\|\pi_{\mathcal{T}}^{g}(X)\|_{\bar{g}}\notag\\
&\leq (1+10^{-10})\big(\|\pi_{\mathcal{T}}^{g}(X)-\pi_{\mathcal{T}}^{\bar{g}}(X)\|_{\bar{g}} + \|\pi_{\mathcal{T}}^{\bar{g}}(X)\|_{\bar{g}}\big)\\
&\leq (1+10^{-10})\cdot2\cdot 10^{-9}\|X\|_{\bar{g}}\\
&\leq {\textstyle \frac{1}{2}}\|X\|_{g},
\end{align*}
from which we conclude that
\begin{equation}
\|X\|_{g}\leq 2\|\pi_{\mathcal{N}}^g(X)\|_{g},
\end{equation}
which is precisely the desired inequality. This completes the proof of Lemma \ref{lem:ProjectionLemma}.
\end{proof}

We are now ready to prove (P\ref{en.P7}).  Indeed, observe that for any multi-index $\alpha$ with $|\alpha|=2$, the first two components of $D_\alpha\tilde{u}$ vanish; thus regarding $D_{\alpha}\tilde{u}(\rho)\in T_{\tilde{u}(\rho)} \R^m$, we see that Lemma \ref{lem:ProjectionLemma} applies.  For instance, we then have
\begin{equation*}
\|\tilde{u}_{st}\|_{\bar{g}}\leq 2\|\pi_{\mathcal{N}}^g(\tilde{u}_{st})\|_{g} =2\big\|\pi_{\mathcal{N}}^g(\nabla_{\tilde{u}_s}\tilde{u}_t)-\pi_{\mathcal{N}}^g\big(\Gamma(\tilde{u}_s,\tilde{u}_t)\big)\big\|_{g}\leq 10
\end{equation*}
The same estimate holds for the other second derivatives, and thus property (P\ref{en.P7}) follows immediately.

To prove property (P\ref{en.P9}), recall that $\rho\in \mathcal{D}_{r_0}$ guarantees $\dist_g \big(\tilde{u}(\rho),\tilde{u}(0)\big)\leq \delta=10^{-10}$, so by Lemma \ref{lem:GammaEst} it follows that
\begin{equation*}
\big|\la X,Y\ra_g - \la X,Y\ra_{\bar{g}} \big|\leq 10^{-10} \|X\|_{\bar{g}} \|Y\|_{\bar{g}};
\end{equation*}
combining this inequality with (P\ref{en.P5}) then guarantees that $|\gamma_{ij}-\delta_{ij}|\leq 10^{-9}$.  Letting $[\gamma_{ij}]$ and $[\gamma^{ij}]$ denote the symmetric positive definite $2\times 2$ matrices with entries $\gamma_{ij}$ and $\gamma^{ij}$ respectively, it then follows that
\begin{equation}\label{eq:goodParam6}
\|[\gamma_{ij}]\| \leq 1+10^{-8}, \qquad |1-\det[\gamma_{ij}]| \leq 10^{-8},
\quad\text{and}\quad
|\gamma^{ij}-\delta^{ij}| \leq 10^{-8}.
\end{equation}
The first inequality of (\ref{eq:goodParam6}) together with the fact that $[\gamma_{ij}]$ is a positive symmetric definite matrix and $[\gamma_{ij}]^{-1}=[\gamma^{ij}]$ proves (P\ref{en.P9}).

To prove (P\ref{en.P10}), we first note that the third inequality of (\ref{eq:goodParam6}) guarantees that  $\|\gamma^{ij}\|_{C^0(\mathcal{D}_{r_0})}\leq 2$, so it remains to estimate the H\"{o}lder seminorm $[\cdot]_{0,\alpha}$ given by
\begin{equation}\label{eq:HolderSeminorm}
[f]_{0,\alpha}:=\sup_{\substack{\rho,\rho'\in\mathcal{D}_{r_0}\\\rho\neq \rho'}} \frac{|f(\rho)-f(\rho')|}{|\rho-\rho'|^{\alpha}}.
\end{equation}
Recall that $[f]_{0,\alpha}\leq (2r_0)^{1-\alpha} \|D f\|_{C^0(\mathcal{D}_{r_0})}$,  and
$\gamma^{ij}=\pm \gamma_{i'j'}/|\gamma|$ for some $i'$ and $j'$ and $|\gamma|:=\det[\gamma_{ij}]$, so
\begin{align*}
[\gamma^{ij}]_{0,\alpha}&\leq \| D \gamma_{i' j'} \| \cdot \| 1/|\gamma| \| + 2 \|D \gamma_{i' j'}\| \cdot \|\gamma_{i'j'}\| \cdot \| 1/|\gamma|^2 \|\\
&\leq 10 \| D \gamma_{i' j'} \|,
\end{align*}
where $\|\cdot\|$ denotes the $C^0(\mathcal{D}_{r_0})$ norm.  To estimate the right-most term we compute
\begin{align*}
\big|\partial_s (\gamma_{11}) \big| &= \big|\partial_s \la \tilde{u}_s,\tilde{u}_s\ra_g\big| \\
&\leq  \big| g_{ij,k}\tilde{u}_s^i \tilde{u}_s^j \tilde{u}_s^k\big| + \big| (g_{ij}-\delta_{ij}) \tilde{u}_{ss}^i \tilde{u}_{s}^j \big| + \big| (g_{ij}-\delta_{ij})
\tilde{u}_s^i \tilde{u}_{ss}^j \big| + 2\big|\tilde{u}_{ss}^i\tilde{u}_{s}^i\big|\\
&\leq 20,
\end{align*}
where we have made use of properties (P\ref{en.P5}) and (P\ref{en.P7}). The same argument holds for the partial of the $\gamma_{ij}$ with respect to $s$ and $t$, so conclude that $\|D \gamma_{i'j'}\|\leq 20$, and thus
\begin{equation*}
\|\gamma^{ij}\|_{C^{0,\alpha}(\mathcal{D}_{r_0})} \leq  2\cdot 10^3
\end{equation*}
This verifies property (P\ref{en.P10}), and completes the proof of Lemma \ref{lem:uniformLocalGraphs}.
\end{proof}

We are now prepared to state and prove the main result of Section \ref{sec:InteriorCase}, namely Theorem \ref{thm:AprioriDerivataiveBounds}, which guarantees that $J$-curves with a priori bounded curvature, can be graphically parameterized in such a way that all derivatives of the parameterization are bounded a priori.  We now make this precise.

\begin{theorem}[A priori derivative bounds]\label{thm:AprioriDerivataiveBounds}
Fix constants $m\in \mathbb{N}$ with $m\geq 2$, $C>0$, and $\alpha\in (0,1)$.  Then there exists a constant $C'=C'(m,C,\alpha)$ with the following significance.
Let $(M,J,g)$ be a compact almost Hermitian manifold of dimension $2n$, possibly with boundary, let $\mathcal{K}\subset M$ be a compact set for which $(M,g,\mathcal{K})$ is an admissible triple and $\|g\|_{C_g^{m-1,\alpha}(\mathcal{K})}+\|J\|_{C_g^{m-1,\alpha}(\mathcal{K})}\leq C$,  and let $u:S\to M$ be an immersed $J$-curve of Type 0 for which
\begin{equation*}
\sup_{\zeta\in S} \|B_u(\zeta)\|_g \leq 1.
\end{equation*}
Letting $r_0$ be the constant guaranteed by Lemma \ref{lem:uniformLocalGraphs}, we fix $\zeta\in u^{-1}(\mathcal{K})$ and let $\phi:\mathcal{D}_{r_0}\to S$ and $\Phi:\mathcal{B}_{2r_0}\to \R^{2n}$ denote the maps also guaranteed by that
Lemma.  \emph{Then} for $\tilde{u}:=\Phi\circ u \circ \phi$, the following estimate holds.
\begin{equation}
\|\tilde{u}\|_{C^{m,\alpha}(\mathcal{D}_{r_0/2})}\leq C'.\label{eq:EllipticRegularity}
\end{equation}
\end{theorem}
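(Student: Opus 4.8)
The plan is to show that, in the graphical coordinates furnished by Lemma~\ref{lem:uniformLocalGraphs}, the components $\tilde u^3,\dots,\tilde u^m$ satisfy a quasilinear uniformly elliptic system whose coefficients and inhomogeneity are a priori controlled by the bounds (P\ref{en.P5}), (P\ref{en.P7}), (P\ref{en.P9}), (P\ref{en.P10}), and then to bootstrap from $C^{1,1}$ to $C^{m,\alpha}$ by repeated application of interior Schauder estimates.

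First I would extract the PDE. Since $(s,t)$ are coordinates on $\mathcal D_{r_0}$ and $\gamma_{ij}=\la\partial_i\tilde u,\partial_j\tilde u\ra_g$ is exactly the induced metric, $\tilde u$ is an isometric immersion of $(\mathcal D_{r_0},\gamma)$ into $(\R^m,\Phi_*g)$, so its tension field coincides with its mean curvature vector; combined with the inhomogeneous mean curvature equation of Lemma~\ref{lem:InhomogeneousMeanCurvatureEq} this gives, in the geodesic normal coordinates $x^1,\dots,x^m$ determined by $\Phi$ (indices $i,j,l$ running over $\{1,2\}$ and $a,b,k$ over $\{1,\dots,m\}$),
\[
\gamma^{ij}\partial_i\partial_j\tilde u^k-\gamma^{ij}\hat\Gamma^{\,l}_{ij}\,\partial_l\tilde u^k \;=\; \gamma^{ij}\big(Q^{k}{}_{ab}-\Gamma^{k}_{ab}\big)(\tilde u)\,\partial_i\tilde u^a\,\partial_j\tilde u^b ,
\]
where $\Gamma^k_{ab}$ and $\hat\Gamma^{\,l}_{ij}$ are the Christoffel symbols of $g$ and of $\gamma$, and $Q^k{}_{ab}$ are the components of the tensor $Q(X,Y)=J(\nab_X J)Y$; each $\Gamma^k_{ab}$ and $Q^k{}_{ab}$ is built algebraically from $g,\partial g,J,\partial J$, hence lies in $C^{m-2,\alpha}$ on the chart. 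Because $\tilde u^1=s$ and $\tilde u^2=t$, the second-order part of the $k=1$ and $k=2$ equations collapses to $-\gamma^{ij}\hat\Gamma^{\,1}_{ij}$ and $-\gamma^{ij}\hat\Gamma^{\,2}_{ij}$, which lets me solve for the two contracted Christoffel symbols $\gamma^{ij}\hat\Gamma^{\,1}_{ij}$ and $\gamma^{ij}\hat\Gamma^{\,2}_{ij}$ as explicit expressions in $\tilde u$ and $D\tilde u$ alone; substituting these back into the remaining equations produces a genuine quasilinear elliptic system in non-divergence form
\[
\gamma^{ij}\,\partial_i\partial_j\tilde u^k \;=\; F^k(\tilde u,D\tilde u),\qquad k=3,\dots,m,
\]
where each $F^k$ is a fixed polynomial in $D\tilde u$ and in $\gamma^{ij}$ — the latter a smooth function of $g_{ab}(\tilde u)$ and $D\tilde u$, nondegenerate by (P\ref{en.P9}) — whose coefficients are $C^{m-2,\alpha}$ functions of $\tilde u$.

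Next I would run the bootstrap, shrinking the domain through a fixed finite chain of radii $r_0=\rho_0>\rho_1>\cdots>\rho_{m-2}=r_0/2$. At the base step, (P\ref{en.P5}) and (P\ref{en.P7}) bound $\|D\tilde u\|_{C^{0,1}(\mathcal D_{r_0})}$, (P\ref{en.P10}) bounds $\|\gamma^{ij}\|_{C^{0,\alpha}(\mathcal D_{r_0})}$, so $F^k\in C^{0,\alpha}(\mathcal D_{r_0})$ with norm controlled by $m,C,\alpha$; the interior Schauder estimate for the uniformly elliptic scalar equations $\gamma^{ij}\partial_i\partial_j\tilde u^k=F^k$, with ellipticity constant from (P\ref{en.P9}), then gives $\tilde u^k\in C^{2,\alpha}(\mathcal D_{\rho_1})$ with an a priori bound. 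In general, if $\tilde u\in C^{j,\alpha}$ on $\mathcal D_{\rho_{l-1}}$ then $D\tilde u\in C^{j-1,\alpha}$ and $\gamma^{ij}\in C^{\min(j-1,m-1),\alpha}$ there, while the $\tilde u$-dependent coefficients of $F^k$ are $C^{m-2,\alpha}$, so $F^k\in C^{\min(j-1,m-2),\alpha}$ and one more application of interior Schauder upgrades $\tilde u$ to $C^{\min(j+1,m),\alpha}$ on $\mathcal D_{\rho_l}$. After $m-2$ such steps this yields $\|\tilde u\|_{C^{m,\alpha}(\mathcal D_{r_0/2})}\le C'(m,C,\alpha)$; the case $m=2$ is trivial since then $\tilde u\equiv(s,t)$.

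The elliptic machinery is entirely standard (interior Schauder estimates, e.g.\ Gilbarg--Trudinger), so the main obstacle is the bookkeeping in the derivation of the system — verifying that, after eliminating $\gamma^{ij}\hat\Gamma^{\,1}_{ij}$ and $\gamma^{ij}\hat\Gamma^{\,2}_{ij}$ via the first two component equations, the right-hand sides $F^k$ carry no second derivatives of $\tilde u$ — and in matching the regularity gained at each Schauder step against the $C^{m-1,\alpha}$ hypothesis on $(g,J)$ so that the iteration terminates precisely at $C^{m,\alpha}$ with all constants depending only on $m$, $C$, and $\alpha$.
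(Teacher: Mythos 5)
Your proposal follows essentially the same route as the paper: derive the quasilinear uniformly elliptic system $\gamma^{ij}D_{ij}\tilde u^\mu=\mathcal F^\mu$ for the non-trivial graph components by combining the inhomogeneous mean curvature equation $H=\tr_S Q$ with the graphical parameterization and eliminating the contracted Christoffel terms via the $\mu=1,2$ equations (this is precisely the content of Proposition \ref{prop:GraphicalMinimalSurfaceSystem} and the lemmas in the appendix), then invoke the bounds (P\ref{en.P5}), (P\ref{en.P7}), (P\ref{en.P9}), (P\ref{en.P10}) from Lemma \ref{lem:uniformLocalGraphs} to control coefficients, ellipticity, and the inhomogeneity, and finally bootstrap with interior Schauder estimates on a nested sequence of disks. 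One small caution: your write-up conflates the regularity parameter $m$ in the theorem with the ambient dimension (the components run to $2n$, not to $m$; and the case $m=2$ is the base Schauder step, not the degenerate situation $\tilde u\equiv(s,t)$), but this is a notational slip and does not affect the substance of the argument.
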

\begin{proof}
We begin by defining the constants $r_k:=\frac{1}{2}(1+\frac{1}{2^k}) r_0$, and note that $r_k\in (\frac{1}{2}r_0,r_0)$ for all positive $k\in \mathbb{N}$.  Note that a consequence of Proposition \ref{prop:GraphicalMinimalSurfaceSystem} is that for
$\tilde{u}:=\Phi\circ u\circ \phi$ and $\mu=3,\ldots,2n$ the $\tilde{u}^\mu$ satisfy the equation $\gamma^{ij}D_{ij}\tilde{u}^\mu = \mathcal{F}^\mu$, where
\begin{equation*}
\gamma_{ij} = \la D_i\tilde{u},D_j\tilde{u}\ra_{\Phi_* g},\qquad \gamma_{i\ell}\gamma^{\ell j} = \delta_i^j,
\end{equation*}
and
\begin{equation*}
\mathcal{F}^\mu:=\gamma^{ij} (D_i \tilde{u}^\alpha) (D_j \tilde{u}^\beta) \big((D_\ell \tilde{u}^\mu)\Gamma_{\alpha \beta}^\ell-(D_\ell \tilde{u}^\mu) Q_{\alpha \beta}^\ell +Q_{\alpha \beta}^\mu-\Gamma_{\alpha \beta}^\mu \big);
\end{equation*}
here the $\Gamma_{\alpha\beta}^\mu$ are the Christoffel symbols associated to geodesic normal coordinates, the component functions $Q_{\alpha\beta}^\mu$ are defined by $Q_{\alpha\beta}^\mu\partial_{x^\mu} = J(\nab_{\partial_{x^\alpha}} J ) \partial_{x^\beta}$, repeated Roman indices indicate a summation from $1$ to $2$, and repeated Greek indices indicate summation from $1$ to $2n$.   Also recall that properties (P\ref{en.P9}) and (P\ref{en.P10}) of Lemma \ref{lem:uniformLocalGraphs} guarantee that
\begin{equation*}
\|\gamma^{ij}\|_{C^{0,\alpha}(\mathcal{D}_{r_0})} \leq 2\cdot 10^3 \qquad\text{and}\qquad \inf_{\substack{\rho\in\mathcal{D}_{r_0}\\\xi\in \R^2}}\gamma^{ij}(\rho)\xi_i\xi_j \geq {\textstyle \frac{1}{4}}|\xi|^2.
\end{equation*}
Furthermore  property (P\ref{en.P5}) guarantees $\|D_i \tilde{u}\|_{C^0(\mathcal{D}_{r_0})}\leq 2$, and property (P\ref{en.P7}) guarantees $\|D_{ij} \tilde{u}\|_{C^0(\mathcal{D}_{r_0})}\leq 10$.  We thus observe that the $C^{0,\alpha}(\mathcal{D}_{r_0})$-norms of the $\mathcal{F}^\mu$ are bounded in terms of $r_0$, $\alpha$, $\|\tilde{u}\|_{C^{1,\alpha}(\mathcal{D}_{r_0})}$, $\|g\|_{C_g^{1,\alpha}(\mathcal{K})}$ and $\|J\|_{C_g^{1,\alpha}(\mathcal{K})}$.  However, $r_0$ is a universal constant, $\alpha$ is fixed by assumption, $\|\tilde{u}\|_{C^{1,\alpha}(\mathcal{D}_{r_0})}$ is bounded in terms of $\alpha$ and $\|\tilde{u}\|_{C^{2}(\mathcal{D}_{r_0})}$, and  $\|g\|_{C_g^{1,\alpha}(\mathcal{K})}+\|J\|_{C_g^{1,\alpha}(\mathcal{K})}<C$  by assumption.
Thus by elliptic regularity (c.f. \cite{GdTn01}, Chapter 6), it follows that there exists a $C_2=C_{2}(\alpha,C)$ such that
\begin{equation}
\|\tilde{u}\|_{C^{2,\alpha}(\mathcal{D}_{r_2})}\leq C_{2}.
\end{equation}
By the usual elliptic bootstrapping argument (i.e. differentiating the equation $\gamma^{ij}D_{ij}\tilde{u}^\mu = \mathcal{F}^\mu$, to show that higher order derivatives of $\tilde{u}^\mu$ solve the same partial differential equation with a different inhomogeneous term), there exist constants $C_{k} = C_{k}\big(\alpha,k,C\big)$ for $k=2,\ldots,m$ so that
\begin{equation*}
\|\tilde{u}\|_{C^{k,\alpha}(\mathcal{D}_{r_k})}\leq C_{k}.
\end{equation*}
Since this estimate holds for $k=m$, we let $C':= C_m$, and the proof of Theorem \ref{thm:AprioriDerivataiveBounds} is complete.
\end{proof}

\subsection{Lagrangian boundary case}\label{sec:BoundaryCase} In this section we prove results analogous to those of Section \ref{sec:InteriorCase} in the case that the $J$-curves of interest have a partial Lagrangian boundary condition, or more precisely the $J$-curves are Type 1 immersions.

Here and throughout, the $4$-tuple $(M,J,g,L)$ will consist of a smooth compact almost Hermitian manifold $(M,J,g)$ of dimension $2n$ (possibly with boundary), and a compact embedded totally geodesic Lagrangian submanifold $L\subset M$ with $\partial L = L \cap \partial M$.

\begin{definition}[$L$-adapted geodesic normal coordinates]\label{def:LAdaptGeoNormalCoords}
Let $(M,J,g,L)$ be as above, and fix $p\in L\setminus \partial L$.  Define the map
\begin{equation*}
\exp_p^L:T_p L \times T_p L^\bot\simeq T_p M \to M\qquad\text{by}\qquad \exp_p^L(\ell,\nu):=\exp_{\exp_p(\ell)}(\tilde{\nu}),
\end{equation*}
where $\tilde{\nu}_{\exp_p(\ell)}\in T_{\exp_p(\ell)} M$ is the vector obtained by parallel transport of $\nu_p$ along the path $t\mapsto \exp_p(t\ell)$.
Extend any orthonormal frame $\{f_1^*,\ldots,f_{n}^*\}\subset T_p^* L$ to an orthonormal frame $\{e_1^*,\ldots,e_{2n}^*\}\subset T_p^* M$ by defining $e_{2k-1}^*:=f_k^*$ and $e_{2k}^*:= -e_{2k-1}^*\circ J$ for $k=1,\ldots,n$, and we define the associated \emph{$L$-adapted geodesic normal coordinates}  $(x^1,\ldots,x^{2n})$ by the following:
\begin{equation*}
x^k(q):=e_k^*\big((\exp_p^L)^{-1}(q) \big).
\end{equation*}
\end{definition}

Just as the injectivity radius is associated to the $\exp$ map, we also define an $L$-adapted injectivity radius associated to the $\exp^L$ map by the following.

\begin{definition}[$\inj^L$]\label{def:injL}
For each $p\in L$, $\epsilon>0$, define $\mathcal{B}_\epsilon:=\{X\in T_p L: \|X\|< \epsilon\}$ and $\mathcal{B}_\epsilon^\bot:=\{X\in T_p L^\bot: \|X\|< \epsilon\}$.  We then define the $L$-adapted injectivity radius at $p$ to be the following:
\begin{equation*}
\inj^L(p):=\sup \{\epsilon : \;conditions  \;\;(C\ref{en.C1}) \;\;\text{and}\;\; (C\ref{en.C2})\;\;\text{hold}  \};
\end{equation*}
where the conditions (C\ref{en.C1}) and (C\ref{en.C2}) are given by
\begin{enumerate}[({C}1)]
\item $\exp_p^L:\mathcal{B}_\epsilon\times\mathcal{B}_\epsilon^\bot \to M\text{ is a diffeomorphism with its image}$ \label{en.C1}
\item $\mathcal{B}_\epsilon\times \{0\}\supset (\exp_p^L)^{-1}\big(  L\cap \exp_p^L\big(\mathcal{B}_\epsilon\times\mathcal{B}_\epsilon^\bot\big)\big)$ \label{en.C2}.
\end{enumerate}
Similarly, for a set $\mathcal{K}\subset L$ define
\begin{equation}\label{eq:LAdaptInjRad}
\inj^L(\mathcal{K}):=\inf_{p\in \mathcal{K}}\inj^L(p).
\end{equation}
\end{definition}
Note that when comparing the $L$-adapted injectivity radius to the usual injectivity radius, the first condition (C\ref{en.C1}) is an obvious adaptation from the usual definition, whereas the second condition (C\ref{en.C2}) guarantees that if $p\in L$ and $\epsilon< \inj^L(p)$, then $L\cap \exp_p^L(\mathcal{B}_\epsilon\times \mathcal{B}_\epsilon^\bot)$ is connected. Moreover, consider the torus $\mathbb{S}^1\times \mathbb{S}^1$ with the standard structures, and consider the Lagrangians $L_\delta: = \{0\}\times \mathbb{S}^1\cup \{\delta\}\times \mathbb{S}^1$ for each $\delta>0$;  for cases such as this in what follows,  it will be important that $\inj^{L_\delta}(L_\delta)\to 0$ as $\delta \to 0$.

Also note that for $\mathcal{B}_\epsilon\subset T_p L$ and $\mathcal{B}_\epsilon^\bot\subset T_pL^\bot$ as above, and for $\mathcal{B}_\epsilon^M(p)\subset M$ the metric ball of radius $\epsilon>0$, and for each sufficiently small $\epsilon>0$, it is straightforward to verify that $\mathcal{B}_\epsilon^M(p)\subset \exp_p^L\big(\mathcal{B}_\epsilon \times \mathcal{B}_\epsilon^\bot\big)$.

Since $L$-adapted geodesic normal coordinates are a bit non-standard, we take a moment to discuss properties of the metric components $g_{ij}$ in these coordinates.  We make this precise with Lemma \ref{lem:LAdaptedGeoCoords} below.

\begin{lemma}\label{lem:LAdaptedGeoCoords}
Let $(M,J,g,L)$ be as above.  Then in local $L$-adapted geodesic normal coordinates $\Phi=(x^1,\ldots,x^{2n})$ centered at $p\in L\setminus \partial L$, the following hold locally.
\begin{enumerate}[({L}1)]
\item $L=\cap_{k=1}^n \{x^{2k}=0\}$.\label{en.L1}
\item $TL = \Span\big(\{\partial_{x^{2k-1}}\}_{k=1}^n\big)\big|_{L}$ and $TL^\bot = \Span\big(\{\partial_{x^{2k}}\}_{k=1}^n\big)\big|_L$.\label{en.L2}
\item For any \emph{odd} $i,j\in 1, \ldots, 2n-1$ the components of the metric satisfy the following along $L$\label{en.L3}
\begin{equation*}
\left(
\begin{array}{cc}
g_{ij} & g_{i,j+1}\\
g_{i+1, j} & g_{i+1, j+1}
\end{array}
\right)
=
\left(
\begin{array}{cc}
g_{ij} & 0\\
0 & 1
\end{array}
\right).
\end{equation*}
\item At $p=\Phi^{-1}(0)$, $g_{ij}=\delta_{ij}$, $\partial_{x^k}g_{ij}=0$, and hence $\Gamma_{ij}^k(p)=0$, for all $i,j,k\in \{1,\ldots,2n\}$.\label{en.L4}
\item The Christoffel symbols satisfy $\Gamma_{o_i o_j}^{e_k}(q) = 0= \Gamma_{o_i e_k}^{o_j}(q)$, whenever $q\in L$, $o_i,o_j\in 1,\ldots,2n-1$ are odd, and $e_k\in 2,\ldots,2n$ is even.\label{en.L5}
\end{enumerate}
\end{lemma}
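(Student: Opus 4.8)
The plan is to read off all five statements directly from the construction of $\exp_p^L$ and the adapted frame, the only subtle one being (L\ref{en.L5}).

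First I would pin down the coordinate frame. Extending $f_k^*\in T_p^*L$ to $T_p^*M$ means declaring it to annihilate $T_pL^\bot=JT_pL$; since $J$ is a $g$-isometry this makes $e^*_{2k-1}=f^*_k$ the $g$-dual of $f_k$ and $e^*_{2k}=-f^*_k\circ J$ the $g$-dual of $Jf_k$, so the dual frame is $e_{2k-1}=f_k$, $e_{2k}=Jf_k$, a $g$-orthonormal basis of $T_pM$ adapted to $T_pM=T_pL\oplus T_pL^\bot$; in particular $e^*_{2k}$ kills $T_pL$ and $e^*_{2k-1}$ kills $T_pL^\bot$, and since $d(\exp_p^L)_0=\mathrm{id}$ one has $\partial_{x^k}|_p=e_k$. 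For (L\ref{en.L1}): if $(\exp_p^L)^{-1}(q)=\ell+\nu$ with $\ell\in T_pL$, $\nu\in T_pL^\bot$, then $x^{2k}(q)=e^*_{2k}(\nu)=g(Jf_k,\nu)$, which vanishes for every $k$ iff $\nu=0$ iff $q=\exp_p^L(\ell,0)=\exp_p(\ell)$; since $L$ is totally geodesic $\exp_p(\ell)\in L$ for $\ell$ small, and condition (C\ref{en.C2}) in the definition of $\inj^L$ says these are exactly the points of $L$ in the chart. For (L\ref{en.L2}), at $q=\exp_p(\ell)\in L$ I would differentiate the two families of coordinate curves through $q$: $t\mapsto\exp_p^L(\ell+tf_k,0)=\exp_p(\ell+tf_k)$, which stays in $L$, so $\partial_{x^{2k-1}}|_q\in T_qL$; and $t\mapsto\exp_p^L(\ell,tJf_k)=\exp_q\big(t\,\widetilde{Jf_k}|_q\big)$, where $\widetilde{Jf_k}|_q$ is the $M$-parallel transport of $(Jf_k)_p$ along the $L$-geodesic $\exp_p(r\ell)$, so $\partial_{x^{2k}}|_q=\widetilde{Jf_k}|_q\in T_qL^\bot$ because $M$-parallel transport along a curve in a totally geodesic submanifold preserves $TL$ and $TL^\bot$; dimension counting makes each set a frame for the respective bundle.

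Then (L\ref{en.L3}) is immediate from (L\ref{en.L2}): along $L$ the entries $g_{i,j+1}$, $g_{i+1,j}$ ($i,j$ odd) pair a tangent field with a normal field, hence vanish, while $g_{i+1,j+1}|_L=g(\widetilde{Jf_a}|_q,\widetilde{Jf_b}|_q)=g((Jf_a)_p,(Jf_b)_p)=g(f_a,f_b)=\delta_{ab}$ since parallel transport and $J$ are $g$-isometries. For (L\ref{en.L4}) I would show every $\Gamma^k_{ij}(p)=0$ and then use $\partial_m g_{ij}=\Gamma^l_{mi}g_{lj}+\Gamma^l_{mj}g_{li}$ together with $g_{ij}(p)=\langle e_i,e_j\rangle=\delta_{ij}$ to get $\partial_m g_{ij}(p)=0$. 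The purely-odd (resp. purely-even) symbols vanish at $p$ because $t\mapsto\exp_p(t\ell)$, $\ell\in T_pL$ (resp. $t\mapsto\exp_p(t\nu)$, $\nu\in T_pL^\bot$) are $M$-geodesics with straight-line coordinate representatives through the origin; the mixed symbols $\Gamma^k_{o\,e}(p)$ vanish because, by the computation in (L\ref{en.L2}), $\partial_{x^e}$ restricted to the $x^o$-coordinate axis is the $M$-parallel transport of $(Je)_p$ along that axis (an $L$-geodesic from $p$), so $\nabla_{\partial_{x^o}}\partial_{x^e}=0$ there.

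Finally, (L\ref{en.L5}): the first identity is the vanishing of the second fundamental form of $L$, since along $L$ we have $\Gamma^{e_k}_{o_io_j}=\langle\nabla_{\partial_{x^{o_i}}}\partial_{x^{o_j}},\partial_{x^{e_k}}\rangle=\langle\two(\partial_{x^{o_i}},\partial_{x^{o_j}}),\partial_{x^{e_k}}\rangle=0$, using (L\ref{en.L2})--(L\ref{en.L3}) to see $\partial_{x^{e_k}}|_L$ as a unit normal orthogonal to the (tangent) odd fields. For the second identity I would compute, along $L$, $\langle\nabla_{\partial_{x^{o_i}}}\partial_{x^{e_k}},\partial_{x^{o_j}}\rangle=\partial_{x^{o_i}}\langle\partial_{x^{e_k}},\partial_{x^{o_j}}\rangle-\langle\partial_{x^{e_k}},\nabla_{\partial_{x^{o_i}}}\partial_{x^{o_j}}\rangle$; the first term vanishes because $\langle\partial_{x^{e_k}},\partial_{x^{o_j}}\rangle\equiv0$ on $L$ (by (L\ref{en.L2})) and $\partial_{x^{o_i}}$ is tangent to $L$, and the second vanishes because $\nabla_{\partial_{x^{o_i}}}\partial_{x^{o_j}}$ has no normal component ($\two\equiv0$) while $\partial_{x^{e_k}}$ is normal. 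Since, again by (L\ref{en.L2})--(L\ref{en.L3}), the $g$-orthogonal projection onto $T_qL$ for $q\in L$ has image spanned by the odd coordinate fields, this is precisely $\Gamma^{o_j}_{o_ie_k}(q)=0$ for $q\in L$. I expect this last point to be the main obstacle: one must resist plugging "$g^{ij}=\delta^{ij}$ along $L$" into the Koszul formula (it is false — the odd--odd block of $g$ on $L$ is the intrinsic metric of $L$, not the identity, away from $p$) and instead argue invariantly through the second fundamental form, carefully tracking which index parities occur in each orthogonal projection along $L$.
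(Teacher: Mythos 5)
Your proposal is correct and follows essentially the same route as the paper's: (L\ref{en.L1})--(L\ref{en.L3}) via the splitting $\exp_p^L(\ell,\nu)=\exp_{\exp_p(\ell)}(\tilde\nu)$ with $\tilde\nu\in T_{\exp_p(\ell)}L^\bot$, (L\ref{en.L4}) via coordinate geodesics and parallelism of the even frame fields along $L$-radial geodesics, and (L\ref{en.L5}) from total geodesy together with (L\ref{en.L3}). The paper disposes of (L\ref{en.L5}) in one sentence, and your expanded argument (vanishing of $\langle\nabla_{\partial_{o_i}}\partial_{e_k},\partial_{o_j}\rangle$ via the Leibniz rule and $\two_L\equiv 0$, then reading off the odd Christoffel components through the orthogonal block structure) is precisely the content that sentence leaves implicit; your warning about not substituting $g^{ij}=\delta^{ij}$ along $L$ into the Koszul formula is well taken.
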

\begin{proof}
The validity of (L\ref{en.L1}) follows from the definition. Consequently, along $L$, the vector fields $\{\partial_{x^{2k-1}}\}_{k=1}^n$ form a basis for the fibers of $TL$, which proves the first half of (L\ref{en.L2}).  Next observe that since $L$ is totally geodesic, it follows that for any sections $\tau_1,\tau_2\in \Gamma(TL)$ and $\nu\in \Gamma(TL^\bot)$ we have $\nab_{\tau_1} \tau_2 \in \Gamma(TL)$ and $\nab_{\tau_1}\nu\in \Gamma(TL^\bot)$. Consequently for $(\tau,\nu)\in T_p L\times T_p L^\bot$, we have
\begin{equation*}
\exp_p^L(\tau,\nu) = \exp_{\exp_p(\tau)}(\tilde{\nu})
\end{equation*}
where $\tilde{\nu}\in T_{\exp_p(\tau)}L^\bot$ (instead of just $\tilde{\nu}\in T_{\exp_p(\tau)}M$).  We conclude that (L\ref{en.L3}) and the second half of  (L\ref{en.L2}) hold.  It is clear from the definition that $g_{ij}(p)=\delta_{ij}$.  We now claim that $\Gamma_{ij}^k(p)=0$ whenever $i$ and $j$ are both even or both odd. To prove this, we will consider the former case; the latter follows by the same argument.  Recall that in $L$-adapted geodesic normal coordinates,  the paths
\begin{equation*}
\gamma(t) = (0,a_2 t, 0, a_4 t,\ldots, 0,a_{2n} t)
\end{equation*}
are geodesics.  In other words,
\begin{equation*}
0 = \nab_{\dot{\gamma}}\dot{\gamma} = a_i a_j\Gamma_{ij}^k\big(\gamma(t)\big) \partial_{x^k}.
\end{equation*}
Thus $a_i a_j \Gamma_{ij}^k(0)=0$ whenever $a_{2k-1}=0$ for all $k=1,\ldots,n$.  By considering $a_\ell = \delta_{i\ell}$ with $i$ even, one quickly concludes that $\Gamma_{ij}^k(0)=0$ whenever $i=j$ and $i$ is even.  By considering $a_\ell = \delta_{i\ell} + \delta_{j\ell}$ with $i$ and $j$ even, and recalling that $\nab$ is torsion free, one deduces that $\Gamma_{ij}^k(0)=0$ whenever $i$ and $j$ are even; the same argument holds when $i$ and $j$ are both odd. Observe that we have just verified that $\nab_{\partial_{x^i}}\partial_{x^j}\big|_p=0$ whenever $i$ and $j$ are both even or both odd. Also by construction, the $\partial_{x^{2k}}$ are parallel along geodesics in $L$ emanating from $p$, so $\Gamma_{ij}^k(0)\partial_{x^k}=\nab_{\partial_{x^i}}\partial_{x^j}\big|_p = 0$ whenever $i$ is odd and $j$ is even.  Since $\nab$ is torsion free, one concludes that $\Gamma_{ij}^k(0)\partial_{x^k}=\nab_{\partial_{x^i}}\partial_{x^j}\big|_p = 0$ for all $i,j=1,\ldots,2n$. Property (L\ref{en.L4}) then follows from the computation
\begin{equation*}
\partial_{x^k}g_{ij}(p)= \la \nab_{\partial_x{^k}}\partial_{x^i}\big|_p, \partial_{x^j}\ra_g+ \la \partial_{x^i},\nab_{\partial_x{^k}} \partial_{x^j}\big|_p\ra_g=0.
\end{equation*}
Lastly, note that property (L\ref{en.L3}) and the fact that $L$ is totally geodesic guarantee property (L\ref{en.L5}).
\end{proof}

\begin{definition}[$\|T\|_{C_{g,L}^k(\mathcal{K})}$]\label{def:C1BoundOnG2}
Let $(M,J,g,L)$ be as above and let $T$ be a tensor on $M$. For each point $p\in L\setminus \partial L$ we define the bi-ball
\begin{equation*}
\mathcal{B}^2:=\exp_p^L\big(\mathcal{B}_{3\inj^L(p)/4}\times\mathcal{B}_{3\inj^L(p)/4}^\bot\big),
\end{equation*}
and equip it with $L$-adapted geodesic normal coordinates $(x^1,\ldots,x^{2n})$ centered at $p$. For multi-indices $I_1$ and $I_2$, we let $T_{I_1}^{I_2}$ denote the components of $T$ in the coordinates $(x^1,\ldots,x^{2n})$, and then define
\begin{equation*}
[T]_{C_{g,L}^k(p)}:=\sup_{q\in\mathcal{B}^2} \Big(\sum_{|\alpha|=k}\sum_{I_1,I_2}|D_\alpha T_{I_1}^{I_2}(q)|^2\Big)^{1/2}\quad\text{and}\quad \|T\|_{C_{g,L}^k(p)}:= \sum_{j=0}^k [T]_{C_{g,L}^j(p)} .
\end{equation*}
Similarly for each $\alpha\in (0,1]$ we define
\begin{equation*}
\|T\|_{C_{g,L}^{k,\alpha}(p)}:= \|T\|_{C_{g,L}^k(p)} + \sup_{\substack{q_1,q_2\in\mathcal{B}^2 \\ q_1\neq q_2}}\Big(\sum_{|\alpha|=k}\sum_{I_1,I_2}\frac{|D_\alpha T_{I_1}^{I_2}(q_1)-D_\alpha T_{I_1}^{I_2}(q_2)|^2}{|q_1-q_2|^{2\alpha}}\Big)^{1/2},
\end{equation*}
where $|q_1-q_2|$ is the distance between $q_1$ and $ q_2$ computed with respect to the flat metric $dx^i\otimes dx^i$. More generally, for any subset $\mathcal{K} \subset L\setminus \partial L$, we repeat the above construction for each $p\in \mathcal{K}$, to define
\begin{equation*}
[T]_{C_{g,L}^k(\mathcal{K})}:= \sup_{p\in \mathcal{K}} [T]_{C_{g,L}^k(p)}\qquad \|T\|_{C_{g,L}^k(\mathcal{K})} := \sup_{p\in \mathcal{K}} \|T\|_{C_{g,L}^k(p)},\quad\text{and}
\end{equation*}
\begin{equation*}
\|T\|_{C_{g,L}^{k,\alpha}(\mathcal{K})}:=\sup_{p\in \mathcal{K}} \|T\|_{C_{g,L}^{k,\alpha}(p)}.
\end{equation*}
\end{definition}

\begin{definition}[$L$-admissible]\label{def:LAdmissible}
The $5$-tuple $(M,J,g,L,\mathcal{K})$ is said to be \emph{$L$-admissible} provided the following hold.
The triple $(M,J,g)$ is a compact almost Hermitian manifold of dimension $2n$ (possibly with boundary), $L\subset M$ is a compact embedded totally geodesic Lagrangian submanifold with $\partial L = L \cap \partial M$, the subset  $\mathcal{K}\subset M$ is compact, $(M,g,\mathcal{K})$ is an admissible triple, and
\begin{equation*}
10(2n)^2[g]_{C_{g,L}^2(L\cap \mathcal{K})} \leq 1\qquad\text{and}\qquad \inf_{q\in L\cap \mathcal{K}}\inj^L(q)\geq 2.
\end{equation*}
\end{definition}

As in Section \ref{sec:InteriorCase}, it will be convenient for later computations to have the following elementary estimates at our disposal.  The proof of Lemma \ref{lem:GammaEst2} is both elementary and identical to that of Lemma \ref{lem:GammaEst}.

\begin{lemma}\label{lem:GammaEst2}
Let $(M,J,g,L,\mathcal{K})$ be an $L$-admissible $5$-tuple, $p\in L\cap \mathcal{K}$ a point, $(x^1,\ldots,x^{2n})$ $L$-adapted geodesic normal coordinates centered at $p$, $g_{ij}$ the components of $g$ in these coordinates which satisfy  $g_{i\ell} g^{\ell j} = \delta_i^j$, and $\Gamma_{ij}^k$ be the Christoffel symbols of the Levi-Civita connection associated to $g$ in these coordinates.  Then for all points $q\in M$ for which $|q|:=\dist(p,q)\leq 1$, the following inequalities hold.
\begin{align*}
&{\textstyle\sum_{ij=1}^{2n}} |g_{ij}(q)-\delta_{ij}|^2\leq |q|^4,
&&{\textstyle\sum_{ijk=1}^{2n}} |\Gamma_{ij}^k(q)|^2 \leq |q|^2\\
&{\textstyle\sum_{ij=1}^{2n}} |g^{ij}(q)-\delta^{ij}|^2\leq |q|^4
&&\sup_{p\in \mathcal{K}} |K_{sec}(p)| \leq 1.
\end{align*}
Furthermore, for $\bar{g}:=dx^i\otimes dx^i$, the following inequalities hold.
\begin{align*}
\big| \la Y,Z\ra_g - \la Y,Z\ra_{\bar{g}} \big|  &\leq |q|^2 \|Y\|_{\bar{g}}\|Z\|_{\bar{g}}\\
\big(1-|q|^2\big)\|Y\|_{\bar{g}}^2\;\;\leq\;\;  &\|Y\|_g^2 \;\;\leq\;\; \big(1+|q|^2\big)\|Y\|_{\bar{g}}^2.
\end{align*}
Lastly, for $\mathcal{V}\subset T_q M$, a vector space of dimension $k$, we let $\pi_\mathcal{V}^g$ and $\pi_\mathcal{V}^{\bar{g}}$ respectively denote the $g$ and $\bar{g}$ orthogonal projections onto $\mathcal{V}$. Then for $q$ as above, and
$Y,Z\in T_qM$, the following inequalities also hold.
\begin{equation*}
\big\| \pi_\mathcal{V}^g (Y) - \pi_\mathcal{V}^{\bar{g}}(Y)\big\|_{\bar{g}}\leq |q|^2\|Y\|_{\bar{g}}.
\end{equation*}
\end{lemma}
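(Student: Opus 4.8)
The plan is to follow the proof of Lemma \ref{lem:GammaEst} essentially verbatim, the key observation being that $L$-adapted geodesic normal coordinates centered at a point $p\in L\setminus\partial L$ carry exactly the same $1$-jet normalization at $p$ as ordinary geodesic normal coordinates. Concretely, property (L\ref{en.L4}) of Lemma \ref{lem:LAdaptedGeoCoords} gives $g_{ij}(p)=\delta_{ij}$, $\partial_{x^k}g_{ij}(p)=0$, and $\Gamma_{ij}^k(p)=0$ for all $i,j,k$, while the $L$-admissibility of $(M,J,g,L,\mathcal{K})$ (Definition \ref{def:LAdmissible}) supplies the two quantitative inputs needed: the bound $10(2n)^2[g]_{C_{g,L}^2(L\cap\mathcal{K})}\leq 1$ controls the second derivatives of the $g_{ij}$ throughout the bi-ball $\mathcal{B}^2$ of Definition \ref{def:C1BoundOnG2}, and $\inf_{q\in L\cap\mathcal{K}}\inj^L(q)\geq 2$ ensures $\mathcal{B}^2\supset\mathcal{B}_1^M(p)$, so that all of the estimates make sense at every $q$ with $|q|:=\dist(p,q)\leq 1$.

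Given this, I would first deduce the two displayed estimates on $g_{ij}-\delta_{ij}$ and $\Gamma_{ij}^k$ from Taylor's theorem with remainder: $g_{ij}(q)-\delta_{ij}$ vanishes to second order at $p$ and $\Gamma_{ij}^k(q)$ to first order, the remainders being controlled by $[g]_{C_{g,L}^2}$ (using that $\Gamma_{ij}^k$ is a first-order rational expression in the $g_{ij}$ with $[g_{ij}]$ close to the identity). The numerical factor $10(2n)^2$ is calibrated precisely so that, after squaring, summing over indices, and using $|q|\leq 1$, one gets $\sum|g_{ij}(q)-\delta_{ij}|^2\leq|q|^4$ and $\sum|\Gamma_{ij}^k(q)|^2\leq|q|^2$. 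The estimate on $g^{ij}-\delta^{ij}$ then follows by inverting $[g_{ij}]$ perturbatively via $g_{i\ell}g^{\ell j}=\delta_i^j$, and the curvature bound $\sup_{\mathcal{K}}|K_{sec}|\leq 1$ is coordinate-independent and is already part of the admissibility of the underlying triple $(M,g,\mathcal{K})$, exactly as in Lemma \ref{lem:GammaEst}.

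Finally, I would obtain the remaining three inequalities as elementary linear algebra consequences of the first. Writing $g_{ij}(q)=\delta_{ij}+h_{ij}(q)$ with $\bigl(\sum|h_{ij}|^2\bigr)^{1/2}\leq|q|^2$, the symmetric bilinear form $(Y,Z)\mapsto h_{ij}(q)Y^iZ^j$ has operator norm at most its Hilbert--Schmidt norm $\leq|q|^2$, which yields $|\langle Y,Z\rangle_g-\langle Y,Z\rangle_{\bar g}|\leq|q|^2\|Y\|_{\bar g}\|Z\|_{\bar g}$; taking $Y=Z$ gives the two-sided norm comparison. For the projection estimate I would set $P:=\pi_{\mathcal V}^g$, $\bar P:=\pi_{\mathcal V}^{\bar g}$, and $v:=PY-\bar PY\in\mathcal V$; then $\langle \bar PY-Y,v\rangle_{\bar g}=0=\langle PY-Y,v\rangle_g$, so $\|v\|_{\bar g}^2=\langle PY-Y,v\rangle_{\bar g}-\langle PY-Y,v\rangle_g\leq|q|^2\|PY-Y\|_{\bar g}\|v\|_{\bar g}$, hence $\|\bar PY-PY\|_{\bar g}\leq|q|^2\|Y\|_{\bar g}$ once the $O(|q|^2)$ discrepancy between $\|PY-Y\|_{\bar g}$ and $\|Y\|_{\bar g}$ is absorbed using the norm comparison and the slack in the constant.

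I do not expect a genuine obstacle here: the whole lemma is bookkeeping, and the structural facts (L\ref{en.L1})--(L\ref{en.L5}) about the metric along $L$ play no adverse role, since the estimates use only the $1$-jet of $g$ at $p$ together with the uniform $C^2$ bound on $\mathcal{B}^2$. The one point that needs a little care --- handled exactly as in the proof of Lemma \ref{lem:GammaEst} --- is checking that the numerical constant in Definition \ref{def:LAdmissible} is indeed large enough to upgrade the Taylor estimates to the stated inequalities with the precise right-hand sides $|q|^4$, $|q|^2$, and the rest.
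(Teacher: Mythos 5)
Your proposal is correct, and it fills in exactly the details that the paper omits: the paper's proof of this lemma consists of the single sentence that it is ``elementary and identical to that of Lemma~\ref{lem:GammaEst},'' and Lemma~\ref{lem:GammaEst} itself carries no proof. Your argument correctly identifies the only structural inputs — the vanishing of the $1$-jet of $g-\delta$ at $p$ (Lemma~\ref{lem:LAdaptedGeoCoords} (L\ref{en.L4}), which plays the role of the standard geodesic-normal normalization), the bi-ball $\mathcal{B}^2$ containing $\mathcal{B}_1(p)$ via $\inj^L\geq 2$, and the uniform $C^2$ bound $10(2n)^2[g]_{C_{g,L}^2}\leq 1$ on $\mathcal{B}^2$ — and then deduces the displayed inequalities by Taylor's theorem, matrix inversion, the Hilbert--Schmidt bound on the bilinear perturbation, and a short orthogonality computation for the projections, which is the intended argument.

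One small remark on your projection estimate: the identity $\|v\|_{\bar g}^2=\langle PY-Y,v\rangle_{\bar g}-\langle PY-Y,v\rangle_{g}$ is clean, and you correctly flag that it gives $\|v\|_{\bar g}\leq|q|^2\|PY-Y\|_{\bar g}$ rather than $|q|^2\|Y\|_{\bar g}$ on the nose. The absorption you invoke is legitimate: the admissibility constant actually yields $\sum|g_{ij}-\delta_{ij}|^2\leq\tfrac{1}{200(2n)^2}|q|^4$, so the genuine bound in the bilinear-form estimate is $\epsilon|q|^2\|Y\|_{\bar g}\|Z\|_{\bar g}$ with $\epsilon\ll 1$, and the factor $\|PY-Y\|_{\bar g}/\|Y\|_{\bar g}\leq(1+\epsilon|q|^2)^{1/2}(1-\epsilon|q|^2)^{-1/2}$ is then comfortably dominated. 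Spelling this out would make the last step airtight, but the reasoning as sketched is sound and matches the calibration the paper builds into Definition~\ref{def:LAdmissible}.
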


In what follows, it will be convenient to have the following result at our disposal.

\begin{lemma}\label{lem:ShortPathsHaveLargeCurvature}
Let $(M,J,g,L,\mathcal{K})$ be an $L$-admissible $5$-tuple, $p\in L\cap \mathcal{K}$ a point, and $\gamma:[0,\ell]\to \mathcal{B}_{1/2}(p)\subset M$ a $C^2$ path for which $\gamma(0),\gamma(\ell)\in L$,  $\|\dot{\gamma}\|\equiv 1$, and $\dot{\gamma}(0)\in TL^\bot$.  If $\ell< 1/2$, then
\begin{equation*}
\sup_{t\in [0,\ell]}\kappa_g(t) \geq \frac{1}{2\ell}-1,
\end{equation*}
where $\kappa_g(t)$ is the geodesic curvature of $\gamma$ at the point $t$.
\end{lemma}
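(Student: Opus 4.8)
The plan is to work in $L$-adapted geodesic normal coordinates centered at $p$ and compare $\gamma$ to a straight line segment in those coordinates. Write $\gamma(t)=(x^1(t),\dots,x^{2n}(t))$. The hypothesis $\dot\gamma(0)\in T_pL^\bot$ together with Lemma \ref{lem:LAdaptedGeoCoords}(L\ref{en.L2}) says that at $t=0$ only the even components $x^{2k}$ have nonzero velocity, i.e. $\dot{x}^{o}(0)=0$ for every odd index $o$. On the other hand, since $\gamma(0),\gamma(\ell)\in L$, Lemma \ref{lem:LAdaptedGeoCoords}(L\ref{en.L1}) forces $x^{2k}(0)=x^{2k}(\ell)=0$ for all $k$; thus $\sum_k \bigl(x^{2k}(\ell)-x^{2k}(0)\bigr)=0$, and more to the point the function $F(t):=\tfrac12\sum_k (x^{2k}(t))^2$ satisfies $F(0)=F(\ell)=0$ while $F'(0)=\sum_k x^{2k}(0)\dot x^{2k}(0)=0$. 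Combined with the fact that $\|\dot\gamma(0)\|_g=1$ concentrates the initial velocity entirely in the even directions, this means $\gamma$ must leave $L$ in the normal directions at unit speed and then return to $L$ by time $\ell$, so its ``normal part'' executes a turn; that turn must be paid for in geodesic curvature.

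I would make this quantitative as follows. First, by $L$-admissibility the metric ball $\mathcal{B}_{1/2}(p)$ lies well inside a coordinate bi-ball, and Lemma \ref{lem:GammaEst2} gives $\sum_{ij}|g_{ij}-\delta_{ij}|^2\le |q|^4\le (1/2)^4$ and $\sum_{ijk}|\Gamma^k_{ij}|^2\le |q|^2\le 1/4$ on the image of $\gamma$; hence the Euclidean norm $\|\cdot\|_{\bar g}$ and $g$-norm of vectors agree to within a factor close to $1$, and the Euclidean curvature $\bar\kappa$ of $\gamma$ differs from $\kappa_g$ by at most a term controlled by $\|\Gamma\|\cdot\|\dot\gamma\|^2\le 1$; more precisely $\bar\kappa(t)\le \kappa_g(t)+1$ pointwise (using $\ddot{\gamma}=(\ddot{\gamma})^{\text{intrinsic}}-\Gamma(\dot\gamma,\dot\gamma)$, bounding the Euclidean acceleration by the $g$-geodesic curvature plus the Christoffel term, and using $\|\dot\gamma\|_{\bar g}\le (1-|q|^2)^{-1/2}\|\dot\gamma\|_g\le 2/\sqrt 3$ together with unit speed). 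So it suffices to show $\sup_t \bar\kappa(t)\ge \frac{1}{2\ell}$. For a Euclidean $C^2$ path this is elementary: consider the unit Euclidean tangent $T(t)=\dot\gamma(t)/\|\dot\gamma(t)\|_{\bar g}$; its initial value $T(0)$ is (a near-unit rescaling of) a normal vector, so its projection onto $\mathrm{span}(\partial_{x^{2k-1}})\approx T_pL$ is small, whereas the chord $\gamma(\ell)-\gamma(0)$ lies along $L$ (both endpoints in $L$, and in coordinates $L=\{x^{2k}=0\}$ is a linear subspace), so $\int_0^\ell T(t)\,dt = \gamma(\ell)-\gamma(0)$ has essentially no component in the normal directions. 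Since $T(0)$ points (nearly) in the normal directions and $\int_0^\ell T$ points along $L$, the tangent direction must rotate by a definite angle: $\bigl|\int_0^\ell (T(t)-T(0))\,dt\bigr|_{\bar g}$ is bounded below by (nearly) $\ell$ after projecting onto the normal subspace, forcing $\int_0^\ell \|\dot T(t)\|_{\bar g}\,dt \ge \ell - o(\ell)$, hence $\sup_t\|\dot T(t)\|_{\bar g}\ge 1 - o(1) \ge \tfrac12$ for $\ell<1/2$; and $\|\dot T\|_{\bar g}$ is exactly the Euclidean geodesic curvature up to the $\|\dot\gamma\|_{\bar g}$ factor. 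Tracking the constants (the $|q|^2\le 1/4$ errors, the $\le 2/\sqrt3$ speed factor, and the $+1$ from $\|\Gamma\|$) yields $\sup_t\kappa_g(t)\ge \frac{1}{2\ell}-1$.

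The main obstacle I anticipate is purely bookkeeping: getting the numerical constant to come out as exactly $\frac{1}{2\ell}-1$ rather than something merely of the form $\frac{c}{\ell}-C$. The geometric mechanism (a path that starts perpendicular to a totally geodesic submanifold and returns to it must turn) is robust, but the clean constant $\tfrac12$ in the leading term relies on choosing the comparison carefully — e.g. estimating $\mathrm{dist}_g$ versus the coordinate chord length, and using that $\dot\gamma(0)$ is \emph{exactly} normal (not approximately) so that the full unit of initial speed is ``wasted'' in directions the chord cannot use. I would handle this by working with the single scalar function $F(t)=\tfrac12\sum_k (x^{2k}(t))^2$ directly: $F(0)=F(\ell)=0$, $F'(0)=0$, and $F'(t)=\sum_k x^{2k}\dot x^{2k}$, $F''(t)=\sum_k \bigl((\dot x^{2k})^2 + x^{2k}\ddot x^{2k}\bigr)$. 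Near $t=0$, $\sum_k(\dot x^{2k})^2$ is close to $\|\dot\gamma\|_g^2=1$ (the odd components start at rest and the metric is nearly Euclidean), so $F''(0)\approx 1>0$; but $F$ vanishes at both ends of $[0,\ell]$, so $F''$ must become sufficiently negative somewhere, which forces $\sum_k x^{2k}\ddot x^{2k}$, hence $\max_t\|\ddot\gamma(t)\|$, to be at least of order $1/\ell^2$ times the maximum of $F$ — and one shows $\max F \gtrsim \ell^2$ from $F''(0)\approx 1$ and $F(0)=F'(0)=0$, giving $\max\|\ddot\gamma\|\gtrsim 1/\ell$; converting the Euclidean acceleration bound to a geodesic-curvature bound via the Christoffel estimate of Lemma \ref{lem:GammaEst2} completes the argument. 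This ODE-comparison route keeps all constants transparent and is the cleanest path to the stated inequality.
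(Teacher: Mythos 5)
Your geometric mechanism is the right one, and your scalar-reduction instinct is also the right one, but the paper scalarizes more aggressively than you do, and that choice is what makes the constants fall out cleanly. The paper does not work with $F(t)=\tfrac12\sum_k (x^{2k})^2$ nor with the unit-tangent rotation integral. Instead it rotates the $L$-adapted normal frame so that $\dot\gamma(0)=\partial_{x^2}$ exactly, and then studies the single scalar $\gamma_2(t):=x^2(\gamma(t))$. Since $L=\bigcap_k\{x^{2k}=0\}$ and $\inj^L(p)\geq 2$, both endpoints lie in $L$ and hence $\gamma_2(0)=\gamma_2(\ell)=0$, while $\gamma_2'(0)=1$ exactly (the metric is Euclidean at $p$). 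Rolle then produces $\ell'\in(0,\ell)$ with $\gamma_2'(\ell')\leq 0$, and the mean value theorem on $\gamma_2'$ over $[0,\ell']$ gives $|\gamma_2''(\ell'')|\geq 1/\ell$ for some $\ell''$. Finally $\nabla_{\dot\gamma}\dot\gamma=\ddot\gamma+\Gamma(\dot\gamma,\dot\gamma)$ together with the Christoffel bound from Lemma \ref{lem:GammaEst2} converts this into the stated $\kappa_g\geq \tfrac{1}{2\ell}-1$. That is the whole proof.

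Both of your proposed quantifications have real gaps. In the unit-tangent sketch you conclude ``$\int_0^\ell\|\dot T\|\,dt\geq \ell-o(\ell)$, hence $\sup_t\|\dot T\|\geq 1-o(1)$,'' but this bound is $\ell$-independent and cannot possibly reproduce the $1/(2\ell)$ blow-up; the correct estimate is $\ell\lesssim\big|\int_0^\ell(T(t)-T(0))\,dt\big|\leq\int_0^\ell t\,\sup\|\dot T\|\,dt=\tfrac{\ell^2}{2}\sup\|\dot T\|$, giving $\sup\|\dot T\|\gtrsim 2/\ell$, so the step as written is wrong even though the idea is salvageable. In the $F$-based version, the claim ``$\max F\gtrsim\ell^2$ from $F''(0)\approx 1$ and $F(0)=F'(0)=0$'' does not hold: nothing stops $F''$ from decaying to zero almost immediately unless one already has a bound on the acceleration, which is precisely the quantity being estimated. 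The argument can be rescued by running it contrapositively (assume $\|\ddot\gamma\|$ small, deduce $F''$ stays near $1$, contradict $F(\ell)=0$), but then one must also control $\sum_k x^{2k}\ddot x^{2k}$ via Cauchy--Schwarz, which introduces a dimension-dependent factor $\sqrt{n}$ and spoils the clean constant $\tfrac{1}{2\ell}-1$. The paper's choice of a single coordinate aligned with $\dot\gamma(0)$ sidesteps both problems: no dimension factor, no circularity, and the Rolle/MVT step is the entire calculus input.
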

\begin{proof}
We begin by recalling that the geodesic curvature is given by $\kappa_g(t) = \|\big(\nab_{\dot{\gamma}(t)}\dot{\gamma}(t)\big)^\bot\|$, and $\big(\nab_{\dot{\gamma}(t)}\dot{\gamma}(t)\big)^\bot=\nab_{\dot{\gamma}(t)}\dot{\gamma}(t)$; this follows since $\|\dot{\gamma}\|\equiv 1$.  Next we let $(x^1,\ldots,x^{2n})$ denote $L$-adapted geodesic normal coordinates centered at $p$.  Since $\dist\big(p,\gamma(t)\big)< 1/2$ for all $t\in[0,\ell]$, we see that $\gamma$ has image in the neighborhood on which the coordinates area defined.  In local coordinates, we then find
\begin{equation*}
\nab_{\dot{\gamma}(t)}\dot{\gamma}(t) = \ddot{\gamma}(t) + \Gamma(\dot{\gamma},\dot{\gamma})
\end{equation*}
where $\Gamma$ is the locally defined bilinear operator associated the Christoffel symbols of the Levi-Civita connection.  As a consequence of Lemma \ref{lem:GammaEst2} and the fact that $\dot{\gamma}\equiv 1$, we see that $\|\Gamma(\dot{\gamma},\dot{\gamma})\|\leq 1$.

As a consequence of Lemma \ref{lem:LAdaptedGeoCoords}, we may assume with out loss of generality that the coordinates are chosen so that $\frac{d}{dt}\big|_{t=0}\gamma_2(t)=\gamma_2'(0)=1$, where $\gamma_2(t):=x^2\big(\gamma(t)\big)$.  Since $\gamma(\ell)\in L$, and since $\inj^L(p)\geq 2$ (this follows from admissibility of the given 5-tuple) we see that $\gamma_2(\ell)=0$.  Furthermore, there exist $\ell'\in (0,\ell)$ such that $\gamma_2'(\ell')\leq 0$.  The mean value inequality then guarantees that there exists a $\ell''\in (0,\ell')$ such that $|\gamma_2''(\ell'')|\geq \ell^{-1}$.  The desired estimate then follows from this inequality and another application of Lemma \ref{lem:GammaEst2}.
\end{proof}


In what follows, it will be convenient to have a notion of an immersion being ``adapted'' to totally geodesic Lagrangian.  We make this precise with Definition \ref{def:TotGeoAdapt} below.
\begin{definition}\label{def:TotGeoAdapt}
Let $(M,J,g,L)$ be as above, and let $u:S\to M$ be smooth immersion of Type 1 (see Definition \ref{def:Type1Map}).  We say that $(u,S)$ is \emph{adapted to $(M,J,g,L)$} provided that any locally defined, continuous, and outward pointing $u^*g$-normal vector field $\nu\in \Gamma(TS)\big|_{\partial_1 S}$ satisfies $u_*\nu \bot_g TL$, or equivalently $u_*\nu \in \Gamma (TL^\bot)\big|_{\partial_1 S}$.
\end{definition}

Immersions $(u,S)$ which are adapted to totally geodesic Lagrangian submanifolds as above have a particularly useful property which we state and prove at present.

\begin{lemma}[geodesic $\partial_1$-boundary]\label{lem:geodesicBoundary}
Let $(M,J,g,L)$ and $(u,S)$ be as in Definition \ref{def:TotGeoAdapt}.  Suppose further that $\alpha:(-\epsilon,\epsilon)\to \partial_1 S$ is a $u^*g$-unit speed path.  Then $\alpha$ is a $u^*g$-geodesic.
\end{lemma}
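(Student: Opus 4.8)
The plan is to show that the curve $\alpha$, viewed via $u$ as a curve in $M$ lying on $L$, has geodesic curvature zero as a curve in $(S, u^*g)$ by showing that its acceleration vector $\nabla^{u^*g}_{\dot\alpha}\dot\alpha$ has no component tangent to $S$. The key structural facts to exploit are: (i) $L$ is totally geodesic in $M$, so a $u^*g$-unit-speed curve in $\partial_1 S$ maps under $u$ to a unit-speed curve in $L$ whose $M$-acceleration $\nabla^g_{u_*\dot\alpha} u_*\dot\alpha$ lies in $TL$; (ii) since $J$ is an isometry and $u$ is a $J$-curve, the tangent plane $\mathcal{T}$ is $J$-invariant, and by Definition~\ref{def:TotGeoAdapt} the outward normal $u_*\nu$ to $\partial_1 S$ lies in $TL^\bot$, hence $Ju_*\nu \in JTL^\bot = TL$ (using that $L$ is Lagrangian), and $Ju_*\nu$ is $g$-orthogonal to $u_*\nu$ and of unit length, so $\{u_*\dot\alpha, u_*\nu\}$ spans $\mathcal{T}$ with $u_*\dot\alpha$ parallel to $Ju_*\nu$ along $\partial_1 S$.

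First I would set up the splitting: at a point $\alpha(t)$, the fiber $\mathcal{T}_{\alpha(t)}$ is spanned by the $u^*g$-orthonormal pair $e_1 := u_*\dot\alpha$ and $e_2 := u_*\nu$, and $\mathcal{N}_{\alpha(t)}$ is the $g$-orthogonal complement in $T_{u(\alpha(t))}M$. Write $\nabla^g_{e_1} e_1 = (\nabla^g_{e_1}e_1)^\top + (\nabla^g_{e_1}e_1)^\bot$, where the second term is $B(e_1,e_1)$ by definition. The tangential part, by the Gauss formula relating $\nabla^{u^*g}$ and the ambient connection, is precisely $\nabla^{u^*g}_{\dot\alpha}\dot\alpha$ (pushed forward). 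So it suffices to prove $(\nabla^g_{e_1}e_1)^\top = 0$, i.e. that $\nabla^g_{e_1} e_1$ is $g$-orthogonal to both $e_1$ and $e_2$. Orthogonality to $e_1$ is automatic since $\|e_1\|_g \equiv 1$ (the curve is unit speed in $u^*g$, hence in $g$ after pushforward). The real content is orthogonality to $e_2 = u_*\nu$.

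For that, I would use fact (i): because $L$ is totally geodesic and $u(\alpha)$ is a curve in $L$, the full ambient acceleration $\nabla^g_{e_1} e_1$ lies in $TL$. On the other hand $e_2 = u_*\nu \in TL^\bot$ by the adapted hypothesis. Hence $\langle \nabla^g_{e_1}e_1, e_2\rangle_g = 0$ immediately, since one vector is tangent to $L$ and the other normal to $L$. Combined with $\langle \nabla^g_{e_1}e_1, e_1\rangle_g = 0$, this gives $(\nabla^g_{e_1}e_1)^\top = 0$, so $\nabla^{u^*g}_{\dot\alpha}\dot\alpha = 0$, which is exactly the geodesic equation in $(S, u^*g)$. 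One should note that $B(e_1,e_1)$ itself need not vanish — that is the curvature of $S$ in $M$ — but it is irrelevant here because it is purely normal to $S$.

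The only subtlety, which I expect to be the main thing to verify carefully rather than the main obstacle, is that the Gauss formula $(\nabla^g_{e_1}e_1)^\top = \nabla^{u^*g}_{\dot\alpha}\dot\alpha$ is being applied along the boundary $\partial_1 S$ of a manifold with boundary and corners; one must check that $u$ is still an immersion near such boundary points (true by hypothesis, away from $\mathcal{Z}_u$, and $\dot\alpha \ne 0$) and that the intrinsic Levi-Civita connection of $u^*g$ on $S$ agrees with the tangential projection of the ambient connection — this is the standard second-fundamental-form decomposition from Section~\ref{sec:MeanCurvatureEquation}, valid pointwise on regular points. No global properness or compactness is needed since the statement is entirely local along $\alpha$.
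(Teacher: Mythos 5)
Your proof is correct and follows essentially the same route as the paper: reduce the geodesic condition to showing $\nabla^g_{\dot\beta}\dot\beta$ (with $\beta=u\circ\alpha$) is $g$-orthogonal to $u_*\nu$, then use that $L$ totally geodesic forces $\nabla^g_{\dot\beta}\dot\beta\in TL$ while the adapted hypothesis gives $u_*\nu\in TL^\perp$. The preliminary observations about $Ju_*\nu$ and $J$-invariance of $\mathcal{T}$ are not actually needed for the core argument, but they do no harm.
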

\begin{proof}
Let $\nu$ be continuous unit vector field along $\alpha$ for which $\la \nu,\dot{\alpha}\ra_{u^*g}\equiv 0$.  To show that $\alpha$ is a  geodesic it is sufficient to show that
$\la \nab_{\dot{\alpha}}^S \dot{\alpha},\nu\ra_{u^*g}\equiv0$, where $\nab^S$ is the Levi-Civita connection on $S$ associated to $u^*g$ (in other words $\nab^S = \pi_{\mathcal{T}}^g\circ \nab$ ).  Since $(u,S)$ is adapted to $L$ it follows that $u_*\nu\bot L$.
Thus to show $\la \nab_{\dot{\alpha}}^S \dot{\alpha},\nu\ra_{u^*g}=0$, it is sufficient to show that $\pi_{TL^\bot}^g(\nab_{\dot{\beta}} \dot{\beta})=0$ where $\beta = u\circ \alpha$, and $TL^\bot$ is the normal bundle to $L\subset M$.  However, the second fundamental form $B_{L}:TL\otimes TL \to TL^\bot$ of $L\subset M$ is defined to be $\pi_{TL^\bot}^g( \nab_X Y)$, and it vanishes identically because $L$ is totally geodesic. Consequently $\pi_{TL^\bot}^g(\nab_{\dot{\beta}} \dot{\beta})=B_{L}(\dot{\beta},\dot{\beta})=0$, and we immediately conclude that $\partial_1 S$ is a $u^*g$-geodesic.  This completes the proof of Lemma \ref{lem:geodesicBoundary}.
\end{proof}

For the following lemma, it will be convenient to define
\begin{equation}\label{eq:DefOfHr}
\mathcal{H}_r:=\{(s,t)\in \R^2: |s|<r\;\;\text{and}\;\; 0\leq t< r\}.
\end{equation}

\begin{lemma}[Uniform Local Graphs with Lagrangian Boundary]\label{lem:uniformLocalGraphs2}
There exists a constant $r_0>0$ with the following significance.  Let $(M,J,g,L,\mathcal{K})$ be an admissible 5-tuple with $\dim M = 2n$, and suppose $u:S\to M$ is a Type 1 immersion which is adapted to $(M,J,g,L)$ and which satisfies
\begin{equation*}
\sup_{\zeta\in S} \|B_u(\zeta)\|_g \leq 1.
\end{equation*}
Then for each $\zeta\in u^{-1}(L\cap \mathcal{K})\cap \partial_1 S$, there exists a map $\phi:\mathcal{H}_{r_0}\to S$ and $L$-adapted geodesic normal coordinates\footnote{See Definition \ref{def:LAdaptGeoNormalCoords}.} $\Phi:\exp_{u(\zeta)}^L\big(\mathcal{B}_{2r_0}\times \mathcal{B}_{2r_0}^\bot\big)\to\mathbb{R}^{2n}$  with the following properties.
\begin{enumerate}[({PL.}1)]
\item $\Phi(L) \subset \big(\R\times \{0\}\big)^n$\label{en.PL1}
\item $\phi(0)=\zeta$ and $\phi(s,0)\subset \partial_1 S$\label{en.PL2}
\item $\tilde{u}(s,t):=\Phi\circ u \circ \phi(s,t)=\big(s,t,\tilde{u}^3(s,t),\ldots,\tilde{u}^{2n}(s,t)\big)$\label{en.PL3}
\item $\tilde{u}^i(0,0)=0$ for $i=1,\ldots, m$\label{en.PL4}
\item $D_\alpha\tilde{u}^i(0,0)=0$ whenever $|\alpha|=1$ and $i=3,\ldots,2n$\label{en.PL5}
\item $\tilde{u}(s,0)\in \big(\R\times\{0\}\big)^n$ and $\tilde{u}_t(s,0)\in \big(\{0\}\times \R\big)^n$\label{en.PL6}
\item $\sum_{|\alpha|=1}\sum_{i=3}^{2n}\|D_\alpha \tilde{u}^i\|_{C^0(\mathcal{H}_{r_0})}^2 \leq 10^{-20}$\label{en.PL7}
\item For Euclidian coordinates $\rho=(s,t)$, on $\mathcal{H}_{r_0}$, we have\label{en.PL8}
\begin{equation*}
{\textstyle \frac{1}{2}}|\rho|\leq \dist_{u^*g}(0,\rho) \leq 2 |\rho|
\end{equation*}
\item For $|\alpha|=2$ and $i=1,\ldots,2n$, $\|D_\alpha \tilde{u}^i\|_{C^0(\mathcal{H}_{r_0})} \leq 10 $ \label{en.PL9}
\item With $\rho$ as above, we have\label{en.PL10}
\begin{equation*}
    {\textstyle \frac{1}{2}}|\rho|\leq \dist_{g}\big(u(\phi(\rho)),u(\phi(0))\big)\leq 2|\rho|.
\end{equation*}
\end{enumerate}
\noindent Furthermore, letting subscripts $s$ and $t$ denote partial derivatives, and denoting $\gamma_{11}=\la u_s,u_s\ra_g$, $\gamma_{12}=\gamma_{21}=\la u_s,u_t\ra_g$, $\gamma_{22}=\la u_t,u_t\ra_g$, and $\gamma_{ik}\gamma^{k j}=\delta_i^j$, the following inequalities also hold.
\begin{enumerate}[({PL.}1)]
\setcounter{enumi}{10}
\item $\frac{1}{4}|\xi|^2\leq \inf_{\rho\in\mathcal{H}_{r_0}} \gamma^{ij}\xi_i\xi_j$\label{en.PL11}
\item $\|\gamma^{ij}\|_{C^{0,\alpha}(\mathcal{H}_{r_0})}\leq 2\cdot 10^3$ \label{en.PL12}
\end{enumerate}
\end{lemma}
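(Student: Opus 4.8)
The plan is to follow the proof of Lemma \ref{lem:uniformLocalGraphs} closely, with two structural changes dictated by the boundary: the exponential map based at $\zeta$ is replaced by a \emph{Fermi-type} parametrization emanating normally from the $\partial_1$-boundary, and ordinary geodesic normal coordinates are replaced by $L$-adapted ones. Fix $\delta:=10^{-10}$ and choose $L$-adapted geodesic normal coordinates $\Phi$ centered at $u(\zeta)$ (Definition \ref{def:LAdaptGeoNormalCoords}), selecting the initial orthonormal frame so that $T_\zeta(\Phi\circ u)$ carries the unit tangent of $\partial_1 S$ at $\zeta$ to $\partial_{x^1}$ and the inward $u^*g$-unit normal of $\partial_1 S$ at $\zeta$ to $\partial_{x^2}$. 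This is possible because $u$ is a $J$-curve, so $\mathcal{T}_\zeta$ is a $J$-complex line, while $L$ is Lagrangian, so $J$ sends $T_\zeta(\partial_1 S)\subset TL$ into $TL^\bot$, and in the $L$-adapted construction $e_2^*=-e_1^*\circ J$ forces $\partial_{x^2}=J\partial_{x^1}$ at $u(\zeta)$; thus $\mathrm{span}(\partial_{x^1},\partial_{x^2})$ is exactly the $J$-complex line $\mathcal{T}_\zeta$. Property (PL.\ref{en.PL1}) is then immediate from Lemma \ref{lem:LAdaptedGeoCoords}(L\ref{en.L1}), and the tangent-plane normalization gives (PL.\ref{en.PL5}).

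By Lemma \ref{lem:geodesicBoundary} the component of $\partial_1 S$ through $\zeta$ is a $u^*g$-geodesic; let $\alpha(s)$ be its unit-speed parametrization with $\alpha(0)=\zeta$ and let $\nu(s)$ be the inward $u^*g$-unit normal along $\alpha$ (automatically $u^*g$-parallel, since $\alpha$ is a geodesic in the surface $(S,u^*g)$). Define $\Psi(s,t):=\exp_{\alpha(s)}^{u^*g}\!\big(t\,\nu(s)\big)$ on $\{|s|<2r_0,\ 0\le t<2r_0\}$. I would first show that for a universal $r_0\in(0,\delta)$ this is well defined, maps into $S$, and is an immersion. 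For the immersion property: a conjugate or focal point along the $\Psi$-family would, via the Gauss equation for $u$ together with $\|B_u\|_g\le 1$ and $\sup_{\mathcal{K}}|K_{sec}|\le 1$ (Lemma \ref{lem:GammaEst2} and admissibility), produce a point of $(S,u^*g)$ with Gaussian curvature $\ge\delta^{-2}=10^{20}$, contradicting the bound $\le 2$ as in (\ref{eq:CurvatureContradiction}); the extra focal contribution is harmless because $\alpha$ has vanishing $u^*g$-geodesic curvature. For staying inside $S$: a normal geodesic $t\mapsto\Psi(s,t)$ cannot meet $\partial_0 S=u^{-1}(\partial M)$ for $t\le\delta$ since $\dist_{u^*g}(\zeta,\partial_0 S)\ge\dist_g(\mathcal{K},\partial M)\ge 2$ by admissibility, and it cannot return to $\partial_1 S$ at any time $\ell<\tfrac12$, because then $u\circ\Psi(s,\cdot)$ would be a $C^2$ path in $M$ with endpoints on $L$, initial velocity in $TL^\bot$ (adaptedness of $(u,S)$), and geodesic curvature $\|(\nab_{\dot\gamma}\dot\gamma)^\bot\|=\|B_u(\dot\gamma,\dot\gamma)\|_g\le 1$ in $M$, so Lemma \ref{lem:ShortPathsHaveLargeCurvature} would force geodesic curvature $\ge\tfrac{1}{2\ell}-1>1$, a contradiction. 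Shrinking $r_0$, the projection $\varphi:=\pr\circ\Phi\circ u\circ\Psi$ with $\pr(x^1,\ldots,x^{2n})=(x^1,x^2)$ is a diffeomorphism onto its image, which contains $\mathcal{H}_{r_0}$; this uses the slow-deviation estimates (\ref{eq:smallDeviation})--(\ref{eq:goodParam2}) applied along the curves $u\circ\Psi(s,\cdot)$, exactly as in the interior case. Set $\phi:=\Psi\circ\varphi^{-1}:\mathcal{H}_{r_0}\to S$ and $\tilde u:=\Phi\circ u\circ\phi$.

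With $\phi$ and $\Phi$ in hand, the remaining conclusions follow essentially verbatim from Lemma \ref{lem:uniformLocalGraphs}, with Lemma \ref{lem:GammaEst2} and Lemma \ref{lem:LAdaptedGeoCoords} replacing Lemma \ref{lem:GammaEst}. Indeed (PL.\ref{en.PL2}) holds since $\Psi(s,0)=\alpha(s)\in\partial_1 S$ and $\phi(0)=\zeta$; (PL.\ref{en.PL6}) holds since $\tilde u(s,0)=\Phi\big(u(\alpha(s))\big)\in\Phi(L)=(\R\times\{0\})^n$ and $\tilde u_t(s,0)=\Phi_*\big(u_*\nu(s)\big)\in TL^\bot=\Span(\partial_{x^{2k}})=(\{0\}\times\R)^n$ by Lemma \ref{lem:LAdaptedGeoCoords}(L\ref{en.L2}); (PL.\ref{en.PL3})--(PL.\ref{en.PL4}) and (PL.\ref{en.PL7})--(PL.\ref{en.PL10}) are obtained as for (P\ref{en.P2})--(P\ref{en.P8}), the second-derivative bound (PL.\ref{en.PL9}) using the projection estimate of Lemma \ref{lem:ProjectionLemma} (whose proof depends only on the graphical form and the pointwise metric estimates, hence transfers unchanged) to control $\|D_\alpha\tilde u\|$ by the normal part of $B_u$; and (PL.\ref{en.PL11})--(PL.\ref{en.PL12}) follow from (PL.\ref{en.PL7}), (PL.\ref{en.PL9}) and the metric estimates just as (P\ref{en.P9})--(P\ref{en.P10}) did. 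The main obstacle is the second paragraph: building the Fermi parametrization and proving it is an embedding onto a half-disk lying inside $S$ --- in particular ruling out early returns of the normal geodesics to $\partial_1 S$ (the role of Lemma \ref{lem:ShortPathsHaveLargeCurvature} and the adaptedness hypothesis) and correctly absorbing the focal-point term in the Gauss-equation argument; once this is set up, everything else is routine bookkeeping imported from the interior case.
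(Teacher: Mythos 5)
Your proposal follows essentially the same route as the paper's proof: Fermi-type coordinates $\Psi(s,t)=\exp^{u^*g}_{\alpha(s)}(t\nu(s))$ built along the geodesic $\partial_1$-boundary (the paper writes this as $\tilde\varphi(s,t)=\exp_{\exp(s\tau)}(t\nu)$), combined with $L$-adapted geodesic normal coordinates in the target, with well-definedness of the normal geodesics into $S$ secured by the distance bound to $\partial_0 S$ plus Lemma~\ref{lem:ShortPathsHaveLargeCurvature} and adaptedness, the immersion property via a Gauss-equation curvature contradiction, and the remaining properties (PL.3)--(PL.12) imported verbatim from the interior case after swapping Lemma~\ref{lem:GammaEst} for Lemma~\ref{lem:GammaEst2}. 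Your derivation of (PL.1), (PL.5), and the $J$-complex-line justification of the frame choice, together with (PL.6) from Lemma~\ref{lem:LAdaptedGeoCoords}(L\ref{en.L2}), matches the paper's treatment, so this is a correct reconstruction rather than an alternative argument.
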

\begin{proof}
After a change in the construction of $\phi$ and choice of $\Phi$, the proof of
Lemma \ref{lem:uniformLocalGraphs2} is essentially no different than that of Lemma \ref{lem:uniformLocalGraphs}.  As such, we will only discuss the necessary modifications, and then refer to the proof of Lemma \ref{lem:uniformLocalGraphs} for verification of the appropriate estimates.

Begin by defining $\delta:=10^{-10}$. Next, since $L$ is a totally geodesic Lagrangian, it follows that there exist $L$-adapted geodesic normal coordinates $\Phi$ such that $\Phi\big(u(\zeta)\big) =0$ and (PL.\ref{en.PL1}) holds. Furthermore, since $(u,S)$ is an immersion (in particular at $\zeta$) and it is adapted to $L$, it follows that for $u^*g$-unit vectors $\tau,\nu \in T_\zeta S$ for which $\tau\in T_\zeta \partial_1 S$, $\la \nu, \tau\ra_{u^*g}=0$, and $\nu$ ``inward pointing'', we see that $\Phi$ can be chosen so that
\begin{equation*}
(\Phi \circ u)_*\tau = (1,0,\ldots,0)\qquad\text{and}\qquad(\Phi\circ u)_* \nu = (0,1,0,\ldots,0);
\end{equation*}
this will guarantee that (PL.\ref{en.PL5}) holds for any parameterization $\phi$.  To construct the map $\phi$, we extend $\nu$ to be a continuous ``inward pointing'' $u^*g$-unit vector field along $\partial_1 S$ which is $u^*g$-orthogonal to $\partial_1 S$, and define an exponential-type map
\begin{equation}\label{eq:defTildeVarPhi}
\tilde{\varphi}:\mathcal{H}_{\delta}\to S \qquad\text{by}\qquad \tilde{\varphi}(s,t):= \exp_{\exp(s\tau)}(t\nu).
\end{equation}
We now claim (and shall prove) that $\tilde{\varphi}$ is well defined and is an immersion.  Indeed, as a consequence of Lemma \ref{lem:geodesicBoundary} it follows that $\partial_1 S$ consists of $u^*g$-geodesics; furthermore since $(u,S)$ is adapted to $(M,J,g,L)$ it follows that the set of non-smooth boundary points of $S$ is precisely $\partial_0 S \cap \partial_1 S\subset \partial_0 S$ and by assumption $\dist_{u^*g}(\partial_0 S, \zeta)\geq \dist_g\big(\partial M,u(\zeta)\big)\geq 2$. Consequently $\tilde{\varphi}\big|_{(-\delta,\delta)\times\{0\}} \to \partial_1 S$ is well defined.  Thus to show $\tilde{\varphi}$ is well-defined on $\mathcal{H}_\delta$, it is sufficient to show that for $\zeta'\in \partial_1 S$ with $\dist_{u^*g}(\zeta,\zeta')< \delta$, the map $\alpha(t):=\exp_{\zeta'}(t \nu)$ is well defined for $t\in [0,\delta)$.  To that end, suppose not; then there exists a smooth path $\beta:=u\circ\alpha:[0,\delta']\to M$ which must satisfy the following conditions: $\|\dot{\beta}\|\equiv 1$, $\beta(t)\in \mathcal{B}_{1/2}\big(u(\zeta)\big)\subset\subset M$ for all $t\in [0,\delta']$, $\beta(0),\beta(\delta')\in L$, $\dot{\beta}(0)\in TL^\bot$, $\delta'\leq 10^{-10}\leq 1/2$, and $\|\nab_{\dot{\beta}}\dot{\beta}\|=\|B_u(\dot{\beta},\dot{\beta})\|\leq 1$.  However, by Lemma \ref{lem:ShortPathsHaveLargeCurvature} we see that
\begin{equation*}
2\leq \sup_{t\in [0,\delta']} \kappa_g(t) = \sup_{t\in [0,\delta']}\|\nab_{\dot{\beta}(t)}\dot{\beta}(t)\| =\sup_{t\in[0,\delta']} \|B_u\big(\dot{\beta}(t),\dot{\beta}(t)\big)\|\leq 1.
\end{equation*}
From this contradiction we conclude that $\tilde{\varphi}:\mathcal{H}_\delta\to S$ given in (\ref{eq:defTildeVarPhi}) is well defined.  The proof that $\tilde{\varphi}$ is an immersion is identical to the proof   that $\exp_\zeta:\{X\in T_\zeta S: \|X\|_{u^*g}\leq \delta\}\to S$ is an immersion in Lemma \ref{lem:uniformLocalGraphs}.

Next, define the coordinate projection $\pr(x^1,\ldots,x^{2n})=(x^1,x^2)$, and the map
\begin{equation*}
\varphi:\mathcal{H}_\delta \to \R^2\qquad\text{by}\qquad\varphi:=\pr\circ \Phi\circ u\circ \tilde{\varphi}.
\end{equation*}
At this point, we observe that since $\tilde{\varphi}:\R\times\{0\}\to \partial_1 S$, $u(\partial_1 S) \subset L$, and $\Phi(L) \subset \big(\R\times \{0\}\big)^n$, it follows that $\varphi\big((-\delta,\delta)\times \{0\}\big)\subset \R\times \{0\}$.  Furthermore, by our choice of $\Phi$, it follows that $(T_0\varphi) (0,1) = (0,1)$, and thus $\varphi:\mathcal{H}_\delta\to\mathcal{H}_\infty$. Thus by defining $\phi:= \tilde{\varphi}\circ\varphi^{-1}$, we see that with the exception of proving (PL.\ref{en.PL6}), and some obvious modifications from working in ``polar'' coordinates to ``bi-polar'' coordinates, the remainder of the proof Lemma \ref{lem:uniformLocalGraphs2} is essentially identical to that of Lemma \ref{lem:uniformLocalGraphs}, with references to $\|g\|_{C_g^k(\mathcal{K})}$, Definition \ref{def:C1BoundOnG}, and Lemma \ref{lem:GammaEst} respectively replaced with $\|g\|_{C_{g,L}^k(\mathcal{K})}$, Definition \ref{def:C1BoundOnG2}, and Lemma \ref{lem:GammaEst2}.

Thus to finish the proof of Lemma \ref{lem:uniformLocalGraphs2}, it is sufficient to prove (PL.\ref{en.PL6}). To that end, we first note that we have already established that $\Phi(L)\subset \big(\R \times \{0\}\big)^n$, so it is sufficient to prove that $\tilde{u}_t(s,0)\in \big(\{0\}\times \R\big)^n$. Next we recall three important facts.
\begin{enumerate}
\item The immersion $\tilde{\varphi}:(\mathcal{H}_\delta,ds^2+dt^2)\to (S,u^*g)$ is isometric along $\{t=0\}$.
\item For vectors $\nu\in TS\big|_{\partial_1 S}$ for which $\nu\bot_{u^*g} \partial_1 S$ we have $u_*\nu \bot_g L$.
\item $\Phi(L)\subset \big(\R\times\{0\}\big)^n$, and along $\Phi(L)$ the metric $\Phi_* g$ expressed in coordinates as a block matrix with $2\times 2$ entries each of the form
 $\left(\begin{smallmatrix} *&0\\0&1\end{smallmatrix}\right)$.
\end{enumerate}
Note that the second part of the third fact was established in Lemma \ref{lem:LAdaptedGeoCoords}. We can now conclude that $(\Phi\circ u\circ \tilde{\varphi})_t(s,0)\in \big(\{0\}\times \R\big)^n$. Since $\varphi = \pr\circ\Phi\circ u\circ \tilde{\varphi}$, it then follows that $\varphi_t(s,0)\in \{0\}\times \R$.  Since $\phi=\tilde{\varphi}\circ\varphi^{-1}$ and $\tilde{u}=\Phi\circ u\circ \phi$, it follows from the above three facts that $\tilde{u}_t(s,0)\in \big(\{0\}\times \R\big)^n$.  This completes the proof of (PL.\ref{en.PL6}), as well as the proof of Lemma \ref{lem:uniformLocalGraphs2}.
\end{proof}

\begin{theorem}[A priori derivative bounds]\label{thm:AprioriDerivataiveBounds2}
Fix constants $m\in \mathbb{N}$, $C>0$, and $\alpha\in (0,1)$.  Then there exists a constant $C'=C'(m,C,\alpha)$ with the following significance.
Let $(M,J,g)$ be a compact almost Hermitian manifold of dimension $2n$ (possibly with boundary), let $L\subset M$ be a totally geodesic Lagrangian submanifold with $\partial L = L\cap \partial M$, and let $\mathcal{K}\subset M$ be a compact set for which $(M,J,g,L,\mathcal{K})$ is an admissible 5-tuple.  Suppose further that
\begin{equation*}
\|g\|_{C_{g,L}^{m-1,\alpha}(L\cap\mathcal{K})}+\|J\|_{C_{g,L}^{m-1,\alpha}(L\cap\mathcal{K})}\leq C,
\end{equation*}
and let $u:S\to M$ be an immersed $J$-curve of Type 1 for which
\begin{equation*}
\sup_{\zeta\in S} \|B_u(\zeta)\|_g \leq 1.
\end{equation*}
Letting $r_0$ be the constant guaranteed by Lemma \ref{lem:uniformLocalGraphs2}, we fix $\zeta\in u^{-1}(L\cap\mathcal{K})$ and let $\phi:\mathcal{H}_{r_0}\to S$ and $\Phi:\mathcal{B}_{2r_0}\times\mathcal{B}_{2r_0}^\bot\to \R^{2n}$ denote the maps also guaranteed by that
Lemma.  \emph{Then} for $\tilde{u}:=\Phi\circ u \circ \phi$, the following estimate holds.
\begin{equation}
\|\tilde{u}\|_{C^{m,\alpha}(\mathcal{H}_{r_0/2})}\leq C'.\label{eq:EllipticRegularity2}
\end{equation}
\end{theorem}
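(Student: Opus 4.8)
The plan is to follow the proof of Theorem \ref{thm:AprioriDerivataiveBounds} almost verbatim, the one genuinely new feature being the boundary segment $\{t=0\}$ of the half-disk, along which $\tilde u$ inherits from Lemma \ref{lem:uniformLocalGraphs2} a pair of explicit boundary conditions. First I would invoke Lemma \ref{lem:uniformLocalGraphs2} to obtain $\phi:\mathcal{H}_{r_0}\to S$ and the $L$-adapted geodesic normal coordinates $\Phi$, so that $\tilde u:=\Phi\circ u\circ\phi$ is graphical in the sense of (PL.\ref{en.PL3}), carries the universal bounds $\|D_i\tilde u\|_{C^0(\mathcal{H}_{r_0})}\le 2$ and $\|D_{ij}\tilde u\|_{C^0(\mathcal{H}_{r_0})}\le 10$ from (PL.\ref{en.PL7}) and (PL.\ref{en.PL9}), and has metric coefficients $\gamma_{ij}=\la D_i\tilde u,D_j\tilde u\ra_{\Phi_* g}$ whose inverses $\gamma^{ij}$ satisfy the uniform ellipticity and H\"{o}lder bounds (PL.\ref{en.PL11}) and (PL.\ref{en.PL12}). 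Since Proposition \ref{prop:GraphicalMinimalSurfaceSystem} is a purely coordinate-local statement about graphical $J$-curves, it applies here without change: the components $\tilde u^\mu$, $\mu=3,\dots,2n$, solve the quasilinear elliptic system $\gamma^{ij}D_{ij}\tilde u^\mu=\mathcal F^\mu$, with $\mathcal F^\mu$ the same expression in $\gamma^{ij}$, $D\tilde u$, the Christoffel symbols $\Gamma^\mu_{\alpha\beta}$ and the components $Q^\mu_{\alpha\beta}$ as in the proof of Theorem \ref{thm:AprioriDerivataiveBounds}. The hypothesis $\|g\|_{C_{g,L}^{m-1,\alpha}(L\cap\mathcal{K})}+\|J\|_{C_{g,L}^{m-1,\alpha}(L\cap\mathcal{K})}\le C$ bounds $\Gamma^\mu_{\alpha\beta}$ and $Q^\mu_{\alpha\beta}$, together with their derivatives up to order $m-1$, in these $L$-adapted coordinates; so, exactly as in the interior case, $\|\mathcal F^\mu\|_{C^{0,\alpha}(\mathcal{H}_{r_0})}$ is controlled in terms of $\alpha$, $C$ and $\|\tilde u\|_{C^{2}(\mathcal{H}_{r_0})}$, hence in terms of $\alpha$ and $C$ alone.

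Next I would read off the boundary conditions. By (PL.\ref{en.PL3}) the components $\tilde u^1\equiv s$ and $\tilde u^2\equiv t$ are explicit, and since $\Phi$ is $L$-adapted, Lemma \ref{lem:LAdaptedGeoCoords}(L\ref{en.L1}) identifies $L$ with $\bigcap_{k=1}^n\{x^{2k}=0\}$. Thus (PL.\ref{en.PL6}) says precisely that for each $k=2,\dots,n$ the even component satisfies the homogeneous Dirichlet condition $\tilde u^{2k}(s,0)=0$, while the odd component satisfies the homogeneous Neumann condition $\partial_t\tilde u^{2k-1}(s,0)=0$. Hence the unknowns $\{\tilde u^\mu\}_{\mu=3}^{2n}$ solve an elliptic system on $\mathcal{H}_{r_0}$ which, although coupled in the interior through the first-order terms in $\mathcal F^\mu$, has boundary conditions on the flat piece $\{t=0\}$ that decouple componentwise into either a Dirichlet condition or a transversal oblique-derivative (Neumann) condition; since $\partial_t$ is transversal to $\{t=0\}$, the Neumann components fall squarely under the oblique-derivative Schauder theory of \cite{GdTn01}, Chapter 6.

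The bootstrap is then the one from the proof of Theorem \ref{thm:AprioriDerivataiveBounds}, carried out up to the boundary. Put $r_k:=\tfrac12\big(1+2^{-k}\big)r_0\in(\tfrac12 r_0,r_0)$. Starting from the universal $C^2$-bound on $\tilde u$ and the uniform ellipticity and $C^{0,\alpha}$-bounds on $\gamma^{ij}$, one applies componentwise the interior Schauder estimate on the part of $\mathcal{H}_{r_0/2}$ at positive distance from $\{t=0\}$ together with, near $\{t=0\}$, the global Dirichlet Schauder estimate for the even components and the oblique-derivative Schauder estimate for the odd components (\cite{GdTn01}, Chapter 6), obtaining $\|\tilde u\|_{C^{2,\alpha}(\mathcal{H}_{r_2})}\le C_2(\alpha,C)$. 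One then iterates: tangential differentiation in $s$ preserves each boundary condition ($\tilde u^{2k}(s,0)=0$ becomes $\partial_s\tilde u^{2k}(s,0)=0$ and $\partial_t\tilde u^{2k-1}(s,0)=0$ becomes $\partial_t\partial_s\tilde u^{2k-1}(s,0)=0$), the differentiated components solve the same principal-part system with a new right-hand side whose H\"{o}lder norm is bounded by the estimate just obtained, and normal derivatives of any order are recovered from the equation by solving $\gamma^{22}D_{tt}\tilde u^\mu=\mathcal F^\mu-\gamma^{11}D_{ss}\tilde u^\mu-2\gamma^{12}D_{st}\tilde u^\mu$ and using $\gamma^{22}\ge\tfrac14$ from (PL.\ref{en.PL11}). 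This yields constants $C_k=C_k(k,\alpha,C)$, $k=2,\dots,m$, with $\|\tilde u\|_{C^{k,\alpha}(\mathcal{H}_{r_k})}\le C_k$; taking $C':=C_m$ gives the estimate (\ref{eq:EllipticRegularity2}).

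I expect the only delicate point to be the boundary-regularity step: one must confirm that the conditions extracted from (PL.\ref{en.PL6}) genuinely fit the scalar Dirichlet / oblique-derivative Schauder framework for the operator $\gamma^{ij}D_{ij}$ — in particular that $\partial_t\tilde u^{2k-1}(s,0)=0$ is admissible as a transversal oblique-derivative condition — and that these conditions are inherited by the differentiated systems at every stage of the bootstrap, so that all normal derivatives up to order $m$ are controlled. The remaining ingredients, including the routine verification that $\|\mathcal F^\mu\|_{C^{0,\alpha}}$ and, at later stages, $\|\mathcal F^\mu\|_{C^{k-2,\alpha}}$ are bounded, are identical to the interior case treated in Theorem \ref{thm:AprioriDerivataiveBounds}.
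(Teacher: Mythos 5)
Your proposal is correct and follows the same route the paper takes: the paper's proof merely asserts that the argument parallels Theorem \ref{thm:AprioriDerivataiveBounds}, except that the graphical parameterization now satisfies a partial Dirichlet or partial Neumann boundary condition on $\{t=0\}$ to which elliptic bootstrapping at the boundary applies, and your componentwise extraction of the Dirichlet condition $\tilde{u}^{2k}(s,0)=0$ and Neumann condition $\partial_t\tilde{u}^{2k-1}(s,0)=0$ from (PL.\ref{en.PL6}) is precisely the content being invoked. (As a small simplification of the one ``delicate point'' you flag: combining (PL.\ref{en.PL6}) with (L\ref{en.L2}) and (L\ref{en.L3}) of Lemma \ref{lem:LAdaptedGeoCoords} shows $\gamma_{12}(s,0)=\la\tilde u_s,\tilde u_t\ra_g=0$, hence $\gamma^{12}(s,0)=0$, so $\partial_t$ is the conormal direction for $\gamma^{ij}D_{ij}$ on $\{t=0\}$ and the condition is a genuine Neumann condition rather than merely a transversal oblique one.)
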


\begin{proof}
The proof of Theorem \ref{thm:AprioriDerivataiveBounds2} parallels the proof of Theorem \ref{thm:AprioriDerivataiveBounds} exactly. Indeed, the only modification of any non-trivial importance is that the local parameterizations yielded by Lemma \ref{lem:uniformLocalGraphs2} satisfy an elliptic partial differential equation with either a partial Dirichlet or partial Neumann boundary condition.  Nevertheless, elliptic bootstrapping at the boundary still applies, and the desired result is immediate.
\end{proof}

\section{Regularity of the second fundamental form}\label{sec:CurvatureRegularity}
The purpose of this section is to establish several regularity results for the second fundamental form of an immersed $J$-curve $u:S\to M$ of either Type 0 or Type 1.  The key results of this section are Theorem \ref{thm:EpsRegularity0} and Theorem \ref{thm:EpsRegularity1} which establish $\epsilon$-regularity of $\|B\|^2$ for curves with or without partial Lagrangian boundary. Of perhaps more practical importance are
Corollary \ref{cor:CurvatureThreshold} and Corollary \ref{cor:CurvatureThreshold1} which guarantee that if the curvature of a $J$-curve is sufficiently large at a point $\zeta\in S$, then there is a threshold amount of ``total'' curvature in a small neighborhood of $\zeta$. Note that these theorems and corollaries depend heavily upon other regularity results, namely Proposition \ref{prop:nabBbounded} and Proposition \ref{prop:nabBbounded2}, which establish elliptic regularity for the second fundamental form. These two propositions can be regarded as a parametrization-independent version of elliptic regularity for $J$-curves.

\subsection{Interior case}\label{sec:CurvatureRegularity0} Here we establish the aforementioned regularity results for immersed $J$-curves without partial Lagrangian boundary conditions, or more precisely for those $J$-curves which are Type 0 (see Definition \ref{def:Type0Map}).  To that end, we begin with an useful elementary result regarding covariant derivatives of the two second fundamental forms $A\in \Hom\big(\mathcal{N},\Hom(\mathcal{T},\mathcal{T}) \big)$ and $B\in \Hom\big(\mathcal{T}\otimes\mathcal{T},\mathcal{N} \big)$ of an immersion $u:S\to M$.  Recall $A$ and $B$ are respectively defined by $A^\nu(\tau_1):=(\nab_{\tau_1} \nu)^\top$ and $B(\tau_1,\tau_2) = (\nab_{\tau_1}\tau_2)^\bot$.

\begin{lemma}\label{lem:covDerivOfAandB}
Let $u:S\to M$ be an immersion into a Riemannian manifold $(M,g)$, and let $A$ and $B$ be the associated second fundamental forms.  Then for each $\zeta\in S$, $k\in \mathbb{N}$, and $\tau_1,\tau_2, X_1,\ldots,X_k \in \mathcal{T}_\zeta$ and $\nu\in \mathcal{N}_\zeta$, the following holds
\begin{equation}\label{eq:covDerivOfAandB}
0=\big\langle (\nab_{X_1,\ldots,X_k}^k B)(\tau_1,\tau_2),\nu\big\rangle + \big\langle \tau_2, (\nab_{X_1,\ldots,X_k}^k A)^\nu (\tau_1),  \nu\big\rangle
\end{equation}
where $\nab^k A$ and $\nab^k B$ denote the $k^{th}$ covariant derivative of $A$ and $B$ respectively. Furthermore
\begin{equation}\label{eq:covDerivOfAandB2}
\|\nab^k A\| = \|\nab^k B\|,
\end{equation}
for all $k\in \mathbb{N}$.
\end{lemma}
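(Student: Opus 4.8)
The plan is to derive both claims from the $k=0$ case, which is exactly \eqref{eq:secondFundamentalFormsRelation}, by differentiating a vanishing tensor identity. First I would package \eqref{eq:secondFundamentalFormsRelation} into the statement that the section
\begin{equation*}
\Theta\in\Gamma\big(\Hom(\mathcal{N}\otimes\mathcal{T}\otimes\mathcal{T},\R)\big),\qquad \Theta(\nu,\tau_1,\tau_2):=\big\la A^\nu(\tau_1),\tau_2\big\ra-\big\la B(\tau_1,\tau_2),\nu\big\ra,
\end{equation*}
vanishes identically; here I use that $A$ and $B$ are genuine tensors (as recalled just before Definition \ref{def:meanCurvature}), so that $\Theta$ is a bona fide section of the displayed bundle rather than merely a pointwise expression.

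The main point is then that the connection with respect to which $\nab^k A$ and $\nab^k B$ are formed is metric. Indeed the tangential connection $\tau\mapsto(\nab\tau)^\top$ on $\mathcal{T}$ and the normal connection $\nu\mapsto(\nab\nu)^\bot$ on $\mathcal{N}$ are each compatible with the metrics induced from $g$ — this is the same Leibniz-rule-plus-$\nab g=0$ computation used in Section \ref{sec:MeanCurvatureEquation} to see that $A$ and $B$ are tensors — and hence the induced connection on every associated bundle ($\Hom(\mathcal{N},\mathcal{S})$, $\Hom(\mathcal{T}\otimes\mathcal{T},\mathcal{N})$, and the bundle containing $\Theta$) is metric as well. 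Consequently $\nab$ commutes with the pairings $\la\cdot,\cdot\ra$ appearing in $\Theta$. Since $\Theta\equiv 0$ we have $\nab^k\Theta\equiv 0$ for all $k$; expanding the latter by the Leibniz rule, every term in which a derivative lands on a metric coefficient drops out, and one is left with
\begin{equation*}
0=\big\la(\nab^k_{X_1,\ldots,X_k}A)^\nu(\tau_1),\tau_2\big\ra-\big\la(\nab^k_{X_1,\ldots,X_k}B)(\tau_1,\tau_2),\nu\big\ra,
\end{equation*}
which (up to the sign convention of \eqref{eq:secondFundamentalFormsRelation}) is \eqref{eq:covDerivOfAandB}. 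Equivalently one can run this as an induction on $k$: apply $\nab_{X_{k+1}}$ to the $k$-th identity and invoke metric compatibility to move the derivative through the inner product.

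For \eqref{eq:covDerivOfAandB2} I would repeat the argument by which \eqref{eq:NormAisNormB} is deduced from \eqref{eq:secondFundamentalFormsRelation}: the identity just established says that $\nab^k A$ and $\nab^k B$ are mutually adjoint with respect to the metric on $\mathcal{T}$, so evaluating both against $g$-orthonormal frames of $\mathcal{T}$, of $\mathcal{N}$, and against an orthonormal choice of the $k$ derivative directions, each component of $\nab^k A$ equals the corresponding component of $\nab^k B$ up to sign; summing the squares of these components yields $\|\nab^k A\|=\|\nab^k B\|$. I do not anticipate a genuine obstacle here: the only care needed is the bookkeeping of the induced connection on the relevant $\Hom$-bundles — to justify that covariant differentiation commutes with the pairings $\la\cdot,\cdot\ra$ — together with keeping the slot conventions for $(\nab^k A)^\nu$ versus $(\nab^k B)$ consistent; both are entirely routine.
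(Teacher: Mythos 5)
Your proof is correct, and the underlying fact you lean on (metric compatibility of the tangential connection on $\mathcal{T}$, the normal connection on $\mathcal{N}$, and hence of all the induced connections on the $\Hom$-bundles carrying $A$, $B$, and $\Theta$) is exactly what makes the argument go. One small caveat on phrasing: it is not quite that ``derivatives landing on metric coefficients drop out,'' but rather that the lower-order Leibniz terms — those in which the derivative falls on $\nu$, $\tau_1$, or $\tau_2$ rather than on $A$ or $B$ — reassemble into instances of $\Theta$ evaluated at derived arguments, which vanish because $\Theta\equiv 0$. With that reading your identity $(\nabla^k_{X_1,\ldots,X_k}\Theta)(\nu,\tau_1,\tau_2)=\la(\nab^k A)^\nu(\tau_1),\tau_2\ra-\la(\nab^k B)(\tau_1,\tau_2),\nu\ra$ is right, and \eqref{eq:covDerivOfAandB2} follows from it by the same orthonormal-frame count as in \eqref{eq:NormAisNormB}.

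The route is genuinely different from the paper's, though the two are close cousins. The paper works primally: it chooses local extensions of $\tau_1,\tau_2,\nu$ (and of $X_1,\ldots,X_{k-1}$) adapted at $\zeta$ so that at the base point all the Leibniz correction terms vanish, and then differentiates the scalar relation $0=\la\tau_2,\nu\ra$ directly $k+1$ times, evaluating at $\zeta$. You instead promote the $k=0$ identity \eqref{eq:secondFundamentalFormsRelation} to a vanishing section $\Theta$ of a metric bundle and observe that its covariant derivatives of every order vanish identically — no extensions, no evaluation at a special point. What the paper's version buys is that everything is spelled out in one explicit computation you can check by hand; what yours buys is that the case of general $k$ is immediate rather than requiring careful bookkeeping of which terms die at $\zeta$ and why. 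The essential mathematical input is the same in both: $\nab g=0$, carried through the tangential/normal splitting of $u^*TM$.
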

\begin{proof}
We begin by letting $\tau_1,\tau_2,X_1\in \Gamma(\mathcal{T})$ and $\nu\in \Gamma(\mathcal{N})$ be extensions of the given vectors so that
\begin{equation*}
0=(\nab_{X_1} \tau_1)^\top\big|_{\zeta}=(\nab_{X_1} \tau_2)^\top\big|_{\zeta} = (\nab_{X_1} \nu)^\bot\big|_{\zeta}.
\end{equation*}
Then by taking successive derivatives we find
\begin{align*}
0&=\la \tau_2, \nu\ra\\
&\Rightarrow\\
0&=\la (\nab_{\tau_1} \tau_2)^\bot,\nu\ra + \la \tau_2 , (\nab_{\tau_1} \nu)^\top \ra\\
&=\la B(\tau_1,\tau_2), \nu\ra + \la \tau_2, A^\nu(\tau_1)\ra\\
&\Rightarrow\\
0&= \la (\nab_{X_1}B)(\tau_1,\tau_2), \nu\ra + \la B(\nab_{X_1}^\top\tau_1,\tau_2), \nu\ra\\
&\qquad+ \la B(\tau_1,\nab_{X_1}^\top\tau_2), \nu\ra + \la B(\tau_1,\tau_2), \nab_{X_1}^\bot\nu\ra\\
&\qquad+ \la \nab_{X_1}^\top \tau_2, A^\nu(\tau_1)\ra
+ \la \tau_2, (\nab_{X_1}A)^\nu(\tau_1)\ra\\
&\qquad + \la \tau_2, A^{\nab_{X_1}^\bot \nu}(\tau_1)\ra+ \la \tau_2, A^\nu(\nab_{X_1}^\top\tau_1)\ra,
\end{align*}
and thus by evaluating at $\zeta$ we find that
(\ref{eq:covDerivOfAandB}) holds for $k=0,1$.  Since the quantities on the right hand side of that equation do not depend on the extensions of $\tau_1,\tau_2$, and $\nu$, we conclude that (\ref{eq:covDerivOfAandB}) holds for $k=0,1$ in general for arbitrary vectors $X_1,\tau_1,\tau_2$ and $\nu$. Iterating this procedure, and making sure to take extensions of the $X_1,\ldots,X_{k-1}$ so that
\begin{equation*}
0=(\nab_{X_k} \tau_1)^\top\big|_{\zeta}=(\nab_{X_k} \tau_2)^\top\big|_{\zeta} = (\nab_{X_k} \nu)^\bot\big|_{\zeta} = (\nab_{X_k} X_j)^\top\big|_{\zeta}
\end{equation*}
for $j=1,\ldots,k-1$, will prove (\ref{eq:covDerivOfAandB}) for all $k\in \mathbb{N}$.  Equation (\ref{eq:covDerivOfAandB2}) then follows immediately.
\end{proof}

We are now prepared to prove elliptic regularity of the second fundamental form.

\begin{proposition}[elliptic regularity of $B$]\label{prop:nabBbounded}
Fix a constant $\alpha\in (0,1)$, and an increasing sequence $\{C_k\}_{k=0}^{\infty}\subset \R^+$, then there exists another increasing sequence $\{C_k'\}_{k=0}^\infty\subset \R^+$ for which $C_k'=C_k'(\alpha,C_1,\ldots,C_{k+1})$ and which has the following significance.  Let $(M,J,g)$ be a smooth compact almost Hermitian manifold of dimension $2n$ which possibly has boundary.  Define the set $\tilde{M}:=\{p\in M:\dist (p,\partial M)\geq 2\}$, and suppose $(M,g,\tilde{M})$ is an admissible triple.  Suppose further that
\begin{equation*}
\|g\|_{C_g^{k,\alpha}(M\setminus\partial M)}+ \|J\|_{C_g^{k,\alpha}(M\setminus\partial M)}\leq C_k,
\end{equation*}
for each $k\in \mathbb{N}$. Then for any immersed $J$-curve $u:S\to M$ of Type 0 for which
\begin{equation*}
\sup_{\zeta\in S}\|B_u\|_g\leq 1,
\end{equation*}
the following also holds
\begin{equation*}
\|\nab^k B_u\|_{C_g^{0,\alpha}(u^{-1}(\tilde{M}))} \leq C_k',
\end{equation*}
where $\nab^k B_u \in \Gamma \big( \Hom(\bigotimes^{k+2}\mathcal{T},\mathcal{N})\big)$ denotes the $k^{th}$ covariant derivative of the second fundamental form $B_u$ of the immersion $u$.
\end{proposition}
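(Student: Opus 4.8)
The plan is to reduce to the graphical parameterisations provided by Theorem~\ref{thm:AprioriDerivataiveBounds} and then observe that, in such a chart, $\nab^k B_u$ is a universal expression in uniformly bounded data. Fix $k\in\mathbb{N}$ and a point $\zeta\in u^{-1}(\tilde M)$. Since $\tilde M\subset M\setminus\partial M$, the hypotheses give $\|g\|_{C_g^{k+1,\alpha}(\tilde M)}+\|J\|_{C_g^{k+1,\alpha}(\tilde M)}\le C_{k+1}$, and $(M,g,\tilde M)$ is admissible, so Theorem~\ref{thm:AprioriDerivataiveBounds} applies with $m:=k+2$ and $C:=C_{k+1}$: there are a map $\phi:\mathcal{D}_{r_0}\to S$ with $\phi(0)=\zeta$ and geodesic normal coordinates $\Phi$ on $\mathcal{B}_{2r_0}(u(\zeta))$ such that $\tilde u:=\Phi\circ u\circ\phi$ is graphical over the coordinate $2$-plane and $\|\tilde u\|_{C^{k+2,\alpha}(\mathcal{D}_{r_0/2})}\le C'(k+2,C_{k+1},\alpha)$. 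Moreover properties (P\ref{en.P9}) and (P\ref{en.P10}) of Lemma~\ref{lem:uniformLocalGraphs} give the uniform ellipticity $\gamma^{ij}\xi_i\xi_j\ge\tfrac{1}{4}|\xi|^2$ and the bound $\|\gamma^{ij}\|_{C^{0,\alpha}(\mathcal{D}_{r_0})}\le 2\cdot10^3$, where $\gamma_{ij}=\langle D_i\tilde u,D_j\tilde u\rangle_{\Phi_*g}$; in particular $\det[\gamma_{ij}]$ is bounded below away from $0$.

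The next step is to write $\nab^k B_u$ out in this chart. Pushing forward the coordinate frame by $u$, the bundle $\mathcal{T}$ is spanned by $D_1\tilde u,D_2\tilde u$, the $g$-orthogonal projections are $\pi_{\mathcal{T}}^g(X)=\gamma^{ij}\langle X,D_i\tilde u\rangle_{\Phi_*g}\,D_j\tilde u$ and $\pi_{\mathcal{N}}^g=1-\pi_{\mathcal{T}}^g$, and $B_u(\partial_i,\partial_j)=\pi_{\mathcal{N}}^g\big(D_{ij}\tilde u+\Gamma(D_i\tilde u,D_j\tilde u)\big)$, where $\Gamma$ denotes the Christoffel symbols of $g$ in the geodesic normal coordinates. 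Covariant differentiation on $S$ uses the Levi-Civita connection of $u^*g$ on the $\mathcal{T}$-factors (whose Christoffel symbols are rational in $\gamma_{ij},D\gamma_{ij}$ with denominator $\det[\gamma_{ij}]$) and $\nab^\bot=\pi_{\mathcal{N}}^g\circ\nab$ on the $\mathcal{N}$-factor. Iterating $k$ times, the components of $\nab^k B_u$ in this chart form a universal expression, polynomial in the derivatives of $\tilde u$ up to order $k+2$, the derivatives of $g$ up to order $k+1$ evaluated along $\tilde u$, the entries of $[\gamma_{ij}],[\gamma^{ij}]$ and their derivatives up to order $k+1$, and $1/\det[\gamma_{ij}]$. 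Each of these is controlled by $\|\tilde u\|_{C^{k+2,\alpha}}$, $\|g\|_{C_g^{k+1,\alpha}}$ and (P\ref{en.P9}), hence by $\alpha$ and $C_1,\dots,C_{k+1}$; since moreover $\|\gamma^{ij}\|_{C^{0,\alpha}}$ is bounded, the same holds for the $C^{0,\alpha}(\mathcal{D}_{r_0/2})$ norm of these components. Using that $[\gamma_{ij}]$ is within $10^{-8}$ of the identity, $\|\nab^k B_u\|_g$ is comparable to the Euclidean norm of the components, so $\nab^k B_u$ has $C^{0,\alpha}$ norm on $\phi(\mathcal{D}_{r_0/2})$ bounded by some $C_k'=C_k'(\alpha,C_1,\dots,C_{k+1})$.

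Finally I would patch the local estimates together: each $\zeta\in u^{-1}(\tilde M)$ lies in the interior of its own chart, which by (P\ref{en.P6}) and (P\ref{en.P8}) covers a $\dist_{u^*g}$-ball about $\zeta$ of definite radius and is uniformly bi-Lipschitz to the Euclidean metric, so the pointwise bound on $\|\nab^k B_u\|$ holds everywhere; pairs of points within that definite distance lie in a common chart where the H\"older estimate applies, and farther apart pairs contribute a bounded H\"older quotient, which gives the claimed bound on $\|\nab^k B_u\|_{C_g^{0,\alpha}(u^{-1}(\tilde M))}$, and replacing each $C_k'$ by $\max_{j\le k}C_j'$ makes the sequence increasing. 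I expect the only non-formal point to be the second paragraph---organising $\nab^k B_u$ in the graphical coordinates, keeping straight the interplay of the induced Levi-Civita connection on $S$ with the normal connection $\nab^\bot$, and invoking the uniform ellipticity (P\ref{en.P9}) at each differentiation so that no inverse-metric factor degenerates; the genuine analytic content (interior elliptic regularity for the graphical system) is already contained in Theorem~\ref{thm:AprioriDerivataiveBounds}, so no new PDE estimates are needed here.
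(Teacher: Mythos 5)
Your proposal is correct and follows essentially the same route as the paper: express $\nabla^k B_u$ in the graphical chart provided by Lemma~\ref{lem:uniformLocalGraphs} as a polynomial/rational expression in the derivatives of $\tilde u$, the metric components $g_{ij}$ and their derivatives, and the inverse metrics $\gamma^{ij}$, $g^{ij}$, then bound each ingredient via Theorem~\ref{thm:AprioriDerivataiveBounds} (applied with $m=k+2$, $C=C_{k+1}$), properties (P\ref{en.P9})--(P\ref{en.P10}), and Lemma~\ref{lem:GammaEst}. You are somewhat more explicit than the paper about the tangent/normal split of the iterated covariant derivative and about the patching of local H\"older estimates, but the underlying argument is the same.
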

\begin{proof}

For any smooth immersion $\tilde{u}:\R^2\to \R^m$ we will use the following notation for coordinate tangent vectors:
$D_1 \tilde{u} = \tilde{u}_{,1}=\tilde{u}_s = \tilde{u}_*\partial_s$, $D_2 \tilde{u} =\tilde{u}_{,2}=\tilde{u}_t = \tilde{u}_*\partial_t$.  Suppose that $\R^m$ is equipped with coordinates $(x^1,\ldots,x^m)$, and a Riemannian metric $g$, with components $g_{ij}$ and associated inverse components $g^{ij}$ for which $\delta_i^j=g_{i\ell} g^{\ell j}$.  Then  for any tangent vector fields of the form $\tau_i= \tilde{u}_{,j_i}$, repeated use of the Leibniz rule and the fact that $\nabla = \nabla^\top + \nabla^\bot$ shows that
\begin{equation}\label{eq:FormulaA}
(\nabla_{\tau_{k+2},\ldots,\tau_{3}}^kB)(\tau_2,\tau_1) = \mathcal{F}_{k,j_1,\ldots,j_{k+2}}^\ell \partial_{x^\ell}
\end{equation}
where the $\mathcal{F}_{k,j_1,\ldots,j_{k+2}}^\ell$ are polynomials of their arguments which we denote as
\begin{equation}\label{eq:FormulaB}
\mathcal{F}=\mathcal{F}\big(\bigcup_{i=1}^m\bigcup_{|\mathbf{a}|=0}^{k+2}D_{\mathbf{a}}\tilde{u}^i,
\;\;\;\bigcup_{i,j=1}^2 \gamma^{ij},
\;\;\;\bigcup_{i,j=1}^m g^{ij},
\;\;\;\bigcup_{i,j=1}^m \bigcup_{|\mathbf{a}| =0}^{k+1} D_{\mathbf{a}}g_{ij}\big);
\end{equation}
Here the $\gamma^{ij}$ satisfy $\delta_i^j=\gamma_{i\ell}\gamma^{\ell j}$ and $\gamma_{ij}=g_{\alpha\beta}\tilde{u}_{,i}^\alpha, \tilde{u}_{,j}^\beta$.  For an immersed $J$-curve $u:S\to M$, we consider the maps $\tilde{u}=\Phi\circ u \circ \phi^{-1}$ as in Lemma \ref{lem:uniformLocalGraphs}.  A consequence of Theorem \ref{thm:AprioriDerivataiveBounds} is that that for fixed $k$, the $C^{k+2,\alpha}$-norms of the $\tilde{u}$ are uniformly bounded in terms of $\|g,J\|_{C_g^{k+1,\alpha}(M\setminus \partial M)}$, which are bounded by $C_{k+1}$.  A consequence of Lemma \ref{lem:uniformLocalGraphs} is that the $\|\gamma^{ij}\|_{C^{0,\alpha}}$ are uniformly bounded by a universal constant. A consequence of Lemma \ref{lem:GammaEst} is that the $\|g^{ij}\|_{C^0}$ are bounded by a universal constant, and it is straight forward to show that the $\|g^{ij}\|_{C^{0,\alpha}}$ are bounded in terms of $C_0$. Combining (\ref{eq:FormulaA}), (\ref{eq:FormulaB}), and these estimates then proves the desired result.
\end{proof}

It will often be convenient to locally consider $J$-curves in small balls $\mathcal{B}_\epsilon(p)\subset M$ (in particular for $\epsilon<2$).  In such cases the following result is useful.
\begin{corollary}\label{cor:LocalInteriorCurvatureBounds}
Fix $\epsilon\in(0,2]$ and let $M$, $J$, $g$, $\tilde{M}$, $\alpha$, $C_k$ and $C_k'$ be as in Proposition \ref{prop:nabBbounded}; in particular assume
\begin{equation*}
\|g\|_{C_g^{k,\alpha}(M\setminus\partial M)}+\|J\|_{C_g^{k,\alpha}(M\setminus\partial M)}\leq C_k
\end{equation*}
for each $k\in \mathbb{N}$.  Fix $p\in \tilde{M}$, and suppose $u:S\to \overline{\mathcal{B}_{\epsilon}^g(p)}\subset M$ is an immersed $J$-curve of Type 0 in $\overline{\mathcal{B}_\epsilon^g(p)}$ which satisfies
\begin{equation*}
\sup_{S}\|B_u\|_g\leq 1.
\end{equation*}
Then
\begin{equation*}
\sup_{\zeta\in u^{-1}(\overline{\mathcal{B}_{\epsilon/3}^g(p)} )}\|\nab^k B_u\|_g \leq (3/\epsilon)^{k+1}C_k'.
\end{equation*}
\end{corollary}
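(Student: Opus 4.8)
The plan is to deduce the corollary from Proposition~\ref{prop:nabBbounded} by the conformal rescaling trick advertised in the remark following Definition~\ref{def:admissible}. Set $c:=3/\epsilon$, so that $c\ge 3/2>1$ since $\epsilon\in(0,2]$. Since $p\in\tilde M$ and $(M,g,\tilde M)$ is admissible we have $\inj^g(p)\ge 2\ge\epsilon$, so $M':=\overline{\mathcal{B}_{\epsilon}^g(p)}$ is a compact geodesically convex Riemannian manifold with smooth boundary; give it the rescaled metric $g':=c^2\,g|_{M'}$ and the unchanged $J$. Then $g'(Jx,Jy)=c^2g(Jx,Jy)=c^2g(x,y)=g'(x,y)$, so $(M',J,g')$ is almost Hermitian, and because the splitting $u^*TM'=\mathcal{T}\oplus\mathcal{N}$ does not depend on the scale of the metric, $u$ is still an immersed $J$-curve of Type $0$ in $M'$.

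First I would record, via Lemma~\ref{lem:RescalingProperties}, the scaling of the relevant quantities: $\inj^{g'}=c\,\inj^g$, the $C^k$-seminorms of $g$ and $J$ scale by $c^{-k}$, $K^{g'}_{sec}=c^{-2}K^g_{sec}$, and $\|B_u^{g'}\|_{g'}=c^{-1}\|B_u^g\|_g\le c^{-1}\le 1$. Moreover the Levi-Civita connections of $g$ and $g'$ coincide, so $\nab^kB_u$ is literally the same $\Hom\!\big(\bigotimes^{k+2}\mathcal{T},\mathcal{N}\big)$-valued tensor for both metrics; unwinding the definition of its pointwise norm — each of the $k+2$ tangential inputs brings a factor $c^{-1}$ and the normal output a factor $c$ — one gets $\|\nab^kB_u^{g'}\|_{g'}=c^{-(k+1)}\|\nab^kB_u^{g}\|_{g}$. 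Next I would check that $(M',J,g')$ satisfies the hypotheses of Proposition~\ref{prop:nabBbounded}. With $\tilde{M'}:=\{q\in M':\dist_{g'}(q,\partial M')\ge 2\}$ and using convexity of $M'$, so that $\dist_{g'}(q,\partial M')=c\big(\epsilon-\dist_g(p,q)\big)$, one finds $\tilde{M'}=\overline{\mathcal{B}_{\epsilon/3}^g(p)}$. Since $c\ge 1$, the scaling relations, together with the observation that passing to the smaller convex ball $M'$ only shrinks the geodesic balls appearing in Definition~\ref{def:C1BoundOnG} (and that the ordinary injectivity radius is $1$-Lipschitz and bounded below by the distance to the boundary, giving $\inj^{g'}_{M'}\ge 2$ on $\tilde{M'}$), should show that $(M',g',\tilde{M'})$ is admissible and that $\|g'\|_{C^{k,\alpha}_{g'}(M'\setminus\partial M')}+\|J\|_{C^{k,\alpha}_{g'}(M'\setminus\partial M')}\le C_k$ for every $k$; here it is worth noting that $C_k'$ depends only on $\alpha,C_1,\dots,C_{k+1}$, so no bound on the rescaled metric at the $C^{0,\alpha}$ level is actually needed.

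Granting this, Proposition~\ref{prop:nabBbounded} applied to $u\colon S\to M'$ gives $\|\nab^kB_u^{g'}\|_{C^{0,\alpha}_{g'}(u^{-1}(\tilde{M'}))}\le C_k'$, and since the pointwise $g'$-norm of $\nab^kB_u^{g'}$ at a point $u(\zeta)$ is dominated by this (the component and intrinsic norms agreeing at the centre of a geodesic normal chart), we get $\sup_{u^{-1}(\overline{\mathcal{B}_{\epsilon/3}^g(p)})}\|\nab^kB_u^{g'}\|_{g'}\le C_k'$; multiplying by $c^{k+1}=(3/\epsilon)^{k+1}$ and using $\|\nab^kB_u^{g}\|_{g}=c^{k+1}\|\nab^kB_u^{g'}\|_{g'}$ yields the asserted bound. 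I expect the main obstacle to lie in the bookkeeping of the second paragraph, namely confirming that the admissibility inequality $10(2n)^2[g']_{C^2_{g'}(\tilde{M'})}\le 1$ and the $C^{k,\alpha}$-bounds really do survive the restriction-and-rescaling — the operative mechanism being that scaling up by $c\ge 1$ can only decrease the relevant seminorms while enlarging the injectivity radius — plus the routine verification that $M'$ has injectivity radius at least $2$ on $\tilde{M'}$ after rescaling. The rest is a mechanical invocation of Lemma~\ref{lem:RescalingProperties} and Proposition~\ref{prop:nabBbounded}.
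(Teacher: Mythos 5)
Your proof is correct and follows the paper's argument exactly: rescale the metric by $(3/\epsilon)^2$ on $\overline{\mathcal{B}^g_\epsilon(p)}$, check via Lemma~\ref{lem:RescalingProperties} that the rescaled triple is admissible with the same $C^{k,\alpha}$ bounds, apply Proposition~\ref{prop:nabBbounded}, and unwind the $(3/\epsilon)^{k+1}$ scaling of $\|\nab^k B_u\|$. The only cosmetic difference is that you spell out the pointwise tensor-norm scaling that the paper subsumes under ``another application of Lemma~\ref{lem:RescalingProperties}.''
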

\begin{proof}
We begin by defining $N:=\overline{\mathcal{B}_\epsilon^g(p)}$, $\tilde{N}:=\overline{\mathcal{B}_{\epsilon/3}^g(p)}\big)$ and $\tilde{g}:=(3/\epsilon)^2 g$. Then observe that $(N,\tilde{g},\tilde{N})$  is an admissible triple. Furthermore, since $\epsilon/3< 1$ it follows from Lemma \ref{lem:RescalingProperties} that
\begin{equation*}
\|\tilde{g}\|_{C_{\tilde{g}}^{k,\alpha}(N\setminus\partial N)}+\|J\|_{C_{\tilde{g}}^{k,\alpha}(N\setminus\partial N)}\leq C_k,
\end{equation*}
and $\sup_{\zeta\in S} \|B_u^{\tilde{g}}\|_{\tilde{g}}\leq 1$. By Proposition \ref{prop:nabBbounded}, it then follows that
\begin{equation*}
\sup_{\zeta \in u^{-1}(\tilde{N})}\|\nab^kB_u^{\tilde{g}}\|_{\tilde{g}} \leq C_k'.
\end{equation*}
However, by the definition of $\tilde{N}$, the fact that $\nab^k B_u^{\tilde{g}} = \nab^k B_u^{g}$, and another application of Lemma \ref{lem:RescalingProperties}, we then find
\begin{equation*}
\sup_{\zeta\in u^{-1}(\overline{\mathcal{B}_{\epsilon/3}^g(p)})}\|\nab^kB_u^{{g}}\|_{{g}} \leq (3/\epsilon)^{k+1}C_k',
\end{equation*}
which is precisely the desired estimate.
\end{proof}

With Proposition \ref{prop:nabBbounded} established, we are now prepared to state and prove the main result of Section \ref{sec:CurvatureRegularity0}, namely Theorem \ref{thm:EpsRegularity0} below.  To that end, it will be convenient to have the following notation at our disposal.  If $(M,g)$ is a compact Riemannian manifold which possibly has boundary, then for each $\delta\geq 0$ we define $M^\delta$ to be the compact set given by $M^\delta:=\{q\in M:\dist(q,\partial M)\geq \delta\}$. In the case that $\partial M = \emptyset$, then $M^\delta:=M$ for all $\delta\geq0$.

\begin{theorem}[$\epsilon$-regularity of $\|B\|^2$]\label{thm:EpsRegularity0}
Fix positive constants $C,\delta, c>0$ and $n\in \mathbb{N}$. Then there exists a positive constant $\hbar=\hbar(C,\delta,c,n)< \delta$ with the following significance. Let $(M,J,g)$ be a compact almost Hermitian manifold of dimension $2n$ which possibly has boundary, and which satisfies
\begin{equation*}
\|g\|_{C_g^{2,\alpha}(M\setminus\partial M)} + \|J\|_{C_g^{2,\alpha}(M\setminus\partial M)} \leq C\qquad\text{and}\qquad\inf_{q\in M^{2\delta}}\inj(q)\geq \delta.
\end{equation*}
Let $u:S\to M$ be an immersed $J$-curve of Type 0.
Then for each $\zeta\in u^{-1}(M^{3\delta})\subset S$ and $r\in(0,\hbar)$ for which
\begin{equation*}
\int_{S_r(\zeta)}\|B_u^g\|^2 \leq \hbar
\end{equation*}
the following inequality also holds
\begin{equation*}
\max_{0\leq \sigma \leq r} \big(\sigma^2
\sup_{S_{r-\sigma}(\zeta)}\|B_u^g\|^2 \big)\leq c^2.
\end{equation*}
\end{theorem}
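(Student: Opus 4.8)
The plan is to run the standard point-picking (blow-up) argument, using Proposition \ref{prop:nabBbounded} to pass from an a priori bound on $\|B\|$ to a bound on all covariant derivatives, and then using the $\epsilon$-small total curvature hypothesis together with the monotonicity of area to force a contradiction. First I would argue by contradiction: suppose no such $\hbar$ exists. Then for each $k \in \mathbb{N}$, taking $\hbar_k := 1/k$, there is an almost Hermitian manifold $(M_k, J_k, g_k)$ satisfying the stated bounds, a Type 0 immersed $J_k$-curve $u_k : S_k \to M_k$, a point $\zeta_k \in u_k^{-1}(M_k^{3\delta})$, and a radius $r_k \in (0, \hbar_k)$ with $\int_{S_{r_k}(\zeta_k)} \|B_{u_k}\|^2 \le \hbar_k$, yet $\max_{0 \le \sigma \le r_k}\big(\sigma^2 \sup_{S_{r_k-\sigma}(\zeta_k)}\|B_{u_k}\|^2\big) > c^2$. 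The key point-picking step: since $\sigma \mapsto \sigma^2 \sup_{S_{r_k-\sigma}}\|B_{u_k}\|^2$ is continuous, vanishes at $\sigma = 0$, and exceeds $c^2$ somewhere, one can choose $\sigma_k \in (0, r_k)$ and a point $\eta_k \in S_{r_k - \sigma_k}(\zeta_k)$ realizing (up to a factor) the maximum, with $\sigma_k^2 \|B_{u_k}(\eta_k)\|^2 = c^2$ and, crucially, $\sup_{S_{\sigma_k/2}(\eta_k)} \|B_{u_k}\| \le 2\|B_{u_k}(\eta_k)\|/c \cdot (c/2)=$ (a controlled multiple of) $\|B_{u_k}(\eta_k)\|$ — the usual trick being that on the smaller ball $S_{(r_k - \sigma_k) + \sigma_k/2}$ one still has $\sigma \mapsto (\sigma_k/2)$ giving a bound no worse than a fixed constant times $\sigma_k^2\|B_{u_k}(\eta_k)\|^2$.

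Next I would rescale. Set $Q_k := \|B_{u_k}(\eta_k)\|$, so $Q_k \sigma_k = c$ and hence $Q_k \to \infty$ since $\sigma_k \le r_k < 1/k$. Define rescaled metrics $\tilde g_k := Q_k^2 g_k$. By Lemma \ref{lem:RescalingProperties}: $\|B_{u_k}^{\tilde g_k}(\eta_k)\|_{\tilde g_k} = 1$, the injectivity radius at $u_k(\eta_k)$ becomes $\ge Q_k \delta \to \infty$, sectional curvature scales by $Q_k^{-2}$, and the $C^{k,\alpha}$ norms of $\tilde g_k$ and $J_k$ (in the rescaled geodesic normal coordinates) scale by $Q_k^{-k}$ and hence tend to the flat/standard values. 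In particular, for $k$ large, $(M_k, \tilde g_k, \tilde M_k)$ is admissible in the sense of Definition \ref{def:admissible} on any fixed-size ball around $u_k(\eta_k)$, and on that ball $\sup \|B_{u_k}^{\tilde g_k}\|_{\tilde g_k} \le $ a constant close to $1$ (using the controlled supremum bound from point-picking and $Q_k \sigma_k/2 \to \infty$). Now apply Corollary \ref{cor:LocalInteriorCurvatureBounds} (equivalently Proposition \ref{prop:nabBbounded}) to get uniform bounds $\|\nabla^j B_{u_k}^{\tilde g_k}\|_{\tilde g_k} \le C_j'$ on balls of fixed rescaled radius around $\eta_k$, independent of $k$. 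Combined with Theorem \ref{thm:AprioriDerivataiveBounds}, the local graphical parameterizations $\tilde u_k = \Phi_k \circ u_k \circ \phi_k$ on $\mathcal{D}_{r_0}$ have uniformly bounded $C^{m,\alpha}$ norms for every $m$; by Arzelà–Ascoli, a subsequence converges in $C^\infty_{loc}$ to a limit immersion $\tilde u_\infty$ in flat $\R^{2n}$, which is a $J_{std}$-curve (an honest holomorphic curve) with $\|B_{\tilde u_\infty}(0)\|= 1$, hence nonconstant and with nonzero second fundamental form.

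Finally I would derive the contradiction from the vanishing of total curvature. The total curvature is scale-invariant: $\int \|B^{\tilde g_k}\|^2_{\tilde g_k}\,d\mathcal{H}^2_{\tilde g_k} = \int \|B^{g_k}\|^2_{g_k}\,d\mathcal{H}^2_{g_k} \le \hbar_k \to 0$. On the other hand, by the monotonicity of area (Corollary \ref{cor:MonotonicityOfArea0}) applied to the rescaled curves — whose ambient geometry has curvature and torsion tending to zero so the relevant constant $C$ can be taken uniformly small — the rescaled curves $S_k$ have area at least $(1+\epsilon)^{-1}\pi \rho^2$ on any ball of fixed rescaled radius $\rho$ around $\eta_k$; by $C^\infty_{loc}$ convergence this area is captured by the limit, so $\tilde u_\infty$ has area bounded below on compact sets. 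More to the point, the $C^\infty_{loc}$ convergence forces $\int_{\mathcal{D}_{r_0/2}} \|B_{\tilde u_k}^{\tilde g_k}\|^2 \to \int_{\mathcal{D}_{r_0/2}}\|B_{\tilde u_\infty}\|^2 > 0$ (strictly positive because $\|B_{\tilde u_\infty}(0)\| = 1$ and $B$ is continuous), while the left side is $\le \hbar_k \to 0$. This contradiction completes the proof; the expected main obstacle is the point-picking bookkeeping — making the constants in the choice of $\eta_k, \sigma_k$ precise enough that the rescaled curvature is bounded (not merely bounded at the base point) on a ball of fixed rescaled radius, so that Corollary \ref{cor:LocalInteriorCurvatureBounds} genuinely applies uniformly.
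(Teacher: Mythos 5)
Your proposal follows the same Choi--Schoen blueprint as the paper: contradiction, point-picking, rescaling, elliptic regularity of $B$ (Proposition \ref{prop:nabBbounded}), and a contradiction with the vanishing total curvature via scale-invariance of $\int\|B\|^2$. The one genuine deviation is in the final step: you pass to a $C^\infty_{loc}$ limit curve $\tilde u_\infty$ via Theorem \ref{thm:AprioriDerivataiveBounds} and Arzel\`a--Ascoli and then use continuity of $\|B_{\tilde u_\infty}\|$ near $0$ to conclude $\int\|B_{\tilde u_\infty}\|^2>0$; the paper avoids taking a limit altogether and instead directly combines the $\|\nabla B^{\tilde g_k}\|\le C_1$ bound (to get $\|B^{\tilde g_k}\|\ge c(2\ell)^{-1}$ on an intrinsic disk $\Sigma_k$ around $\zeta_k'$) with the monotonicity-of-area lower bound $\Area(\Sigma_k)\ge \pi\hat c^2/2$ to obtain a $k$-independent positive lower bound on $\int_{\Sigma_k}\|B^{\tilde g_k}\|^2$, which contradicts $\epsilon_k\to 0$ without invoking a limiting object. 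Both routes are valid; yours costs a convergence step but yields a concrete limit curve, while the paper's is more economical.

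Two points to be careful about in carrying your version through. First, the point-picking equalities should be inequalities: you can only conclude $\sigma_k^2\|B_{u_k}(\eta_k)\|^2\ge c^2$ (so $Q_k\sigma_k\ge c$), not equality, and your stated supremum bound should read $\sup_{S_{\sigma_k/2}(\eta_k)}\|B_{u_k}\|\le 2\|B_{u_k}(\eta_k)\|$ as in the paper's (\ref{eq:epsReg2}). Second — and this is precisely the ``bookkeeping'' you flag — rescaling by $\tilde g_k:=Q_k^2 g_k$ alone does not guarantee either (a) $\sup\|B^{\tilde g_k}\|_{\tilde g_k}\le 1$ on a ball of fixed rescaled radius (you only get $\le 2$), nor (b) that a fixed rescaled ball of radius $\rho$ around $\eta_k$ sits inside $S_{r_k}(\zeta_k)$, since after unwinding that requires $\rho\le Q_k\sigma_k$, where you only know $Q_k\sigma_k\ge c$; for small $c$ this fails if $\rho$ is a universal constant like $r_0$ from Lemma \ref{lem:uniformLocalGraphs}. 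The paper resolves both by taking $\tilde g_k:=(\ell/c_k)^2 g_k$ with $\ell=\max(6,2c)$ and $c_k:=c/Q_k$, which simultaneously renormalizes the curvature bound to $\le 1$ on a radius-3 rescaled ball and ensures $\tilde S_3(\zeta_k')=S_{3c_k/\ell}(\zeta_k')\subset S_{\sigma_k/2}(\zeta_k')\subset S_{r_k}(\zeta_k)$. You should adopt the same (or an equivalent) normalization before applying Proposition \ref{prop:nabBbounded} and before asserting $\int_{\mathcal{D}_{r_0/2}}\|B_{\tilde u_k}^{\tilde g_k}\|^2\le\hbar_k$, because without containment of $\phi_k(\mathcal{D}_{r_0/2})$ in $S_{r_k}(\zeta_k)$ the total-curvature hypothesis does not bound that integral.
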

\begin{proof}The proof is similar to the Choi and Schoen's proof of $\epsilon$-regularity of $\|B\|^2$ for minimal surfaces as in \cite{ChSr85}. That is, we argue by contradiction and rescaling methods. Thus we proceed by assuming the theorem is false. Then there exist sequences of almost Hermitian manifolds $(M_k,J_k,g_k)$, and $J_k$-curves $u_k:S_k\to M_k$ which satisfy the hypotheses of the Theorem, and there also exists a sequence $\epsilon_k\to 0$ and a sequence of points
$\zeta_k\in u^{-1}(M_k^{3\delta})\subset S_k$ and a sequence $r_k\in(0,\epsilon_k]$ for which
\begin{equation*}
\int_{S_{r_k}(\zeta_k)}\|B_{u_k}^{g_k}\|_{g_k}^2 = \epsilon_k \qquad\text{and}\qquad \max_{0\leq \sigma \leq r_k} \big(\sigma^2
\sup_{S_{r_k-\sigma}(\zeta_k)}\|B_{u_k}^{g_k}\|_{g_k}^2 \big)\geq c^2.
\end{equation*}
Next we let $\sigma_k \in (0, r_k]$ be chosen so that
\begin{equation*}
\sigma_k^2 \sup_{S_{r_k-\sigma_k}(\zeta_k)}\|B_{u_k}^{g_k}\|_{g_k}^2 = \max_{\sigma\in [0,r_k]} \big(\sigma^2 \sup_{S_{r_k-\sigma}(\zeta_k)}\|B_{u_k}^{g_k}\|_{g_k}^2\big),
\end{equation*}
and we let $\zeta_k'\in S_{r_k-\sigma_k}(\zeta_k)$ be chosen so that
\begin{equation*}
\|B_{u_k}^{g_k}(\zeta_k')\|_{g_k}^2 = \sup_{S_{r_k-\sigma_k}(\zeta_k)}\|B_{u_k}^{g_k}\|_{g_k}^2.
\end{equation*}
Consequently, $c^2\leq \max_{0\leq\sigma\leq r_k}\big( \sigma^2 \sup_{S_{r_k-\sigma}(\zeta_k')}\|B_{u_k}^{g_k}\|_{g_k}^2\big)
=\sigma_k^2\|B_{u_k}^{g_k}(\zeta_k')\|_{g_k}^2$. Furthermore,
\begin{align*}
\big(\frac{\sigma_k}{2}\big)^2\sup_{S_{\sigma_k/2}(\zeta_k')}\|B_{u_k}^{g_k}\|_{g_k}^2 &\leq \big(\frac{\sigma_k}{2}\big)^2\sup_{S_{r_k-\sigma_k/2}(\zeta_k')}\|B_{u_k}^{g_k}\|_{g_k}^2\\
&\leq \max_{0\leq\sigma\leq r_k}\big( \sigma^2 \sup_{S_{r_k-\sigma}(\zeta_k')}\|B_{u_k}^{g_k}\|_{g_k}^2\big),
\end{align*}
from which we conclude
\begin{equation}\label{eq:epsReg2}
\sup_{S_{\sigma_k/2}(\zeta_k')}\|B_{u_k}^{g_k}\|_{g_k}^2 \leq 4\|B_{u_k}^{g_k}(\zeta_k')\|_{g_k}^2.
\end{equation}
Also observe that since $\sigma_k\leq r_k\leq \epsilon_k\to 0$ it follows that
\begin{equation}\label{eq:epsReg1}
\frac{c^2}{c_k^{2}}:=\|B_{u_k}^{g_k}(\zeta_k')\|_{g_k}^2\geq \frac{c^2}{\sigma_k^2}\to\infty.
\end{equation}
Next we define the metrics $\tilde{g}_k:=(\ell/c_k)^{2} g_k$ where $\ell:=\max(6,2c)$.  Observe that the $(M_k,\tilde{g}_k,J_k)$ are again almost Hermitian manifolds. Also, it is straightforward to show that $B_{u_k}^{g_k}=B_{u_k}^{\tilde{g}_k}$ and that the following point-wise estimate holds:
\begin{equation}\label{eq:epsReg3}
\|B_{u_k}^{\tilde{g}_k}\|_{\tilde{g}_k} = \frac{c_k}{\ell}\|B_{u_k}^{g_k}\|_{g_k};
\end{equation}
consequently $\|B_{u_k}^{\tilde{g}_k}(\zeta_k')\|_{\tilde{g}_k}=c/\ell$.  For clarity, for each $r>0$ and $\zeta_k$, define the set $\tilde{S}_r(\zeta_k')$ to be
the connected component of $\{\zeta'\in S_k:u_k(\zeta')\in \mathcal{B}^{\tilde{g}_k}_r\big(u_k(\zeta_k')\big)\}$ which contains $\zeta_k'$.  It is straightforward to show that
\begin{equation*}
\mathcal{B}_{r}^{g_k}(p)=\mathcal{B}_{\ell r/{c_k}}^{\tilde{g}_k}(p)\qquad\text{and}\qquad S_{r}(\zeta_k') = \tilde{S}_{\ell r/{c_k}}(\zeta_k').
\end{equation*}
We then conclude from  (\ref{eq:epsReg2}) - (\ref{eq:epsReg3}), $c_k\leq \sigma_k$, and $\ell:=\max(6,2c)$ that
\begin{equation*}
\sup_{\tilde{S}_{3}(\zeta_k')}\|B_{u_k}^{\tilde{g}_k}\|_{\tilde{g}_k}=\frac{c_k}{\ell}\sup_{S_{3c_k/\ell}(\zeta_k')}\|B_{u_k}^{g_k}\|_{g_k}
\leq \frac{c_k}{\ell}\sup_{S_{\sigma_k/2 }(\zeta_k')}\|B_{u_k}^{g_k}\|_{g_k} \leq \frac{2c}{\ell}\leq 1.
\end{equation*}
Next since $u_k(\zeta_k)\in M_k^{3\delta}$, and $c_k,\epsilon_k\to 0$, it follows that for all sufficiently large $k$, we have
\begin{equation*}
\mathcal{B}_3^{\tilde{g}_k}\big(u_k(\zeta_k')\big) = \mathcal{B}_{3c_k/\ell}^{g_k}\big(u_k(\zeta_k')\big) \subset \mathcal{B}_{3c_k/\ell+\epsilon_k}^{g_k} \big(u_k(\zeta_k)\big)\subset M_k^{2\delta},
\end{equation*}
so that $\inf_{q\in \mathcal{B}_3^{\tilde{g}_k}(u_k(\zeta_k'))}\inj^{g_k}(q)\geq \delta.$
Since $\tilde{g}_k = (\ell/c_k)^2g_k$ and $c_k\to 0$, we conclude
\begin{equation*}
\inf_{q\in \mathcal{B}_1^{\tilde{g}_k}(u_k(\zeta_k'))}\inj^{\tilde{g}_k}(q)\geq 2.
\end{equation*}
We then define the compact manifolds (with boundary) $N_k$ and $\mathcal{K}_k$ to be the closures of $\mathcal{B}_3^{\tilde{g}_k} \big(u_k(\zeta_k')\big)$ and $\mathcal{B}_1^{\tilde{g}_k} \big(u_k(\zeta_k')\big)$ respectively.  We conclude that the triples $(N_k,\tilde{g}_k,\mathcal{K}_k)$ are admissible for all sufficiently large $k$, and the $J_k$-curves with restricted domains given by $\tilde{u}_k:=u_k:\tilde{S}_3(\zeta_k')\to N_k$, with image in the almost Hermitian manifolds  $\big(N_k,J_k,\tilde{g}_k\big)$, are immersed and of Type 0 and satisfy
\begin{equation*}
\|B_{\tilde{u}_k}^{\tilde{g}_k}(\zeta_k')\|_{\tilde{g}_k} = c\ell^{-1}\qquad\text{and}\qquad\sup_{\tilde{S}_{3}(\zeta_k')}\|B_{\tilde{u}_k}^{\tilde{g}_k}\|_{\tilde{g}_k}\leq 1.
\end{equation*}
By Proposition \ref{prop:nabBbounded} we conclude that there exists a $C_1>0$ which is independent of $k$, for which
\begin{equation*}
\sup_{\tilde{S}_1(\zeta_k')} \|\nab B_{\tilde{u}_k}^{\tilde{g}_k}\|_{\tilde{g}_k}\leq C_1.
\end{equation*}
Then either arguing directly, or applying Lemma \ref{lem:uniformLocalGraphs}, one finds that there exist constants $k_0\in \mathbb{N}$ and  $\hat{c} >0$ such that for all $k\geq k_0$ the following holds:
\begin{align}
\hat{c}&\leq \min \big(1, {\textstyle \frac{1}{2}} \inj^{\tilde{u}_k^*\tilde{g}_k}(\zeta_k'),c(2\ell C_1)^{-1}\big)\label{eq:EpsReg4}\\
\pi \hat{c}^2/2&\leq \Area_{\tilde{u}_k^*\tilde{g}_k}(\Sigma_k) \quad\text{where}\quad \Sigma_k:=\{ \zeta\in \tilde{S}_3(\zeta_k'): \dist_{\tilde{u}_k^*g_k}(\zeta,\zeta_k')\leq \hat{c}\}.\label{eq:EpsReg5}
\end{align}
However, for each $\zeta \in \Sigma_k$ there exists a $\tilde{u}_k^*g_k$-unit speed geodesic $\gamma$ for which $\gamma(0)=\zeta_k'$ and $\gamma(\ell)=\zeta$ and $\ell\in (0,\hat{c}]$.  Consequently, by inequality (\ref{eq:EpsReg4})
\begin{align*}
\big|c\ell^{-1} - \|B_{\tilde{u}}^{\tilde{g}_k}(\zeta)\|_{\tilde{g}_k}\big| & = \big|\|B_{\tilde{u}}^{\tilde{g}_k}(\zeta_k')\|_{\tilde{g}_k} - \|B_{\tilde{u}}^{\tilde{g}_k}(\zeta)\|_{\tilde{g}_k}\big|\\
&\leq \int_0^\ell \big|{\textstyle \frac{d}{dt}} \|B_{\tilde{u}_k}^{\tilde{g}_k}(\gamma(t))\|_{\tilde{g}_k}\big| dt \leq \int_0^{\hat{c}} \|(\nab B_{\tilde{u}_k}^{\tilde{g}_k})(\gamma(t))\|_{\tilde{g}_k} dt\\
&\leq c(2\ell)^{-1},
\end{align*}
and thus $\inf_{\zeta\in \Sigma_k}\|B_{\tilde{u}_k}^{\tilde{g}_k}\|_{\tilde{g}_k}\geq c(2\ell)^{-1}$.  Let $d{\mathcal{H}}_k^2$ and $d\tilde{\mathcal{H}}_k^2$ denote that Hausdorff measures associated to the metrics $u_k^*g_k$ and $\tilde{u}_k^* \tilde{g}_k = u_k^*\tilde{g}_k$ respectively, and observe that $d\tilde{\mathcal{H}}_k^2=(\ell/c_k)^2d{\mathcal{H}}_k^2$.  Then by inequality (\ref{eq:EpsReg5}) we find
\begin{align*}
\frac{\pi \hat{c}^2}{2}\big(\frac{c}{2\ell}\big)^2 &\leq \int_{\Sigma_k}\|B_{\tilde{u}_k}^{\tilde{g}_k}\|_{\tilde{g}_k}^2 d\tilde{\mathcal{H}}_k^2\leq \int_{\tilde{S}_3(\zeta_k')}\|B_{\tilde{u}_k}^{\tilde{g}_k}\|_{\tilde{g}_k}^2 d\tilde{\mathcal{H}}_k^2 = \int_{S_{3c_k/\ell}(\zeta_k')}\|B_{{u}_k}^{g_k}\|_{g_k}^2 d{\mathcal{H}}_k^2\\
&\leq \int_{S_{\sigma_k/2}(\zeta_k')}\|B_{u_k}^{g_k}\|_{g_k}^2 d{\mathcal{H}}_k^2 \leq \int_{S_{r_k}(\zeta_k)}\|B_{u_k}^{g_k}\|_{g_k}^2 d{\mathcal{H}}_k^2= \epsilon_k \to 0.
\end{align*}
Since $\hat{c},c,\ell>0$ are independent of $k$, we have obtained the desired contradiction.  This completes the proof of Theorem \ref{thm:EpsRegularity0}.
\end{proof}

While useful as stated above, it will be convenient to state the ``curvature threshold'' version of Theorem \ref{thm:EpsRegularity0} above, namely Corollary \ref{cor:CurvatureThreshold} below.

\begin{corollary}[Curvature Threshold]\label{cor:CurvatureThreshold}
Fix real constants $C,\delta>0$ and $n\in \mathbb{N}$. Then there exists a positive constant $\hbar=\hbar(C,\delta,n)< \delta$ with the following significance. Let $(M,J,g)$ be a compact almost Hermitian manifold of dimension $2n$ which possibly has boundary, and which satisfies
\begin{equation*}
\|g\|_{C_g^{2,\alpha}(M\setminus\partial M)} + \|J\|_{C_g^{2,\alpha}(M\setminus\partial M)} \leq C\qquad\text{and}\qquad\inf_{q\in M^{2\delta}}\inj(q)\geq \delta.
\end{equation*}
Let $u:S\to M$ be an immersed $J$-curve of Type 0.
Then for each $\zeta\in u^{-1}(M^{3\delta})\subset S$ the following statement holds for every $0<r<\hbar$:
\begin{equation*}
\text{if}\qquad \|B_u^g(\zeta)\|_g \geq \frac{1}{r}\qquad \text{then}\qquad \int_{S_r(\zeta)}\|B_u^g\|^2 \geq \hbar.
\end{equation*}
\end{corollary}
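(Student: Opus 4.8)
The plan is to obtain Corollary~\ref{cor:CurvatureThreshold} as the contrapositive of Theorem~\ref{thm:EpsRegularity0}, after committing to a convenient value of the free constant $c$ appearing in that theorem. Specifically, I would set $c:=\tfrac12$ (any value strictly less than $1$ works equally well) and then \emph{define} $\hbar := \hbar(C,\delta,\tfrac12,n)$ to be precisely the constant produced by Theorem~\ref{thm:EpsRegularity0} for these data. By that theorem $\hbar<\delta$, as required, and $\hbar$ depends only on $C$, $\delta$, and $n$, so no new dependencies are introduced.

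With this choice of $\hbar$ fixed, I would argue by contradiction. Fix $(M,J,g)$ and the immersed Type 0 $J$-curve $u\colon S\to M$ as in the hypotheses, fix $\zeta\in u^{-1}(M^{3\delta})$ and $0<r<\hbar$, and suppose that $\|B_u^g(\zeta)\|_g\ge 1/r$ while $\int_{S_r(\zeta)}\|B_u^g\|^2<\hbar$. Then all the hypotheses of Theorem~\ref{thm:EpsRegularity0} are met with $c=\tfrac12$ (the curvature bound $\int_{S_r(\zeta)}\|B_u^g\|^2<\hbar$ in particular implies $\le\hbar$), so
\[
\max_{0\le\sigma\le r}\Big(\sigma^2\sup_{S_{r-\sigma}(\zeta)}\|B_u^g\|^2\Big)\le \tfrac14 .
\]
Specializing to the endpoint $\sigma=r$ yields $r^2\sup_{S_0(\zeta)}\|B_u^g\|^2\le\tfrac14$.

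It then remains only to identify $S_0(\zeta)$: this is the connected component of $u^{-1}\big(u(\zeta)\big)$ containing $\zeta$, and since $u$ is an immersion it is locally injective near $\zeta$, so that component is the single point $\{\zeta\}$ and $\sup_{S_0(\zeta)}\|B_u^g\|^2=\|B_u^g(\zeta)\|_g^2$. Hence $r^2\|B_u^g(\zeta)\|_g^2\le\tfrac14$, i.e. $\|B_u^g(\zeta)\|_g\le \tfrac{1}{2r}<\tfrac1r$, contradicting the assumption; therefore $\int_{S_r(\zeta)}\|B_u^g\|^2\ge\hbar$. There is essentially no genuine obstacle in this argument — all of the analytic content (the rescaling/compactness blow-up) already lives in Theorem~\ref{thm:EpsRegularity0} — and the only points requiring any care are the bookkeeping between strict and non-strict inequalities and the (immersion-based) identification of $S_0(\zeta)$ with $\{\zeta\}$, both of which are routine.
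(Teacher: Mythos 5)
Your overall strategy matches the paper's: read Corollary~\ref{cor:CurvatureThreshold} as the contrapositive of Theorem~\ref{thm:EpsRegularity0} after fixing $c$, and then obtain the contradiction from one term of the maximum. The gap is in the choice $\sigma=r$. Throughout Section~\ref{sec:CurvatureRegularity} the set $S_\rho(\zeta)$ is the connected component of $u^{-1}\big(\mathcal{B}_\rho(u(\zeta))\big)$ containing $\zeta$, where $\mathcal{B}_\rho$ denotes the \emph{open} metric ball (this is the convention stated in Section~\ref{sec:GraphsWithAPrioriBounds}, used in Lemma~\ref{lem:IntrinsicExtrinsicEstimate}, and used explicitly for $\tilde S_\rho$ in the proof of Theorem~\ref{thm:EpsRegularity0}). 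Hence $\mathcal{B}_0=\emptyset$, so $S_0(\zeta)=\emptyset$ and $\sup_{S_0(\zeta)}\|B_u^g\|^2$ is the supremum over the empty set, which gives $0$ for a nonnegative integrand. The endpoint term in the maximum is therefore $r^2\cdot 0=0$, and it provides no contradiction with $\|B_u^g(\zeta)\|\ge 1/r$. Your identification of $S_0(\zeta)$ with $\{\zeta\}$ tacitly replaces the open ball with a closed one (or with the fiber $u^{-1}(u(\zeta))$), which is not the convention in force here.

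The fix is elementary and is exactly what the paper does: evaluate at an interior value, say $\sigma=r/2$. Since $\zeta\in S_{r/2}(\zeta)$ one has
\begin{equation*}
\Big(\tfrac r2\Big)^2\sup_{S_{r/2}(\zeta)}\|B_u^g\|^2 \;\geq\; \Big(\tfrac r2\Big)^2\|B_u^g(\zeta)\|^2 \;\geq\; \tfrac14,
\end{equation*}
so the maximum is at least $\tfrac14$. To contradict the conclusion $\max\le c^2$ one must now take $c$ \emph{strictly} less than $\tfrac12$; with your choice $c=\tfrac12$ you only get $\max\ge c^2$, which is not a contradiction. The paper's choice $c=\tfrac13$ (so $c^2=\tfrac19<\tfrac14$) closes this. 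After these two adjustments (replace $\sigma=r$ by $\sigma=r/2$, and replace $c=\tfrac12$ by any $c<\tfrac12$), your argument is correct and coincides with the paper's proof.
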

\begin{proof}
Let $\hbar>0$ be the constant yielded from Theorem \ref{thm:EpsRegularity0} for $c=1/3$, and $C$, $\delta$, and $n$ as above.  Then
\begin{equation*}
\frac{1}{9}< \frac{1}{4}\leq (r/2)^2\|B_u^g(\zeta)\|_g^2\leq (r/2)^2 \sup_{S_{r/2}(\zeta)}\|B_u^g\|_g^2 \leq \max_{0\leq\sigma\leq r}\big(\sigma^2  \sup_{S_{r-\sigma}(\zeta)}\|B_u^g\|^2\big).
\end{equation*}
We conclude from Theorem \ref{thm:EpsRegularity0} that it cannot be the case that $\int_{S_r(\zeta)}\|B\|^2 \leq \hbar$, so the desired conclusion is immediate.
\end{proof}

\subsection{Lagrangian boundary case}\label{sec:CurvatureRegularity1} Here we establish the aforementioned regularity results for immersed $J$-curves with partial Lagrangian boundary conditions, or more precisely for those $J$-curves which are Type 1 (see Definition \ref{def:Type1Map}).  It should not be surprising that many of the results below are proved by considering two types of points $\zeta \in S$, namely those that are
uniformly bounded away from $\partial_1 S$, and those which are sufficiently close to $\partial_1 S$.  For the former points, we apply results of Section \ref{sec:CurvatureRegularity0} directly, and for the latter points we mimic the proofs in Section \ref{sec:CurvatureRegularity0} while making references to results in Section \ref{sec:BoundaryCase} instead of Section \ref{sec:InteriorCase}.  To help establish this dichotomy, it will be convenient to have the following estimate comparing extrinsic and intrinsic distances.

\begin{lemma}[extrinsic/intrisic estimate]\label{lem:IntrinsicExtrinsicEstimate}
Let $(M,g)$ be a compact Riemannian manifold of dimension $2n$ which possibly has boundary, define $\tilde{M}:=\{q\in M: \dist_g(q,\partial M)\geq 2\}$, and suppose that $(M,g,\tilde{M})$ is an admissible triple (in the sense of Definition \ref{def:admissible}).  Suppose further that $u:S\to M$ is an immersion, for which $S$ is compact but possibly has boundary, and assume that $\sup_S\|B_u\|\leq 1$.  Then for each $\zeta\in S\setminus \partial S$ with the property that $u(\zeta)\in \tilde{M}$, the following also holds for all $0<r\leq \min\big(1/10,\dist_{u^*g}(\zeta,\partial S)\big)$:
\begin{equation*}
S_{r/2}(\zeta)\subset \{\zeta'\in S: \dist_{u^*g}(\zeta',\zeta)<r\};
\end{equation*}
here $S_r(\zeta)$ is defined to be the connected component of $u^{-1}\big(\mathcal{B}_r^g(u(\zeta))\big)$ which contains $\zeta$.
\end{lemma}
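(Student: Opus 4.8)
The goal is to show that the intrinsic ball of radius $r$ about $\zeta$ in $(S,u^*g)$ contains the extrinsic ball $S_{r/2}(\zeta)$, i.e. the connected component of $u^{-1}(\mathcal{B}_{r/2}^g(u(\zeta)))$ through $\zeta$. The natural approach is to argue by contradiction: suppose some $\zeta'\in S_{r/2}(\zeta)$ has $\dist_{u^*g}(\zeta',\zeta)\geq r$. Since $\zeta'$ lies in the connected component of $u^{-1}(\mathcal{B}_{r/2}^g(u(\zeta)))$ containing $\zeta$, there is a continuous path in $S$ from $\zeta$ to $\zeta'$ staying inside $u^{-1}(\mathcal{B}_{r/2}^g(u(\zeta)))$; I can take a $u^*g$-geodesic-type or piecewise path and stop it at the first moment its intrinsic distance from $\zeta$ equals $r$ — call this point $\zeta''$. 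By construction $\zeta''$ still satisfies $u(\zeta'')\in \mathcal{B}_{r/2}^g(u(\zeta))$, so $\dist_g(u(\zeta),u(\zeta''))< r/2$, while $\dist_{u^*g}(\zeta,\zeta'') = r$; the hypothesis $r\leq \dist_{u^*g}(\zeta,\partial S)$ ensures this intrinsic geodesic stays away from $\partial S$, so it is a genuine geodesic in the interior.

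The contradiction should come from comparing intrinsic and extrinsic length along the connecting minimizing geodesic $\gamma:[0,r]\to S$ (unit speed, $\gamma(0)=\zeta$, $\gamma(r)=\zeta''$). The key point is that because $\|B_u\|\leq 1$, the curve $\tilde\gamma := u\circ\gamma$ in $M$ has geodesic curvature bounded by $1$: indeed $\nabla_{\dot{\tilde\gamma}}\dot{\tilde\gamma} = (\nabla^S_{\dot\gamma}\dot\gamma)^\top + B(\dot\gamma,\dot\gamma)$, and the first term vanishes since $\gamma$ is an $S$-geodesic, so $\|\nabla_{\dot{\tilde\gamma}}\dot{\tilde\gamma}\| = \|B(\dot\gamma,\dot\gamma)\|\leq 1$. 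Now I would work in geodesic normal coordinates $\Phi$ centered at $u(\zeta)$, which are available and well-behaved since $u(\zeta)\in\tilde M$ and $(M,g,\tilde M)$ is admissible (so $\inj\geq 2$ and Lemma \ref{lem:GammaEst} applies on the ball of radius $1$, which contains everything in play since $r\leq 1/10$). In these coordinates $\nabla_{\dot{\tilde\gamma}}\dot{\tilde\gamma} = \ddot{\tilde\gamma} + \Gamma(\dot{\tilde\gamma},\dot{\tilde\gamma})$ with $\|\Gamma(\dot{\tilde\gamma},\dot{\tilde\gamma})\|_{\bar g}\leq \dist(u(\zeta),\cdot)^2\cdot(\text{const})$ by Lemma \ref{lem:GammaEst}, and $\|\dot{\tilde\gamma}\|_{\bar g}$ is close to $1$ by the metric comparison there; hence $\|\ddot{\tilde\gamma}\|_{\bar g}$ is bounded by a small explicit constant (say $\leq 3/2$) on $[0,r]$.

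From $\|\ddot{\tilde\gamma}\|_{\bar g}\leq 3/2$, $\|\dot{\tilde\gamma}(0)\|_{\bar g}\leq 1+O(r^2)$ and integrating twice, the Euclidean displacement satisfies $|\tilde\gamma(r)-\tilde\gamma(0)|_{\bar g}\geq r(1 - O(r^2)) - \tfrac{3}{4}r^2 \geq r/2$ once $r\leq 1/10$ (the numerology works comfortably with room to spare). Converting back via the metric comparison of Lemma \ref{lem:GammaEst} gives $\dist_g(u(\zeta),u(\zeta'')) \geq \tfrac12 |\tilde\gamma(r)-\tilde\gamma(0)|_{\bar g}(1-O(r^2)) > \ldots$ — the point is to push this below is impossible: we get $\dist_g(u(\zeta),u(\zeta''))\geq r/2$ after being careful with constants, contradicting $\dist_g(u(\zeta),u(\zeta''))<r/2$. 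Actually, to get a clean strict contradiction it is cleaner to show $\dist_g(u(\zeta),u(\zeta''))\geq 2r/5$ say and instead arrange the stopping radius so that the extrinsic distance is forced to be $\geq r/2$; I would tune the intermediate constants (this is exactly the kind of bookkeeping done in the proof of Lemma \ref{lem:uniformLocalGraphs}, cf. inequalities (\ref{eq:smallDeviation})–(\ref{eq:goodParam2})).

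The main obstacle is purely the constant-chasing: making sure that the $O(r^2)$ errors coming from the Christoffel symbols and the metric distortion (all controlled by Lemma \ref{lem:GammaEst} once distances are $\leq 1$) combined with the $\tfrac12 r^2$-type error from the bounded second derivative of $\tilde\gamma$ still leave the extrinsic displacement above $r/2$ when $r\leq 1/10$. No new analytic idea is needed beyond the curvature-bound-implies-bounded-acceleration trick and the normal-coordinate estimates already in hand; one just has to verify the inequality $r(1-cr^2) - \tfrac{3}{4}r^2 \geq r/2$ holds on $(0,1/10]$, which it does with margin, and that the metric comparison factors $(1\pm r^2)$ do not erode this. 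A final remark: the requirement $r\leq \dist_{u^*g}(\zeta,\partial S)$ is used only to guarantee the minimizing $S$-geodesic of length $r$ from $\zeta$ exists and lies in $S\setminus\partial S$, so the formula $\nabla_{\dot{\tilde\gamma}}\dot{\tilde\gamma}=B(\dot\gamma,\dot\gamma)$ is valid along all of it.
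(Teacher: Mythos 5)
Your proposal takes essentially the same route as the paper: the curvature bound gives $\|\nabla_{\dot{\tilde\gamma}}\dot{\tilde\gamma}\|_g\leq 1$ along a minimizing $u^*g$-geodesic, and in geodesic normal coordinates centered at $u(\zeta)$ Lemma \ref{lem:GammaEst} controls $\Gamma$, yielding a bound on $\ddot{\tilde\gamma}$ that one integrates to get a lower bound on the extrinsic distance. The paper makes the constant-chasing you were unsure about painless by tracking only the coordinate $\alpha^1$ aligned with $\dot\alpha(0)$ (the mean value theorem gives $\dot\alpha^1(t)\geq 1/2$, hence $\alpha^1(t)\geq t/2$) and then invoking the exact identity $\dist_g\big(u(\zeta),q\big)=|\Phi(q)|_{\bar g}$ valid in normal coordinates centered at $u(\zeta)$, which removes the lossy final metric comparison in your last step (the stray factor $\tfrac12$ you wrote in front of $|\tilde\gamma(r)-\tilde\gamma(0)|_{\bar g}$ would in fact spoil the inequality, but it simply is not there).
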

\begin{proof}
Begin by considering a $u^*g$-unit speed geodesic $\gamma:[0,r]\to S$ such that $\gamma(0) = \zeta$. Next, choose geodesic-normal coordinates $\Phi=(x^1,\ldots,x^{2n}):\mathcal{B}_1\big(u(\zeta)\big)\to \R^{2n}$ so that $\Phi\big(u(\zeta)\big)=0$, and $\frac{d}{dt}\big|_{t=0}x^1\big(u(\gamma(t))\big) =1$.  For convenience, define $\bar{g}:=dx^i\otimes dx^i$, and the path $\alpha:=(\alpha^1,\ldots,\alpha^{2n}) =\Phi\circ u\circ \gamma$, so that $\alpha(0)=0$ and $\dot{\alpha}^1(0) =1$.  We will also abuse notation by letting $g$ denote the pushed forward metric $\Phi_*g$.

Observe that by definition $\gamma$ is a $u^*g$-geodesic in $S$, and thus $\nab_{(u\circ \gamma)'}(u\circ\gamma)' = (\nab_{(u\circ \gamma)'}(u\circ\gamma)')^\bot = B_u(\dot{\gamma},\dot{\gamma})$, which is bounded in norm by 1 by assumption. Consequently $\|\nab_{\dot{\alpha}} \dot{\alpha}\|_g\leq 1$ which implies $\|\ddot{\alpha} + \Gamma_{\alpha}(\dot{\alpha},\dot{\alpha})\|_g\leq 1,$
where $\Gamma$ is the locally defined bi-linear form on $\R^{2n}$ associated to the Christoffel symbols in the coordinates $\Phi$.  However, since $r\leq \frac{1}{10}$ and $\|\dot{\alpha}\|_g\equiv 1$, it follows that $|\alpha(t)|:=\dist_g\big(\alpha(0),\alpha(t)\big)\leq \frac{1}{10}$ for all $t\in [0,r]$.  Furthermore, since $(M,g,\tilde{M})$ is admissible and $u(\zeta)\in \tilde{M}$, it follows by Lemma \ref{lem:GammaEst} that $\|\Gamma(\dot{\alpha},\dot{\alpha})\|_g\leq \frac{1}{2}$, and thus for all $t\in [0,r]$ we have
\begin{equation*}
|\ddot{\alpha}^1| \leq \|\ddot{\alpha}\|_{\bar{g}} \leq 2 \|\ddot{\alpha}\|_g \leq 3.
\end{equation*}
Since $\dot{\alpha}^1(0) = 1$, the mean value theorem then guarantees that for $t\in [0,1/10]$ we have $\dot{\alpha}^1(t)\geq 1/2$. Integrating up, we then find that $\alpha^1(t) \geq t/2$.  Since $\dist_g\big(\alpha(0),\alpha(t)\big)^2 = \sum_{i} \big(\alpha^i(t)\big)^2$, we conclude that $\dist_g\big(u\circ \gamma (0),u\circ\gamma (t)\big)\geq t/2$ for all $t\in [0,r]$.  The desired result is immediate.
\end{proof}

We next move on to our first main result, namely elliptic regularity for the second fundamental form.

\begin{proposition}[higher order curvature bounds]\label{prop:nabBbounded2}
Fix a constant $\alpha\in (0,1)$, and an increasing sequence $\{C_k\}_{k=0}^{\infty}\subset \R^+$, then there exists another increasing sequence $\{C_k'\}_{k=0}^\infty\subset \R^+$ for which $C_k'=C_k'(\alpha,C_1,\ldots,C_{k+1})$ and which has the following significance.  Let $(M,J,g)$ be a smooth compact almost Hermitian manifold of dimension $2n$ which possibly has boundary, and let $L\subset M$ be a compact embedded totally geodesic Lagrangian submanifold for which $\partial L = L\cap \partial M$.  For each $\epsilon>0$ define the set $\mathcal{K}^\epsilon:=\{p\in M:\dist (p,\partial M)\geq \epsilon\}$, and suppose $(M,J,g,L,\mathcal{K}^2)$ is an admissible 5-tuple.  Suppose further that
\begin{equation*}
\|g\|_{C_g^{k,\alpha}(M\setminus\partial M)}+ \|g\|_{C_{g,L}^{k,\alpha}(L\cap M\setminus\partial M)} + \|J\|_{C_g^{k,\alpha}(M\setminus\partial M)}+\|J\|_{C_{g,L}^{k,\alpha}(L\cap M\setminus \partial M)}\leq C_k,
\end{equation*}
for each $k\in \mathbb{N}$. Then for any immersed $J$-curve $u:S\to M$ of Type 1 for which
\begin{equation*}
\sup_{\zeta\in S}\|B_u\|_g\leq 1,
\end{equation*}
the following also holds
\begin{equation*}
\|\nab^k B_u\|_{C^{0,\alpha}(u^{-1}(\mathcal{K}^3))} \leq C_k',
\end{equation*}
where $\nab^k B_u \in \Gamma \big( \Hom(\bigotimes^{k+2}\mathcal{T},\mathcal{N})\big)$ denotes the $k^{th}$ covariant derivative of the second fundamental form $B_u$ of the immersion $u$.
\end{proposition}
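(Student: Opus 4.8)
The plan is to mimic the proof of Proposition \ref{prop:nabBbounded}, splitting $u^{-1}(\mathcal{K}^3)$ into points $\zeta$ that are intrinsically far from the Lagrangian boundary $\partial_1 S$ and points that are intrinsically close to it, as anticipated in the discussion opening this section. Fix once and for all a small universal constant $\delta_0>0$, chosen less than $1/10$ and less than both the constant $r_0$ of Lemma \ref{lem:uniformLocalGraphs} and the constant $r_0$ of Lemma \ref{lem:uniformLocalGraphs2}. For every $\zeta\in u^{-1}(\mathcal{K}^3)$ one has $\dist_{u^*g}(\zeta,\partial_0 S)\geq \dist_g(u(\zeta),\partial M)\geq 3$, so $\partial_0 S$ is never an issue; the dichotomy is whether $\dist_{u^*g}(\zeta,\partial_1 S)\geq 2\delta_0$ or $\dist_{u^*g}(\zeta,\partial_1 S)< 2\delta_0$.

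\emph{The far case.} Suppose $\dist_{u^*g}(\zeta,\partial_1 S)\geq 2\delta_0$, hence $\dist_{u^*g}(\zeta,\partial S)\geq 2\delta_0$. By Lemma \ref{lem:IntrinsicExtrinsicEstimate} (applied with $r=\delta_0$) the connected component $S_{\delta_0/2}(\zeta)$ of $u^{-1}\big(\mathcal{B}_{\delta_0/2}^g(u(\zeta))\big)$ containing $\zeta$ lies inside the intrinsic ball of radius $\delta_0$ about $\zeta$; its closure is therefore disjoint from $\partial_1 S$ (distance $\geq\delta_0$) and from $\partial_0 S$ (since $u(\zeta)\in\mathcal{K}^3$ keeps $\overline{\mathcal{B}_{\delta_0/2}^g(u(\zeta))}$ clear of $\partial M$). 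Consequently $u$ restricts to a Type $0$ immersion $\overline{S_{\delta_0/2}(\zeta)}\to\overline{\mathcal{B}_{\delta_0/2}^g(u(\zeta))}$ with second fundamental form of norm $\leq 1$, and running the argument of Proposition \ref{prop:nabBbounded} on this restricted curve --- after the local conformal rescaling of Corollary \ref{cor:LocalInteriorCurvatureBounds} (via Lemma \ref{lem:RescalingProperties}) that renders the relevant triple admissible --- produces the bound $\|\nab^k B_u\|_{C^{0,\alpha}}\leq C_k'$ near $\zeta$, with $C_k'$ depending only on $\alpha$, $k$, $\delta_0$ and $C_1,\ldots,C_{k+1}$.

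\emph{The near case.} Suppose $\dist_{u^*g}(\zeta,\partial_1 S)< 2\delta_0$ and pick $\zeta'\in\partial_1 S$ with $\dist_{u^*g}(\zeta,\zeta')< 2\delta_0$. Then $\dist_g(u(\zeta'),u(\zeta))< 2\delta_0<1$, so $u(\zeta')\in L\cap\mathcal{K}^2$ and, by hypothesis, $(M,J,g,L,\mathcal{K}^2)$ is an admissible $5$-tuple; hence Lemma \ref{lem:uniformLocalGraphs2} and Theorem \ref{thm:AprioriDerivataiveBounds2} both apply at $\zeta'$. Let $\phi:\mathcal{H}_{r_0}\to S$ and the $L$-adapted geodesic normal coordinates $\Phi$ be those furnished by Lemma \ref{lem:uniformLocalGraphs2}, put $\tilde u:=\Phi\circ u\circ\phi$, and note that since $\delta_0$ is small relative to $r_0$ the point $\zeta$ lies in $\phi(\mathcal{H}_{r_0/2})$. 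By Theorem \ref{thm:AprioriDerivataiveBounds2} (applied with $m=k+2$) we have $\|\tilde u\|_{C^{k+2,\alpha}(\mathcal{H}_{r_0/2})}\leq C''$ with $C''=C''(k,\alpha,C_{k+1})$. As in the proof of Proposition \ref{prop:nabBbounded}, repeated use of the Leibniz rule together with the decomposition $\nab=\nab^\top+\nab^\bot$ expresses the components of $\nab^k B_u$ in these coordinates through the universal polynomial formula (\ref{eq:FormulaA})--(\ref{eq:FormulaB}) in the derivatives $D_{\mathbf a}\tilde u$ of order $\leq k+2$, the entries $\gamma^{ij}$ of the inverse first fundamental form, the entries $g^{ij}$, and the derivatives $D_{\mathbf a}g_{ij}$ of order $\leq k+1$; this formula is purely algebraic, hence remains valid at points of $\{t=0\}$, where $\tilde u$ and the coordinate data extend smoothly by construction. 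Bounding its inputs --- $\|\tilde u\|_{C^{k+2,\alpha}}$ as above, $\|\gamma^{ij}\|_{C^{0,\alpha}}$ by Lemma \ref{lem:uniformLocalGraphs2} (properties (PL.\ref{en.PL11})--(PL.\ref{en.PL12})), and $\|g^{ij}\|_{C^{0,\alpha}}$ and the $\|D_{\mathbf a}g_{ij}\|_{C^{0,\alpha}}$ by Lemma \ref{lem:GammaEst2} and the hypothesis $\|g\|_{C_{g,L}^{k+1,\alpha}}\leq C_{k+1}$ --- yields $\|\nab^k B_u\|_{C^{0,\alpha}}\leq C_k'$ near $\zeta$. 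Taking $C_k'$ to be the larger of the two bounds completes the proof.

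The argument is conceptually routine; the real work is bookkeeping, and the main obstacle is getting the constants to be uniform. Concretely, one must fix $\delta_0$ independently of the curve so that in the far case the extrinsic-ball component is a genuine Type $0$ subcurve clear of all of $\partial S$, and in the near case $\phi(\mathcal{H}_{r_0/2})$ genuinely captures $\zeta$ while staying clear of $\partial_0 S$ (which follows from $\dist_{u^*g}(\zeta',\partial_0 S)\geq\dist_g(u(\zeta'),\partial M)\geq 2>r_0$); one must also check that formula (\ref{eq:FormulaA})--(\ref{eq:FormulaB}) and every estimate feeding it are insensitive to the flat boundary $\{t=0\}$ of $\mathcal{H}_{r_0}$, using property (L\ref{en.L3}) of Lemma \ref{lem:LAdaptedGeoCoords}, and that the composed coordinate changes carry uniformly controlled $C^{k+1,\alpha}$-norms so the H\"older bound passes cleanly from $\tilde u$ to $\nab^k B_u$. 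None of these presents a genuine difficulty given Lemmas \ref{lem:uniformLocalGraphs2}, \ref{lem:GammaEst2}, \ref{lem:IntrinsicExtrinsicEstimate} and Theorem \ref{thm:AprioriDerivataiveBounds2}.
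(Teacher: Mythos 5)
Your proof is correct and follows essentially the same strategy as the paper's: you split $u^{-1}(\mathcal{K}^3)$ into points intrinsically far from or near to $\partial_1 S$, handle the far points via Lemma \ref{lem:IntrinsicExtrinsicEstimate} together with the interior machinery (Corollary \ref{cor:LocalInteriorCurvatureBounds} built on Proposition \ref{prop:nabBbounded}), and handle the near points via the boundary-adapted graphs of Lemma \ref{lem:uniformLocalGraphs2} and Theorem \ref{thm:AprioriDerivataiveBounds2} together with the universal polynomial expression for $\nabla^k B$. The only cosmetic difference is that the paper fixes the dichotomy threshold at $r_0/5$ and organizes the argument by first defining two covering criteria and then verifying every point satisfies one of them, while you introduce an auxiliary $\delta_0$; either works so long as $\delta_0$ is chosen a sufficiently small multiple of $r_0$ (not merely $<r_0$) so that (PL.8) actually places $\zeta$ in $\phi(\mathcal{H}_{r_0/2})$ in the near case.
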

\begin{proof}
We begin by letting $r_0$ be the positive constant guaranteed by Lemma \ref{lem:uniformLocalGraphs2}. Since $(M,J,g,L,\mathcal{K}^2)$ is an admissible 5-tuple, it also follows from Lemma \ref{lem:uniformLocalGraphs2} that for each point $\zeta\in u^{-1}(\mathcal{K}^2)\cap \partial_1 S$, there exists a map $\phi_{\zeta}:\mathcal{H}_{r_0/2}\to S$ which is a diffeomorphism with its image, satisfies $\phi_\zeta(0)=\zeta$,  and  satisfies the conclusions of that lemma. Arguing as in the proof of Proposition \ref{prop:nabBbounded}, one proves the existence of constants $\tilde{C}_k' = \tilde{C}_k'(\alpha,C_1,\ldots,C_{k+1})$ with the property that
$\|\nab^k B_u\|_{C^{0,\alpha}(\mathcal{O})}\leq \tilde{C}_k'$ whenever
\begin{equation}\label{eq:nabBbdd2eq1}
\mathcal{O}\subset \bigcup_{\zeta\in u^{-1}(\mathcal{K}^2)\cap \partial_1 S}\phi_{\zeta}(\mathcal{H}_{r_0/2}).
\end{equation}
For each $\zeta\in S$, we define $S_r(\zeta)$ to be the connected component of $u^{-1}\big(\mathcal{B}_r(u(\zeta))\big)$ that contains $\zeta$.  We then note that a consequence of Corollary \ref{cor:LocalInteriorCurvatureBounds} is that there exist constants $\hat{C}_k:=\hat{C}_k(\alpha,C_1,\ldots,C_{k+1})$ such that $\|\nab^k B_u\|_{C^{0,\alpha}(\mathcal{O})}\leq \hat{C}_k'$ whenever
\begin{equation}\label{eq:nabBbdd2eq2}
\partial S\cap \bigcup_{\zeta\in \mathcal{O}}S_{r_0/10}(\zeta) =\emptyset.
\end{equation}
Letting $C_k':=\max(\tilde{C}_k',\hat{C}_k')$, we see that to finish the proof of Proposition \ref{prop:nabBbounded2}, it is sufficient to show that for each $\zeta'\in u^{-1}(\mathcal{K}^3)$, there exists an open neighborhood $\mathcal{O}$ of $\zeta'$ such that either (\ref{eq:nabBbdd2eq1}) holds or (\ref{eq:nabBbdd2eq2}) holds.  To that end, fix $\zeta'\in u^{-1}(\mathcal{K}^3)$ and consider two cases.\\

\noindent\emph{Case I: $\dist_{u^*g}(\zeta',\partial_1 S)\leq r_0/5$.}\\
\indent Since $\dist_{u^*g}(\zeta',\partial_0 S) \geq 3$ and $r_0/5\leq 1$, there must exist $\zeta\in u^{-1}(\mathcal{K}^2)\cap \partial_1 S$ and a $u^*g$-unit speed geodesic $\gamma:[0,\ell]\to S$ such that $\gamma(0)=\zeta$, $\gamma(\ell) = \zeta'$,  and $0\leq \ell \leq r_0/5$.  Consequently $\dist_{u^*g}(\zeta,\zeta')\leq \ell \leq r_0/5$.  However by property (PL.\ref{en.P8}) of Lemma \ref{lem:uniformLocalGraphs2} it follows that there exists an open neighborhood $\mathcal{O}$ of $\zeta'$ such that $\mathcal{O}\subset \phi_\zeta(\mathcal{H}_{r_0/2})$, and thus (\ref{eq:nabBbdd2eq1}) is satisfied.\\

\noindent\emph{Case II: $\dist_{u^*g}(\zeta',\partial_1 S)> r_0/5$.}\\
\indent Since $\zeta'\in u^{-1}(\mathcal{K}^3)$, we can then conclude that $\dist_{u^*g}(\zeta',\partial S)> r_0/5$. Since $r_0\leq 1/2 $, we can then conclude from Lemma \ref{lem:IntrinsicExtrinsicEstimate} that there exists an open neighborhood $\mathcal{O}$ of $\zeta'$ such that
\begin{equation*}
S_{r_0/10}(\zeta'')\subset \{\zeta \in S: \dist_{u^*g}(\zeta,\zeta'') < r_0/5\},
\end{equation*}
for all $\zeta''\in \mathcal{O}$.
Since $\{\zeta \in S: \dist_{u^*g}(\zeta,\zeta') \leq r_0/5\}\cap \partial S = \emptyset$, we see that (\ref{eq:nabBbdd2eq2}) holds.   This completes the proof of Proposition \ref{prop:nabBbounded2}.
\end{proof}

As in Section \ref{sec:CurvatureRegularity0}, the establishment of elliptic regularity of $B$ allows one to prove $\epsilon$-regularity of $\|B\|^2$ and a ``curvature threshold.''  We accomplish this at present.

\begin{theorem}[$\epsilon$-regularity of $\|B\|^2$]\label{thm:EpsRegularity1}
Fix real constants $C,\delta, c>0$ and $n\in \mathbb{N}$. Then there exists a positive constant $\hbar=\hbar(C,\delta,c,n)< \delta$ with the following significance. Let $(M,J,g)$ be a compact almost Hermitian manifold of dimension $2n$ which possibly has boundary, and which satisfies
\begin{equation*}
\|g\|_{C_g^{2,\alpha}(M\setminus\partial M)} + \|J\|_{C_g^{2,\alpha}(M\setminus\partial M)} \leq C\qquad\text{and}\qquad\inf_{q\in M^{2\delta}}\inj(q)\geq \delta.
\end{equation*}
Suppose further that $L\subset M$ is a compact embedded totally geodesic Lagrangian submanifold for which $\partial L=L\cap \partial M$, and
\begin{equation*}
\|g\|_{C_{g,L}^{2,\alpha}(L\setminus\partial L)} + \|J\|_{C_{g,L}^{2,\alpha}(L\setminus\partial L)} \leq C\qquad\text{and}\qquad \inf_{q\in L\cap M^{2\delta}}\inj^L(q)\geq \delta.
\end{equation*}
Let $u:S\to M$ be an immersed $J$-curve of Type 1.
Then for each $\zeta\in u^{-1}(M^{3\delta})$ and $r\in(0,\hbar)$ for which
\begin{equation*}
\int_{S_r(\zeta)}\|B_u^g\|^2 \leq \hbar
\end{equation*}
the following inequality also holds
\begin{equation*}
\max_{0\leq \sigma \leq r} \big(\sigma^2
\sup_{S_{r-\sigma}(\zeta)}\|B_u^g\|^2 \big)\leq c^2.
\end{equation*}
\end{theorem}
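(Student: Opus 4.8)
The plan is to run the Choi--Schoen blow-up and rescaling argument from the proof of Theorem \ref{thm:EpsRegularity0}, splitting the analysis according to whether a putative blow-up point lies near the Lagrangian-type boundary $\partial_1 S$. Suppose the conclusion fails. Then there are almost Hermitian manifolds $(M_k,J_k,g_k)$ with compact embedded totally geodesic Lagrangians $L_k$ satisfying the hypotheses, Type 1 $J_k$-curves $u_k\colon S_k\to M_k$ with $\|B_{u_k}\|\le 1$, points $\zeta_k\in u_k^{-1}(M_k^{3\delta})$, and radii $r_k\in(0,\epsilon_k]$ with $\epsilon_k\to 0$, such that $\int_{S_{r_k}(\zeta_k)}\|B_{u_k}\|^2=\epsilon_k$ while $\max_{0\le\sigma\le r_k}\big(\sigma^2\sup_{S_{r_k-\sigma}(\zeta_k)}\|B_{u_k}\|^2\big)\ge c^2$. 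Exactly as in the interior proof, choose $\sigma_k\in(0,r_k]$ and $\zeta_k'\in S_{r_k-\sigma_k}(\zeta_k)$ realizing the nested maximum, put $c_k^{-2}:=\|B_{u_k}^{g_k}(\zeta_k')\|^2$ (so $c_k\le\sigma_k$ and $c_k\to0$), $\ell:=\max(6,2c)$, and rescale $\tilde g_k:=(\ell/c_k)^2 g_k$; the restricted curves $\tilde u_k$ then satisfy $\|B_{\tilde u_k}^{\tilde g_k}(\zeta_k')\|=c/\ell$ and $\sup\|B_{\tilde u_k}^{\tilde g_k}\|\le1$ on a definite $\tilde g_k$-ball about $\zeta_k'$. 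By Lemma \ref{lem:RescalingProperties} — together with its evident analogues for $\inj^{L_k}$ and for the $C^{k,\alpha}_{g,L}$-norms, which scale exactly as the unadapted quantities since the Levi--Civita connection, hence $\exp^{L_k}$, is invariant under constant rescaling of the metric — the rescaled data are admissible and $L_k$-admissible for $k$ large, with the relevant $C^{2,\alpha}$-norms still bounded. Pass to a subsequence along which either $\tilde d_k:=\dist_{\tilde u_k^*\tilde g_k}(\zeta_k',\partial_1 S_k)\to\infty$, or $\tilde d_k$ stays bounded.

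If $\tilde d_k\to\infty$, then $\zeta_k'$ is effectively interior: since $u_k(\zeta_k')\in M_k^{2\delta}$ for $k$ large we have $\dist_{\tilde u_k^*\tilde g_k}(\zeta_k',\partial_0 S_k)\ge(\ell/c_k)\,2\delta\to\infty$ too, so a fixed-radius $\tilde g_k$-ball about $\zeta_k'$ carries a Type 0 immersed $J_k$-curve with $\|B\|\le1$ (Lemma \ref{lem:IntrinsicExtrinsicEstimate} is used to convert the intrinsic distance bounds into control of the relevant extrinsic balls). The last part of the proof of Theorem \ref{thm:EpsRegularity0} then applies verbatim and produces $\tfrac{\pi\hat c^2}{2}\big(\tfrac{c}{2\ell}\big)^2\le\epsilon_k\to0$ for a universal $\hat c>0$, a contradiction.

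If $\tilde d_k$ stays bounded, there is $\eta_k\in\partial_1 S_k$ within bounded $\tilde g_k$-distance of $\zeta_k'$, with $u_k(\eta_k)\in M_k^{2\delta}$, and $\partial_0 S_k$ again recedes to $\tilde g_k$-distance $\to\infty$. Since a Type 1 $J$-curve is automatically adapted to $L$ (the $J$-invariant tangent plane along $\partial_1 S$ forces the inward normal into $TL^\bot$), Proposition \ref{prop:nabBbounded2}, applied to $\tilde u_k$ on a fixed-radius $\tilde g_k$-ball about $u_k(\eta_k)$, gives $\|\nab B_{\tilde u_k}^{\tilde g_k}\|\le C_1$ on a definite neighborhood of $\zeta_k'$, with $C_1$ independent of $k$. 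From here the argument mirrors the end of the proof of Theorem \ref{thm:EpsRegularity0}: integrating the gradient bound along $\tilde u_k^*\tilde g_k$-unit-speed geodesics emanating from $\zeta_k'$ gives $\|B_{\tilde u_k}^{\tilde g_k}\|\ge c/(2\ell)$ on the intrinsic ball $\Sigma_k$ of a definite radius $\hat c\le c(2\ell C_1)^{-1}$; the uniform graph lemmas (Lemmas \ref{lem:uniformLocalGraphs} and \ref{lem:uniformLocalGraphs2}), or monotonicity of area (Corollaries \ref{cor:MonotonicityOfArea0} and \ref{cor:MonotonicityOfArea1}) via Lemma \ref{lem:IntrinsicExtrinsicEstimate}, bound $\Area_{\tilde u_k^*\tilde g_k}(\Sigma_k)$ below by a positive constant $\kappa$ independent of $k$; and the conformal invariance of $\|B\|^2\,d\mathcal H^2$ yields
\[
\kappa\Big(\tfrac{c}{2\ell}\Big)^2\le\int_{\Sigma_k}\|B_{\tilde u_k}^{\tilde g_k}\|^2\le\int_{S_{3c_k/\ell}(\zeta_k')}\|B_{u_k}^{g_k}\|^2\le\int_{S_{r_k}(\zeta_k)}\|B_{u_k}^{g_k}\|^2=\epsilon_k\to0,
\]
a contradiction. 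Combining the two cases proves the theorem.

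I expect the main obstacle to be the boundary bookkeeping in the second case: one must check that the rescaled curves remain Type 1 and adapted with respect to the rescaled totally geodesic Lagrangian, that $\inj^{L_k}$ and the $C^{k,\alpha}_{g,L}$-norms rescale exactly as the unadapted quantities of Lemma \ref{lem:RescalingProperties}, and — most delicately — that relocating the base point to $\eta_k\in\partial_1 S_k$ does not bring it near the corner set $\partial_0 S_k\cap\partial_1 S_k$. This last point is precisely where the hypothesis $u_k(\zeta_k)\in M_k^{3\delta}$ is used, as it forces $\partial_0 S_k$ to recede in the rescaled metric. One should also record that the factor $\tfrac12$ lost in the Lagrangian monotonicity of area is harmless, since only a uniform positive lower bound on $\Area(\Sigma_k)$ enters the contradiction.
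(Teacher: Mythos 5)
Your argument is correct and follows essentially the same route as the paper's: after the Choi--Schoen rescaling, both proofs split on the rescaled intrinsic distance from $\zeta_k'$ to $\partial_1 S_k$ and then invoke the interior (Lemma \ref{lem:uniformLocalGraphs}) or Lagrangian-boundary (Lemma \ref{lem:uniformLocalGraphs2}) uniform-graph lemma, together with the curvature-gradient bound from Proposition \ref{prop:nabBbounded2}, to produce a uniform lower area bound for $\Sigma_k$ that contradicts $\epsilon_k\to 0$. The paper phrases the dichotomy as ``$\dist_{\tilde u_k^*\tilde g_k}(\zeta_k',\partial_1 S_k)$ bounded below by $\epsilon'$'' versus ``tending to $0$'' rather than your ``tending to $\infty$'' versus ``bounded'' (either works after a subsequence), takes $\ell=\max(8,2c)$ rather than $\max(6,2c)$, and applies Proposition \ref{prop:nabBbounded2} uniformly before the case split; these are cosmetic, and your observation that a Type 1 $J$-curve is automatically adapted to $L$ (since $J$-invariance of $\mathcal{T}$ along $\partial_1 S$ forces $u_*\nu\in JTL=TL^\bot$) is correct and is implicitly used in the paper as well.
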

\begin{proof}
The proof is very similar to that of Theorem \ref{thm:EpsRegularity0}.  Indeed, suppose not, then argue that there must exist
sequences of almost Hermitian manifolds $(M_k,J_k,g_k)$, Lagrangian submanifolds $L_k\subset M_k$, and $J_k$-curves $u_k:S_k\to M_k$ which satisfy the hypotheses of the Theorem, and there also exists a sequence $\epsilon_k\to 0$ and a sequence of points
$\zeta_k\in u^{-1}(M_k^{3\delta})\subset S_k$ and a sequence $r_k\in(0,\epsilon_k]$ for which
\begin{equation*}
\int_{S_{r_k}(\zeta_k)}\|B_{u_k}^{g_k}\|_{g_k}^2 = \epsilon_k \qquad\text{and}\qquad \max_{0\leq \sigma \leq r_k} \big(\sigma^2
\sup_{S_{r_k-\sigma}(\zeta_k)}\|B_{u_k}^{g_k}\|_{g_k}^2 \big)\geq c^2.
\end{equation*}
Choose $\sigma_k\in (0,r_k]$ and $\zeta_k'\in S_{r_k-\sigma_k}(\zeta_k)$ as in the proof of Theorem \ref{thm:EpsRegularity0}, in particular so that
$\max_{0\leq\sigma\leq r_k}\big( \sigma^2 \sup_{S_{r_k-\sigma}(\zeta_k')}\|B_{u_k}^{g_k}\|_{g_k}^2\big)
=\sigma_k^2\|B_{u_k}^{g_k}(\zeta_k')\|_{g_k}^2\geq c^2$; define $c_k := c\|B_{u_k}^{g_k}(\zeta_k')\|_{g_k}^{-1}$ and $S_r(\zeta_k')$ as before, so that
\begin{equation}
\sup_{S_{\sigma_k/2}(\zeta_k')}\|B_{u_k}^{g_k}\|_{g_k} \leq 2\|B_{u_k}^{g_k}(\zeta_k')\|_{g_k}=2c/c_k.
\end{equation}
Also as before, define $\tilde{g}_k:(\ell/c_k)^2 g_k$, where $\ell:=\max(8,2c)$, so that
\begin{equation}\label{eq:epsRegA}
\|B_{u_k}^{\tilde{g}_k}(\zeta_k')\|_{\tilde{g}_k} = c\ell^{-1}\qquad\text{and}\qquad\sup_{\tilde{S}_{4}(\zeta_k')}\|B_{u_k}^{\tilde{g}_k}\|_{\tilde{g}_k}\leq 1.
\end{equation}
Letting $N_k$ denote the closure of $\mathcal{B}_4^{\tilde{g}_k}\big(u_k(\zeta_k')\big)$, we see that the restricted maps $\tilde{u}_k:\tilde{S}_4(\zeta_k')\to N_k$ are immersed $J_k$-curves of Type 1 which satisfy (\ref{eq:epsRegA}).  It follows from Proposition \ref{prop:nabBbounded2} that there exists a $C_1>0$ with the property that for all sufficiently large $k$, the following holds
\begin{equation*}
\sup_{\tilde{S}_1(\zeta_k')} \|\nab B_{\tilde{u}_k}^{\tilde{g}_k}\|_{\tilde{g}_k}\leq C_1.
\end{equation*}
At this point we pass to a subsequence so that precisely one of the two following cases holds.

\noindent\emph{Case I: There exists $\epsilon'>0$ such that $\dist_{\tilde{u}_k^*\tilde{g}_k}(\zeta_k',\partial_1 S_k) \geq \epsilon'$.}\\
\indent In this case one employs Lemma \ref{lem:uniformLocalGraphs} to find a $\hat{c}>0$ such that for
\begin{equation}\label{eq:epsRegB}
\Sigma_k:=\{ \zeta\in \tilde{S}_4(\zeta_k'): \dist_{\tilde{u}_k^*g_k}(\zeta,\zeta_k')\leq \hat{c}\},
\end{equation}
the following hold
\begin{equation*}
\pi \hat{c}^2/2 \leq \Area_{\tilde{u}_k^*\tilde{g}_k}(\Sigma_k) \qquad\text{and}\qquad  \inf_{\zeta\in \Sigma_k}\|B_{\tilde{u}_k}^{\tilde{g}_k}\|_{\tilde{g}_k}\geq c(2\ell)^{-1}.
\end{equation*}
As the proof of Theorem \ref{thm:EpsRegularity0}, the desired contradiction follows from the scale invariance of the ``total'' curvature $\int \|B\|^2 d\mathcal{H}^2$, and  the fact that by construction $\Sigma_k\subset S_{r_k}(\zeta_k)$, on which the total curvature is assumed to be arbitrarily small.\\

\noindent\emph{Case II: $\dist_{\tilde{u}_k^*\tilde{g}_k}(\zeta_k',\partial_1 S_k)\to 0$.}\\
\indent In this case one employs Lemma \ref{lem:uniformLocalGraphs2} to find a $\hat{c}>0$ such that for $\Sigma_k$ defined as in (\ref{eq:epsRegB}), the following hold
\begin{equation*}
\pi\hat{c}^2/4 \leq \Area_{\tilde{u}_k^*\tilde{g}_k}(\Sigma_k) \qquad\text{and}\qquad  \inf_{\zeta\in \Sigma_k}\|B_{\tilde{u}_k}^{\tilde{g}_k}\|_{\tilde{g}_k}\geq c(2\ell)^{-1}.
\end{equation*}
The remainder of the proof then follows as in Case I.
\end{proof}

As in the previous section, we now state the associated ``curvature threshold'' version of the above result for the Lagrangian boundary case.

\begin{corollary}\label{cor:CurvatureThreshold1}
Fix real constants $C,\delta>0$ and $n\in \mathbb{N}$. Then there exists a positive constant $\hbar=\hbar(C,\delta,n)< \delta$ with the following significance. Let $(M,J,g)$ be a compact almost Hermitian manifold of dimension $2n$ which possibly has boundary, and which satisfies
\begin{equation*}
\|g\|_{C_g^{2,\alpha}(M\setminus\partial M)} + \|J\|_{C_g^{2,\alpha}(M\setminus\partial M)} \leq C\qquad\text{and}\qquad\inf_{q\in M^{2\delta}}\inj(q)\geq \delta.
\end{equation*}
Suppose further that $L\subset M$ is a compact embedded totally geodesic Lagrangian submanifold for which $\partial L=L\cap \partial M$, and
\begin{equation*}
\|g\|_{C_{g,L}^{2,\alpha}(L\setminus\partial L)} + \|J\|_{C_{g,L}^{2,\alpha}(L\setminus\partial L)} \leq C\qquad\text{and}\qquad \inf_{q\in L\cap M^{2\delta}}\inj^L(q)\geq \delta.
\end{equation*}
Let $u:S\to M$ be an immersed $J$-curve of Type 1.
Then for each $\zeta\in u^{-1}(M^{3\delta})$ and $0<r<\hbar$ the following statement holds
\begin{equation*}
\text{if}\qquad \|B_u^g(\zeta)\|_g \geq \frac{1}{r}\qquad\text{then}\qquad\int_{S_r(\zeta)}\|B_u^g\|^2 \geq \hbar.
\end{equation*}
\end{corollary}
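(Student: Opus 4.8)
The plan is to derive this corollary from Theorem \ref{thm:EpsRegularity1} in exactly the way that Corollary \ref{cor:CurvatureThreshold} is derived from Theorem \ref{thm:EpsRegularity0}; no new analytic input is needed. First I would invoke Theorem \ref{thm:EpsRegularity1} with the choice $c=1/3$ and with the same constants $C$, $\delta$, $n$. The hypotheses imposed in the present statement on $(M,J,g)$, on the totally geodesic Lagrangian $L$, and on the norms $\|g\|_{C_g^{2,\alpha}}$, $\|g\|_{C_{g,L}^{2,\alpha}}$, $\|J\|_{C_g^{2,\alpha}}$, $\|J\|_{C_{g,L}^{2,\alpha}}$, together with the injectivity radius bounds on $\inj$ and $\inj^L$, are precisely those required by that theorem, so this produces the desired constant $\hbar=\hbar(C,\delta,n)<\delta$.

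Next I would fix $\zeta\in u^{-1}(M^{3\delta})$ and $0<r<\hbar$ and argue by contradiction: suppose $\|B_u^g(\zeta)\|_g\geq 1/r$ while $\int_{S_r(\zeta)}\|B_u^g\|^2\leq \hbar$. Then Theorem \ref{thm:EpsRegularity1} applies and yields $\max_{0\leq\sigma\leq r}\big(\sigma^2\sup_{S_{r-\sigma}(\zeta)}\|B_u^g\|^2\big)\leq c^2=1/9$. On the other hand, taking $\sigma=r/2$ and using $\zeta\in S_{r/2}(\zeta)$ together with the assumed pointwise lower bound gives
\[
\frac{1}{4} \;\leq\; \Big(\frac{r}{2}\Big)^2\|B_u^g(\zeta)\|_g^2 \;\leq\; \Big(\frac{r}{2}\Big)^2\sup_{S_{r/2}(\zeta)}\|B_u^g\|^2 \;\leq\; \max_{0\leq\sigma\leq r}\Big(\sigma^2\sup_{S_{r-\sigma}(\zeta)}\|B_u^g\|^2\Big)\;\leq\;\frac{1}{9},
\]
which is absurd. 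Hence $\int_{S_r(\zeta)}\|B_u^g\|^2\geq\hbar$, which is the claim.

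Since Theorem \ref{thm:EpsRegularity1} already packages all the substantive work (the elliptic regularity of Proposition \ref{prop:nabBbounded2} and the uniform graphical parameterizations with Lagrangian boundary of Lemma \ref{lem:uniformLocalGraphs2}), there is no genuine obstacle to overcome here. The only point requiring a moment of care is bookkeeping: one must check that $S_r(\zeta)$, meaning here the connected component of $u^{-1}(\mathcal{B}_r(u(\zeta)))$ containing $\zeta$, is the same set used in the statement of Theorem \ref{thm:EpsRegularity1}, and that the distances, the balls $\mathcal{B}_r$, and the norm $\|\cdot\|$ applied to $B_u$ are all computed with respect to the single metric $g$. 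Both are immediate from the conventions fixed at the start of Section \ref{sec:CurvatureRegularity}.
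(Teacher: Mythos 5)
Your argument is correct and follows exactly the route the paper takes: invoke Theorem \ref{thm:EpsRegularity1} with $c=1/3$, and observe via the choice $\sigma=r/2$ that the conclusion $\max_{0\le\sigma\le r}\big(\sigma^2\sup_{S_{r-\sigma}(\zeta)}\|B_u^g\|^2\big)\le 1/9$ is incompatible with $\|B_u^g(\zeta)\|_g\ge 1/r$, just as in the proof of Corollary \ref{cor:CurvatureThreshold}. No gaps; the bookkeeping remark about $S_r(\zeta)$ and the metric $g$ is consistent with the conventions fixed earlier in Section \ref{sec:CurvatureRegularity}.
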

\begin{proof}
The proof is essentially identical to that of Corollary \ref{cor:CurvatureThreshold}, with reference to Theorem \ref{thm:EpsRegularity0} replaced with reference to Theorem \ref{thm:EpsRegularity1}.
\end{proof}

\appendix

\section{The inhomogeneous equation in local coordinates}  The purpose of this section is to prove Proposition \ref{prop:GraphicalMinimalSurfaceSystem}, which is a quasi-linear elliptic partial differential equation for graphically (but not conformally) parameterized $J$-curves. The following argument closely follows Micallef and White's argument for minimal sub-manifolds in the appendix of  \cite{MmWb94} with some additional details and modifications for the inhomogeneous mean curvature equation which $J$-curves satisfy.

We begin by establishing some notation.  Indeed, let $(M,J,g)$ be an almost Hermitian manifold, $x:M\to \R^{2n}$ local coordinates on $M$, $(u,S)$ a generally immersed (but not necessarily pseudo-holomorphic) curve, with local coordinates $y:S\to \R^2$.  Suppose further that the map $x\circ u\circ y^{-1}:\mathcal{O}\subset \R^2\to\R^{2n}$ can be written as
\begin{align*}
x^1(y^1,y^2)&:=\big(\pi_1\circ x\circ u \circ y^{-1}\big) (y^1,y^2)\\
x^2(y^1,y^2)&:=\big(\pi_2\circ x\circ u \circ y^{-1}\big) (y^1,y^2)\\
&\;\;\vdots\\
x^{2n}(y^1,y^2)&:=\big(\pi_{2n}\circ x\circ u \circ y^{-1}\big) (y^1,y^2),
\end{align*}
where $\pi_\mu:\R^{2n}\to \R$ denotes the canonical projection of the $\mu^{th}$ component.

Here and throughout, we will employ the Einstein summation notation of summing over repeated indices; furthermore, for clairity, we will use Roman indices to denote summations from $1$ to $2$, and Greek
indices to denote summations from $1$ to $2n$.  As a consequence of this notation, we can express the metrics $g$ and $\gamma:=u^*g$ in local coordinates respectively as $g_{\alpha\beta}dx^\alpha \otimes dx^\beta$ and $\gamma_{ij}dy^i\otimes dy^j$; define $g^{\alpha\beta}$ and $\gamma^{ij}$ respectively by $g_{\alpha \mu}g^{\mu \beta}=\delta_{\alpha \beta}$ and $\delta_{ij}=\gamma_{ik}\gamma^{kj}$, and note that $\gamma_{ij}$ and $g_{\alpha\beta}$ are related by the following
\begin{equation}\label{eq:GAndGamma}
\gamma_{ij}=x_{,i}^\alpha x_{,j}^\beta g_{\alpha \beta},
\end{equation}
where we have employed the notation $x_{,i}^\alpha=\partial_{y^i} (x^\alpha)$. Recall that in these local coordinates, we can express the Levi-Civita connection as
\begin{equation*}
\nab_{\partial_{x^\alpha}}\partial_{x^\beta}=\Gamma_{\alpha\beta}^\mu\partial_{x^\mu}\qquad\text{where}\qquad\Gamma_{\alpha\beta}^\mu= {\textstyle \frac{1}{2}}g^{\nu \mu}(g_{\nu \alpha,\beta}+g_{\beta\nu,\alpha}-g_{\alpha\beta,\nu}),
\end{equation*}
and $g_{\alpha\beta,\mu}:=\partial_{x^\mu}(g_{\alpha\beta})$.  Throughout the remainder of this section, we will let $\Delta$ denote the Laplace-Beltrami operator on $S$, which is given by
\begin{equation*}
\Delta f = \frac{1}{\sqrt{\gamma}} ( \sqrt{\gamma} \gamma^{ij} f_{,i})_{,j},
\end{equation*}
where we have abused notation in standard fashion by letting $\gamma:=\det(\gamma_{ij})$. Lastly, recall the $(2,1)$-tensor $Q$ on $M$ given by $Q(X,Y)=J(\nab_X J)Y$; we then write the components of $Q$ as
\begin{equation*}
Q(\partial_{x^\alpha},\partial_{x^\beta})=Q_{\alpha\beta}^\mu\partial_{x^\mu}.
\end{equation*}

With our notation established, we now move on to proving the main result of this section. To that end we first establish a second order partial differential equation satisfied by arbitrary smooth (or at least $C^2$) maps $u:S\to M$.

\begin{lemma}\label{lem:TangentEquation}
Any immersion $u:S\to M$ expressed in local coordinates as above satisfies the following equation,
\begin{equation*}
\big((\Delta x^\mu) \partial_{x^\mu}+ \gamma^{ij} x_{,i}^\alpha x_{,j}^\beta \Gamma_{\alpha \beta}^\mu\partial_{x^\mu}\big)^\top = 0,
\end{equation*}
where $\partial_{x^\alpha}\mapsto (\partial_{x^\alpha})^\top$ denotes the orthogonal projection onto the tangent bundle $\mathcal{T}$, and $\Delta$, $\gamma^{ij}$, and $\Gamma_{\alpha\beta}^\mu$ are as in the beginning of this section.
\end{lemma}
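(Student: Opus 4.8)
The plan is to identify the displayed vector field with the $u^*g$-trace of the second fundamental form of the smooth map $u$, and then to observe that this quantity is nothing but the mean curvature vector $H_\nu$, which is a section of the normal bundle $\mathcal{N}$ and therefore has vanishing tangential part. Concretely, fix local coordinates $y$ on $S$ and $x$ on $M$ as in the statement, write $\gamma=u^*g$, and let ${}^S\Gamma^k_{ij}$ denote the Christoffel symbols of $(S,\gamma)$. Using the formula $\Delta f=\frac{1}{\sqrt{\gamma}}(\sqrt{\gamma}\,\gamma^{ij}f_{,i})_{,j}$ recalled just above, together with the standard identity $\partial_j\log\sqrt{\gamma}={}^S\Gamma^k_{kj}$, one rewrites $\Delta x^\mu=\gamma^{ij}\big(x^\mu_{,ij}-{}^S\Gamma^k_{ij}x^\mu_{,k}\big)$, so that the $\mu$-th component of the displayed field becomes
\[
\gamma^{ij}\big(x^\mu_{,ij}-{}^S\Gamma^k_{ij}x^\mu_{,k}+\Gamma^\mu_{\alpha\beta}\,x^\alpha_{,i}x^\beta_{,j}\big)=\big(\tr_{u^*g}\nab du\big)^\mu ,
\]
which one recognizes as the $u^*g$-trace of $\nab du$, the second fundamental form of the map $u$.

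Next I would run the decomposition argument. For tangent vector fields $X,Y\in\Gamma(\mathcal{T})$, the paper's identification $\nab^S=\pi_{\mathcal{T}}^g\circ\nab$ (see Lemma \ref{lem:geodesicBoundary}) gives $(\nab du)(X,Y)=\nab_X Y-\nab^S_X Y=\nab_X Y-(\nab_X Y)^\top=(\nab_X Y)^\bot=B(X,Y)$. Taking a local $u^*g$-orthonormal frame $\{e_1,e_2\}$ of $\mathcal{T}$ and tracing, $\tr_{u^*g}\nab du=\sum_i B(e_i,e_i)=\tr_S B=H_\nu$. By Definition \ref{def:meanCurvature}, $H_\nu\in\Gamma(\mathcal{N})$, i.e. $H_\nu$ is everywhere $g$-orthogonal to $\mathcal{T}$, so $(H_\nu)^\top=0$. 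Combining this with the previous paragraph yields $\big((\Delta x^\mu)\partial_{x^\mu}+\gamma^{ij}x^\alpha_{,i}x^\beta_{,j}\Gamma^\mu_{\alpha\beta}\partial_{x^\mu}\big)^\top=(H_\nu)^\top=0$, which is exactly the assertion.

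I do not expect a genuine obstacle here: the statement is the classical fact that the tension field of an isometric immersion equals its mean curvature vector, and hence is normal. The only points requiring care are bookkeeping — checking that the Laplace--Beltrami operator as defined in the excerpt reduces to the Christoffel-corrected trace of second derivatives, and consistently distinguishing the ambient connection $\nab$ of $(M,g)$ from the intrinsic connection $\nab^S$ of $(S,u^*g)$. If one prefers not to invoke the frame identity $\nab^S=\pi_{\mathcal{T}}^g\circ\nab$, the same conclusion follows by a direct Koszul-type computation: from $\langle u_*\partial_{y^i},u_*\partial_{y^j}\rangle_g=\gamma_{ij}$ and the symmetry $\nab_{u_*\partial_{y^i}}u_*\partial_{y^j}=\nab_{u_*\partial_{y^j}}u_*\partial_{y^i}$ (torsion-freeness), one obtains $\langle u_*\partial_{y^k},\nab_{u_*\partial_{y^i}}u_*\partial_{y^j}\rangle_g=\tfrac12(\partial_i\gamma_{jk}+\partial_j\gamma_{ik}-\partial_k\gamma_{ij})=\gamma_{k\ell}\,{}^S\Gamma^\ell_{ij}$, hence $\langle u_*\partial_{y^k},(\nab du)(\partial_{y^i},\partial_{y^j})\rangle_g=\gamma_{k\ell}{}^S\Gamma^\ell_{ij}-{}^S\Gamma^\ell_{ij}\gamma_{\ell k}=0$ for all $i,j,k$; since the $u_*\partial_{y^k}$ span $\mathcal{T}$, contracting with $\gamma^{ij}$ finishes the proof.
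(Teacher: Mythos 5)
Your proof is correct but follows a genuinely different route than the paper's. You identify the displayed vector field with the tension field $\tr_{u^*g}\nabla du$ of the isometric immersion $u:(S,u^*g)\to(M,g)$, invoke the Gauss relation $\nabla^S=\pi^g_{\mathcal{T}}\circ\nabla$ to see that $\nabla du=B$ on tangent pairs, and conclude that the trace is $H_\nu\in\Gamma(\mathcal{N})$ and so has vanishing tangential part. The paper instead avoids any appeal to the tension-field/mean-curvature identification or to the Gauss formula: it first derives, via Jacobi's determinant identity, a formula for $\tfrac{1}{\sqrt{\gamma}}(\sqrt{\gamma})_{,k}$ in terms of $x^\alpha_{,i}$, $g_{\alpha\beta}$, and their derivatives, and then directly computes both sides of
$\big\langle (\Delta x^\mu)\partial_{x^\mu},\,x^\nu_{,\ell}\partial_{x^\nu}\big\rangle
=-\big\langle\gamma^{ij}x^\alpha_{,i}x^\beta_{,j}\Gamma^\mu_{\alpha\beta}\partial_{x^\mu},\,x^\nu_{,\ell}\partial_{x^\nu}\big\rangle$
for $\ell=1,2$, which is equivalent to the claim because the $x^\nu_{,\ell}\partial_{x^\nu}$ span $\mathcal{T}$. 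Your argument is shorter and more conceptual; it also transparently explains why this lemma and the companion Lemma \ref{lem:NormalEquation} hold simultaneously (the vector field \emph{is} $H_\nu$). Your optional Koszul computation at the end amounts to a self-contained proof of the Gauss formula, so there is no hidden external dependency. The paper's version has the virtue of being purely elementary index bookkeeping in the chosen coordinates, requiring only the definitions of $\Delta$, $\gamma^{ij}$, and $\Gamma^\mu_{\alpha\beta}$, and it serves as the computational template reused in the proof of Lemma \ref{lem:NormalEquation}. One small caveat: the identity $\Delta f=\gamma^{ij}\big(f_{,ij}-{}^{S}\Gamma^k_{ij}f_{,k}\big)$ requires the full contracted relation $\tfrac{1}{\sqrt{\gamma}}(\sqrt{\gamma}\gamma^{ij})_{,j}=-\gamma^{jk}\,{}^{S}\Gamma^i_{jk}$, not just $\partial_j\log\sqrt{\gamma}={}^{S}\Gamma^k_{kj}$; this is standard but worth stating precisely.
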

\begin{proof}
We begin by verifying the following equation:
\begin{equation}\label{eq:IndexJuggling2}
\frac{1}{\sqrt{\gamma}}(\sqrt{\gamma})_{,k}={\textstyle \frac{1}{2}} \gamma^{ij}(x_{,ik}^\alpha x_{,j}^\beta g_{\alpha\beta} + x_{,i}^\alpha x_{,jk}^\beta g_{\alpha\beta} + x_{,i}^\alpha x_{,j}^\beta x_{,k}^\nu g_{\alpha\beta,\nu}).
\end{equation}
Indeed, to see that equation (\ref{eq:IndexJuggling2}) holds, we recall Jacobi's formula for invertible matrices: $d \det (A)= \det(A)A^{ij}dA_{ij}$, from which it follows that $\frac{1}{\gamma} (\gamma)_{,k}=\gamma^{ij}\gamma_{ij,k}$, and hence $\frac{1}{\sqrt{\gamma}}(\sqrt{\gamma})_{,k}=\frac{1}{2}\gamma^{ij}\gamma_{ij,k}$. The validity of equation (\ref{eq:IndexJuggling2}) then follows by differentiating equation (\ref{eq:GAndGamma}).

Next we recall that in our local coordinates, the vector fields $\frac{\partial x}{\partial y^i} =  x_{,i}^\alpha \partial_{x^\alpha}$ for $i=1,2$ form a (non-orthonormal) frame field for the tangent bundle $\mathcal{T}\to S$.  Thus to prove Lemma \ref{lem:TangentEquation}, it suffices to show that
\begin{equation}\label{eq:IndexJuggling3}
\la \Delta x^\mu \partial_{x^\mu}, x_{,\ell}^\nu \partial_{x^\nu}\ra=-\la \gamma^{ij} x_{,i}^\alpha x_{,j}^\beta \Gamma_{\alpha \beta}^\mu\partial_{x^\mu},x_{,\ell}^\nu \partial_{x^\nu}\ra,
\end{equation}
for $\ell=1,2$.  To that end, we compute as follows.
\begin{align*}
\la \Delta x^\mu \partial_{x^\mu}, x_{,\ell}^\nu \partial_{x^\nu}\ra&=(\Delta x^\mu) x_{,\ell}^\nu g_{\mu\nu}\\
&=g_{\mu\nu} x_{,\ell}^\nu \frac{1}{\sqrt{\gamma}}(\sqrt{\gamma} \gamma^{ij} x_{,i}^\mu)_{,j}\\
&=\frac{1}{\sqrt{\gamma}}(\sqrt{\gamma})_{,\ell}+\gamma_{\ell i}\gamma_{,j}^{ij}+g_{\mu\nu} x_{,\ell}^\nu \gamma^{ij} x_{,ji}^\mu\\
&={\textstyle \frac{1}{2}} \gamma^{ij}(x_{,i\ell}^\alpha x_{,j}^\beta g_{\alpha\beta} + x_{,i}^\alpha x_{,j\ell}^\beta g_{\alpha\beta} + x_{,i}^\alpha x_{,j}^\beta x_{,\ell}^\rho g_{\alpha\beta,\rho})\\
&\qquad-\gamma^{ij}(x_{,\ell j}^\alpha x_{,i}^\beta g_{\alpha \beta} + x_{,\ell}^\alpha x_{,ij}^\beta g_{\alpha \beta} + x_{,\ell}^\alpha x_{,i}^\beta g_{\alpha \beta, \rho}x_{,j}^\rho)\\
&\qquad +g_{\mu\nu} x_{,\ell}^\nu \gamma^{ij} x_{,ji}^\mu\\
&={\textstyle \frac{1}{2}} \gamma^{ij}x_{,i}^\alpha x_{,j}^\beta x_{,\ell}^\rho g_{\alpha\beta,\rho}-\gamma^{ij}x_{,\ell}^\alpha x_{,i}^\beta g_{\alpha \beta, \rho}x_{,j}^\rho\\
&=\gamma^{ij} x_{,i}^\alpha x_{,\ell}^\beta x_{,j}^\rho  (\textstyle {\frac{1}{2}}  g_{\alpha \rho,\beta} -   g_{\alpha \beta, \rho}).
\end{align*}
We similarly compute
\begin{align*}
\la \gamma^{ij} x_{,i}^\alpha x_{,j}^\beta \Gamma_{\alpha \beta}^\mu\partial_{x^\mu},x_{,\ell}^\rho \partial_{x^\rho}\ra&=\gamma^{ij}x_{,i}^\alpha x_{,j}^\beta{\textstyle\frac{1}{2}}g^{\nu\mu}(g_{\beta \nu,\alpha}+g_{\alpha \nu,\beta}-g_{\alpha \beta, \nu}) x_{,\ell}^\rho g_{\mu\rho}\\
&=-\gamma^{ij} x_{,i}^\alpha x_{,\ell}^\beta x_{,j}^\rho  (\textstyle {\frac{1}{2}}  g_{\alpha \rho,\beta} -   g_{\alpha \beta, \rho}),
\end{align*}
which proves equation (\ref{eq:IndexJuggling3}) as well as Lemma \ref{lem:TangentEquation}.
\end{proof}

\begin{lemma}\label{lem:NormalEquation}
Any immersion $u:S\to M$ expressed in local coordinates as above satisfies the following mean curvature equation,
\begin{equation}\label{eq:NormalEquation}
H=\big((\Delta x^\mu)\partial_{x^\mu} \big)^\bot +
\big(\gamma^{ij}x_{,i}^\alpha   x_{,j}^\beta
\Gamma_{\alpha\beta}^\mu \partial_{x^\mu}\big)^\bot,
\end{equation}
where $\partial_{x^\mu}\mapsto (\partial_{x^\mu})^\bot$ denotes the orthogonal projection to the normal bundle $\mathcal{N}\to S$, and $\Delta$, $\gamma^{ij}$, and $\Gamma_{\alpha\beta}^\mu$ are defined as in the beginning of this section.
\end{lemma}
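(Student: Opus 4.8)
The plan is to compute the second fundamental form $B$ directly in the given local coordinates, contract it with $\gamma^{ij}$ to produce $H=\tr_S B$, and then rewrite $\gamma^{ij}x_{,ij}^\mu$ in terms of the Laplace--Beltrami operator, absorbing the leftover first-order term into the tangent bundle where the normal projection annihilates it. This is the standard Micallef--White-type computation, now carried out for the non-conformal (graphical) parameterization.

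First I would record the coordinate form of the ambient connection along $u$. Abusing notation as in Remark~\ref{rem:AbuseOfNotation} and identifying the coordinate frame $\partial_{y^i}$ on $S$ with $u_*\partial_{y^i}=x_{,i}^\alpha\partial_{x^\alpha}\in\Gamma(\mathcal{T})$, the Leibniz rule together with $\nabla_{\partial_{x^\alpha}}\partial_{x^\beta}=\Gamma_{\alpha\beta}^\mu\partial_{x^\mu}$ gives
\begin{equation*}
\nabla_{\partial_{y^i}}\partial_{y^j}=x_{,ij}^\mu\partial_{x^\mu}+x_{,i}^\alpha x_{,j}^\beta\Gamma_{\alpha\beta}^\mu\partial_{x^\mu}.
\end{equation*}
Projecting onto $\mathcal{N}$ yields $B(\partial_{y^i},\partial_{y^j})=\bigl(x_{,ij}^\mu\partial_{x^\mu}+x_{,i}^\alpha x_{,j}^\beta\Gamma_{\alpha\beta}^\mu\partial_{x^\mu}\bigr)^\bot$. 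Since $\{\partial_{y^1},\partial_{y^2}\}$ is a (non-orthonormal) frame of $\mathcal{T}$ with Gram matrix $\gamma_{ij}$, and $B$ is a genuine symmetric $\mathcal{N}$-valued tensor on $\mathcal{T}$, we have $\tr_S B=\gamma^{ij}B(\partial_{y^i},\partial_{y^j})$, so that
\begin{equation*}
H=\bigl(\gamma^{ij}x_{,ij}^\mu\partial_{x^\mu}+\gamma^{ij}x_{,i}^\alpha x_{,j}^\beta\Gamma_{\alpha\beta}^\mu\partial_{x^\mu}\bigr)^\bot.
\end{equation*}

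Next I would compare $\gamma^{ij}x_{,ij}^\mu$ with $\Delta x^\mu=\tfrac{1}{\sqrt\gamma}(\sqrt\gamma\,\gamma^{ij}x_{,i}^\mu)_{,j}$. Expanding the outer derivative by the product rule gives $\Delta x^\mu=\gamma^{ij}x_{,ij}^\mu+\tfrac{1}{\sqrt\gamma}(\sqrt\gamma\,\gamma^{ij})_{,j}\,x_{,i}^\mu$, hence
\begin{equation*}
\gamma^{ij}x_{,ij}^\mu\partial_{x^\mu}=(\Delta x^\mu)\partial_{x^\mu}-\tfrac{1}{\sqrt\gamma}(\sqrt\gamma\,\gamma^{ij})_{,j}\,x_{,i}^\mu\partial_{x^\mu}.
\end{equation*}
The correction term is a combination of the vectors $x_{,i}^\mu\partial_{x^\mu}=u_*\partial_{y^i}\in\Gamma(\mathcal{T})$, so its normal projection vanishes; substituting into the previous display gives exactly
\begin{equation*}
H=\bigl((\Delta x^\mu)\partial_{x^\mu}\bigr)^\bot+\bigl(\gamma^{ij}x_{,i}^\alpha x_{,j}^\beta\Gamma_{\alpha\beta}^\mu\partial_{x^\mu}\bigr)^\bot,
\end{equation*}
which is (\ref{eq:NormalEquation}). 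Combined with Lemma~\ref{lem:TangentEquation}, which asserts that the vector $V:=(\Delta x^\mu)\partial_{x^\mu}+\gamma^{ij}x_{,i}^\alpha x_{,j}^\beta\Gamma_{\alpha\beta}^\mu\partial_{x^\mu}$ has vanishing tangential part, one even concludes $H=V$.

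The computation is essentially bookkeeping, and the only steps demanding care — which I would flag as the main, if modest, obstacle — are justifying the coordinate formula for $\nabla_{\partial_{y^i}}\partial_{y^j}$ at the level of the connection pulled back along $u$ (so that the terms $x_{,ij}^\mu$ enter with the correct coefficients) and the trace identity $\tr_S B=\gamma^{ij}B(\partial_{y^i},\partial_{y^j})$, which uses only that $B$ is a symmetric tensor. The derivatives of $\sqrt\gamma$ need not be expanded here, but should one wish to, Jacobi's formula $\tfrac{1}{\sqrt\gamma}(\sqrt\gamma)_{,k}=\tfrac12\gamma^{ij}\gamma_{ij,k}$ already used in the proof of Lemma~\ref{lem:TangentEquation} suffices.
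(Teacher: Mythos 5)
Your proposal is correct and follows essentially the same route as the paper's proof: compute $(\nabla_{\partial_{y^i}}\partial_{y^j})^\bot$, contract with $\gamma^{ij}$ to get $H$, and absorb $\gamma^{ij}x_{,ij}^\mu$ into $\Delta x^\mu$ up to a tangential correction that dies under $(\cdot)^\bot$. The only cosmetic difference is that the paper derives the trace identity $\tr_S B=\gamma^{ij}B(\partial_{y^i},\partial_{y^j})$ explicitly by introducing an orthonormal frame $\{e_k\}$ and the change-of-basis coefficients $a_{ij}$, whereas you invoke it directly as a standard fact about symmetric tensors in a non-orthonormal frame.
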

\begin{proof}
We begin by choosing an locally defined orthonormal frame field $\{e_1,e_2\}$ of the tangent bundle $\mathcal{T}\to S$, and we define the functions
$ a_{ij}=\la \frac{\partial x}{\partial y^i},e_j \ra $.  Define $a^{ij}$ so that $a_{ik}a^{kj}=\delta_{ij}$; then $e_i = a^{ij}\frac{\partial x}{\partial y^j}$, so
\begin{equation*}
\delta_{\ell k}=a^{\ell
i}a^{k j}\gamma_{ij} \quad\Rightarrow\quad a_{ik}=a^{k j}\gamma_{ij}\quad\Rightarrow \quad\gamma^{ji} a_{ik}=a^{k j} \quad\Rightarrow \quad\gamma^{ij}=a^{k i}a^{k j}.
\end{equation*}
Then
\begin{align*}
H&=\tr_S B = B(e_k,e_k)= \big(\nabla_{e_k}e_k \big)^\bot= a^{k i} a^{k j} \big(\nabla_{\frac{\partial
x}{\partial y^i}} \frac{\partial x}{\partial y^j} \big)^\bot\\
&=\gamma^{ij}\big(x_{,ij}^\mu \partial_{x^\mu} + x_{,i}^\alpha   x_{,j}^\beta
\Gamma_{\alpha\beta}^\mu \partial_{x^\mu}\big)^\bot\\
&=\big((\Delta x^\mu)\partial_{x^\mu} \big)^\bot +
\big(\gamma^{ij}x_{,i}^\alpha   x_{,j}^\beta
\Gamma_{\alpha\beta}^\mu \partial_{x^\mu}\big)^\bot,
\end{align*}
where we have made use of the fact that
\begin{align*}
\big((\Delta x^\mu)\partial_{x^\mu} \big)^\bot &=
\frac{1}{\sqrt{\gamma}}(
\gamma^{ij}\sqrt{\gamma}  x_{,j}^\mu)_{,i} (\partial_{x^\mu} )^\bot\notag\\
&=
\frac{1}{\sqrt{\gamma}}(
\gamma^{ij}\sqrt{\gamma})_{,i} \big(\frac{\partial x}{\partial y^j} \big)^\bot + (
\gamma^{ij}  x_{,ij}^\mu \partial_{x^\mu} )^\bot\\
&= (
\gamma^{ij}  x_{,ij}^\mu \partial_{x^\mu} )^\bot.
\end{align*}
Thus we have established equation (\ref{eq:NormalEquation}) and Lemma \ref{lem:NormalEquation}.
\end{proof}

\begin{proposition}\label{prop:NonGraphicalCoordinateEquation}
Let $(u,S)$ be an immersed $J$-curve expressed in local coordinates as above.  Then
\begin{equation}\label{eq:NonGraphicalCoordinateEquation}
\Delta x^\mu + \gamma^{ij} x_{,i}^\alpha x_{,j}^\beta (\Gamma_{\alpha \beta}^\mu-Q_{\alpha \beta}^\mu)=0,
\end{equation}
where $\Delta$, $\gamma^{ij}$, $\Gamma_{\alpha\beta}^\mu$ and $Q_{\alpha\beta}^\mu$ are defined as in the beginning of this section.
\end{proposition}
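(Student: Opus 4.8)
The plan is to obtain Proposition \ref{prop:NonGraphicalCoordinateEquation} by splicing together the two preceding lemmas with the inhomogeneous mean curvature equation of Lemma \ref{lem:InhomogeneousMeanCurvatureEq}. First I would introduce the vector field along $S$ given in the chosen coordinates by
\[
V := (\Delta x^\mu)\partial_{x^\mu} + \gamma^{ij} x_{,i}^\alpha x_{,j}^\beta \Gamma_{\alpha\beta}^\mu \partial_{x^\mu}.
\]
Lemma \ref{lem:TangentEquation} says precisely that $V^\top = 0$, and Lemma \ref{lem:NormalEquation} says $V^\bot = H$. Since $u^*TM = \mathcal{T}\oplus\mathcal{N}$ as an orthogonal direct sum on the regular locus, adding these two identities gives $V = H$ as a section of $u^*TM$. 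Because $(u,S)$ is a $J$-curve, Lemma \ref{lem:InhomogeneousMeanCurvatureEq} then identifies $H = H_\nu = \tr_S Q$, so it remains only to rewrite $\tr_S Q$ in the given local coordinates.

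For this I would reuse the orthonormal frame $\{e_1,e_2\}$ of $\mathcal{T}$ and the matrix $a^{ij}$ appearing in the proof of Lemma \ref{lem:NormalEquation}, for which $e_i = a^{ij}\tfrac{\partial x}{\partial y^j}$ and $\gamma^{ij} = a^{ki}a^{kj}$. Since $Q$ is a $(1,2)$-tensor, hence pointwise $\mathbb{R}$-bilinear in its two vector arguments, I can expand
\[
\tr_S Q = Q(e_k,e_k) = a^{ki}a^{kj}\, Q\!\left(\tfrac{\partial x}{\partial y^i},\tfrac{\partial x}{\partial y^j}\right) = \gamma^{ij} x_{,i}^\alpha x_{,j}^\beta Q_{\alpha\beta}^\mu \partial_{x^\mu}.
\]
Combining with $V = H = \tr_S Q$ yields $(\Delta x^\mu)\partial_{x^\mu} + \gamma^{ij} x_{,i}^\alpha x_{,j}^\beta \Gamma_{\alpha\beta}^\mu \partial_{x^\mu} = \gamma^{ij} x_{,i}^\alpha x_{,j}^\beta Q_{\alpha\beta}^\mu \partial_{x^\mu}$; since $\{\partial_{x^\mu}\}_{\mu=1}^{2n}$ is a frame, equating $\mu$-th components and transposing gives $\Delta x^\mu + \gamma^{ij} x_{,i}^\alpha x_{,j}^\beta(\Gamma_{\alpha\beta}^\mu - Q_{\alpha\beta}^\mu) = 0$, which is the assertion.

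I do not expect a genuine obstacle here: the analytic content is already carried by Lemmas \ref{lem:TangentEquation}, \ref{lem:NormalEquation} and \ref{lem:InhomogeneousMeanCurvatureEq}, and what remains is bookkeeping. The one point worth stating carefully is that the tangential vanishing $V^\top = 0$ holds for an \emph{arbitrary} immersion, whereas it is only the pseudo-holomorphicity $\bar{\partial}_J u \equiv 0$ — entering through Lemma \ref{lem:InhomogeneousMeanCurvatureEq} — that forces the normal part $V^\bot = H$ to take the explicit form $\gamma^{ij} x_{,i}^\alpha x_{,j}^\beta Q_{\alpha\beta}^\mu \partial_{x^\mu}$; I would flag this so that the role of each hypothesis is transparent. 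A minor auxiliary remark is that all of this takes place on $S\setminus\mathcal{Z}_u$, where $\mathcal{T}$ and $\mathcal{N}$ are defined and the frame $\{e_1,e_2\}$ exists, which suffices since the resulting PDE is a pointwise identity on that set.
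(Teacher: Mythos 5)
Your proof is correct and takes essentially the same route as the paper's: combine the tangential vanishing (Lemma \ref{lem:TangentEquation}) with the normal identity (Lemma \ref{lem:NormalEquation}) to get $V = H_\nu$, invoke Lemma \ref{lem:InhomogeneousMeanCurvatureEq} for $H_\nu = \tr_S Q$, and expand $\tr_S Q$ in the coordinate frame via $\gamma^{ij} = a^{ki}a^{kj}$. The only cosmetic difference is that the paper separately recalls $Q^\top \equiv 0$ on $\mathcal{T}\otimes\mathcal{T}$ (from equation (\ref{eq:nabJImageComplexOrthogToDomain})), but you correctly note this is already packaged into the statement $H_\nu = \tr_S Q$ and need not be re-derived.
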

\begin{proof}
We begin by recalling Lemma \ref{lem:InhomogeneousMeanCurvatureEq}, which guarantees that $(u,S)$ satisfies the inhomogeneous mean curvature equation $H=\tr_S Q$.  Next we recall that as a consequence of Lemma \ref{Lem:ComputationalAlmostHermition} (equation (\ref{eq:nabJImageComplexOrthogToDomain}) in particular) we see that for any $J$-curve $(u,S)$, the restricted tensor
\begin{equation*}
Q^\top:\mathcal{T}\otimes \mathcal{T} \to \mathcal{T}\qquad\text{given by}\qquad Q^\top(X,Y)=\big(Q(X,Y)\big)^\top,
\end{equation*}
vanishes identically.  Thus with $\{e_1, e_2\}$, $a_{ij}$, and $a^{ij}$ defined as in the proof of Lemma \ref{lem:NormalEquation}, we deduce from Lemmas \ref{lem:TangentEquation}  and  \ref{lem:NormalEquation} that
\begin{align*}
(\Delta x^\mu) \partial_{x^\mu}+ \gamma^{ij} x_{,i}^\alpha x_{,j}^\beta \Gamma_{\alpha \beta}^\mu\partial_{x^\mu}  &= \tr_S Q = Q(e_k,e_k) =a^{k i} a^{k j} Q\big(\frac{\partial x}{\partial y^i},\frac{\partial x}{\partial y^j}\big)\\
&= \gamma^{ij} x_{,i}^\alpha x_{,j}^\beta Q_{\alpha \beta}^\mu\partial_{x^\mu}.
\end{align*}
Equation (\ref{eq:NonGraphicalCoordinateEquation}) and Proposition \ref{prop:NonGraphicalCoordinateEquation} follow immediately.
\end{proof}

\begin{proposition}\label{prop:GraphicalMinimalSurfaceSystem}
Let $(u,S)$ be an immersed $J$-curve parameterized in local coordinates as above.  Suppose further that this (non-conformal) parametrization is graphical, that is
\begin{equation*}
x^1(y^1,y^2)=y^1\qquad\text{and}\qquad x^2(y^1,y^2)=y^2,
\end{equation*}
with notation as in the beginning of this section. Then for $\mu=3,\ldots, 2n$, the $x^\mu$ satisfy the following partial differential equation
\begin{equation*}
\gamma^{ij}D_{ij}x^\mu = \mathcal{F}^\mu,
\end{equation*}
where $D_i=\frac{\partial}{\partial y^i}$, $D_{ij}=\frac{\partial^2}{\partial y^i \partial y^j}$, and $\mathcal{F}^\mu$ is given by
\begin{equation*}
\mathcal{F}^\mu:=\gamma^{ij} (D_i x^\alpha) (D_j x^\beta) \big((D_k x^\mu)\Gamma_{\alpha \beta}^k-(D_k x^\mu) Q_{\alpha \beta}^k +Q_{\alpha \beta}^\mu-\Gamma_{\alpha \beta}^\mu \big),
\end{equation*}
and $\gamma^{ij}$, $\Gamma_{\alpha\beta}^\mu$, and $Q_{\alpha\beta}^\mu$ are defined as in the beginning of this section.
\end{proposition}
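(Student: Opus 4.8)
The plan is to deduce the graphical system directly from Proposition \ref{prop:NonGraphicalCoordinateEquation}, which already supplies the coordinate equation $\Delta x^\mu + \gamma^{ij} x_{,i}^\alpha x_{,j}^\beta (\Gamma_{\alpha \beta}^\mu-Q_{\alpha \beta}^\mu)=0$ for \emph{every} $\mu\in\{1,\ldots,2n\}$, and to combine it with the special structure forced by the graphical normalization. First I would unpack the Laplace--Beltrami operator with the Leibniz rule: from $\Delta f = \tfrac{1}{\sqrt{\gamma}}(\sqrt{\gamma}\,\gamma^{ij} f_{,i})_{,j}$ one gets
\[
\Delta f = \gamma^{ij} f_{,ij} + \tfrac{1}{\sqrt{\gamma}}\big(\sqrt{\gamma}\,\gamma^{ij}\big)_{,j} f_{,i}.
\]
The principal term $\gamma^{ij}f_{,ij}$ is exactly the operator appearing on the left of the claimed equation, so the task reduces to identifying the first-order term $\tfrac{1}{\sqrt{\gamma}}(\sqrt{\gamma}\,\gamma^{ij})_{,j}$ in terms of the geometric data.

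That identification is where the graphical hypothesis enters. Since $x^1=y^1$ and $x^2=y^2$, we have $x_{,i}^k=\delta_i^k$ and $x_{,ij}^k=0$ for $k\in\{1,2\}$, so applying the Leibniz expansion with $f=x^k$ gives $\Delta x^k=\tfrac{1}{\sqrt{\gamma}}(\sqrt{\gamma}\,\gamma^{kj})_{,j}$. Feeding this into Proposition \ref{prop:NonGraphicalCoordinateEquation} with $\mu=k$ yields, for $k\in\{1,2\}$,
\[
\tfrac{1}{\sqrt{\gamma}}\big(\sqrt{\gamma}\,\gamma^{kj}\big)_{,j} = -\gamma^{ij} x_{,i}^\alpha x_{,j}^\beta \big(\Gamma_{\alpha \beta}^k-Q_{\alpha \beta}^k\big).
\]
Next, for $\mu\in\{3,\ldots,2n\}$ I would substitute this last formula back into the Leibniz expansion of $\Delta x^\mu$ (the summed index $i$ there playing the role of the free index $k$), obtaining
\[
\Delta x^\mu = \gamma^{ij} x_{,ij}^\mu - \gamma^{ij} x_{,i}^\alpha x_{,j}^\beta \big(\Gamma_{\alpha \beta}^k-Q_{\alpha \beta}^k\big) x_{,k}^\mu.
\]
Equating this with the other consequence of Proposition \ref{prop:NonGraphicalCoordinateEquation}, namely $\Delta x^\mu = -\gamma^{ij} x_{,i}^\alpha x_{,j}^\beta(\Gamma_{\alpha \beta}^\mu-Q_{\alpha \beta}^\mu)$, and solving for $\gamma^{ij} x_{,ij}^\mu$, one collects all terms carrying the factor $\gamma^{ij} x_{,i}^\alpha x_{,j}^\beta$ into precisely $\mathcal{F}^\mu = \gamma^{ij} (D_i x^\alpha)(D_j x^\beta)\big((D_k x^\mu)\Gamma_{\alpha \beta}^k-(D_k x^\mu)Q_{\alpha \beta}^k+Q_{\alpha \beta}^\mu-\Gamma_{\alpha \beta}^\mu\big)$; rewriting $\partial_{y^i}$ as $D_i$ and $\partial_{y^i y^j}^2$ as $D_{ij}$ gives the stated equation $\gamma^{ij}D_{ij}x^\mu=\mathcal{F}^\mu$.

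The argument is entirely algebraic index bookkeeping built on top of Proposition \ref{prop:NonGraphicalCoordinateEquation}; no analytic input is required. The only point demanding genuine care is keeping the free index of the divergence identity straight when it is re-summed against $x_{,i}^\mu$ inside the expansion of $\Delta x^\mu$ — that is, recognizing that $\tfrac{1}{\sqrt{\gamma}}(\sqrt{\gamma}\,\gamma^{ij})_{,j}x_{,i}^\mu = -\gamma^{ij}x_{,i}^\alpha x_{,j}^\beta(\Gamma_{\alpha\beta}^k - Q_{\alpha\beta}^k)x_{,k}^\mu$ after a harmless relabeling of the dummy indices. I expect that to be the sole (very mild) obstacle. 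The ellipticity and H\"older bounds on $\gamma^{ij}$ needed to run elliptic regularity on this system are not proved here but follow from properties (P\ref{en.P9}) and (P\ref{en.P10}) of Lemma \ref{lem:uniformLocalGraphs}.
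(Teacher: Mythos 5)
Your proposal is correct and follows essentially the same route as the paper: both start from Proposition~\ref{prop:NonGraphicalCoordinateEquation}, extract the identity for $\tfrac{1}{\sqrt{\gamma}}(\sqrt{\gamma}\,\gamma^{kj})_{,j}$ ($k=1,2$) from the graphical normalization, substitute it back into the Leibniz expansion of $\Delta x^\mu$ for $\mu\geq 3$, and collect terms. The index relabeling you flag as the one point requiring care is precisely the step the paper carries out in its three-line display, so your reading of where the (mild) danger lies is accurate.
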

\begin{proof}
Begin by observing that in this graphical case we have $x_{,i}^j=\delta_{ij}$ for $i,j=1,2$; substituting this equality into equation (\ref{eq:NonGraphicalCoordinateEquation}) for $\mu=1,2$ then yields
\begin{equation}
\frac{1}{\sqrt{\gamma}} ( \sqrt{\gamma} \gamma^{ij} )_{,j}+ \gamma^{pq} x_{,p}^\alpha x_{,q}^\beta (\Gamma_{\alpha \beta}^i-Q_{\alpha \beta}^i)=0,
\end{equation}
for $i=1,2$.  Consequently, for $\mu=3,\ldots,2n$ we find
\begin{align*}
0&=\frac{1}{\sqrt{\gamma}} ( \sqrt{\gamma} \gamma^{ij} x^\mu_{,i})_{,j}+ \gamma^{ij} x_{,i}^\alpha x_{,j}^\beta (\Gamma_{\alpha \beta}^\mu-Q_{\alpha \beta}^\mu),\\
&=  \gamma^{ij} x^\mu_{,ij}+\frac{1}{\sqrt{\gamma}} ( \sqrt{\gamma} \gamma^{ij})_{,j} x^\mu_{,i}+ \gamma^{ij} x_{,i}^\alpha x_{,j}^\beta (\Gamma_{\alpha \beta}^\mu-Q_{\alpha \beta}^\mu),\\
&=  \gamma^{ij} x^\mu_{,ij}+\gamma^{pq} x_{,p}^\alpha x_{,q}^\beta (Q_{\alpha \beta}^i-\Gamma_{\alpha \beta}^i) x^\mu_{,i}+ \gamma^{ij} x_{,i}^\alpha x_{,j}^\beta (\Gamma_{\alpha \beta}^\mu-Q_{\alpha \beta}^\mu).
\end{align*}
Rearranging and relabeling terms then yields
\begin{equation*}
\gamma^{ij} x^\mu_{,ij}=\gamma^{ij} x_{,i}^\alpha x_{,j}^\beta \big(x^\mu_{,k}\Gamma_{\alpha \beta}^k-x^\mu_{,k}Q_{\alpha \beta}^k +Q_{\alpha \beta}^\mu-\Gamma_{\alpha \beta}^\mu \big),
\end{equation*}
for $\mu=3,\ldots 2n$, which is precisely the desired equation.  This completes the proof of Lemma \ref{prop:GraphicalMinimalSurfaceSystem}.
\end{proof}

\bibliographystyle{amsplain}
\bibliography{../Bibliography/Bibliography}

\providecommand{\bysame}{\leavevmode\hbox to3em{\hrulefill}\thinspace}
\providecommand{\MR}{\relax\ifhmode\unskip\space\fi MR }
\providecommand{\MRhref}[2]{%
  \href{http://www.ams.org/mathscinet-getitem?mr=#1}{#2}
}
\providecommand{\href}[2]{#2}
\begin{thebibliography}{10}

\bibitem{ChSr85}
Hyeong~In Choi and Richard Schoen, \emph{The space of minimal embeddings of a
  surface into a three-dimensional manifold of positive {R}icci curvature},
  Invent. Math. \textbf{81} (1985), no.~3, 387--394.

\bibitem{CtMw99}
Tobias~H. Colding and William~P. Minicozzi, II, \emph{Minimal surfaces},
  Courant Lecture Notes in Mathematics, vol.~4, New York University Courant
  Institute of Mathematical Sciences, New York, 1999.

\bibitem{Fj09a}
Joel~W. Fish, \emph{{Target-local Gromov compactness}}, arXiv:0912.4435.

\bibitem{Fj07}
\bysame, \emph{Compactness results for pseudo-holomorphic curves}, {PhD}
  dissertation, New York University, 2007.

\bibitem{Fu08}
Urs Frauenfelder, \emph{Gromov convergence of pseudoholomorphic disks}, J.
  Fixed Point Theory Appl. \textbf{3} (2008), no.~2, 215--271.

\bibitem{GdTn01}
David Gilbarg and Neil~S. Trudinger, \emph{Elliptic partial differential
  equations of second order}, Classics in Mathematics, Springer-Verlag, Berlin,
  2001, Reprint of the 1998 edition.

\bibitem{Gm85}
M.~Gromov, \emph{Pseudoholomorphic curves in symplectic manifolds}, Invent.
  Math. \textbf{82} (1985), no.~2, 307--347.

\bibitem{KsNk96a}
Shoshichi Kobayashi and Katsumi Nomizu, \emph{Foundations of differential
  geometry. {V}ol. {I}}, Wiley Classics Library, John Wiley \& Sons Inc., New
  York, 1996, Reprint of the 1963 original, A Wiley-Interscience Publication.

\bibitem{KsNk96b}
\bysame, \emph{Foundations of differential geometry. {V}ol. {II}}, Wiley
  Classics Library, John Wiley \& Sons Inc., New York, 1996, Reprint of the
  1969 original, A Wiley-Interscience Publication.

\bibitem{MdSd04}
Dusa McDuff and Dietmar Salamon, \emph{{$J$}-holomorphic curves and symplectic
  topology}, American Mathematical Society Colloquium Publications, vol.~52,
  American Mathematical Society, Providence, RI, 2004.

\bibitem{MmWb94}
Mario~J. Micallef and Brian White, \emph{The structure of branch points in
  minimal surfaces and in pseudoholomorphic curves}, Ann. of Math. (2)
  \textbf{141} (1995), no.~1, 35--85.

\bibitem{Rhl88}
H.~L. Royden, \emph{Real analysis}, third ed., Macmillan Publishing Company,
  New York, 1988.

\bibitem{SrYst94}
R.~Schoen and S.-T. Yau, \emph{Lectures on differential geometry}, Conference
  Proceedings and Lecture Notes in Geometry and Topology, I, International
  Press, Cambridge, MA, 1994, Lecture notes prepared by Wei Yue Ding, Kung
  Ching Chang [Gong Qing Zhang], Jia Qing Zhong and Yi Chao Xu, Translated from
  the Chinese by Ding and S. Y. Cheng, Preface translated from the Chinese by
  Kaising Tso.

\end{thebibliography}

\end{document}